\documentclass[a4paper,12pt,leqno]{amsart}
\usepackage[a4paper, top=2.4cm, bottom=2cm, left=3cm, right=3cm]{geometry}

\usepackage{amsmath,amssymb,latexsym,amsthm,tikz, enumerate,enumitem,amscd,hyperref}%,enumitem} 
\usepackage{graphicx,tikz}

\usetikzlibrary{positioning,arrows}
\usepackage[utf8]{inputenc} 
\usepackage[T1]{fontenc}
\usepackage{hyperref}
\usepackage{theoremref}
\usepackage[textsize=tiny]{todonotes}

\numberwithin{equation}{section}
\theoremstyle{plain}
\newtheorem{lem}[equation]{Lemma}

\newtheorem{prop}[equation]{Proposition}
\newtheorem{thm}[equation]{Theorem}
\newtheorem{assumption}[equation]{Assumption}
\newtheorem{cor}[equation]{Corollary}

\newtheorem{conj}[equation]{Conjecture}
\theoremstyle{definition}
\newtheorem{definition}[equation]{Definition}

\newtheorem{remark}[equation]{Remark}

\newtheorem{claim}{Claim}
\newtheorem*{claim*}{Claim}
\newtheorem*{que*}{Question}
\newtheorem*{remark*}{Remark}
\newtheorem*{definition*}{Definition}

\newcommand{\type}{\operatorname{Type}}

\newcommand{\bC}{\mathfrak C}

\newcommand{\Y}{\Upsilon}

\newcommand{\lk}{\operatorname{lk}}

\newcommand{\wtC}{\widetilde C}

\newcommand{\N}{\mathbb{N}}
\newcommand{\HB}{B^{\mathrm h}}
\DeclareMathOperator{\Res}{Res}

\title[From Trees to Tripods]{From Trees to Tripods: Proof of $K(\pi,1)$ for Artin groups with $ABI$-type spherical parabolics}

\author{Nima Hoda}
\address{Instytut Matematyczny,
Uniwersytet Wroc\l awski,
pl.\ Grun\-wal\-dzki 2,
50--384 Wroc{\l}aw, Poland}
\email{nima@nimahoda.net}
\address{Department of Mathematics, Tufts University, Medford, MA 02155, USA}
\email{nima@nimahoda.net}

\author{Jingyin Huang}
\address{Department of Mathematics, The Ohio State University, 100 Math Tower, 231 W 18th Ave, Columbus, OH 43210, U.S.}
\email{huang.929@osu.edu}

\begin{document}

\begin{abstract}
	%We reduce the $K(\pi,1)$-conjecture for all Artin groups with tree Coxeter diagrams to certain properties of three special families of Artin groups whose Coxeter diagrams are tripod-shaped with restricted edge labels. As a consequence, the $K(\pi,1)$-conjecture holds for every Artin group whose spherical parabolic subgroups avoid type $D_n$ ($n \ge 4$) and the exceptional types. This consequence also relies on a geometric property of braid groups which follows from combination of works of Charney, Crisp, McCammond, Haettel and second named author.
	We reduce the $K(\pi,1)$-conjecture for all Artin groups with tree Coxeter diagrams to properties of Artin groups with tripod-shaped Coxeter diagrams. Combining this reduction theorem and properties of braid groups in previous works of Charney, Crisp-McCammond, Haettel and the second named author, we deduce that the $K(\pi,1)$-conjecture holds for every Artin group whose spherical parabolic subgroups avoid type $D_n$ ($n \ge 4$) and the exceptional types. 
	
	The reduction theorem relies on producing a ``tower'' of injective metric spaces from a single Artin group. The construction of such a tower relies on two ingredients of independent interests: a notion of combinatorial convexity and a Bestvina-type inequality, in certain injective orthoscheme complexes. These ingredients further rely on the use of structural properties of bi-Helly graphs (also known as absolute bipartite retracts) developed in joint work of the first named author with Munro.
	%The proof is based on constructing actions of Artin groups on injective orthoscheme complexes. We introduce a notion of combinatorial convexity for subcomplexes of such complexes and establish a Bestvina-type inequality. A further ingredient is the use of structural properties of bi-Helly graphs (also known as absolute bipartite retracts) developed in joint work of the first named author with Munro.
\end{abstract}

\maketitle

\section{Introduction}
Artin groups form a large and natural class of groups arising in geometric group theory, low-dimensional topology, hyperplane arrangement and singularity theory. Despite their simple combinatorial definition, many basic questions about their topology remain open. One of the most fundamental is the $K(\pi,1)$-conjecture, originating in work of Arnol'd, Brieskorn, Pham, and Thom, which predicts that a canonical complex manifold associated to each Artin group is aspherical.

An influential approach to the $K(\pi,1)$-conjecture was proposed by Charney and Davis in the 1990s. The guiding principle is that Artin groups are governed by their irreducible spherical standard parabolic subgroups, which control the local geometry of the complexes on which Artin groups act. Such spherical subgroups are completely classified: they consist of four infinite families $A_n$, $B_n$, $D_n$, and $I_2(n)$, together with six exceptional groups. As a particularly important instance of this principle, Charney and Davis proposed the following deep conjecture on braid groups and showed that it would imply the $K(\pi,1)$-conjecture for a substantial portion of Artin groups.

\begin{conj}[Charney--Davis]
	\label{conj:braid}
	The spherical Deligne complex of any braid group is CAT$(1)$.
\end{conj}

\begin{thm}[{\cite{CharneyDavis,charney2004deligne}}]
	\label{thm:connection}
	If Conjecture~\ref{conj:braid} holds, then the $K(\pi,1)$-conjecture holds for any Artin group whose every irreducible spherical parabolic subgroup is of type $A$, $B$, or $I_2$.
\end{thm}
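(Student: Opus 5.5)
We follow the Charney--Davis strategy. First reduce the $K(\pi,1)$-conjecture for an Artin group $A_\Gamma$ to contractibility of its modified Deligne complex $\Phi_\Gamma$. This complex is built from cosets $gA_T$ with $T$ spherical, and $A_\Gamma$ acts on it with finite spherical stabilizers. Charney--Davis showed that the conjecture for $A_\Gamma$ is equivalent to contractibility of $\Phi_\Gamma$. To prove contractibility, we metrize $\Phi_\Gamma$ as a piecewise Euclidean cube complex (the Moussong-type metric) or piecewise Euclidean orthoscheme complex. We then show it is CAT$(0)$ via Gromov's link condition, which makes it contractible.

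The links of vertices $gA_T$ in $\Phi_\Gamma$ decompose as joins of a "downward" part and an "upward" part.
\begin{enumerate}
\item The upward part is controlled by the nerve of the Coxeter group, that is, by spherical subsets containing $T$. It is CAT$(1)$ by Moussong's theorem for Coxeter complexes.
\item The downward part is the spherical Deligne complex of the spherical Artin group $A_T$, with the Moussong metric.
\end{enumerate}
Since a spherical join of CAT$(1)$ spaces is CAT$(1)$, it suffices to show that the spherical Deligne complex of every spherical $A_T$ is CAT$(1)$.

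Next reduce to irreducible $T$. The spherical Deligne complex of $A_{T_1\times T_2}$ is the spherical join of those of the factors, so CAT$(1)$ is preserved. By hypothesis each irreducible factor is of type $A_n$, $B_n$, or $I_2(m)$.
\begin{enumerate}
\item Type $A_n$ is exactly Conjecture~\ref{conj:braid}.
\item Type $I_2(m)$ is direct: the spherical Deligne complex is a graph. Its edge lengths are $\pi/m$, and its girth is at least $2m$ by the standard solution of the word problem in dihedral Artin groups. So it is CAT$(1)$.
\item Type $B_n$ uses the standard embedding of $A_{B_n}$ into $A_{A_n}$ as the centralizer/fixed subgroup of a diagram involution. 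Equivalently, $A_{B_n}$ is the finite-index subgroup of the braid group $A_{A_n}$ (annular braids) fixing one strand.
\end{enumerate}

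For the $B_n$ step, the plan is to realize the spherical Deligne complex of $B_n$ as a convex subcomplex, or a fixed set, of the spherical Deligne complex of an $A_m$ braid group. Fixed point sets of isometries of CAT$(1)$ spaces are $\pi$-convex, hence CAT$(1)$, and this transfers the property. This is the main obstacle. One must check that the Moussong metrics are compatible under this identification, since the angles in type $B$ orthoschemes differ from those in type $A$. One must also check that the simplicial structures match, i.e.\ that the type-$B$ parabolic cosets correspond exactly to the invariant type-$A$ parabolic cosets. This is where Charney's work on the Deligne complex of type $B$ enters. If the fixed-set approach fails, we would instead compare links directly, using the fact that both are orthoscheme complexes of the relevant Garside lattices.

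Finally, assemble the pieces. Every vertex link of $\Phi_\Gamma$ is CAT$(1)$, so $\Phi_\Gamma$ is locally CAT$(0)$. It is simply connected by the usual complex-of-groups argument, hence CAT$(0)$ and contractible. This gives the $K(\pi,1)$-conjecture.
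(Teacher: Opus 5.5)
The paper does not prove this statement; it quotes it from Charney--Davis and from Charney (for type $B$). Charney--Davis reduce contractibility of the modified Deligne complex, with the Moussong metric, via the link condition to CAT$(1)$ of the spherical Deligne complexes of the spherical parabolics. Your account of that reduction, of the reducible case and of $I_2(m)$ is the standard one and is fine, apart from two wording errors. First, the stabilizers $gA_Tg^{-1}$ are infinite; the reduction uses Deligne's theorem for them, not finiteness. Second, the join decomposition of links you use is that of the Moussong metric, not of a cube or orthoscheme metric. With the cubical metric the link condition holds only in the FC case.

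The genuine gap is type $B_n$. It is the one case the hypothesis does not supply, and you leave it as ``the main obstacle''. The embedding you invoke is also misidentified, and your two descriptions of it are not equivalent.

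\emph{The strand-fixing subgroup does not help.} It has finite index in $A_{A_n}$, with one generator sent to $\sigma_1^2$. It does not carry standard parabolic cosets to standard parabolic cosets. So it cannot exhibit the $B_n$ complex as a subcomplex or fixed set: already for $n=2$ you would be comparing a graph with edges of length $\pi/4$ to one with edges of length $\pi/3$.

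\emph{The diagram-involution subgroup lives in $A_{A_{2n-1}}$, not $A_{A_n}$.} There the involution $\iota$ is conjugation by $\Delta$, and $C(\Delta)\cong A_{B_n}$, generated by $\sigma_i\sigma_{2n-i}$ ($i<n$) and $\sigma_n$. In $A_{A_n}$ itself, $C(\Delta)$ has rank only $\lfloor (n+1)/2\rfloor$.

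With the correct embedding, your worry about incompatible metrics disappears. By folding, $\iota$ fixes each $\iota$-invariant top simplex along a spherical $B_n$ chamber. Consequently $\mathrm{Fix}(\iota)$ cuts through simplices and is not a subcomplex: its vertices are the vertices of type $\hat\sigma_n$ and the midpoints of edges of type $\hat\sigma_i\hat\sigma_{2n-i}$.

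What must actually be proved is combinatorial. A simplex $gA_R$ is $\iota$-invariant iff $\sigma(R)=R$ and $g^{-1}\Delta g=b\Delta$ with $b\in A_R$. You then need two facts.
\begin{enumerate}
\item[(a)] Every such coset contains an element of $C(\Delta)$. On components of $R$ swapped by $\sigma$ this is automatic. On the $\sigma$-stable component $R_0$, let $b_0$ be the $R_0$-component of $b$. Then $(b\Delta)^2=\Delta^2$ forces $(b_0\Delta_{R_0})^2=\Delta_{R_0}^2$. Since roots in braid groups are unique up to conjugacy, $b_0\Delta_{R_0}$ is conjugate to $\Delta_{R_0}$ inside $A_{R_0}$, which is what is needed.
\item[(b)] $A_R\cap C(\Delta)$ is the corresponding standard parabolic subgroup of $A_{B_n}$.
\end{enumerate}
Only with (a) and (b) is $\mathrm{Fix}(\iota)$ isometric to the $B_n$ Deligne complex, and only then does your $\pi$-convexity argument give CAT$(1)$. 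Note that this deduces $B_n$ from the conjecture for $A_{2n-1}$, not for $A_n$.
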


Here type $A$, $B$ or $I_2$ means type $A_n$, $B_n$ or $I_2(n)$, for some $n$, respectively. 

At present, however, Conjecture~\ref{conj:braid} is known only for braid groups on at most four strands \cite{charney2004deligne}. One of the main observations of this paper is that its full strength is not required. We show that a substantially weaker geometric condition on braid groups (Theorem~\ref{thm:braid} and Remark~\ref{rmk:46}), which holds true by combining previous works of Charney, Crisp-McCammond, Haettel and the second named author, already suffices to deduce the corresponding conclusion for the $K(\pi,1)$-conjecture. As a result, we obtain the following unconditional theorem.

\begin{thm}
	\label{thm:ABI}
	The $K(\pi,1)$-conjecture holds for any Artin group whose every irreducible spherical parabolic subgroup is of type $A$, $B$, or $I_2$.
\end{thm}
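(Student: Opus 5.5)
The plan follows the strategy sketched in the abstract. We prove a reduction theorem for tree-shaped Coxeter diagrams, and then check its hypotheses for the tripod diagrams whose labels can occur when every spherical parabolic is of type $A$, $B$ or $I_2$.

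\emph{Reduction to tree-shaped diagrams.} First we reduce to Artin groups with tree Coxeter diagrams, with no edge labelled $\infty$, all of whose spherical parabolics are of type $A$, $B$ or $I_2$. By the Charney--Davis reduction for the $K(\pi,1)$-conjecture (via the Salvetti complex and the modified Deligne complex), it suffices to show that certain ``local'' Artin groups satisfy the conjecture. For the FC-type part, the Ellis--Sköldberg/Godelle--Paris style arguments let us replace the diagram by its spherical-type and simplicial pieces. The remaining obstruction comes from cycles in the diagram, which we would handle through the standard covering/retraction trick for parabolic subdiagrams. Concretely, we would use the result that the conjecture holds for $A_\Gamma$ if it holds for every $A_{\Gamma'}$ with $\Gamma'$ a proper induced subdiagram, together with an amalgamation argument along separating vertices. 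I am not fully confident this reduction to trees is available off the shelf. If it is not, I would instead build the tower of injective spaces below directly for general $\Gamma$.

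\emph{Tower of injective orthoscheme complexes.} For a tree diagram $\Gamma$, choose a leaf vertex $s$ and build the relative Artin complex $\Delta$ of $A_\Gamma$ with respect to the vertex types. Give $\Delta$ the orthoscheme metric and show it is injective, using the bi-Helly graph criterion of Hoda--Munro. The key local input is that links of vertices are injective. Links are governed by sub-diagrams, so induction on the number of vertices reduces this to the smallest diagrams with branching, namely tripods. This is where we use the combinatorial convexity and the Bestvina-type inequality. Convexity is needed so that sub-complexes stabilized by parabolic subgroups are themselves injective, which lets the induction pass to a ``tower'' of injective spaces. The Bestvina inequality is needed to verify the bi-Helly (flag and triangle) conditions in the link.

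\emph{Tripod input from braid groups.} For tripods with labels $3$ or $4$ and no spherical $D$ or exceptional pieces, the needed property is the one provided in Theorem~\ref{thm:braid}. That theorem combines Charney's Garside-lattice results, Crisp--McCammond, Haettel's injective/Helly structure for $A_n$ and $B_n$, and Huang's results. It says that the relevant relative Artin complexes of the spherical pieces, of types $A_n$ and $B_n$, are bi-Helly or satisfy the weak CAT(1)-type condition, rather than the full Conjecture~\ref{conj:braid}.

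\emph{Conclusion and main obstacle.} With the injective tower in hand, $A_\Gamma$ acts on a contractible space with contractible fixed sets and stabilizers that are proper parabolics. The $K(\pi,1)$-conjecture for those parabolics holds by induction, and the standard Godelle--Paris criterion then yields the conjecture for $A_\Gamma$. The main obstacle is the second step: showing that injectivity of the orthoscheme complex propagates across the tower, that is, that the convex subcomplexes and the Bestvina inequality combine to give the bi-Helly property. I would first attempt this by proving that each link is a bi-Helly graph via a local-to-global theorem, verifying only triangle and 4-cycle conditions.
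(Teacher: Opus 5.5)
\textbf{There is a genuine gap in your first step: the reduction from arbitrary type-$ABI$ diagrams to trees.}

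The ``result'' you cite is not a theorem. It says the conjecture holds for $A_\Gamma$ whenever it holds for every proper induced subdiagram. Combined with induction from one-vertex diagrams, that would prove the full $K(\pi,1)$-conjecture. The actual criterion, Theorem~\ref{thm:kpi1}, also requires $\Delta_\Gamma$ to be contractible, and that is the whole difficulty.

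Your other tools do not fill the gap either. Ellis--Sk\"oldberg/Godelle--Paris-type reductions only pass to parabolics without $\infty$-labels, so they cannot remove cycles with finite labels. A separating vertex of the Coxeter diagram gives no amalgam, because absent edges mean commuting generators.

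The paper instead uses the tree reduction of \cite{huangbestvina} (Theorem~\ref{thm:cycle 1'}, in the form of Corollary~\ref{cor:combine}). Its hypothesis goes beyond $K(\pi,1)$ for trees. For every tree $\Lambda$ of type $ABI$ with labels $\le 5$, every special $4$-cycle in the \emph{full} Artin complex $\Delta_\Lambda$ must have a center.

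The injective tower (Theorems~\ref{thm:BD robust prop} and~\ref{thm:tripod reduction}) gives $K(\pi,1)$ and atomic BD-robustness for such trees, but not this centre condition. The missing ingredient is Proposition~\ref{prop:ABI bowtie free}: the labeled $4$-cycle condition for $\Delta_\Lambda$ together with $B$-robustness. It is proved by a separate induction in Section~\ref{sec:ABI}, using $\widetilde C_2$- and $\widetilde B_3$-like cores, a minimal-quadrilateral argument with the convex subcomplexes $Z_{z_i}$, and Proposition~\ref{prop:intersection2}. Nothing in your proposal addresses it.

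Your fallback of building the tower directly for non-tree $\Gamma$ is unsupported. The construction relies on tree structure throughout:
\begin{itemize}
\item $\widetilde C$-cores are induced subtrees dominating $\widetilde C_n$, $\widetilde B_n$, $\widetilde D_n$;
\item admissibility is what makes the type relation a partial order;
\item the argument uses the classification of $\widetilde C$-elementary trees;
\item it makes repeated separation arguments via Lemma~\ref{lem:transitive}.
\end{itemize}

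Several of your descriptions of the tree case are also inaccurate:
\begin{itemize}
\item Injectivity of $\bar\Delta_{\Lambda,\Lambda'}$ comes from Haettel's combinatorial link criterion (Theorem~\ref{thm:contractibleII}), namely bowtie-free, upward/downward flag posets in links, plus simple connectivity from Lemma~\ref{lem:sc}. It does not come from a bi-Helly criterion or from injectivity of links.
\item The Hoda--Munro directed geodesics are used only to define normal forms and convexity.
\item Convexity is applied to the subcomplexes $X_i$ of vertices adjacent to a fixed $x_i$, so that the Bestvina-type inequality forces triple intersections (quasi-centers). It is not used to make fixed sets of parabolic subgroups injective.
\item The $ABI$ base cases are $A_n$ and $B_n$, not tripods; any tripod with all labels $3$ contains $D_4$.
\item Theorem~\ref{thm:kpi1} needs $\Delta_\Lambda$ itself to be contractible. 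The paper obtains this from $\Delta_{\Lambda,\Lambda'}$ via Lemma~\ref{lem:dr} and induction over non-spherical subdiagrams.
\end{itemize}
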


Equivalently, the $K(\pi,1)$-conjecture holds for all Artin groups whose spherical parabolic subgroups avoid type $D_n$ ($n\ge 4$) and the exceptional types.

More broadly, Theorem~\ref{thm:ABI} is a consequence of a general reduction theorem for the $K(\pi,1)$-conjecture, which constitutes the main contribution of this paper and suggests a pathway toward the remaining case.

\subsection{A reduction theorem for the $K(\pi,1)$-conjecture}

In a previous article \cite{huangbestvina}, the second named author reduced the $K(\pi,1)$-conjecture for all Artin groups to properties for Artin groups whose Coxeter diagrams are trees.

\begin{thm}[\cite{huangbestvina}]
	\label{thm:cycle 1'}
	Suppose that for every Artin group $A_S$ whose Coxeter diagram is a tree with edge labels $\le 5$, the group $A_S$ satisfies the $K(\pi,1)$-conjecture and every special $4$-cycle in its Artin complex $\Delta_S$ admits a center. Then the $K(\pi,1)$-conjecture holds for all Artin groups.
\end{thm}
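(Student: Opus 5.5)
This is a result quoted from \cite{huangbestvina}, so I would reconstruct its proof along the lines of that paper. The strategy is an induction on the Coxeter diagram. The goal is to reduce the $K(\pi,1)$-conjecture for an arbitrary $A_S$ to two ingredients: contractibility-type statements for relative Artin complexes, and the fact that the $K(\pi,1)$-conjecture passes from standard parabolic subgroups to the whole group. The input I would use is the known criterion (Godelle--Paris, building on Charney--Davis) that $A_S$ satisfies the $K(\pi,1)$-conjecture if all proper standard parabolic subgroups do and the Artin complex $\Delta_S$ (or the Deligne-type complex built from spherical cosets) is contractible. Because of this, it suffices to show by induction on $|S|$ that $\Delta_S$ is contractible whenever all proper parabolics satisfy the conjecture.

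First, I would reduce from arbitrary diagrams to trees with small labels. Edges labeled $\ge 6$ or $\infty$ can be handled by a Bass--Serre / amalgamation argument: for an $\infty$ edge, $A_S$ splits as an amalgam of proper parabolics, and the conjecture is preserved under such amalgams. Edges with large labels, and cycles in the diagram, are handled by showing that suitable relative Artin complexes $\Delta_{S,S'}$ are CAT(0) or systolic-like. Here I would use the second named author's Bestvina-type complexes, i.e.\ the Bestvina normal form combined with a Garside-type structure on the tree parabolic, to build a contractible complex on which $A_S$ acts with stabilizers being tree-type parabolics.

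Second, the key geometric step is to show that for a tree diagram $T$ with labels $\le 5$, the hypothesis ``every special $4$-cycle in $\Delta_T$ has a center'' implies a local-to-global convexity statement. Concretely, vertex links in the relevant relative Artin complexes become bowtie-free or flag, which yields a Helly or weakly modular graph structure. From this structure one obtains contractibility, and more importantly the property that when a cycle in the diagram $S$ is assembled from tree subdiagrams, the complex glued along these trees is simply connected and locally nonpositively curved.

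Finally, by the link condition and Cartan--Hadamard, the complex for $S$ is contractible, so $A_S$ satisfies the $K(\pi,1)$-conjecture.

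The main obstacle is this second step: converting centers of special $4$-cycles in tree-type Artin complexes into the nonpositive curvature of links for cyclic diagrams. I would attack it by realizing each link as a union of tree-type Artin complexes and proving a combinatorial Gromov link condition directly from the $4$-cycle centers. This should be done for labels $\le 5$ case by case, following the pattern of the triangle-free arguments.
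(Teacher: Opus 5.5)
Your sketch has a genuine gap, and it sits exactly at the step you call the main obstacle. The paper does not reprove this statement; it is quoted from \cite{huangbestvina}, so the comparison is with what any proof must do.

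Your induction carries only the $K(\pi,1)$-conjecture for proper parabolics, and that is not enough to run the contractibility step. Suppose $\Lambda'\subset\Lambda$ is an induced cycle and you want $\Delta_{\Lambda,\Lambda'}$ to be contractible. The link of a vertex of type $\hat s$ is $\Delta_{\Lambda\setminus\{s\},\Lambda'\setminus\{s\}}$ (Lemma~\ref{lem:link}). The local condition you need there is a bowtie-free property, i.e.\ centers for special $4$-cycles in $\Delta_{\Lambda\setminus\{s\}}$. But $\Lambda\setminus\{s\}$ is in general not a tree: it may contain further cycles or labels $\ge 6$. So the hypothesis of the theorem does not apply to it. These links are not ``unions of tree-type Artin complexes'', and nothing in your sketch transfers a $4$-cycle of $\Delta_{\Lambda\setminus\{s\}}$ to a tree diagram.

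What is needed is a stronger inductive statement: ``$A_\Lambda$ satisfies $K(\pi,1)$ \emph{and} every special $4$-cycle in $\Delta_\Lambda$ has a center.'' You then need a propagation result. Using the contractible, nonpositively curved relative complex built from the lower-level link conditions, show that subcomplexes spanned by neighbours of a vertex are convex, and that the relevant ones have a common point; this produces the centers. That is exactly the role of Corollary~\ref{cor:convex}, Proposition~\ref{prop:intersection} and Theorem~\ref{thm:BD robust prop} in the present paper, for the tree case. Your proposal contains no step of this kind.

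Second, the contractibility mechanism you propose cannot work in general. A Gromov link condition plus Cartan--Hadamard for a piecewise Euclidean metric on $\Delta_{\Lambda,\Lambda'}$ requires the links, with their spherical metrics, to be CAT$(1)$. Already when $\Lambda=\Lambda'$ is a cycle with all labels $3$, the links are spherical Deligne complexes of braid groups. Their CAT$(1)$-ness is exactly Conjecture~\ref{conj:braid}, which by Remark~\ref{rmk:46} is far stronger than having centers for $4$-cycles. You must instead use a structure whose local condition \emph{is} bowtie-freeness/flagness of posets on links: lattice/Garside-type structures with Bestvina-type normal forms, or injective orthoscheme complexes via Theorem~\ref{thm:contractibleII}. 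Contractibility then comes from normal forms or injectivity, not from Cartan--Hadamard.

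There are also smaller errors:
\begin{enumerate}
\item Finite labels $\ge 6$ give no amalgam splitting; only $m_{st}=\infty$ does.
\item Trees with some label $\ge 6$ lie outside the hypothesis. Both $K(\pi,1)$ and the $4$-cycle property must be proved for them separately; compare the $\widetilde G_2$-type CAT$(0)$ metric in the proof of Theorem~\ref{thmb:reduction single}.
\item Vertex stabilizers of $\Delta_{S,S'}$ are conjugates of $A_{S\setminus\{s\}}$, not tree parabolics, and tree parabolics carry no Garside structure.
\item Contractibility of a relative complex $\Delta_{S,S'}$ must still be upgraded to contractibility of $\Delta_S$ before Theorem~\ref{thm:kpi1} applies, e.g.\ via Lemma~\ref{lem:dr} using contractibility of the $\Delta_{S\setminus R}$ from the induction.
\end{enumerate}
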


In the present article, we reduce the verification of the $K(\pi,1)$-conjecture for Artin groups with tree Coxeter diagrams to understanding the following three families of Artin groups, whose Coxeter diagrams generalize the classical types $E$, $F$, and $H$.

\begin{definition}
	\label{def:special}
	See Figure~\ref{fig:special} for the following three families.
	A Coxeter diagram $\Lambda$ is of \emph{type $F_{r,s}$} if it is a linear graph with exactly one edge labeled $4$ and all other edges labeled $3$, where $r$ (resp.\ $s$) is the number of edges on one (resp.\ the other) side of the edge labeled $4$. For example, $F_{1,1}$ is the Coxeter diagram of type $F_4$, and $F_{1,2}=\widetilde F_4$.
	
	A Coxeter diagram $\Lambda$ is of \emph{type $H_{r,s}$} if it is a linear graph with exactly one edge labeled $5$ and all other edges labeled $3$, with $r$ and $s$ defined similarly. For example, $H_{1,0}=H_3$ and $H_{2,0}=H_4$.
	
	A Coxeter diagram $\Lambda$ is of \emph{type $E_{r,s,t}$} if it is a tripod with all edges labeled $3$, where $r,s,t$ denote the number of edges in the three arms. For example, $E_{2,2,1}=E_6$ and $E_{2,1,1}=D_5$.
\end{definition}

\begin{figure}[h]
	\centering
	\includegraphics[scale=0.78]{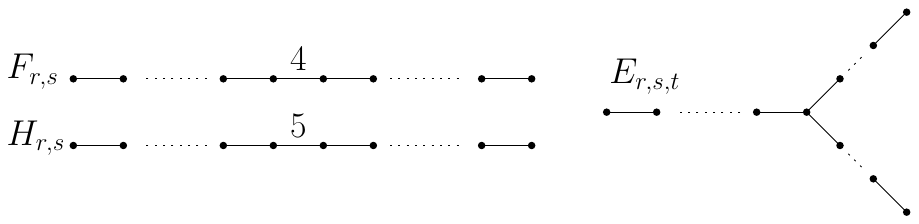}
	\caption{Three special families.}
	\label{fig:special}
\end{figure}

\begin{thm}[Informal version of the main theorem]
	The $K(\pi,1)$-conjecture for Artin groups with tree Coxeter diagrams can be reduced to the $K(\pi,1)$-conjecture for the families $F_{r,s}$, $H_{r,s}$, and $E_{r,s,t}$, together with certain properties of $4$-cycles and $6$-cycles in their Artin complexes.
\end{thm}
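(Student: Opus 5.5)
We argue by induction on the number of vertices of the tree Coxeter diagram $\Lambda$, and reduce to a local-to-global statement about Artin complexes. By the work in \cite{huangbestvina} behind Theorem~\ref{thm:cycle 1'}, the $K(\pi,1)$-conjecture for $A_S$ follows once we have the conjecture for all proper standard parabolic subgroups and a suitable nonpositive curvature property of the Artin complex $\Delta_S$ (or of a relative Artin complex associated with a leaf). The relevant property is a contractibility, or Helly-type, property. We therefore fix a leaf $s$ of the tree, or better a vertex $v$ of valence $\ge 3$ or an edge with label $\ge 4$, and try to realize $\Delta_S$, or a suitable quotient or "tower" of relative Artin complexes, as an injective metric space. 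Concretely, we would build an orthoscheme complex on the vertex set of $\Delta_S$ of types lying along a chosen linear path in $\Lambda$. We would then show its $1$-skeleton is bi-Helly in the sense of Hoda--Munro, and conclude injectivity, hence contractibility.

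The key steps, in order, are as follows.

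\textbf{Step 1: decomposition of the tree.} Any tree diagram not already of spherical type decomposes along its branch vertices and high-label edges into pieces. Locally around each such feature, the link in $\Delta_S$ is governed by a tripod or linear subdiagram. If all labels are $3$, this subdiagram is of type $E_{r,s,t}$; if there is one $4$- or $5$-edge on a line, it is of type $F_{r,s}$ or $H_{r,s}$. Labels $\ge 6$, and two high labels close together, are handled by known bowtie-free or CAT(0) arguments.

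\textbf{Step 2: local conditions from the special families.} Using the hypotheses on these three families — the $K(\pi,1)$-conjecture together with the existence of centers for $4$-cycles and the filling of $6$-cycles in their Artin complexes — we verify the local flag or Helly-type condition in links of the orthoscheme complex. Here $4$-cycles give the "bowtie-free" condition and $6$-cycles give the bi-Helly triangle condition.

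\textbf{Step 3: local-to-global.} A local-to-global theorem for bi-Helly graphs turns these link conditions into the global statement that the universal cover is bi-Helly, hence injective. Simple connectivity of the relevant complex comes from the parabolic structure, as in van der Lek.

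\textbf{Step 4: induction.} Feeding the contractibility of the resulting complex back into the inductive framework yields $K(\pi,1)$ for $A_S$. This includes propagating the $4$-cycle center property to $A_S$ itself, which is needed to continue the induction in Theorem~\ref{thm:cycle 1'}.

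The main obstacle is Step 3 combined with the propagation in Step 4. The orthoscheme complex built for one linear path does not see the other arms of the tree. We therefore need combinatorial convexity of certain subcomplexes, namely fixed sets or relative Artin complexes for sub-diagrams, inside the injective complex. With this convexity we can stack these subcomplexes into a tower and pass information from one arm to another. We also need a Bestvina-type inequality, as in \cite{huangbestvina}, to control where cycles can be filled. My first attempt would be to define convexity via gated or Helly subgraphs in the bi-Helly graph, and prove that the subcomplex spanned by vertices of a given parabolic coset is gated. The $6$-cycle hypothesis on the special families would be used exactly to verify the gatedness.
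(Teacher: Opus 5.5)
\textbf{There is a genuine gap, and it sits where the paper's main work is.}

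\textbf{Where the hypotheses enter, and what must propagate.} Your Step~2 claims that the hypotheses on $F_{r,s}$, $H_{r,s}$, $E_{r,s,t}$ give the link conditions of the orthoscheme complex. That only works one level up. By Lemma~\ref{lem:link}, the link of a vertex of type $\hat s$ in $\Delta_{\Lambda,\Lambda'}$ is a relative Artin complex over $\Lambda\setminus\{s\}$, and this diagram is in general not in those families. So the induction has to carry the link conditions themselves. In the paper this is atomic BD-robustness (Definition~\ref{def:good}): bowtie-freeness and one-sided flagness of $\Delta_{\Lambda,\Lambda_0}$ (or its subdivision) for atomic $B_m$-like and $D_m$-like $\Lambda_0$, together with the labeled 4-cycle condition. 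The heart of the proof is Theorem~\ref{thm:BD robust prop}: this property ascends to any $\Lambda$ containing a $\wtC$-core.

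Your Step~4 instead propagates the center property for special 4-cycles ``for Theorem~\ref{thm:cycle 1'}''. That is not what the induction needs, since the flag (6-cycle) condition must propagate too. It is also not something the paper proves for general trees, and Theorem~\ref{thm:cycle 1'} plays no role in the tree reduction.

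\textbf{How injectivity is obtained.} The injective complex does not come from a local-to-global theorem for bi-Helly graphs, and a bi-Helly graph is not itself an injective space. It comes from Haettel's criterion (Theorem~\ref{thm:contractibleII}), applied to $\bar\Delta_{\Lambda,\Lambda'}$ for a $\wtC$-core $\Lambda'$, i.e.\ a subdiagram dominating $\widetilde C_n$, $\widetilde B_n$ or $\widetilde D_n$, with the subdivisions of Section~\ref{sec:subdivision}. An arbitrary linear path yields nothing contractible; a spherical one, for instance, does not. Bi-Hellyness of the extremal subgraph is derived afterwards (Proposition~\ref{prop:isometric}). It is used only to obtain normal forms with a link-local characterization (Theorem~\ref{thm:local to global}). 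The $K(\pi,1)$ conclusion then follows via Lemma~\ref{lem:dr} and Theorem~\ref{thm:kpi1}.

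\textbf{The propagation step itself.} Your sketch misplaces the hypotheses and would likely fail. Take $X_i$ to be the full subcomplex of $\Delta_{\Lambda,\Lambda'}$ on vertices adjacent to $x_i$. If the $X_i$ were gated, the triple intersection would be automatic, since gated sets have the Helly property. But you give no way to check gatedness from local data. The paper's normal-form convexity explicitly lacks the Helly property in general, and a special-family hypothesis cannot speak about gatedness in $\Delta_{\Lambda,\Lambda'}$ for larger $\Lambda$.

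In the paper the argument runs as follows.
\begin{enumerate}
\item Choose the robust core so that the vertices of $\Lambda_0\setminus\Lambda'$ are $\Lambda'$-extremal (Lemma~\ref{lem:robust C core}).
\item Local convexity of $\bar X_i$ (Lemma~\ref{lem:convex}, Corollary~\ref{cor:convex}) then comes from Lemma~\ref{lem:5cycle}, i.e.\ from the bowtie-free and downward-flag conditions on links, which is the inductive hypothesis.
\item The triple intersection $X_1\cap X_2\cap X_3\neq\emptyset$ (Proposition~\ref{prop:intersection}) is proved by minimizing the perimeter of a triangle of normal form paths. The Bestvina-type inequality (Proposition~\ref{prop:bestvina}, Lemma~\ref{lem:corner1}) forces this minimal triangle to degenerate.
\item That intersection supplies the common upper bound, i.e.\ the quasi-center, for the larger diagram in Corollary~\ref{cor:robust0}.
\end{enumerate}
So the 6-cycle property is the output of the induction step, not an input. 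The special-family 6-cycle hypothesis is used only at level $0$ (Theorem~\ref{thmb:reduction single}).
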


We now introduce the terminology needed to state the main theorem precisely.
Let $A_S$ be an Artin group with standard generating set $S$ and Coxeter diagram $\Lambda$. The \emph{Artin complex} of $A_S$, denoted by $\Delta_S$ or $\Delta_\Lambda$, (\cite{CharneyDavis,paris2012k,cumplido2020parabolic}) is the simplicial complex whose vertices are in bijection with left cosets of the form $\{gA_{S\setminus\{s\}}\}_{g\in A_S,s\in S}$, where $A_{S\setminus\{s\}}$ is the subgroup of $A_S$ generated by $S\setminus\{s\}$. A collection of vertices spans a simplex if and only if the corresponding cosets have nonempty intersection. This construction naturally generalizes the Coxeter complex associated to a Coxeter group.

In the main theorem, we do not attempt to impose conditions on all 4- and 6-cycles in the $1$-skeleton of the Artin complex. Instead, to obtain the strongest possible statement, we restrict attention to a minimal possible class of such cycles, which we now specify.

\begin{definition}
	A vertex of the Artin complex $\Delta_S$ is said to be of type $\hat s=S\setminus\{s\}$ if it corresponds to a coset of the form $gA_{S\setminus\{s\}}$. A \emph{special $4$-cycle} in $\Delta_S$ is an embedded $4$-cycle in the $1$-skeleton $\Delta_S^1$ whose vertex types alternate as $\hat s\hat t\hat s\hat t$ for some $s,t\in S$. %For an induced subdiagram $\Lambda'\subset\Lambda$, such a special $4$-cycle is said to be \emph{supported on $\Lambda'$} if $s,t\in\Lambda'$. A special $4$-cycle has an \emph{center}, if its vertices are adjacent to a common vertex in $\Delta_S$.
\end{definition}

Let $\Lambda$ be a Coxeter diagram in Definition~\ref{def:special}. A \emph{$B_n$-like} $(n\ge 2)$ subdiagram $\Lambda'$ of $\Lambda$ is a linear subdiagram with one terminal edge $e$ of $\Lambda'$ having label $\ge 4$. If $n\ge 3$, then the \emph{base vertex} of $\Lambda'$ is the terminal vertex of $\Lambda'$ not contained in $e$. If $n=2$, then \emph{base vertex} of $\Lambda'$ is chosen to be any vertex of $\Lambda'$.
For $n\ge 3$, a \emph{$D_n$-subdiagram} of $\Lambda$ is a subdiagram $\Lambda'$ isomorphic to the Coxeter diagram of type $D_n$. If $n\ge 5$, the \emph{base vertex} of $\Lambda'$ is the leaf vertex of $\Lambda'$ furthest away from the valence 3 vertex of $\Lambda'$. If $n=4$, the base vertex of $\Lambda'$ is chosen to be any leaf vertex of $\Lambda'$. If $n=3$, the base vertex of $\Lambda'$ is the only interior vertex of $\Lambda'$.

\begin{definition}
	\label{def:special 6 cycles}
	Let $\Lambda$ be a Coxeter diagram in Definition~\ref{def:special}.
	A \emph{special 6-cycle} in the Artin complex $\Delta_\Lambda$ is an embedded 6-cycle with vertex type $\hat s\hat t_1\hat s\hat t_2\hat s\hat t_3$ such that $s,t_1,t_2,t_3$ are vertices of a subdiagram $\Lambda'\subset\Lambda$
	such that
	\begin{enumerate}
		\item  $\Lambda'$ is either a $D_n$-subdiagram ($n\ge 3$) or a $B_n$-like subdiagram ($n\ge 2$);
		\item $s$ is the base vertex of $\Lambda'$, and $\{t_i\}_{i=1}^3$ are non-base leaf vertices of $\Lambda'$ (it is possible that $t_i=t_j$).
	\end{enumerate}
	Such special 6-cycle $\omega$ \emph{has a quasi-center}, if the three vertices of $\omega$ corresponding to the base vertex of $\Lambda'$ are adjacent to a common vertex in $\Delta_\Lambda$.
\end{definition}

The study of these 4-cycles and 6-cycles and their close relatives already appears in previous works \cite{elder2002curvature,charney2004deligne,brady2010braids,haettel20166,weaklymodular,hirai2020uniform,haettel2021lattices,haettel2022link} in some special cases, usually in connection to non-positive curvature aspects of polyhedral complexes. 

\begin{thm}
	\label{thm:reduction single}
	Let $\Lambda$ be a tree Coxeter diagrams such that the following holds for any induced subdiagram $\Lambda'\subset \Lambda$ in families $F_{r,s}$, $H_{r,s}$, and $E_{r,s,t}$:
	\begin{enumerate}
		\item the Artin group $A_{\Lambda'}$ satisfies the $K(\pi,1)$-conjecture;
		\item any special 4-cycles in the Artin complex $\Delta_{\Lambda'}$ has a center, and any special 6-cycle in $\Delta_{\Lambda'}$ has a quasi-center.
	\end{enumerate}
	The $A_\Lambda$ satisfies the $K(\pi,1)$-conjecture.
\end{thm}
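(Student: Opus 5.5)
We argue by induction on the number of vertices of the tree $\Lambda$, and we aim to prove a stronger statement that is stable under induction: for every induced subdiagram $\Lambda''\subset\Lambda$ (again a forest, hence a disjoint union of trees), $A_{\Lambda''}$ satisfies the $K(\pi,1)$-conjecture, and the Artin complex $\Delta_{\Lambda''}$ satisfies suitable center and quasi-center properties for special $4$- and $6$-cycles. If $\Lambda$ is itself one of the families $F_{r,s}$, $H_{r,s}$, $E_{r,s,t}$, or is spherical, or is a proper subdiagram of one of them, the hypotheses give the conclusion directly. Otherwise $\Lambda$ contains a configuration forcing a large ``tower'': either a vertex of valence $\ge 4$, two branch vertices, a branch vertex together with an edge of label $\ge 4$, or two edges with labels $\ge4$. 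In each case we choose a vertex $s$ (a leaf, or a vertex near the offending configuration) and consider the standard criterion, due to Godelle--Paris and Ellis--Sk\"oldberg, that $A_\Lambda$ satisfies $K(\pi,1)$ if each $A_{\Lambda\setminus\{s\}}$ does and the relevant complex built from cosets of the parabolic subgroups is contractible. We verify contractibility by constructing an injective (Helly-type) metric, following the outline in the abstract.

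Concretely, fix a linear (or tripod) subdiagram $\Lambda_0\subset\Lambda$ containing the problematic part. Using the induction hypothesis on the subdiagrams $\Lambda\setminus\{v\}$, we form the relative Artin complex of $A_\Lambda$ with respect to the vertices of $\Lambda_0$, and from it an orthoscheme complex $X$ whose vertices are ordered by the linear order on the types in $\Lambda_0$. The central step is to show that the vertex set of $X$, with the induced poset structure, is a bi-Helly graph, equivalently that $X$ is injective. By the local-to-global results for bi-Helly graphs of Hoda--Munro, it suffices to check a local lattice/flagness condition in the links. These links are Artin complexes of smaller subdiagrams, and the local condition there amounts exactly to the existence of centers for special $4$-cycles and quasi-centers for special $6$-cycles. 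For subdiagrams of types $F$, $H$, $E$ this is hypothesis (2). For other subdiagrams we obtain it inductively by passing up and down the tower: an injective complex at one level yields, via combinatorial convexity of the subcomplexes fixed by parabolic subgroups, the cycle conditions needed at the next level.

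Once $X$ is injective, it is contractible. Moreover, the action of $A_\Lambda$ on $X$ has vertex stabilizers that are conjugates of proper parabolic subgroups, and these satisfy $K(\pi,1)$ by induction. A standard complex-of-groups argument, via Theorem~\ref{thm:cycle 1'}-style reasoning as in \cite{huangbestvina}, then gives $K(\pi,1)$ for $A_\Lambda$. The combinatorial convexity is needed in order to show that fixed-point sets and intersections of the relevant subcomplexes are contractible.

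The main obstacle is the propagation of the cycle conditions. From injectivity at one level we must deduce centers of $4$-cycles and quasi-centers of $6$-cycles in Artin complexes of larger diagrams that are not of types $F$, $H$ or $E$. Here I would first try a Bestvina-type inequality: a minimal $6$-cycle bounds a disk in the orthoscheme complex, and comparing combinatorial distances to convex hulls of the base-type vertices produces the common neighbor. I expect this to be the technically hardest part, especially for the $D_n$-subdiagram cases.
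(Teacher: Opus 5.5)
\textbf{There is a genuine gap: the propagation step is only announced, and you never fix what is being propagated.} Your outline follows the architecture sketched in the paper's introduction, but the step that carries all the weight, propagating the link conditions up the tower, is left as ``I would first try a Bestvina-type inequality''. The invariant (``suitable center and quasi-center properties'') is never specified, and this choice is not cosmetic. It must be implied at level $0$ by hypothesis (2), it must give Haettel's link conditions for some $\wtC$-core, and it must be re-provable one level up. Asking for centers of all special $4$-cycles in all tree diagrams is more than this method delivers; the paper explicitly does not obtain that in general.

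The invariant that works is atomic BD-robustness (Definition~\ref{def:good}), and atomicity is essential. It is what lets Lemma~\ref{lem:robust C core} find, for each atomic $\Lambda_0$, a \emph{robust} core $\Lambda'$, meaning one where each $\Lambda'\setminus\{t\}$ governing a link is atomic in $\Lambda\setminus\{t\}$. It also arranges that every vertex of $\Lambda_0\setminus\Lambda'$ is $\Lambda'$-extremal.

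The remaining ingredients of Theorem~\ref{thm:BD robust prop} are also missing from your plan:
\begin{itemize}
\item normal-form paths in $\bar\Delta_{\Lambda,\Lambda'}$ whose normality is detectable in vertex links (this, not injectivity, is what the Hoda--Munro theory supplies);
\item convexity of the subcomplex $\bar X$ spanned by the neighbours of a vertex of $\Lambda'$-extremal type, which is itself a link statement (Lemmas~\ref{lem:5cycle} and~\ref{lem:convex});
\item the Bestvina inequality (Proposition~\ref{prop:bestvina});
\item a minimal-perimeter triangle of extremal vertices $u_i\in\bar X_i\cap\bar X_{i+1}$, forced to degenerate via Proposition~\ref{prop:bestvina} and Lemma~\ref{lem:strip} (Proposition~\ref{prop:intersection}), which yields the quasi-center.
\end{itemize}
The bowtie part is handled by a path in $\bar Y_1\cap\bar Y_3$ (Corollary~\ref{cor:robust0}). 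Note that these convex sets are neighbourhoods of vertices outside the core, not fixed sets of parabolic subgroups, and convexity plays no role in contractibility.

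\textbf{Several other steps are wrong as stated.}
\begin{itemize}
\item \emph{Role of bi-Helly graphs.} Injectivity of $X$ is not ``equivalent'' to bi-Helliness, and Hoda--Munro give no local-to-global criterion for it. Contractibility comes from Haettel's link criterion (Theorem~\ref{thm:contractibleII}) together with simple connectivity (Lemma~\ref{lem:sc}). The bi-Helliness of the extremal-vertex graph (Proposition~\ref{prop:isometric}) is deduced \emph{from} Haettel's Helly thickening.
\item \emph{Shape of the core.} For $\widetilde B$- and $\widetilde D$-like cores the types are not linearly ordered, so one must subdivide. Also, a $\widetilde D_n$-like core is not a tripod.
\item \emph{Missing base case.} A linear diagram with exactly one edge labelled $\ge 6$ is $\wtC$-elementary but not of type $F$, $H$ or $E$. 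It contains none of your ``offending configurations'' and cannot be reached by the tower. Yet its BD-robustness is needed as input for propagation. The paper treats it separately: $K(\pi,1)$ comes from \cite{huang2023labeled}, and flagness from a CAT$(0)$ metric on a $2$-dimensional relative Artin complex.
\item \emph{Final step.} Theorem~\ref{thm:kpi1} requires contractibility of the full $\Delta_\Lambda$, not of $\Delta_{\Lambda,\Lambda_0}$. The paper bridges this with the deformation retraction of Lemma~\ref{lem:dr}, which is why its induction carries contractibility of $\Delta_{\Lambda''}$ for every non-spherical induced $\Lambda''$. Your complex-of-groups alternative can be made to work, but it must be stated. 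Since $\Lambda_0$ is non-spherical, every spherical cell avoids some $s\in\Lambda_0$. Hence the translates of the universal covers of the Salvetti complexes of $A_{\Lambda\setminus\{s\}}$, $s\in\Lambda_0$, cover the universal cover for $A_\Lambda$, with nerve $\Delta_{\Lambda,\Lambda_0}$. Theorem~\ref{thm:cycle 1'} has nothing to do with this.
\end{itemize}
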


This theorem actually holds under weaker assumption - it is not necessary to consider all special 4-cycles in $\Delta_{\Lambda'}$, see Theorem~\ref{thmb:reduction single}.

\begin{cor}
	\label{thm:reduction all}
	Suppose that for any Coxeter diagram $\Lambda$ belongs to the families $F_{r,s}$, $H_{r,s}$, and $E_{r,s,t}$ in Definition~\ref{def:special}, the following holds true:
	\begin{enumerate}
		\item the Artin group $A_\Lambda$ satisfies the $K(\pi,1)$-conjecture;
		\item any special 4-cycles in the Artin complex $\Delta_\Lambda$ has a center, and any special 6-cycle in $\Delta_\Lambda$ has a quasi-center.
	\end{enumerate}
	The any Artin group with tree Coxeter diagrams satisfies $K(\pi,1)$-conjecture.
\end{cor}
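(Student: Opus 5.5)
This corollary should follow directly from Theorem~\ref{thm:reduction single}. Let $\Lambda$ be an arbitrary tree Coxeter diagram. We need to verify the hypotheses of Theorem~\ref{thm:reduction single} for $\Lambda$. Those hypotheses concern every induced subdiagram $\Lambda'\subset\Lambda$ that belongs to one of the families $F_{r,s}$, $H_{r,s}$, $E_{r,s,t}$.

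Any such $\Lambda'$ is itself a Coxeter diagram in one of these families. Note that when we speak of $\Lambda'$ as a Coxeter diagram in its own right, its edge labels are inherited from $\Lambda$, since $\Lambda'$ is induced. Hence the hypothesis of the corollary applies to $\Lambda'$. It gives both that $A_{\Lambda'}$ satisfies the $K(\pi,1)$-conjecture and that every special $4$-cycle in $\Delta_{\Lambda'}$ has a center while every special $6$-cycle has a quasi-center.

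The notions of special $4$- and $6$-cycle, and of base and leaf vertices of $D_n$- and $B_n$-like subdiagrams, are intrinsic to the diagram $\Lambda'$. Therefore conditions (1) and (2) of Theorem~\ref{thm:reduction single} hold verbatim for $\Lambda'$. Theorem~\ref{thm:reduction single} then yields that $A_\Lambda$ satisfies the $K(\pi,1)$-conjecture. Since $\Lambda$ was an arbitrary tree, the corollary follows.

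There is essentially no obstacle here. The only point to check is a bookkeeping one: the families in Definition~\ref{def:special} must be read as closed under the identification of an induced subdiagram with an abstract diagram. This is also why degenerate members, such as $E_{r,s,0}$ or $H_{r,0}$, cause no issue: they are either in the families themselves or simply not required.
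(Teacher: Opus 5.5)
Your proposal is correct and matches the paper: the corollary is an immediate specialization of Theorem~\ref{thm:reduction single} (whose proof is given as Theorem~\ref{thmb:reduction single}). Every induced subdiagram of a tree that lies in one of the families $F_{r,s}$, $H_{r,s}$, $E_{r,s,t}$ is itself a diagram in that family, so the corollary's global hypothesis supplies conditions (1) and (2) for it.
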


%The following variation is more suitable for studying a specific Artin group.

As a byproduct, we produce actions of these Artin groups on metric spaces with features of non-positive curvature, which might be of independent interests.

\begin{thm}[Theorem~\ref{thmb:npc}]
	\label{thm:npc}
	Under the assumption of Theorem~\ref{thm:reduction single}, if $\Lambda$ does not belong to the families in Definition~\ref{def:special}, then $A_\Lambda$ acts cocompactly on an injective metric space or a CAT$(0)$ space such that all the point stabilizers are isomorphic to proper standard parabolic subgroups of $A_\Lambda$. 
\end{thm}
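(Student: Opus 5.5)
We need to prove Theorem~\ref{thm:npc}: under the hypotheses of Theorem~\ref{thm:reduction single}, if $\Lambda$ is a tree diagram not in the special families, then $A_\Lambda$ acts cocompactly on an injective or CAT$(0)$ space. The stabilizers should be proper standard parabolics.

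The plan is to choose a suitable vertex $s\in\Lambda$ and let $A_\Lambda$ act on a complex built from the cosets $gA_{\Lambda\setminus\{s\}}$. This is the ``tower'' construction alluded to in the abstract. First I use the classification of trees. If $\Lambda$ is not of type $F_{r,s}$, $H_{r,s}$ or $E_{r,s,t}$, then one of three things holds:
\begin{enumerate}
\item $\Lambda$ has an edge label $\ge 6$ or $\infty$;
\item $\Lambda$ has at least two edges with label $\ge4$;
\item $\Lambda$ has a vertex of valence $\ge4$, or two branch vertices, or a branch vertex together with an edge label $\ge4$.
\end{enumerate}
(A diagram that is linear with all labels $3$ is type $A$, which is handled directly by the braid case.) In each case I pick a vertex $s$ or a pair of vertices that separates $\Lambda$ into pieces. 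Each piece, together with $s$, is a smaller tree, and all of its relevant subdiagrams lie in the three special families or are already covered.

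Next, I build the space. Consider the relative Artin complex with respect to two vertex types $\hat s,\hat t$, or the appropriate subdivision of $\Delta_\Lambda$ into an orthoscheme complex. By induction on $|\Lambda|$, the vertex links are Artin complexes or Deligne-type complexes of proper subdiagrams. I then need the local condition: the link of each vertex is injective, i.e.\ its $1$-skeleton is bi-Helly. This is where hypothesis (2) is used. By the structural results on bi-Helly graphs (Hoda--Munro), a suitable graph is bi-Helly provided special $4$-cycles have centers and special $6$-cycles have quasi-centers in the subdiagrams of type $F,H,E$. The local-to-global theorem for injective orthoscheme complexes then gives injectivity of the universal cover. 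Hypothesis (1), combined with the inductive $K(\pi,1)$ for proper parabolics, makes the complex simply connected; that is, the cover is the complex itself. In the case where a label is $\ge6$ or the diagram splits along an edge, I instead take the Bass--Serre-like tree of spaces and obtain a CAT$(0)$ metric, for example a metric tree or a product.

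Cocompactness is immediate, since there are finitely many orbits of cosets of standard parabolics. The stabilizer of a vertex $gA_{\Lambda\setminus\{s\}}$ is its conjugate $gA_{\Lambda\setminus\{s\}}g^{-1}$, which is isomorphic to a proper standard parabolic subgroup. For cells that are not vertices, and for points in their interiors, the stabilizer is an intersection of such parabolics. I would check that it equals a standard parabolic of the intersection type, using the result that intersections of cosets along a simplex correspond to nonempty coset intersections. Since points are fixed only if their carrier cell is fixed, this finishes the stabilizer claim.

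The main obstacle is the local step: verifying that each vertex link is bi-Helly, i.e.\ establishing the Bestvina-type inequality and combinatorial convexity. This requires reducing arbitrary cycles in links to special $4$- and $6$-cycles supported on $D_n$- or $B_n$-like subdiagrams. I would first try to handle these through the convexity of sub-residues, projecting cycles onto parabolic subcomplexes of type $\Lambda'$.
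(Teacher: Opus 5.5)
Your proposal has a genuine gap at exactly the step you call the main obstacle, and as set up the induction cannot close. In the paper the space is the subdivided relative Artin complex $\bar\Delta_{\Lambda,\Lambda'}$, where $\Lambda'$ is a \emph{robust $\wtC$-core} (Lemma~\ref{lem:robust C core}). Haettel's criterion (Theorem~\ref{thm:contractibleII}) then requires, at every vertex, that the upper and lower links are bowtie free and upward/downward flag posets. The link of a vertex of type $\hat s$ is $\Delta_{\Lambda\setminus\{s\},\Lambda'\setminus\{s\}}$, and $\Lambda\setminus\{s\}$ is typically not in the families $F_{r,s},H_{r,s},E_{r,s,t}$; it may itself contain $\wtC$-cores. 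Hypothesis (2) gives the needed link condition (atomic BD-robustness) only for $\wtC$-elementary diagrams. An inductive statement of the form ``proper subdiagrams act on injective spaces'' does not give you the combinatorial link condition for $\Lambda$. The missing ingredient is the propagation theorem, Theorem~\ref{thm:BD robust prop}: if every proper subdiagram is atomic BD-robust and $\Lambda$ contains a $\wtC$-core, then $\Lambda$ is atomic BD-robust. Its proof is the heart of the paper. For pairwise upper bounded $p_1,p_2,p_3$ of a common $\Lambda'$-extremal type, the subcomplexes $X_i\subset\Delta_{\Lambda,\Lambda'}$ of vertices adjacent to $p_i$ are shown to be convex for normal form paths (Corollary~\ref{cor:convex}). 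A perimeter-minimizing triangle with corners in the $X_i\cap X_j$ is then forced to degenerate, using the Bestvina-type inequality (Proposition~\ref{prop:bestvina}, Proposition~\ref{prop:intersection}). ``Projecting cycles onto parabolic subcomplexes'' does not supply this.

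Several other steps are misidentified.
\begin{itemize}
\item Bi-Helly graphs are not the link condition. They arise as the subgraph of extremal vertices of an already $\wtC$-like complex (Proposition~\ref{prop:isometric}), and are used to define normal forms and convexity.
\item A relative complex on two types is only a graph, and $\Delta_\Lambda$ itself carries no orthoscheme structure. The $\ell^\infty$ metric comes from identifying the fundamental domain, through the core, with that of $\widetilde C_n$ (or a subdivided $\widetilde B_n$ or $\widetilde D_n$).
\item Simple connectivity holds unconditionally once $|S'|\ge 3$ (Lemma~\ref{lem:sc}); hypothesis (1) is not needed for it.
\item A finite label $\ge 6$ gives no splitting of $A_\Lambda$ over standard parabolic subgroups (only $\infty$ labels do), so there is no Bass--Serre tree. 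The paper instead metrizes the $2$-dimensional complex $\Delta_{\Lambda,s_{n-2}s_{n-1}s_n}$ by Euclidean triangles with angles $\pi/2,\pi/6,\pi/3$ and proves it is CAT$(0)$.
\end{itemize}
Your stabilizer argument is fine once the right complex is used.
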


These injective metric spaces or CAT$(0)$ spaces arise from metrizing relative Artin complexes, introduced in \cite{huang2023labeled}.

\begin{remark}
	The assumptions for special 4-cycles and 6-cycles can be reformulated in terms of requiring certain posets on vertices of relative Artin groups are bowtie free and upward flag, see Definition~\ref{def:good} and Theorem~\ref{thm:tripod reduction}.
\end{remark}

Corollary~\ref{thm:reduction all} is not readily to be combined with Theorem~\ref{thm:cycle 1'} unless we can add to the conclusion of Corollary~\ref{thm:reduction all} that any special 4-cycle in $\Delta_{\Lambda}$ has a center whenever $\Lambda$ is tree. We will discuss a special class of Artin groups where this can be done in the next section, based on a somewhat ad hoc method. However, the combination of these two theorems in full generality needs a different idea and a different set of conditions, it is treated in an independent article \cite{tripod}.

\subsection{Applications}
The $K(\pi,1)$ has been established for several important classes of Artin groups, notably spherical, type FC, affine, 3-dimensional etc \cite{deligne,CharneyDavis,charney2004deligne,callegaro2010k,paolini2021proof,juhasz2023class,goldman2022k,haettel2023new,huang2023labeled,huang2024,goldman2025deligne,huang2025353}. Nevertheless, the understanding of the conjecture remains limited in dimensions $\ge 4$. Our reduction theorems could be a useful tool to systematically study these higher dimension cases. As an example, we explain how Theorem~\ref{thm:ABI} follows from these reduction theorems.
An Artin group is of \emph{type $ABI$} if it satisfies the assumption of Theorem~\ref{thm:ABI}. A Coxeter diagram is of \emph{type $ABI$} if the associated Artin group is of type $ABI$. In such case, one can strengthen the conclusion of Theorem~\ref{thm:reduction single}:

\begin{prop}[Corollary~\ref{cor:ABI bowtie free}]
	\label{prop:stronger}
	Let $\Lambda$ be a tree Coxeter diagrams of type $ABI$ with edge label $\le 5$ such that the two assumptions of Theorem~\ref{thm:reduction single} hold for any induced subdiagram $\Lambda'\subset \Lambda$ in families $F_{r,s}$, $H_{r,s}$, and $E_{r,s,t}$.
	Then $A_\Lambda$ satisfies the $K(\pi,1)$-conjecture and any special 4-cycle in $\Delta_\Lambda$ has a center.
\end{prop}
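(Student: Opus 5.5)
The plan is to induct on the number of vertices of $\Lambda$. The $K(\pi,1)$ part is already Theorem~\ref{thm:reduction single}, since its hypotheses are exactly the hypotheses here. So the new content is that every special $4$-cycle in $\Delta_\Lambda$ has a center. Inductively we may assume this holds for every proper induced subdiagram $\Lambda''\subsetneq\Lambda$. These are still trees of type $ABI$ with labels $\le 5$, and they inherit the hypotheses on $F$, $H$, $E$ subdiagrams.

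Fix a special $4$-cycle $\omega$ of type $\hat s\hat t\hat s\hat t$. If $s$ and $t$ lie in a proper induced subdiagram $\Lambda''$ such that the vertices of $\omega$ all lie in a common residue isomorphic to $\Delta_{\Lambda''}$, we reduce to the inductive hypothesis. In general this is not automatic, so instead I would use the geometric model behind Theorem~\ref{thm:npc}. Choose a vertex $r\in\Lambda$ so that $\Lambda$ splits as a relative Artin complex $\Delta_{S,S'}$ built from a large spherical or tripod-type piece. The $ABI$ condition forces every $D_n$-subdiagram and $E$-type subdiagram with $n\ge 4$ to be absent. Hence the only special families inside $\Lambda$ are $B_n$-like pieces, i.e.\ $F_{r,s}$ with $\min(r,s)=0$ (type $B$), $H_{r,s}$ with small arms, and $E_{r,s,1}$ with $r$ or $s$ equal to $0$ (type $A$). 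For these, the posets of vertices in the relative Artin complexes are bowtie free and upward flag (Definition~\ref{def:good}, Theorem~\ref{thm:tripod reduction}). This gives a Helly or bi-Helly structure on the relevant vertex sets.

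The key step is to convert this into centers. A bowtie-free poset means that two elements with two common upper bounds have a join (or meet) below them. Apply this to the $\hat s$-vertices $x_1,x_3$ of $\omega$, which have the two common neighbours $x_2,x_4$ of type $\hat t$, viewed in the appropriate poset ordering induced by the linear type order along the path from $s$ to $t$ in $\Lambda$. This produces a vertex $z$ adjacent to all of $x_1,\dots,x_4$, which is a center. To make the types comparable, I would first project $\omega$ to the relative Artin complex associated to the geodesic segment $[s,t]\subset\Lambda$. Since $\Lambda$ is a tree of type $ABI$, this segment is linear of type $A$, $B$ or $H$-like. Bowtie-freeness for linear diagrams then follows from the hypothesis on $F_{r,s}$ and $H_{r,s}$ special 6-cycles with quasi-centers, together with the known orthoscheme/Garside lattice property for types $A$ and $B$ (Haettel, Crisp--McCammond).

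The main obstacle is the projection step: showing that a $4$-cycle in $\Delta_\Lambda$ can be pushed into the relative Artin complex of the path $[s,t]$ without losing embeddedness, and that a center found there lifts back to a center in $\Delta_\Lambda$. I would try first to use the convexity of such sub-relative-complexes inside the injective orthoscheme complex, and the Bestvina-type inequality, both advertised in the abstract. Then I would argue by descending along the branches of $\Lambda$ hanging off $[s,t]$, one leaf at a time, using the inductive hypothesis on the smaller diagrams at each removal.
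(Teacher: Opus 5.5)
The $K(\pi,1)$ half via Theorem~\ref{thm:reduction single} is fine, but the center half has a genuine gap: your key step is circular. A special $4$-cycle of type $\hat s\hat t\hat s\hat t$ already lies in $\Delta_{\Lambda,[s,t]}$, since that complex is by definition the full subcomplex of $\Delta_\Lambda$ on vertices of types $\hat r$ with $r\in[s,t]$. So nothing needs to be projected or lifted, and the ``main obstacle'' you isolate is vacuous.

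What you actually need is that the poset on $\Delta_{\Lambda,[s,t]}$ is bowtie free. By Corollary~\ref{cor:bowtie free criterion1} and Lemma~\ref{lem:4wheel}, this is essentially equivalent to the statement being proved: special $4$-cycles have centers whose type lies in the segment. It does not follow from the hypotheses. Those hypotheses concern the complexes $\Delta_{\Lambda''}$ of subdiagrams $\Lambda''$ in the special families. The Crisp--McCammond and Haettel results concern spherical Artin complexes of types $A$ and $B$. By contrast, $\Delta_{\Lambda,[s,t]}$ is built from cosets in the whole, typically non-spherical, group $A_\Lambda$. Moreover $[s,t]$ need not be spherical, since a segment with labels $4,3,\dots,3,4$ is allowed in type $ABI$. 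The leaf-by-leaf descent fails for the same reason: a $4$-cycle of $\Delta_\Lambda$ sits in a copy of $\Delta_{\Lambda\setminus\{r\}}$ only if it lies in the link of a vertex of type $\hat r$, and that vertex is exactly the center you have not yet found.

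What is missing is a local-to-global mechanism, which the paper supplies as follows. It proves by induction on the number of vertices the stronger package ``$\Delta_\Lambda$ satisfies the labeled 4-cycle condition and $\Lambda$ is $B$-robust'' (Corollary~\ref{cor:ABI bowtie free}). Carrying upward flagness along is necessary: it is what makes the cores used below $\widetilde C$-like, and it is an input to Proposition~\ref{prop:intersection2}.

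The $\widetilde C$-elementary $ABI$ trees with labels $\le 5$ are exactly $A_n$, $B_n$ and $I_2(5)$, which the known spherical results (Theorems~\ref{thm:4 wheel} and~\ref{thm:triple}) handle. Otherwise the paper argues as follows:
\begin{enumerate}
\item Pick a robust $\widetilde C$-core $\Lambda'$, either $\widetilde C_2$-like or $\widetilde B_3$-like via Lemma~\ref{lem:widetilde b3 core}, so that $\bar\Delta_{\Lambda,\Lambda'}$ is an injective orthoscheme complex.
\item For an embedded $4$-cycle $z_1z_2z_3z_4$, let $Z_i$ be the subcomplex of $\Delta_{\Lambda,\Lambda'}$ spanned by the neighbours of $z_i$. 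These are convex for normal form paths (Corollary~\ref{cor:convex}).
\item Minimize the perimeter of quadruples of extremal vertices $u_i\in Z_i\cap Z_{i+1}$.
\item Use the Bestvina-type inequality (Proposition~\ref{prop:bestvina}) together with Lemma~\ref{lem:strip} to show that $Z_1\cap Z_3\neq\emptyset$ or $Z_2\cap Z_4\neq\emptyset$.
\item Bowtie-freeness of segments running into the core (via Corollary~\ref{cor:robust0}) then converts this into a center of the correct type; see Propositions~\ref{prop:labeled 4-cycle with C_2 subdiagram}, \ref{prop:labeled 4-cycle} and~\ref{prop:prop B-robustness}.
\end{enumerate}
Your last paragraph names convexity and the Bestvina inequality. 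Without the ambient core, the convex subcomplexes and the extremal argument, however, it gives no working step for the non-spherical case.
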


Now Proposition~\ref{prop:stronger} and Theorem~\ref{thm:cycle 1'} can be combined as follows.

\begin{cor}
	\label{cor:combine}
	Let $\Lambda$ be a Coxeter diagrams of type $ABI$ such that the two assumptions of Theorem~\ref{thm:reduction single} hold for any induced subdiagram $\Lambda'\subset \Lambda$ in families $F_{r,s}$, $H_{r,s}$, and $E_{r,s,t}$.
	Then $A_\Lambda$ satisfies the $K(\pi,1)$-conjecture.
\end{cor}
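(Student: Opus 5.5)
We derive Corollary~\ref{cor:combine} by feeding Proposition~\ref{prop:stronger} into Theorem~\ref{thm:cycle 1'}. Theorem~\ref{thm:cycle 1'} is stated for all Artin groups, so we first localize it. The argument in \cite{huangbestvina} is hereditary in the diagram. To prove the $K(\pi,1)$-conjecture for a given $A_\Lambda$, one only uses the $K(\pi,1)$-conjecture and the centers of special $4$-cycles for $A_{\Lambda'}$ with $\Lambda'$ running over induced tree subdiagrams of $\Lambda$ with labels $\le 5$. So the first step is to record this relative form: if every induced tree subdiagram $\Lambda'\subset\Lambda$ with edge labels $\le 5$ satisfies the $K(\pi,1)$-conjecture and has centers for all special $4$-cycles in $\Delta_{\Lambda'}$, then $A_\Lambda$ satisfies the $K(\pi,1)$-conjecture. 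If the cited statement is only global, we rerun its proof and check that only such subdiagrams enter, which is what the reduction via relative Artin complexes does.

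Second, we check that the hypotheses of Proposition~\ref{prop:stronger} hold for each such $\Lambda'$. The type $ABI$ condition passes to induced subdiagrams, since a spherical parabolic subgroup of $A_{\Lambda'}$ is also a spherical parabolic subgroup of $A_\Lambda$. By assumption $\Lambda'$ is a tree with labels $\le 5$. Any induced subdiagram $\Lambda''\subset\Lambda'$ in the families $F_{r,s}$, $H_{r,s}$, $E_{r,s,t}$ is also an induced subdiagram of $\Lambda$. Hence assumptions (1) and (2) of Theorem~\ref{thm:reduction single} hold for it by the hypothesis of the corollary. Proposition~\ref{prop:stronger} then shows that $A_{\Lambda'}$ satisfies the $K(\pi,1)$-conjecture and that every special $4$-cycle in $\Delta_{\Lambda'}$ has a center.

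Third, the relative form of Theorem~\ref{thm:cycle 1'} gives the conclusion for $A_\Lambda$. Induced subdiagrams with an edge labeled $\infty$ should cause no trouble: they are not trees in the sense of Theorem~\ref{thm:cycle 1'}, and the standard reduction for such diagrams (amalgamation along parabolics, e.g.\ Ellis--Sköldberg) already covers them.

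The main obstacle is the first step: confirming that the proof of Theorem~\ref{thm:cycle 1'} is local to subdiagrams of $\Lambda$ and uses no diagrams outside it. I would first inspect the induction in \cite{huangbestvina}. It proceeds by removing vertices and uses relative Artin complexes whose vertex links are governed by parabolic subdiagrams, which strongly suggests the needed locality.
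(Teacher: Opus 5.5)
Your proposal is correct and follows the paper's route. The paper combines Proposition~\ref{prop:stronger} with the relative form of Theorem~\ref{thm:cycle 1'}, which it cites directly as \cite[Thm 1.5]{huangbestvina}; that is exactly your first step, so you do not need to rerun its proof, and the heredity checks you do for type $ABI$ and the family hypotheses on induced tree subdiagrams with labels $\le 5$ are the right ones. Your aside about $\infty$-labels and Ellis--Sk\"oldberg is unnecessary, since the reduction from arbitrary labels to trees with labels $\le 5$ is already built into that theorem.
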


Note that $\Lambda'$ in Corollary~\ref{cor:combine} must be of type $A_n$ or $B_n$. As the $K(\pi,1)$-conjecture for Artin groups of type $A_n$ and $B_n$ are already known \cite{fox1962braid,brieskorn2006groupes}, and checking the conditions on special cycles in the $B_n$ case can be reduced to the $A_n$ case (\cite[Prop 6.6]{haettel2021lattices} and \cite[Lem 12.2]{huang2024}), so Theorem~\ref{thm:ABI} reduces to proving the following.

\begin{thm}
	\label{thm:braid}
	Suppose $A_\Lambda$ is an Artin group of type $A_n$ (i.e. the braid group with $n+1$ strands). Then any special 4-cycle in $\Delta_{\Lambda}$ has a center, and any embedded 6-cycle in $\Delta_\Lambda$ of type $\hat s\hat t\hat s\hat t\hat s\hat t$ with $s,t$ not commuting has a center.
\end{thm}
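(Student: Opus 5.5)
We would prove Theorem~\ref{thm:braid} by combining known structural results on braid groups rather than by a direct combinatorial analysis. The model for the Artin complex $\Delta_\Lambda$ of type $A_n$ is the complex of non-crossing curves (or arcs) in an $(n+1)$-punctured disk. In this model a vertex of type $\hat s$ corresponds to a round curve enclosing a prescribed number $k$ of consecutive punctures. Equivalently, in Garside language, it is a vertex of a Bestvina/Brady-type complex; this is the point of view of Crisp--McCammond and Haettel. Adjacency of vertices of types $\hat s,\hat t$ corresponds to the two curves being disjoint or nested, which is the condition for the associated parabolic cosets to intersect.

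The first step treats special $4$-cycles. We would pass to the poset of vertices of types $\hat s,\hat t$ ordered via this nesting, in the form of the bowtie-freeness already established for braid groups. Concretely, we would invoke the result that the (dual) Garside/orthoscheme structure on the braid group yields a Helly or injective model; this is Haettel's and the second author's results that the relevant lattices of type $A_n$ satisfy the required lattice property (\cite[Prop 6.6]{haettel2021lattices} and \cite[Lem 12.2]{huang2024}). A special $4$-cycle $x_1y_1x_2y_2$ gives two elements $x_1,x_2$ that are both ``below'' $y_1,y_2$. Bowtie-freeness then produces an element $z$ that lies between them. This $z$ corresponds to a curve of some type compatible with all four curves, and we take it as the center. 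Here we must check that the meet/join obtained in the lattice actually corresponds to a vertex of $\Delta_\Lambda$ of some type $\hat u$, rather than to a multicurve.

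The second step, the $6$-cycles of type $\hat s\hat t\hat s\hat t\hat s\hat t$ with $s,t$ adjacent, is where we expect the main obstacle. Bowtie-freeness is a $4$-cycle statement, and a hexagon need not reduce to squares. We would first try to use the Charney--Davis result that the spherical Deligne complex of the $4$-strand braid group is CAT$(1)$ \cite{charney2004deligne}. The plan has three parts:
\begin{enumerate}
\item Reduce to the link of a rank-$3$ situation: since $s,t$ are adjacent, the three $\hat s$-vertices and three $\hat t$-vertices are curves with $k$ and $k\pm1$ punctures. The six curves pairwise differ by adding or removing one puncture.
\item Argue via a systole/girth bound coming from CAT$(1)$ (or from Crisp--McCammond's analysis of $6$-cycles in the relevant orthoscheme links) that an embedded hexagon of length $6\cdot(\pi/3)=2\pi$ must bound a configuration with a center.
\item Concretely, take the union or intersection of the enclosed disks of the $\hat s$-curves; after an isotopy, using the adjacency data, this gives a round curve of size $k\pm1$ or $k\pm2$ adjacent to all six vertices.
\end{enumerate}
If the CAT$(1)$ route only covers small ranks, the fallback is to verify the hexagon condition by induction on $n$. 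In that approach we would cut the disk along a curve disjoint from all six curves, whose existence follows from the $4$-cycle centers of the first step, and then apply the inductive hypothesis to the braid group of smaller rank.
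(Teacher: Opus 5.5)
\textbf{The 6-cycle part of your proposal has a genuine gap.} The paper does not prove this theorem internally. The $4$-cycle part is the Crisp--McCammond result recorded in \cite[Thm 5.10]{haettel2021lattices}, and the $6$-cycle part is quoted from \cite[Thm 5.6]{huang2024}. None of your three routes in Step~2 substitutes for that second input.

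\emph{CAT$(1)$ is not available.} CAT$(1)$ of the spherical Deligne complex is Conjecture~\ref{conj:braid}, which is open beyond four strands. Remark~\ref{rmk:46} says explicitly that Theorem~\ref{thm:braid} is much weaker and has to be obtained without it.

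\emph{CAT$(1)$ would not suffice anyway.} Even where CAT$(1)$ is known, ``a hexagon of length $2\pi$ has a center'' is false. For $n=2$ the Artin complex is a graph of girth $6$ with edge length $\pi/3$, hence CAT$(1)$. Yet its embedded hexagons, e.g.\ the cosets $\langle t\rangle,\langle s\rangle,s\langle t\rangle,st\langle s\rangle,ts\langle t\rangle,t\langle s\rangle$, have no vertex adjacent to all six. A center must be a vertex of a third type, and CAT$(1)$ alone does not produce one. (So the statement should be read with $n\ge 3$. Also, the Moussong length of an $\hat s_k\hat s_{k+1}$ edge equals $\pi/3$ only when $n=2$.)

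\emph{Your fallback is circular.} A curve, i.e.\ a vertex, compatible with all six vertices \emph{is} a center. So you cannot assume it exists in order to cut and induct, and ``reducing to a rank-$3$ link'' presupposes it in the same way. Square centers from Step~1 do not produce it. A chordless hexagon contains no $4$-cycle, and bowtie-freeness does not imply flagness: three minimal and three maximal elements arranged in a hexagon form a bowtie-free poset that is not upward flag. This is exactly why Definition~\ref{def:good} imposes the flag/$6$-cycle condition separately from bowtie-freeness (cf.\ Corollary~\ref{cor:upward flag criterion}).

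\emph{The disk-union step assumes the conclusion.} Taking ``the union or intersection of the enclosed disks'' presupposes that the three $\hat s$-curves, which in general pairwise intersect, can be made simultaneously nested or disjoint. That is the whole content of the claim.

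\textbf{Two further corrections.}

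In Step~1 the idea is sound: use bowtie-freeness of the poset on $\Delta_{\Lambda,\Lambda'}^0$ with $\Lambda'$ the segment from $s$ to $t$, and the resulting $z$ with $x_i\le z\le y_j$ is a center. Your citations, however, are wrong. \cite[Prop 6.6]{haettel2021lattices} is upward flagness in type $B_n$ (Theorem~\ref{thm:triple}), and \cite[Lem 12.2]{huang2024} is an upward-flag-to-weakly-flag argument; the paper uses both only to reduce $B_n$ to $A_n$. The relevant input is the Crisp--McCammond result. It also follows from Theorem~\ref{thm:4 wheel} or Theorem~\ref{thm:bowtie free}, since an embedded cycle of alternating type $\hat s\hat t\hat s\hat t$ is automatically induced. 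If you work with vertices of $\Delta_{\Lambda,\Lambda'}$ as the poset elements, your multicurve worry never arises.

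Your round-curve model is not $\Delta_\Lambda$. The central full twist fixes every isotopy class of curves but lies in no $A_{\hat s_k}$, because it links strands $i\le k$ with strands $j>k$. Hence it moves every vertex of $\Delta_\Lambda$, so the curve complex is a proper quotient, and a center found among curves need not lift. A faithful model needs curves tethered to the boundary, or arcs.
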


The statement about 4-cycles follows from an unpublished result of McCammond and Crisp (see \cite[Thm 5.10]{haettel2021lattices}), and the statement about 6-cycles is proved in \cite[Thm 5.6]{huang2024}. Thus Theorem~\ref{thm:ABI} follows.

\begin{remark}
	\label{rmk:46}
	The Artin complex of a braid group is isomorphic to the spherical Deligne complex of this braid group. It is shown in \cite{charney2004deligne} that Theorem~\ref{thm:braid} and Conjecture~\ref{conj:braid} are equivalent for braid groups with $\le 4$ strands (equivalently $\dim(\Delta_\Lambda)\le 2$). However, Theorem~\ref{thm:braid} is much weaker than Conjecture~\ref{conj:braid} in higher dimensions. Existing ways of proving a space is $CAT(1)$ rely on understanding the collection of loops of length $<2\pi$ in the space.  Even if we restrict our attention to the subclass of \emph{edge loops}, namely loops made of a sequence of edges, then as $\dim(\Delta_\Lambda)\to \infty$, the number of edges in an edge loop of $\Delta_\Lambda$ of length $<2\pi$ goes to $\infty$ via a direct computation (see \cite[Formula (13)]{vinberg1993volumes}). However, Theorem~\ref{thm:braid} only involves loops made of at most six edges in all dimensions. From this perspective, proving Conjecture~\ref{conj:braid} is much more involved than proving Theorem~\ref{thm:braid}.
\end{remark}

\subsection{Discussion of proof}
\label{subsec:proof sketch}
We only discuss the proof of Theorem~\ref{thm:reduction single}. Our goal is to reduce this theorem to a problem about triangulation of a 2-dimensional disk. 

\medskip
\noindent
\textbf{Overview}\ \ \ 
Recall that a geodesic metric space \(X\) is \emph{injective} if every family of pairwise intersecting closed metric balls has nonempty total intersection. The model example is \(\mathbb{R}^n\) endowed with the \(\ell^\infty\)-metric. Injective metric spaces are contractible, and thus provide canonical candidates for geometric models of classifying spaces.

Constructing isometric actions of Artin groups on injective metric spaces already plays a role in understanding $K(\pi,1)$-conjecture and other properties of certain Artin groups, see \cite{huang2021helly,haettel2021lattices,haettel2023new,huang2024,huang2025353}. One of the central aims of this article is to promote this viewpoint to a systematic inductive structure.

Starting from a single Artin group, we construct a \emph{tower of injective metric spaces}. More precisely, let \(A_S\) be an Artin group whose Coxeter diagram is a tree. We construct a cocompact isometric action of \(A_S\) on an injective metric space \(X_S\) with the following recursive feature: every point stabilizer is itself an Artin group with strictly fewer generators, and these stabilizers admit cocompact isometric actions on injective metric spaces whose stabilizers are smaller still. The iteration terminates precisely at the class of Artin groups specified in Definition~1.5, which form level \(0\) of the tower.

The principal new ingredient is a \emph{propagation theorem} asserting that injective actions ascend the tower: if every Artin group at level \(k\) admits a cocompact isometric action on an injective metric space, then so does every Artin group at level \(k+1\). This propagation mechanism is the key to construct such a tower. Once the tower is constructed, it is not hard to use the contractility of injective metric spaces to deduce the $K(\pi,1)$-conjecture, so we will focus on how to construct this tower.

\medskip
\noindent
\textbf{Piecewise $\ell^\infty$-metric on relative Artin complexes}\ \ \ 
An $n$-dimensional \emph{unit orthoscheme}  is the convex hull of
$$
v_0=(0,0,\ldots,0),v_1=(1,0,\ldots,0),v_2=(1,1,\ldots,0),\cdots,v_n=(1,1,\ldots,1)
$$
in $\mathbb R^n$. We endow the unit orthoscheme with the $\ell^\infty$-metric. A \emph{piecewise $\ell^\infty$ orthoscheme complex} is a simplicial complex obtained by gluing a collection unit orthoscheme with $\ell^\infty$-metric (possibly of different dimensions) isometrically along their faces. Such complex has a natural gluing metric, and it is an \emph{injective orthoscheme complex} if this metric gives an injective metric space. 

Let $W_S$ be a Coxeter group with generating set $S$. The \emph{Coxeter complex}, denoted by $\bC_S$, is defined in a similar way to the Artin complex, and a vertex of $\bC_S$ is of type $\hat s$ if it corresponds to a coset of form $g W_{S/\{s\}}$. The fundamental domain of the action $W_S\curvearrowright \bC_S$ is a top-dimensional simplex. The same holds true for the action $A_S\curvearrowright \Delta_S$. These two fundamental domains can be canonically identified, by looking at types of vertices. Given 
$S'\subset S$, the \emph{$(S,S')$-relative Artin complex}
$\Delta_{S,S'}$, is the induced
subcomplex of $\Delta_S$ spanned by vertices of type $\hat s$ with $s\in S'$.
Equivalently, we write $\Delta_{S,S'}=\Delta_{\Lambda,\Lambda'}$, where
$\Lambda'$ is the induced subdiagram of $\Lambda$ on $S'$. As the action $A_S\curvearrowright \Delta_S$ preserves types of vertices, we obtain an action $A_S\curvearrowright \Delta_{S,S'}$, whose fundamental domain is again a simplex with its vertices labeled by $\{\hat s\}_{s\in S'}$. Hence the fundamental domain of the action $A_S\curvearrowright \Delta_{S,S'}$ can be canonically identified with the fundamental domain of $W_{S'}\curvearrowright \bC_{S'}$. 

We say a Coxeter diagram $\Lambda$ \emph{dominates} another Coxeter diagram $\Lambda'$, if there is an isomorphism $f:\Lambda\to\Lambda'$ of the underlying graphs such that for any edge $e$ of $\Lambda$, the label of $e$ is $\ge$ the label of $f(e)$. A tree Coxeter diagram is \emph{$\wtC$-elementary}, if it does not contain an induced subdiagram that dominates a Coxeter diagram of type $\widetilde C_n,\widetilde B_n$ or $\widetilde D_n$. These are exactly diagrams in Definition~\ref{def:special}. On the other hand, if a tree Coxeter diagram $\Lambda$ contains a subdiagram $\Lambda'$ dominating a Coxeter diagram $\Lambda''$ of type $\widetilde C_n,\widetilde B_n$ or $\widetilde D_n$, then we metrize $\Delta_{\Lambda,\Lambda'}$ or its appropriate subdivision as follows. It suffices to metrize the fundamental domain of $A_\Lambda\curvearrowright \Delta_{\Lambda,\Lambda'}$, which is identified with the fundamental domain of $W_{\Lambda''}\curvearrowright \bC_{\Lambda''}$ via the dominating map $\Lambda'\to \Lambda''$. 

%So it suffices to metrize the later fundamental domain.

When $\Lambda''$ is of type $\widetilde C_n$, $\bC_{\Lambda''}$ is isomorphic to the tessellation of $\mathbb R^n$ by unit orthoschemes, so the fundamental domain of $W_{\Lambda''}\curvearrowright \bC_{\Lambda''}$ can be metrized as a unit orthoscheme with $\ell^\infty$-metric. As up to scaling, the fundamental domain of the action of the Coxeter group of type $\widetilde B_n$ (resp. $\widetilde D_n$) on $\mathbb R^n$ can be naturally subdivided into 2 (resp. 4) unit orthoschemes, this induces a subdivision of $\Delta_{\Lambda,\Lambda'}$ as a piecewise $\ell^\infty$ orthoscheme complex.

\medskip
\noindent
\textbf{Propagation of link conditions}\ \ \ 
We wish to show $\Delta_{\Lambda,\Lambda'}$ (or its subdivision) is an injective orthoscheme complex. We use a theorem of Haettel \cite{haettel2021lattices,haettel2022link}, which roughly says that for certain type of piecewise $\ell^\infty$ orthoscheme complex $X$ (which applies to $\Delta_{\Lambda,\Lambda'}$), if the link $\lk(x,X)$ of each vertex $x\in X$ satisfies a purely combinatorial condition, and $X$ is simply-connected, then $X$ is an injective orthoscheme complex, in particular $X$ is contractible, see Theorem~\ref{thm:contractibleII} for a precise statement. This combinatorial condition on $\lk(x,X)$ can be formulated in graph theoretical terms - namely it is required that certain 4-cycles in the 1-skeleton of $\lk(x,X)$ have a center and certain 6-cycles in the 1-skeleton of $\lk(x,X)$ have a quasi-center.

The simply-connectedness of $\Delta_{\Lambda,\Lambda'}$ is alread proved in \cite{huang2023labeled}. Given a vertex $x\in \Delta_{\Lambda,\Lambda'}$ of type $\hat s$, its link is again a relative Artin complex $\Delta_{\Lambda\setminus\{s\},\Lambda'\setminus\{s\}}$. So checking the link condition reduces to studying the existence of centers or quasi-centers for a collection of 4-cycles and 6-cycles in $\Delta_{\Lambda\setminus\{s\},\Lambda'\setminus\{s\}}\subset \Delta_{\Lambda\setminus\{s\}}$. This can be formulated as a property of $\Delta_{\Lambda\setminus\{s\}}$, called \emph{atomic BD robust} in Definition~\ref{def:good}. As we wish to focus on the overall strategy, we skip the technical discussion on the precise formulation of atomic BD robust, and simply mention that
when connected components of $\Lambda\setminus\{s\}$ belong to families in Definition~\ref{def:special}, then the assumption of Theorem~\ref{thm:reduction single} exactly corresponds to these families are atomic BD robust. Thus if $\Lambda\setminus\{s\}$ belongs to the families in Definition~\ref{def:special}, we conclude immediately that $\Delta_{\Lambda,\Lambda'}$ is injective (hence contractible). This establishes the first level of the tower. In general, knowing the link condition (atomic BD robust) for Artin groups in level $k-1$ implies Artin groups in level $k$ admit desired actions on injective metric spaces. 

The main difficulty here is to establish the atomic BD robustness for Artin groups in higher levels (the assumption of Theorem~\ref{thm:reduction single} only guarantees it in level 0). The key result in this article is the following propagation theorem on atomic BD robustness, from which the existence of the desired tower follows, as well as Theorem~\ref{thm:npc}.

\begin{thm}[Theorem~\ref{thm:BD robust prop}]
	\label{thm:BD robust prop into}
	Let $\Lambda$ be a tree Coxeter diagram such that every proper induced subdiagram is atomic BD-robust. Suppose $\Lambda$ is not in level $0$ (i.e. not belongs to the families in Definition~\ref{def:special}). Then $\Lambda$ is atomic BD-robust. 
\end{thm}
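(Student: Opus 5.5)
Since $\Lambda$ is not in level $0$, it contains an induced subdiagram $\Lambda'$ dominating a diagram $\Lambda''$ of type $\widetilde C_n$, $\widetilde B_n$ or $\widetilde D_n$. I would use this $\Lambda'$ to build a nonpositively curved model for $A_\Lambda$ and read off the required center and quasi-center properties of $\Delta_\Lambda$ from convexity in that model.

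\textbf{Step 1: the injective complex.} Metrize $\Delta_{\Lambda,\Lambda'}$, or its subdivision in the $\widetilde B_n,\widetilde D_n$ cases, as a piecewise $\ell^\infty$ orthoscheme complex as described in the introduction. The link of a vertex of type $\hat s$ is $\Delta_{\Lambda\setminus\{s\},\Lambda'\setminus\{s\}}$. Because every proper induced subdiagram is atomic BD-robust, the $4$-cycle and $6$-cycle link conditions of Theorem~\ref{thm:contractibleII} hold at every vertex. Simple connectedness is known from \cite{huang2023labeled}, so $X=\Delta_{\Lambda,\Lambda'}$ is an injective orthoscheme complex.

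\textbf{Step 2: translating the cycles into $X$.} Atomic BD-robustness of $\Lambda$ is a statement about the relevant special $4$-cycles and $6$-cycles in $\Delta_\Lambda$, supported on appropriate subdiagrams. Each vertex of $\Delta_\Lambda$ of type $\hat s$ corresponds to a coset $gA_{\Lambda\setminus\{s\}}$, which acts on $X$. I would associate to it its fixed-point set, or a suitable combinatorially convex subcomplex $F_v\subset X$. Adjacency of two vertices in $\Delta_\Lambda$ should then correspond to $F_v\cap F_w\neq\emptyset$. This uses the paper's notion of combinatorial convexity: intersections of such subcomplexes are again convex.

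A $4$-cycle $v_1w_1v_2w_2$ gives convex sets that pairwise meet cyclically. Injectivity, through a Helly-type property for convex subcomplexes, then gives a common point in $F_{v_1}\cap F_{v_2}\cap F_{w_1}\cap F_{w_2}$. The stabilizer of that point is a proper parabolic subgroup. Its associated vertex, or a vertex produced from that stabilizer via the bi-Helly structure results of Hoda--Munro, should serve as the center. For $6$-cycles I would argue the same way, requiring only that the three base-type sets meet, which yields a quasi-center.

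\textbf{Main obstacle.} Cyclically intersecting convex sets need not have a common point. I expect this to be resolved by the Bestvina-type inequality. The plan is to take a cycle for which the desired configuration fails, and project it to a closed loop of minimal combinatorial length in $X$, or to a minimal disk diagram filling it. The Bestvina-type inequality should then show that such a loop strictly shortens unless the sets meet, contradicting minimality. I would first handle the case where the cycle types $s,t$ lie inside $\Lambda'$. Cycles whose types lie outside $\Lambda'$ I would treat by passing to the links from Step 1, which reduces them to robustness of proper subdiagrams. I expect the hardest part to be bookkeeping which cycles are \lq\lq atomic'' in each case.
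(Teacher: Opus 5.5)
Your high-level picture matches the paper's strategy: a $\widetilde C$-like complex built on a relative Artin complex, combinatorially convex subcomplexes attached to vertices outside it, and a minimal-configuration argument driven by the Bestvina-type inequality. As a proof plan, however, it has genuine gaps, and one step points the wrong way.

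\textbf{The case split is misallocated.} When the types of a cycle lie in $\Lambda'$, its vertices are already vertices of $X$. The paper mostly reads off the conclusion from the order structure of $\bar\Delta_{\Lambda,\Lambda'}$ (Lemma~\ref{lem:big lattice}, Propositions~\ref{prop:propagation} and~\ref{prop:propagation2}). The real difficulty is when the atomic subdiagram $\Lambda_0$ carrying the cycle is not contained in $\Lambda'$. There your plan of ``passing to links'' is circular: a cycle of $\Delta_\Lambda$ lies in some $\lk(z,\Delta_\Lambda)$ only once you already have a vertex $z$ adjacent to all its vertices, which is essentially the conclusion. The paper does pass to a link, but only after Proposition~\ref{prop:intersection} has produced such a $z$, and only to correct its type.

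\textbf{No single $\Lambda'$ works.} Atomic BD-robustness of proper subdiagrams controls only \emph{atomic} subdiagrams. So your Step~1 needs each $\Lambda'\setminus\{s\}$ to be (weakly) $\Lambda\setminus\{s\}$-atomic or a robust core; this is where Lemma~\ref{lem:weak atomic} enters. Also, the convex object attached to a vertex $x$ of type $\hat s$ is not a fixed-point set. It is the full subcomplex of $\Delta_{\Lambda,\Lambda'}$ spanned by the neighbours of $x$, and its convexity is only proved when $s$ is $\Lambda'$-extremal (Lemma~\ref{lem:convex}, via Lemma~\ref{lem:5cycle}). This is why the paper chooses $\Lambda'$ depending on $\Lambda_0$ (Lemma~\ref{lem:robust C core}): links are then atomic, and every vertex of $\Lambda_0\setminus\Lambda'$ is $\Lambda'$-extremal, with all relevant types on the same $\tau$-minimal side.

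\textbf{The Helly substitute, the heart of the proof, is missing.} Injectivity gives Helly for balls, not for these subcomplexes: three pairwise intersecting ones need not meet. Proposition~\ref{prop:bestvina} only says that distance to an extremal vertex along a normal form path is first non-increasing, then strictly increasing. It shortens nothing by itself.

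For the flag part, the paper argues as follows.
\begin{enumerate}
\item Take a perimeter-minimizing triple $(u_1,u_2,u_3)$ of extremal vertices with $u_i\in\bar X_i\cap\bar X_{i+1}$.
\item Prove a corner lemma (Lemma~\ref{lem:corner1}): apart from one $\widetilde D_4$ exception, the distance to $u_1$ cannot drop below $d(u_1,u_2)$ along the side from $u_2$ to $u_3$. This uses that all $\theta_i$ are $\Lambda'$-minimal, Lemma~\ref{lem:transitive}, Lemma~\ref{lem:5cycle}, and upward flagness of $\bar\Delta_{\Omega,\Lambda_0}$ for proper $\Omega$ (Assumption~\ref{assum:1}).
\item Combine this with Proposition~\ref{prop:bestvina} to get that each side is equidistant from the opposite corner.
\item Build the layered diagram $P_m$ to produce a new corner violating that equidistance, contradicting minimality.
\end{enumerate}

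For bowtie-freeness, no four-set Helly statement is used. Take a quasi-bowtie $p_1p_2p_3p_4$ with $p_2,p_4$ maximal, so that they lie in $\bar\Delta_{\Lambda,\Lambda'}$. Convexity puts a normal form path from $p_2$ to $p_4$ inside $\bar Y_1\cap\bar Y_3$. A separate shortening argument, using bowtie-freeness of relative Artin complexes of proper subdiagrams, collapses this path to a two-edge path whose middle vertex is the center (Corollary~\ref{cor:robust0}). The labeled 4-cycle condition relies on Propositions~\ref{prop:propagation}(3) and~\ref{prop:propagation2}(3).

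Without these mechanisms, and without the case analysis over the configurations of Figure~\ref{fig:core1} (e.g.\ enlarging $\Lambda_0$ in (6)--(10), and weak flagness via Lemma~\ref{lem:weakly flag equivalent} in (17)--(19)), your Step~2 does not yield the theorem.
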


\medskip
\noindent
\textbf{Bridging Theorem~\ref{thm:BD robust prop into} with convexity}\ \ \ 
Let $\Lambda$ be as in Theorem~\ref{thm:BD robust prop into}. There are different types of 4-cycles and 6-cycles to consider in the proof of atomic BD-robustness of $\Lambda$. To illustrate the idea, we only discuss a particular case and 
let $\omega=x_1x_2x_3x_4x_5x_6$ be the type of 6-cycle in the 1-skeleton of $\Delta_\Lambda$ which the atomic BD-robustness requires to find a quasi-center $x\in\Delta_\Lambda$ that is adjacent to $\{x_1,x_3,x_5\}$.

As $\Lambda$ is not at level $0$, we can find a subdiagram $\Lambda'\subset\Lambda$ dominating a Coxeter diagram $\Lambda''$ of type $\widetilde C_n,\widetilde B_n$ or $\widetilde D_n$. As all the diagrams at a lower lever compared to $\Lambda$ satisfy the atomic BD-robust condition, by previous discussion, $\Delta_{\Lambda,\Lambda'}$ is an injective orthoscheme complex. In general, the subcomplex $\Delta_{\Lambda,\Lambda'}$ of $\Delta_\Lambda$ and the 6-cycle $\omega$ might not be related to each other. By a careful choice of $\Lambda'$, one might arrange that $x_2,x_4,x_6\in \Delta_{\Lambda,\Lambda'}$, however, in general $x_1,x_3,x_5$ are outside $\Delta_{\Lambda,\Lambda'}$.

For $i=1,3,5$, let $X_i$ be the full subcomplex of $\Delta_{\Lambda,\Lambda'}$ spanned by vertices of $\Delta_{\Lambda,\Lambda'}$ that are adjacent to $x_i$. As $x_2,x_4,x_6\in \Delta_{\Lambda,\Lambda'}$, we know $\{X_1,X_3,X_5\}$ pairwise intersects. Eventually we will show $\{X_1,X_3,X_5\}$ have a common intersection, which gives the desired quasi-center $x$. The idea is to argue $X_i$ is convex in $\Delta_{\Lambda,\Lambda'}$ in an appropriate sense, then we use convexity to find the common intersection.

\medskip
\noindent
\textbf{Normal form convexity via bi-Helly graphs}\ \ \ 
Since we only have local combinatorial information describing how the subcomplexes $X_i$ sit
inside $\Delta_{\Lambda,\Lambda'}$ in terms of vertex link conditions, the above argument requires a notion of convexity—usually a
global property—that admits a purely local combinatorial characterization in terms of vertex
links. This is achieved by relating our setting to properties of bi-Helly graphs studied in \cite{HodaMunro:BiHelly_Directed_Geodesics:2026}, as we now explain.

Let $Z$ be an injective orthoscheme complex. For each ordered pair of vertices $(x,y)$, we wish
to specify a preferred geodesic path in the $1$-skeleton $Z^1$ from $x$ to $y$, called the
\emph{normal form path} from $x$ to $y$. In general, the normal form path from $x$ to $y$ need
not coincide with the normal form path from $y$ to $x$. We first show that a certain subgraph
of $Z^1$ is a bi-Helly graph (see Definition~\ref{def:bihelly}). In \cite{HodaMunro:BiHelly_Directed_Geodesics:2026}, the first named author
and Munro showed that in any bi-Helly graph one can travel between any two vertices
via a canonical normal form sequence of near-cliques called a directed geodesic (Definition~\ref{def:directed geodesics}), and
that such sequences admit a purely local characterization (Theorem~\ref{thm:local to global}).

We translate this normal form sequence into a canonical geodesic path in $Z^1$. Under mild
additional assumptions on $Z$, the local characterization from \cite{HodaMunro:BiHelly_Directed_Geodesics:2026} admits a link condition
interpretation: given a sequence of consecutive vertices $z_1,z_2,\ldots,z_n$ in $Z$, one can
detect whether it forms a normal form path via a purely combinatorial condition describing how
$z_{i-1}$ and $z_{i+1}$ sit in the link $\lk(z_i,Z)$. The detectability of normal form
paths via vertex link condition is the key feature required for our approach.

We now define a subcomplex $Z'\subset Z$ to be \emph{convex} if, for any vertices $x,y\in Z'$,
the normal form path from $x$ to $y$ is entirely contained in $Z'$. Although this definition is
global in nature, the local characterization of normal form paths yields a vertex link
criterion for convexity. In particular, we show that the subcomplexes $X_i\subset
\Delta_{\Lambda,\Lambda'}$ introduced earlier are convex in this sense. %Interestingly, this notion of convexity already differs from other notions even in the simplest case where $Z$ is $\mathbb{R}^n$ equipped with its standard unit orthoscheme tessellation.

\medskip
\noindent
\textbf{Bestvina-type inequality and triangulation of disks}\ \ \  Unfortunately, for this notion of convexity it is not true in general that three pairwise
intersecting convex subcomplexes have nonempty common intersection. However, this property does
hold for certain special convex subcomplexes, including the subcomplexes
$X_1,X_3,X_5\subset\Delta_{\Lambda,\Lambda'}$ discussed above.

To prove this, let $\Theta$ denote the collection of all triples $(u_1,u_3,u_5)$ of vertices
such that $u_1\in X_1\cap X_3$, $u_3\in X_3\cap X_5$, and $u_5\in X_5\cap X_1$. Each such triple
determines a triangle whose sides are the normal form paths from $u_1$ to $u_3$, from $u_1$ to
$u_5$, and from $u_3$ to $u_5$. By convexity, each side of the triangle is contained in one of
the subcomplexes $X_1,X_3,X_5$. Among all triangles arising in this way, we show that any
triangle of minimal size must be degenerate, which implies that
$X_1\cap X_3\cap X_5\neq\emptyset$. Since $\Delta_{\Lambda,\Lambda'}$ is simply connected, each such triangle spans a triangulated
disk. It therefore suffices to show that if a triangle is of minimal size, then the
triangulation of the corresponding disk is necessarily degenerate.

A key ingredient in the proof is an inequality inspired by work of Bestvina. In the context
of Garside theory, Bestvina introduced an asymmetric metric on the central quotient of a
Garside group, and showed that given a vertex $x$ and a normal form sequence from $y$ to $z$,
an asymmetric distance from $x$ to points along this sequence first strictly decreases and
then strictly increases \cite[Proposition~3.12]{Bestvina1999}.

We show that a version of Bestvina inequality still holds for certain injective orthoscheme complexes. Namely, given $u_1$ and the normal form path
from $u_3$ to $u_5$, the distance in $Z^1$ from $u_1$ to vertices along this path
first decreases and then increases. Our inequality is slightly weaker than Bestvina’s original formulation: the decreasing segment need not be strictly monotone—explicit examples exhibit this behavior. Nevertheless, this weaker form suffices to control the geometry of the triangles described above and the triangulations of their spanning disks, which constitutes the most technically involved part of the article.

%Our inequality is slightly weaker in that the decreasing portion need not be strictly monotone—indeed, explicit examples show this phenomenon—but it is nevertheless sufficient to control the geometry of the triangles above and the triangulations of their spanning disks, which forms the most involved part of the article.

\subsection{Organization of the article}
In Section~\ref{sec:prelim} we collection preliminaries on Artin groups, Coxeter groups, posets and contractibility criterion on simplicial complexes.  In Section~\ref{sec:nf} we discuss normal forms in the 1-skeleton of injective orthoscheme complexes, notion of convex subcomplexes, and Bestvina type inequality.  Section~\ref{sec:subdivision} is mostly a review of material in \cite{huang2024} about subdivision of certain relative Artin groups. Section~\ref{sec:propagation} is devoted to study configuration of several convex subcomplexes in certain injective orthoscheme complexes, and we prove the key propagation theorem (Theorem~\ref{thm:BD robust prop into}), which allows us to deduce Theorem~\ref{thm:reduction single}. In Section~\ref{sec:ABI} we prove Theorem~\ref{thm:ABI}.

\subsection{Acknowledgment} We kindly thank Piotr Przyticki and Katherine Goldman for helpful discussions in the course of this research.  We also thank AIM and the organizers of the AIM workshop ``Geometry and topology of Artin groups,'' in September 2023, where this research was initiated.

The first named author was partially funded by an NSERC Postdoctoral Fellowship.

The second named author is partially supported by a Sloan fellowship and NSF grant DMS-2305411. 
\section{Preliminaries}
\label{sec:prelim}

\subsection{Artin complexes and relative Artin complexes}
\label{subsec:rel Artin complex}

A \emph{Coxeter diagram} $\Lambda$ is a finite simple graph with vertex set
$S=\{s_i\}_i$, where each edge $s_is_j$ is labeled by an integer
$m_{ij}\in\{3,4,\dots,\infty\}$. If $s_is_j$ is not an edge, we set $m_{ij}=2$.
The associated \emph{Artin group} $A_\Lambda$ is generated by $S$ with defining
relations $
s_is_js_i\cdots = s_js_is_j\cdots,$
where both sides are alternating words of length $m_{ij}$ whenever
$m_{ij}<\infty$. The \emph{Coxeter group} $W_\Lambda$ is obtained from
$A_\Lambda$ by adding the relations $s_i^2=1$ for all $i$.

The \emph{pure Artin group} $PA_\Lambda$ is the kernel of the natural
homomorphism $A_\Lambda\to W_\Lambda$. We say that $A_\Lambda$ is
\emph{spherical} if $W_\Lambda$ is finite. For any subset $S'\subset S$, the
subgroup generated by $S'$ is canonically isomorphic to the Artin group
$A_{\Lambda'}$, where $\Lambda'$ is the induced subdiagram of $\Lambda$ on $S'$ \cite{lek}.
Such a subgroup is called a \emph{standard parabolic subgroup}.

\medskip
\noindent\textbf{Artin and Coxeter complexes.}
The \emph{Artin complex} $\Delta_\Lambda$, introduced in
\cite{CharneyDavis} and further studied in
\cite{godelle2012k,cumplido2020parabolic}, is defined as follows.
For each $s\in S$, let $A_{\hat s}$ denote the standard parabolic subgroup
generated by $\hat s:=S\setminus\{s\}$. The vertices of $\Delta_\Lambda$
correspond to the left cosets of the subgroups $\{A_{\hat s}\}_{s\in S}$.
A collection of vertices spans a simplex if and only if the corresponding
cosets have nonempty intersection. By \cite[Prop.~4.5]{godelle2012k},
$\Delta_\Lambda$ is a flag complex.
The \emph{Coxeter complex} $\bC_\Lambda$ is defined analogously by replacing
$A_{\hat s}$ by the parabolic subgroup $W_{\hat s}<W_\Lambda$ generated by
$\hat s$. A vertex of $\Delta_\Lambda$ or $\bC_\Lambda$ corresponding to a left
coset of $A_{\hat s}$ or $W_{\hat s}$ is said to have \emph{type} $\hat s$.
The Coxeter complex $\bC_\Lambda$ is isomorphic to the quotient of $\Delta_\Lambda$ under the
action of the pure Artin group $PA_\Lambda$.

\begin{thm}[\cite{godelle2012k}, Thm.~3.1]
	\label{thm:kpi1}
	Suppose $A_S$ is not spherical. If $\Delta_S$ is contractible and each
	parabolic subgroup $A_{\hat s}$ satisfies the $K(\pi,1)$-conjecture, then
	$A_S$ satisfies the $K(\pi,1)$-conjecture.
\end{thm}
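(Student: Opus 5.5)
The plan is to work with the Salvetti complex $\mathrm{Sal}_S$. Recall that the $K(\pi,1)$-conjecture for $A_S$ is equivalent to the contractibility of its universal cover $\widetilde{\mathrm{Sal}}_S$. I will cover $\widetilde{\mathrm{Sal}}_S$ by subcomplexes indexed by the vertices of $\Delta_S$, check that the nerve of this cover is $\Delta_S$ and that all nonempty intersections are contractible, and then conclude by the nerve theorem.

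First I recall the structure. $\mathrm{Sal}_S$ has one cell $e_T$ for each $T\subset S$ with $A_T$ spherical. For every $T\subset S$, the full subcomplex $\mathrm{Sal}_T\subset\mathrm{Sal}_S$ is the Salvetti complex of $A_T$. By van der Lek \cite{lek}, $A_T\to A_S$ is injective, so each component of the preimage of $\mathrm{Sal}_T$ in $\widetilde{\mathrm{Sal}}_S$ is a copy of $\widetilde{\mathrm{Sal}}_T$. These components are indexed by the cosets $gA_T$: the component for $gA_T$ is spanned by the lifted cells whose base vertex lies in $gA_T$.

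Now define, for each vertex $gA_{\hat s}$ of $\Delta_S$, the subcomplex $Y_{gA_{\hat s}}=g\widetilde{\mathrm{Sal}}_{\hat s}\subset\widetilde{\mathrm{Sal}}_S$. These subcomplexes cover $\widetilde{\mathrm{Sal}}_S$: every cell is a lift $g\tilde e_T$ with $A_T$ spherical, and since $A_S$ is not spherical we have $T\neq S$, so $T\subset\hat s$ for some $s$ and the cell lies in $g\widetilde{\mathrm{Sal}}_{\hat s}$.

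Next I compute intersections. For cosets $g_1A_{\hat s_1},\dots,g_kA_{\hat s_k}$:
\begin{itemize}
\item If the cosets have a common element $g$, then their intersection is $gA_{T}$ with $T=\bigcap_i\hat s_i$, using again that parabolic subgroups intersect as expected \cite{lek}.
\item In that case the corresponding subcomplexes meet exactly in $g\widetilde{\mathrm{Sal}}_T$. Indeed, a cell $h\tilde e_{T'}$ lies in all of them iff $hA_{T'}\subset g_iA_{\hat s_i}$ for all $i$, i.e.\ iff $T'\subset T$ and $h\in gA_T$.
\item If the cosets have empty common intersection, the subcomplexes do not meet, because each vertex $h$ of $\widetilde{\mathrm{Sal}}_S$ lies in $g_iA_{\hat s_i}$ for every $i$ it belongs to.
\end{itemize}
Hence the nerve of the cover $\{Y_v\}$ is exactly $\Delta_S$, by the definition of simplices in the Artin complex.

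Each nonempty intersection is a copy of $\widetilde{\mathrm{Sal}}_T$ with $T\subset\hat s$ for some $s$. I then need this to be contractible, which is the main point to justify: the $K(\pi,1)$-conjecture for $A_{\hat s}$ should imply it for every standard parabolic $A_T$, $T\subset\hat s$. I would prove this via the standard retraction argument. There is a cellular retraction of $\widetilde{\mathrm{Sal}}_{\hat s}$ onto the copy $\widetilde{\mathrm{Sal}}_T$ based at $1$, defined using minimal-length coset representatives in $W$ (as in Godelle--Paris, or Charney--Davis). A retract of a contractible complex is contractible.

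With that in hand, all nonempty finite intersections of the closed subcomplex cover $\{Y_v\}$ are contractible. The nerve theorem for covers of CW complexes by subcomplexes then gives $\widetilde{\mathrm{Sal}}_S\simeq\Delta_S$, which is contractible by hypothesis. Therefore $\mathrm{Sal}_S$ is aspherical, i.e.\ $A_S$ satisfies the $K(\pi,1)$-conjecture.

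The expected obstacle is the inheritance of $K(\pi,1)$ by parabolics, i.e.\ constructing that retraction. The coset and intersection bookkeeping is routine once van der Lek's results are invoked.
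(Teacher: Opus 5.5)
Your argument is correct and is essentially the Godelle--Paris proof that the paper cites without reproducing. It covers $\widetilde{\mathrm{Sal}}_S$ by the translates $g\widetilde{\mathrm{Sal}}_{\hat s}$, identifies the nerve of this cover with $\Delta_S$ using van der Lek's injectivity and intersection results, and obtains contractibility of the nonempty intersections $g\widetilde{\mathrm{Sal}}_T$ from the Godelle--Paris retraction onto parabolic Salvetti complexes (inheritance of $K(\pi,1)$ by standard parabolic subgroups, which the paper itself invokes later as \cite[Cor 2.4]{paris2012k}).
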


We record a local property of Artin complexes.

\begin{lem}
	\label{lem:nonadj}
	Let $\{s,t,r\}$ be three pairwise distinct vertices in a Coxeter diagram $\Lambda$ such that $t$ and $r$ are in the same component of $\Lambda\setminus\{s\}$. Let $xy$ be an edge in $\Delta_{\Lambda}$ such that $x$ has type $\hat s$ and $y$ has type $\hat t$. Then there exists a vertex $z\in \Delta_\Lambda$ of type $\hat r$ such that $z\sim x$ and $z\nsim y$.
\end{lem}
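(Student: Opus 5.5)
Since $A_\Lambda$ acts transitively on the edges of $\Delta_\Lambda$ of a given type, we may assume $x = A_{\hat s}$ and $y = A_{\hat t}$, i.e.\ both cosets contain the identity. The vertices of type $\hat r$ adjacent to $x$ are the cosets $gA_{\hat r}$ with $g\in A_{\hat s}$, since $gA_{\hat r}\cap A_{\hat s}\neq\emptyset$ lets us choose the representative $g$ in $A_{\hat s}$. Such a vertex is adjacent to $y$ exactly when $gA_{\hat r}\cap A_{\hat t}\neq\emptyset$, i.e.\ when $g\in A_{\hat t}A_{\hat r}$. So it suffices to find $g\in A_{\hat s}$ with $g\notin A_{\hat t}A_{\hat r}$.

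To find $g$, choose a path $t=a_0,a_1,\dots,a_k=r$ in the component of $\Lambda\setminus\{s\}$ containing $t$ and $r$; all the $a_i$ lie in $S\setminus\{s\}$. I would take $g$ to be a positive word such as $g=t\,a_1a_2\cdots a_{k-1}\,r$, or a suitable power of it, which lies in $A_{\hat s}$. The key claim is that $g\notin A_{\hat t}A_{\hat r}$.

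To verify this, pass to the Coxeter group. If $g\in A_{\hat t}A_{\hat r}$, its image $\bar g\in W_\Lambda$ lies in $W_{\hat t}W_{\hat r}$. The double coset $W_{\hat t}\bar gW_{\hat r}$ contains a unique minimal-length element. Writing $\bar g$ as a reduced word, one can read off that this minimal element is nontrivial whenever the word begins with $t$, ends with $r$, and is $(\hat t,\hat r)$-reduced. Deletion/exchange arguments along the path should show the word $t a_1\cdots a_{k-1} r$ is reduced and has no left descent in $\hat t$ and no right descent in $\hat r$. The left descent set is $\{t\}$ because $t$ fails to commute with $a_1$ and each $a_i$ fails to commute with $a_{i+1}$, so no later letter can be shuffled to the front; the right descent set is $\{r\}$ by the symmetric argument. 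Hence $\bar g$ is the minimal element of its nontrivial double coset, so $\bar g\notin W_{\hat t}W_{\hat r}$, and therefore $g\notin A_{\hat t}A_{\hat r}$.

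The main obstacle is this descent computation, in particular the edge cases $t\sim r$ adjacent (so $k=1$ and $g=tr$) and edges labeled $\infty$. For $k=1$, $tr$ is reduced with left descent set $\{t\}$ and right descent set $\{r\}$, because $m_{tr}\ge 3$. An alternative fallback, if the Coxeter argument has gaps, is to pass to the Coxeter complex $\bC_\Lambda$ via the quotient by $PA_\Lambda$ and show that some type-$\hat r$ vertex adjacent to $x$ is not adjacent to $y$ there. The required separation in the Coxeter complex then follows from the residue of $x$ being an irreducible Coxeter complex on the component containing $t,r$. Non-adjacency downstairs implies non-adjacency upstairs, since adjacency is preserved by the quotient map.
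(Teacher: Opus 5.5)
Your proposal is correct and follows essentially the paper's route: translate the edge to the identity cosets, pass to the Coxeter group, and take $g$ to be the product of the generators along an (embedded) path from $t$ to $r$ avoiding $s$. The paper first localizes to the component $\Theta$ and finishes with the one-sided coset $gW_{\Theta\setminus\{r\}}$: $g$ is its minimal element, so every element of the coset has a reduced word with prefix $g$, hence involves $t$. You instead check directly that $\bar g$ is the minimal element of the double coset $W_{\hat t}\bar g W_{\hat r}$, which is an equally valid way to finish.
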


\begin{proof}
	It suffices to find $z$	in $\lk(x,\Delta_{\Lambda})\cong \Delta_{\Lambda\setminus\{s\}}$ of type $\hat r$ such that $z$ is not adjacent to $y$.
	Let $\Theta$ be the connected component of $\Lambda\setminus\{s\}$ containing $t$ and $r$. Then $\Delta_{\Theta}$ is a join factor of $\Delta_{\Lambda\setminus\{s\}}$, and we can view $y$ as a vertex in $\Delta_\Theta$, and it suffices to find $z\in \Delta_\Theta$ of type $\hat r$ with $z\nsim y$. We assume without loss of generality that $y$ corresponds to the identity coset $A_{\Theta\setminus\{t\}}$. We need to find $g\in A_\Theta$ such that $gA_{\Theta\setminus\{r\}}\cap A_{\Theta\setminus\{t\}}=\emptyset$. By considering the homomorphism from $A_\Theta$ to the associated Coxeter group $W_\Theta$, it suffices to find $g\in W_\Theta$ such that
	\begin{equation}
		\label{eq:empty intersection}
		gW_{\Theta\setminus\{r\}}\cap W_{\Theta\setminus\{t\}}=\emptyset.
	\end{equation}  Let $t_1t_2\cdots t_k$ be an embedded edge path in $\Theta$ from $t_1=t$ to $t_k=r$. Let $g=t_1t_2\cdots t_k$. Then \cite[Lem 5.9 (2)]{huang2023labeled} implies that $g$ is the point in $gW_{\Theta\setminus\{r\}}$ that are closest to identity with respect to the word metric on $W_\Theta$. By standard facts of Coxeter groups, each $h\in gW_{\Theta\setminus\{r\}}$ has a reduced representative with $g$ as its prefix. Since this representative has letter $t$, $h\notin W_{\Theta\setminus\{t\}}$. Thus \eqref{eq:empty intersection} follows.
\end{proof}

\medskip
\noindent\textbf{Barycentric subdivision.}
Let $\Delta'_\Lambda$ denote the barycentric subdivision of $\Delta_\Lambda$.
If $x\in\Delta'_\Lambda$ is the barycenter of a simplex $\sigma\subset
\Delta_\Lambda$ whose vertices have types
$\hat s_1,\dots,\hat s_k$, we define the \emph{type} of $x$ to be
$
\hat T := \bigcap_{i=1}^k \hat s_i .
$
Vertices of type $\hat T$ in $\Delta'_\Lambda$ are in 1-1 correspondence
with left cosets $gA_{S\setminus T}$, where $A_{S\setminus T}$ is the
standard parabolic subgroup generated by $S\setminus T$.

Given two vertices $x,y\in\Delta'_\Lambda$, we write $x\sim y$ if they are
contained in a common simplex of $\Delta_\Lambda$. Equivalently, $x\sim y$ if the associated left cosets have nonempty intersection.

\begin{lem}[\cite{huang2025353}, Lem.~2.2]
	\label{lem:transitive}
	Let $x_1,x_2,x_3$ be vertices of $\Delta'_\Lambda$ of types
	$\hat S_1,\hat S_2,\hat S_3$, respectively. Suppose that for any
	$s\in S_1\setminus S_2$ and $t\in S_3\setminus S_2$, the vertices $s$ and $t$
	lie in different connected components of $\Lambda\setminus S_2$.
	If $x_1\sim x_2$ and $x_2\sim x_3$, then $x_1\sim x_3$.
\end{lem}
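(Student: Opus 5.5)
We prove Lemma~\ref{lem:transitive} by translating everything into cosets and using the standard intersection formula for standard parabolic subgroups. Write $x_i=g_iA_{S\setminus S_i}$. The hypothesis $x_1\sim x_2$ means $g_1A_{S\setminus S_1}\cap g_2A_{S\setminus S_2}\neq\emptyset$. So we may pick a common element and replace $g_1$ by it, giving $g_1=g_2$. Similarly we may replace $g_3$ by an element of $g_2A_{S\setminus S_2}\cap g_3A_{S\setminus S_3}$. Translating by $g_2^{-1}$, we reduce to the following situation: $x_1=A_{S\setminus S_1}$, $x_2=A_{S\setminus S_2}$, $x_3=hA_{S\setminus S_3}$ with $h\in A_{S\setminus S_2}$. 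We must show $A_{S\setminus S_1}\cap hA_{S\setminus S_3}\neq\emptyset$, i.e.\ that $h\in A_{S\setminus S_1}A_{S\setminus S_3}$.

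Let $T=S\setminus S_2$. By hypothesis, the vertex sets $S_1\setminus S_2$ and $S_3\setminus S_2$ meet no common component of $\Lambda\setminus S_2=\Lambda_T$. Decompose $\Lambda_T$ into its connected components and group them into three disjoint sets:
\begin{itemize}
\item $T_1$, the union of the components meeting $S_1$;
\item $T_3$, the union of the components meeting $S_3$;
\item $T_0$, the union of the remaining components.
\end{itemize}
These are disjoint precisely by the hypothesis. Then $A_T=A_{T_1}\times A_{T_3}\times A_{T_0}$, because distinct components commute and standard parabolic subgroups are the Artin groups on their induced subdiagrams \cite{lek}. Write $h=h_1h_3h_0$ accordingly. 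Since $T_1$ is disjoint from $S_3$, we have $h_1\in A_{S\setminus S_3}$. Since $T_3$ and $T_0$ are disjoint from $S_1$, we have $h_3,h_0\in A_{S\setminus S_1}$.

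Using commutativity of the factors, $h=(h_3h_0)\,h_1\in A_{S\setminus S_1}A_{S\setminus S_3}$. This gives the element $h_3h_0=h h_1^{-1}$ lying in both $A_{S\setminus S_1}$ and $hA_{S\setminus S_3}$, so $x_1\sim x_3$.

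The only points needing care are the following:
\begin{itemize}
\item The reduction step: a common element of two cosets can serve as a representative of both.
\item The fact that $A_T$ splits as a direct product over components of $\Lambda_T$, with each factor the standard parabolic subgroup on that component. This follows from the defining relations ($m=2$ between components) together with \cite{lek}.
\end{itemize}
I expect no real obstacle; the main step is simply choosing the decomposition of $T$ so that each factor avoids $S_1$ or $S_3$, which is exactly what the component hypothesis provides.
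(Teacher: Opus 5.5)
Your proof is correct and complete. The paper gives no argument of its own for this lemma; it is quoted from \cite{huang2025353}. So the relevant comparison is with the usual geometric argument, of which your coset computation is the algebraic form. In that argument one passes to the link of the simplex $\sigma_2$ of $\Delta_\Lambda$ with barycenter $x_2$ and identifies it with $\Delta_{\Lambda\setminus S_2}$, which is the join of the Artin complexes of the components of $\Lambda\setminus S_2$. The vertices of $\sigma_1\setminus\sigma_2$ and of $\sigma_3\setminus\sigma_2$ have types in $S_1\setminus S_2$ and $S_3\setminus S_2$ respectively. They therefore lie in different join factors and together with $\sigma_2$ span a simplex. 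Your splitting $h=(h_3h_0)\,h_1$ is exactly this join decomposition made explicit, and it avoids having to identify links, so it is fully self-contained.

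Two minor points.
\begin{enumerate}
\item In the reduction step, take the common element of the cosets $x_1$ and $x_2$ as the representative of both; replacing only $g_1$ by it does not give $g_1=g_2$.
\item The splitting step needs less than a direct product decomposition or \cite{lek}. You only use that any word in $T^{\pm 1}$ can be rearranged as $h_1h_3h_0$, with $h_1$ a word in $T_1^{\pm1}$ that commutes with $h_3h_0$. This is immediate from the relations with $m=2$ between different components, and uniqueness of the decomposition is never used.
\end{enumerate}
The genuinely nontrivial input, van der Lek's theorem, is hidden in the coset description of $\Delta'_\Lambda$ and of the relation $\sim$, which the paper states just before the lemma.
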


If $S_1\setminus S_2=\emptyset$, then it is understood that the assumption of Lemma~\ref{lem:transitive} is satisfied.
\begin{lem}[\cite{huangbestvina}, Lem.~2.16]
	\label{lem:4-cycle}
	Let $\{x_i\}_{i\in \mathbb Z/4\mathbb Z}$ be vertices of $\Delta'_\Lambda$ with $x_i\sim x_{i+1}$ for each $i$. Then there is a vertex $x'_2\in \Delta'_\Lambda$ of the same type as $x_4$ such that
	$x'_2\sim \{x_1,x_2,x_3\}$.
	
	Let $\{x_i\}_{i\in \mathbb Z/5\mathbb Z}$ be vertices of $\Delta'_\Lambda$ such
	that $x_i\sim x_{i+1}$ for each $i$. Then there exist vertices $x'_2,x'_3$ of
	$\Delta'_\Lambda$ of the same types as $x_2,x_3$, respectively, such that
	\[
	x'_3\sim\{x'_2,x_4,x_5\}
	\quad\text{and}\quad
	x'_2\sim\{x'_3,x_1,x_5\}.
	\]
\end{lem}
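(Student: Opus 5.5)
The plan is to work directly with cosets, using the fact from \cite{lek} that standard parabolic subgroups behave well under intersection: $A_{T}\cap A_{U}=A_{T\cap U}$, so two intersecting cosets $gA_T\cap hA_U$ form a single coset of $A_{T\cap U}$. Write $T_i=S\setminus S_i$, so that $x_i$ is a coset of $A_{T_i}$ and $x_i\sim x_j$ means the cosets meet. For the $4$-cycle, translate so that $x_2=A_{T_2}$. Then $x_1=aA_{T_1}$ and $x_3=bA_{T_3}$ with $a,b\in A_{T_2}$. Also $x_4=cA_{T_4}$ with $a^{-1}c\in A_{T_1}A_{T_4}$ and $b^{-1}c\in A_{T_3}A_{T_4}$. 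We want $e\in A_{T_2}$ such that $eA_{T_4}$ meets both $aA_{T_1}$ and $bA_{T_3}$; then $x_2':=eA_{T_4}$ has the type of $x_4$ and is adjacent to $x_1,x_2,x_3$.

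To produce $e$ I would \emph{project $c$ onto $A_{T_2}$}. For this I would use the set-theoretic retraction $\rho=\rho_{T_2}\colon A_S\to A_{T_2}$ of Charney--Paris and Godelle--Paris (convexity of standard parabolic subgroups). The properties I need are:
\begin{enumerate}
\item $\rho$ is $A_{T_2}$-equivariant on the left, i.e.\ $\rho(gh)=g\rho(h)$ for $g\in A_{T_2}$;
\item for any $U\subset S$ and $u\in A_U$, $h\in A_S$ one has $\rho(hu)\in\rho(h)A_{U\cap T_2}$.
\end{enumerate}
Property (2) is the coset form of ``$\rho$ maps $A_U$ into $A_{U\cap T_2}$''. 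I would establish it by checking it on the Salvetti complex, or by reducing to the Coxeter group and using minimal coset representatives, as in the proof of Lemma~\ref{lem:nonadj}.

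Granting these properties, set $e=\rho(c)$. Writing $c=a u_1u_4$ with $u_i\in A_{T_i}$, we get $e=a\rho(u_1u_4)\in a\rho(u_1)A_{T_4\cap T_2}=a u_1'A_{T_4\cap T_2}$ with $u_1'\in A_{T_1\cap T_2}$. Hence $eA_{T_4}\ni a u_1'\in aA_{T_1}$. The same computation with $c=bu_3u_4'$ gives $eA_{T_4}\cap bA_{T_3}\neq\emptyset$. Since $e\in A_{T_2}$, we also have $eA_{T_4}\cap A_{T_2}\ni e$. This proves the first assertion.

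For the $5$-cycle, I would reduce to the $4$-cycle case applied twice with a projection. Translate so that $x_1\cap x_5\ni 1$, and project suitable elements of $x_2\cap x_3$ and $x_3\cap x_4$ simultaneously. Concretely, I would first apply the $4$-cycle statement to an auxiliary $4$-cycle to obtain $x_3'$ of the type of $x_3$ with $x_3'\sim x_4,x_5$, and then apply it again to obtain $x_2'$ adjacent to $x_3',x_1,x_5$. A cleaner alternative is to define $x_3'$ and $x_2'$ as images of representatives of $x_3$ and $x_2$ under the retractions onto $A_{T_5}$ composed with $A_{T_4}$ (respectively $A_{T_1}$), and to verify each of the required intersections using properties (1)--(2) as above. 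Either way, one needs the consistency $x_2'\sim x_3'$, which comes from projecting a single element of $x_2\cap x_3$.

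The main obstacle is property (2) of the retraction, in particular its coset version for products $u_1u_4$ of elements from two different parabolics. Compatibility of $\rho$ with right multiplication is exactly what is subtle in Artin groups. If this fails, I would fall back on the Coxeter group, where the analogue follows from the theory of minimal coset representatives (as used in Lemma~\ref{lem:nonadj}). I would then lift via the description of $\Delta_\Lambda$'s local structure, using that $\Delta_\Lambda$ is flag \cite{godelle2012k}.
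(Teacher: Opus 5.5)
\textbf{The gap: property (2) is false.} It is in fact inconsistent with property (1) whenever some label $m_{st}$ is odd. Take $s\in T_2$ and $t\notin T_2$ with $m_{st}=3$, so that $s\cdot ts=ts\cdot t$ in $A_S$.
\begin{enumerate}
\item Property (1) gives $\rho(s\cdot ts)=s\,\rho(ts)$.
\item Property (2), with $h=ts$ and $U=\{t\}$, gives $\rho(ts\cdot t)\in\rho(ts)A_{\emptyset}=\{\rho(ts)\}$.
\end{enumerate}
Together these force $s=1$. The same computation works in $W_\Lambda$, so the Coxeter analogue of (2) is false too, and your fallback to minimal coset representatives cannot establish it.

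Concretely, take the Godelle--Paris retraction onto $A_{\{s\}}$ inside some $S\supsetneq\{s,t\}$. It satisfies $\rho(ts)=1$ but $\rho(tst)=s$: the last letter $t$ is conjugated by the coset representative $ts$ to $s$. So your step $\rho(u_1u_4)\in\rho(u_1)A_{T_4\cap T_2}$ fails for $T_2=\{s\}$, $T_4=\{t\}$, $T_1\supseteq\{s,t\}$, $u_1=ts$, $u_4=t$. The adjacency you want still holds there, since $s\in A_{T_1}$, but not for the reason you give. Only the case $h=1$ of (2) is correct, namely $\rho(A_U)\subset A_{U\cap T_2}$. It does not pass to cosets $hA_U$.

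\textbf{What the argument actually needs.} You need the inclusion $\rho_X(A_UA_V)\subset A_UA_V$ for all $X,U,V\subset S$. With it, both parts follow.
\begin{enumerate}
\item For the $4$-cycle, use $\rho(c)=a\rho(a^{-1}c)=b\rho(b^{-1}c)$ with $a^{-1}c\in A_{T_1}A_{T_4}$ and $b^{-1}c\in A_{T_3}A_{T_4}$.
\item For the $5$-cycle, translate so that $x_5=A_{T_5}$, pick one $g\in x_2\cap x_3$, and set $x'_2=eA_{T_2}$, $x'_3=eA_{T_3}$ with $e=\rho_{T_5}(g)$.
\end{enumerate}
Your ``auxiliary $4$-cycle'' route for the $5$-cycle is not specified, and I do not see how it would go.

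This inclusion is the real content of the lemma, and it is not a formal consequence of anything you list. Here is one way to prove it. Let $v\in A_U$ and $u\in A_V$, and let $\bar v$ be the image of $v$ in $W_\Lambda$.
\begin{enumerate}
\item One has $\rho_X(vu)=\rho_X(v)\,\phi_d(\kappa)$, where $\rho_X(v)\in A_{U\cap X}$ and $d\in W_U$ is the minimal element of $W_X\bar vW_V$.
\item Here $\kappa\in A_K$ with $K=V\cap d^{-1}Xd$, and $\phi_d\colon A_K\to A_X$ is the homomorphism induced by $k\mapsto dkd^{-1}$.
\item A root argument gives: if $d\in W_J$, $k\notin J$ and $dkd^{-1}\in S$, then $dkd^{-1}=k$ and $d\in W_{J\cap k^\perp}$, where $k^\perp$ is the set of generators commuting with $k$.
\item Iterate this starting from $J=U$. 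It splits $K=K_1\sqcup K_2$, where $K_1\subset V$ is fixed by $d$, $dK_2d^{-1}\subset U$, and $K_1$ commutes with $K_2\cup dK_2d^{-1}$.
\item Hence $\phi_d(\kappa)\in A_UA_V$.
\end{enumerate}

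\textbf{The fallback does not close the gap.} Solving the problem in $W_\Lambda$ and then ``lifting'' does not work as stated. Cosets whose images meet in $W_\Lambda$ need not meet in $A_\Lambda$, and flagness of $\Delta_\Lambda$ does not bridge that.
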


\medskip
\noindent\textbf{Relative Artin complexes.}
Let $A_\Lambda$ be an Artin group with generating set $S$, and let
$S'\subset S$. The \emph{$(S,S')$-relative Artin complex}
$\Delta_{S,S'}$, introduced in \cite{huang2023labeled}, is the induced
subcomplex of $\Delta_S$ spanned by vertices of type $\hat s$ with $s\in S'$.
We also write $\Delta_{S,S'}=\Delta_{\Lambda,\Lambda'}$, where
$\Lambda'$ is the induced subdiagram of $\Lambda$ on $S'$.

\begin{lem}[\cite{huang2023labeled}, Lem.~6.2]
	\label{lem:sc}
	If $|S'|\ge 3$, then the relative Artin complex $\Delta_{S,S'}$ is
	simply connected.
\end{lem}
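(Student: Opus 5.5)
We need a proof plan for Lemma~\ref{lem:sc}: if $|S'|\ge 3$, then $\Delta_{S,S'}$ is simply connected.

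The approach is to first show that the full Artin complex $\Delta_S$ is simply connected. Then show that passing to the induced subcomplex on vertices of types $\hat s$ with $s\in S'$ does not create new $\pi_1$ when $|S'|\ge 3$.

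\emph{Simple connectivity of $\Delta_S$.} Use the standard complex-of-groups or coset-complex criterion. The group $A_S$ acts on $\Delta_S$ without inversions, with strict fundamental domain the top simplex $\sigma$ whose vertices are the cosets $A_{\hat s}$. The stabilizers of the faces of $\sigma$ are the standard parabolics $A_{S\setminus T}$. By the classical criterion (Abels--Holz, or Behr's theorem for simplices of groups), $\Delta_S$ is connected and simply connected provided two conditions hold:
\begin{enumerate}
\item $A_S$ is generated by the vertex stabilizers $A_{\hat s}$;
\item $A_S$ is the amalgam (colimit) of the $A_{S\setminus T}$ over the poset of faces, with $|T|\le 2$ sufficing.
\end{enumerate}
When $|S|\ge 3$, every defining relation of $A_S$ involves only two generators $s,t$. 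Each such relation therefore lies in some $A_{\hat r}$ with $r\ne s,t$. So the colimit of the $A_{\hat r}$ amalgamated along the $A_{\widehat{\{r,r'\}}}$ has the same presentation as $A_S$, which gives simple connectivity.

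\emph{Passing to the relative complex.} Write $S''=S\setminus S'$. I would use a retraction-style homotopy argument. First, any edge loop in $\Delta_{S,S'}$ is null-homotopic in $\Delta_S$, so it bounds a simplicial disk $D\to\Delta_S$. The goal is to push $D$ off vertices whose type lies in $S''$.

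Take an interior vertex $v$ of $D$ of type $\hat u$ with $u\in S''$. Its link in $D$ is a cycle $c$ in $\lk(v,\Delta_S)\cong\Delta_{S\setminus\{u\}}$. It suffices to show that $c$ can be filled inside the subcomplex of the link spanned by vertices with types in $S'$, that is, inside $\Delta_{S\setminus\{u\},S'}$. Replacing the star of $v$ by that filling strictly decreases the number of $S''$-type vertices.

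This reduces the problem to the statement that $\Delta_{S\setminus\{u\},S'}$ is simply connected, which is the same statement with $|S|$ smaller and $S'$ unchanged. So I would run an induction on $|S\setminus S'|$. The base case $S'=S$ is the first step.

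One subtlety is that $c$ may pass through vertices of types in $S''$ as well. I would handle this by processing vertices in order, or by first homotoping $c$ within the link. Flagness of $\Delta_S$ (\cite[Prop.~4.5]{godelle2012k}) keeps the replacements simplicial.

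\emph{Main obstacle.} The hard point is when the link of $u$ decomposes as a join. If $\Lambda\setminus\{u\}$ is disconnected, then $\Delta_{S\setminus\{u\}}$ is a join of the complexes of its components. The relative subcomplex is then a join of relative complexes, and some factors may contain only one or two $S'$-types, so they are possibly disconnected. I would argue via join structure: a join $X*Y$ with both factors nonempty is connected, and it is simply connected if one factor is connected, or if the total number of $S'$-types is at least $3$. This is exactly where the hypothesis $|S'|\ge 3$ enters. Small cases, such as a two-type relative complex being a connected graph (a Bass--Serre-like tree or bipartite graph), need Lemma~\ref{lem:nonadj}-style control to ensure connectivity. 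I expect the careful bookkeeping of these join cases to be the main work.
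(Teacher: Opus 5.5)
Your first step already proves the lemma, so the detour through $\Delta_S$ is unnecessary. The group $A_S$ acts on $\Delta_{S,S'}$ itself with strict fundamental domain a simplex whose vertices are indexed by $S'$, and the face indexed by $T\subseteq S'$ has stabilizer $A_{S\setminus T}$. By the same criterion you invoke, $\Delta_{S,S'}$ is simply connected as soon as $A_S$ is the colimit of the $A_{\hat s}$ ($s\in S'$) amalgamated along the $A_{S\setminus\{s,t\}}$ ($s,t\in S'$). Every generator $u$ lies in $A_{\hat s}$ for each $s\in S'\setminus\{u\}$, and these copies are identified through the edge groups. Every two-generator relation in $u,v$ lies in $A_{\hat r}$ for some $r\in S'\setminus\{u,v\}$, and such an $r$ exists exactly because $|S'|\ge 3$. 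This direct argument is the proof behind the citation \cite[Lem.~6.2]{huang2023labeled}. The only change from your base case is choosing $r$ in $S'$ rather than in $S$, and that is precisely where the hypothesis is used.

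As written, your reduction step has a gap. The link of a type-$\hat u$ vertex $v$ of $D$ is a cycle in $\lk(v,\Delta_S)\cong\Delta_{S\setminus\{u\}}$, and it can pass through vertices of other types in $S''$. So the inductive hypothesis on $\Delta_{S\setminus\{u\},S'}$ does not apply to it, and ``processing vertices in order'' does not by itself fix this. The repair is to remove one type at a time: for $u\in S\setminus S'$, work in $\Delta_{S,S'\cup\{u\}}$ instead of $\Delta_S$, which is simply connected by induction on $|S\setminus S'|$. There every neighbour of a type-$\hat u$ vertex $x$ has type in $S'$. Moreover $\lk(x,\Delta_{S,S'\cup\{u\}})\cong\Delta_{S\setminus\{u\},S'}$ by Lemma~\ref{lem:link}, and this is connected and simply connected by induction. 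Distinct type-$\hat u$ vertices are non-adjacent, so $\Delta_{S,S'\cup\{u\}}$ is $\Delta_{S,S'}$ with cones attached along these links. Van Kampen then gives $\pi_1(\Delta_{S,S'})=1$, and this formulation also avoids having to make your disk maps nondegenerate at $v$.

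Finally, the join case you single out as the main obstacle is not one. Neither the colimit argument nor the induction assumes the diagram is connected, so $\Delta_{S\setminus\{u\},S'}$ splitting as a join needs no separate treatment. Your general join claim is also false as stated: the suspension of two disjoint edges carries three types but has $\pi_1\cong\mathbb Z$. It does hold for relative Artin complexes, because there a join factor carrying at least two types is connected.
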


Links of vertices in relative Artin complexes are described as follows.

\begin{lem}[\cite{huang2023labeled}, Lem.~6.4]
	\label{lem:link}
	Let $\Delta=\Delta_{\Lambda,\Lambda'}$, and let $v\in\Delta$ be a vertex of
	type $\hat s$ with $s\in \Lambda'$. Let $\Lambda_s$ and $\Lambda'_s$ be the
	induced subdiagrams of $\Lambda$ and $\Lambda'$, respectively, spanned by the
	vertices other than $s$. Then there is an isomorphism preserving types of vertices
	\[
	\lk(v,\Delta)\cong \Delta_{\Lambda_s,\Lambda'_s}.
	\]
	
	Moreover, let $I_s$ be the union of connected components of $\Lambda_s$ that
	contain at least one component of $\Lambda'_s$. Then
	$\Lambda'_s\subset I_s$, and there is an isomorphism preserving types of vertices
	$$
	\lk(v,\Delta)\cong \Delta_{I_s,\Lambda'_s}.
	$$
\end{lem}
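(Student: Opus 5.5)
This statement is cited from \cite[Lem.~6.4]{huang2023labeled}. The plan is to prove both isomorphisms directly from the coset description of $\Delta_\Lambda$, using the standard description of vertex links in Artin complexes.

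\textbf{Step 1: reduce to the identity coset.} Translate by $A_\Lambda$ so that $v$ corresponds to the identity coset $A_{\hat s}=A_{\Lambda_s}$. The link $\lk(v,\Delta_\Lambda)$ consists of the simplices of $\Delta_\Lambda$ whose vertices are cosets $gA_{\hat t}$ meeting $A_{\Lambda_s}$, and which together with $v$ still have nonempty common intersection. Each such coset can be rewritten as $hA_{\hat t}$ with $h\in A_{\Lambda_s}$.

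\textbf{Step 2: identify the full link.} Send $hA_{\hat t}$ to $h\bigl(A_{\hat t}\cap A_{\Lambda_s}\bigr)=hA_{\Lambda_s\setminus\{t\}}$. This is a vertex of $\Delta_{\Lambda_s}$ of type $\hat t$. To see that the map is type-preserving, well defined and bijective, and that it carries simplices to simplices, one needs two facts:
\begin{enumerate}
\item parabolic subgroups intersect correctly: $A_{T_1}\cap A_{T_2}=A_{T_1\cap T_2}$ \cite{lek};
\item for cosets all meeting $A_{\Lambda_s}$, nonempty total intersection can be tested inside $A_{\Lambda_s}$.
\end{enumerate}
This yields the standard isomorphism $\lk(v,\Delta_\Lambda)\cong\Delta_{\Lambda_s}$.

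\textbf{Step 3: pass to the relative complex.} Since $\Delta_{\Lambda,\Lambda'}$ is an induced subcomplex, its link at $v$ is the induced subcomplex of $\lk(v,\Delta_\Lambda)$ on vertices of type $\hat t$ with $t\in\Lambda'_s$. Under the isomorphism of Step 2, this is exactly $\Delta_{\Lambda_s,\Lambda'_s}$, which gives the first claim.

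\textbf{Step 4: restrict to $I_s$.} Write $\Lambda_s=I_s\sqcup J$, where $J$ is the union of the components of $\Lambda_s$ containing no vertex of $\Lambda'_s$. Then $\Lambda'_s\subset I_s$ by definition. Moreover $A_{\Lambda_s}=A_{I_s}\times A_J$, and $\Delta_{\Lambda_s}$ is the join $\Delta_{I_s}*\Delta_J$, where the $\Delta_{I_s}$ factor consists of the cosets $hA_{\Lambda_s\setminus\{t\}}$ with $t\in I_s$. For such $t$ we have $A_J\subset A_{\Lambda_s\setminus\{t\}}$, so
\[
hA_{\Lambda_s\setminus\{t\}}=h'A_{I_s\setminus\{t\}}\times A_J,
\]
with $h'$ the $A_{I_s}$-component of $h$. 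Hence the induced subcomplex on types in $\Lambda'_s$ lies entirely in the $\Delta_{I_s}$ factor, and it equals $\Delta_{I_s,\Lambda'_s}$. This gives the second isomorphism.

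\textbf{Main obstacle.} The delicate point is fact (2) in Step 2: a collection of cosets $h_iA_{\hat t_i}$ with $h_i\in A_{\Lambda_s}$ that intersect in $A_\Lambda$ must also intersect inside $A_{\Lambda_s}$. I would prove this using intersections of parabolic subgroups \cite{lek}. If $g$ lies in every $h_iA_{\hat t_i}$ and in $A_{\Lambda_s}$, then $h_i^{-1}g\in A_{\hat t_i}\cap A_{\Lambda_s}=A_{\Lambda_s\setminus\{t_i\}}$, so $g$ is a common point of the cosets $h_iA_{\Lambda_s\setminus\{t_i\}}$.

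If instead the common point $g$ a priori lies only in $A_\Lambda$, I would invoke flagness of $\Delta_\Lambda$ \cite[Prop.~4.5]{godelle2012k}. Since each $h_iA_{\hat t_i}$ meets $A_{\hat s}$, the pairwise intersections together with $v$ already span a simplex, so a common point lying in $A_{\hat s}$ can be chosen.
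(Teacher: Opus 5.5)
Your proposal is correct. The present paper only quotes the lemma, and your argument is the standard one behind \cite[Lem.~6.4]{huang2023labeled}: identify $\lk(v,\Delta_\Lambda)$ with $\Delta_{\Lambda_s}$ by intersecting cosets with $A_{\hat s}$ using van der Lek's intersection theorem, restrict to the full subcomplex on types in $\Lambda'_s$, and use $A_{\Lambda_s}=A_{I_s}\times A_J$ for the second isomorphism. Your ``main obstacle'' is not actually one: a simplex of $\lk(v,\Delta_\Lambda)$ by definition comes with a common point in $A_{\hat s}$, so flagness of $\Delta_\Lambda$ is needed only if you define the link as the full subcomplex spanned by the neighbours of $v$.
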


The following is a special case of \cite[Lem 11.7 (2)]{huang2025353}.
\begin{lem}
	\label{lem:dr}
	Let $T$ be a subset of vertices of a Coxeter diagram $\Lambda$. Suppose $\Delta_{\Lambda\setminus R}$ is contractible for each nonempty subset $R$ of $T$. Then $\Delta_{\Lambda}$ deformation retracts onto $\Delta_{\Lambda,\Lambda\setminus T}$.
\end{lem}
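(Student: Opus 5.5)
This is the special case $\Lambda'=\Lambda\setminus T$ of \cite[Lem 11.7 (2)]{huang2025353}, but I would prove it directly by induction on $|T|$, removing one vertex type at a time.

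\textbf{Base case.} If $T=\emptyset$ there is nothing to prove.

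\textbf{Inductive step.} Let $t\in T$ and $T'=T\setminus\{t\}$. Then $\Delta_{\Lambda,\Lambda\setminus T}$ is the full subcomplex of $\Delta_{\Lambda,\Lambda\setminus T'}$ obtained by deleting all vertices of type $\hat t$. Applying the induction hypothesis to $T'$ (whose nonempty subsets are also subsets of $T$), $\Delta_\Lambda$ deformation retracts onto $\Delta_{\Lambda,\Lambda\setminus T'}$. It therefore suffices to show that $Y:=\Delta_{\Lambda,\Lambda\setminus T'}$ deformation retracts onto its full subcomplex $Y_0$ spanned by vertices of type other than $\hat t$.

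\textbf{Removing the type-$\hat t$ vertices.} The vertices of type $\hat t$ are pairwise non-adjacent, since a simplex has at most one vertex of each type. Hence $Y$ is obtained from $Y_0$ by coning off, for each type-$\hat t$ vertex $v$, its link $\lk(v,Y)\subset Y_0$. Since $Y$ is a full subcomplex of a flag complex, $\operatorname{st}(v,Y)$ is the join $v*\lk(v,Y)$ and meets $Y_0$ exactly in $\lk(v,Y)$. Consequently, if every such link is contractible, we can deformation retract each cone $v*\lk(v,Y)$ onto its base $\lk(v,Y)$. Because distinct cones meet only inside $Y_0$, these retractions fix $Y_0$ pointwise and patch together to a deformation retraction of $Y$ onto $Y_0$.

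\textbf{Contractibility of the links.} By Lemma~\ref{lem:link}, $\lk(v,Y)\cong\Delta_{\Lambda\setminus\{t\},\,(\Lambda\setminus\{t\})\setminus T'}$. Now apply the lemma itself inductively to the diagram $\Lambda\setminus\{t\}$ with vertex set $T'$. For any nonempty $R\subset T'$, the subset $R\cup\{t\}$ is a nonempty subset of $T$, so $\Delta_{(\Lambda\setminus\{t\})\setminus R}=\Delta_{\Lambda\setminus(R\cup\{t\})}$ is contractible by hypothesis. Since $|T'|<|T|$, the inductive statement shows that $\Delta_{\Lambda\setminus\{t\}}$ deformation retracts onto this relative complex. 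The complex $\Delta_{\Lambda\setminus\{t\}}$ is itself contractible by hypothesis with $R=\{t\}$, so the link is contractible.

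\textbf{Main obstacle.} The only delicate point is making the induction well-founded, since it is applied to the smaller diagram $\Lambda\setminus\{t\}$ as well as to $\Lambda$. I would set it up as induction on $|T|$ over all pairs (diagram, subset) simultaneously, which handles both uses at once. The degenerate case where $\Lambda\setminus T$ or the link is empty needs a separate check. If $\Lambda\setminus T=\emptyset$, then $R=T=\Lambda$ must have contractible $\Delta_\emptyset$, which we treat as vacuous or excluded. The gluing step is standard: a contractible subcomplex $L$ of a CW complex gives a deformation retraction of $v*L$ onto $L$ rel $L$, by the homotopy extension property. Finally, since the Artin complex is a flag simplicial complex, the star really is the join, and the cones are attached along their links as claimed.
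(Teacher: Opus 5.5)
Your argument is correct. The paper gives no proof of its own here: it only records the statement as a special case of \cite[Lem 11.7 (2)]{huang2025353}. Your write-up is therefore a self-contained substitute for that citation rather than a variant of an argument in the text.

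Your route is the natural one:
\begin{itemize}
\item peel off one type $\hat t$ at a time;
\item observe that $Y=\Delta_{\Lambda,\Lambda\setminus T'}$ is $Y_0=\Delta_{\Lambda,\Lambda\setminus T}$ with cones $v*\lk(v,Y)$ attached;
\item identify $\lk(v,Y)\cong\Delta_{\Lambda\setminus\{t\},\Lambda\setminus T}$ via Lemma~\ref{lem:link};
\item get contractibility of this link from the inductive hypothesis for $(\Lambda\setminus\{t\},T')$ together with contractibility of $\Delta_{\Lambda\setminus\{t\}}$ (the case $R=\{t\}$).
\end{itemize}
Running the induction on $|T|$ over all pairs $(\Lambda,T)$ at once makes both uses of the inductive hypothesis legitimate. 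Compared with the bare citation, your version shows exactly which instances of the hypothesis enter where. The sets $R$ with $t\in R$ feed the link computation, and the nonempty $R\subset T'$ feed the ambient retraction, so together they use all nonempty $R\subset T$.

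Two minor remarks.
\begin{itemize}
\item Flagness plays no role in $\operatorname{st}(v,Y)=v*\lk(v,Y)$, which holds in any simplicial complex. What you actually use is that two distinct vertices of type $\hat t$ are never adjacent, since intersecting cosets of $A_{\hat t}$ coincide. Hence $\lk(v,Y)\subset Y_0$, and distinct stars meet only inside $Y_0$.
\item The degenerate case never arises inside the recursion, since $(\Lambda\setminus\{t\})\setminus T'=\Lambda\setminus T$. When $T=\Lambda$, the hypothesis with $R=T$ already fails under the convention $\Delta_\emptyset=\emptyset$, so the statement is vacuous there.
\end{itemize}
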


\subsection{Posets}
\label{subsec:posets}
A poset $P$ is called \emph{weakly graded} if there is a \emph{poset map} $r:P\to \mathbb Z$, i.e.
such that for every $x<y$ in $P$, we have $r(x)<r(y)$: the map $r$ is called
a \emph{rank map}. A poset $P$ is \emph{weakly boundedly graded} if there is a rank
map $r:P\to\mathbb Z$ with finite image. 
An \emph{upper bound} for a pair of elements $a,b\in P$ is an element $c\in P$ such
that $a\le c, b\le c$. A \emph{minimal upper bound} for $a,b$ is an upper bound $c$ such
that there does not exist upper bound $c'$ of $a,b$ such that $c'<c$. The \emph{join} of
two elements $a, b$ in $P$ is an upper bound $c$ of them such that for any other
upper bound $c'$ of $a, b$, we have $c\le c'$. We define \emph{lower bound}, \emph{maximal lower
	bound}, and \emph{meet} similarly. In general, the meet or join of two elements in $P$
might not exist. A poset $P$ is a \emph{lattice} if any pair of elements have a meet
and a join. A \emph{quasi-bowtie} $x_1y_1x_2y_2$ consists of elements of $P$ satisfying $x_i<y_j$ for all $i,j=1,2$, and it is a \emph{bowtie} if these elements are distinct, $y_1,y_2$ are minimal upper bounds for $\{x_1,x_2\}$, and $x_1,x_2$ are maximal lower bounds for $\{y_1,y_2\}$.

%The name comes from that if we draw $y_1,y_2$ above $x_1,x_2$ in the Hasse diagram, then we obtain a bowtie shaped configuration.
\begin{definition}
	A \emph{center} for a quasi-bowtie $x_1y_1x_2y_2$ in $P$ is an element $z\in P$ such that $x_i\le z\le y_j$ for all $i,j=1,2$.
	A poset $P$ is \emph{bowtie free} if each quasi-bowtie in $P$ has a center.
\end{definition}

\begin{lem}[{\cite[Prop 1.5]{brady2010braids} and \cite[Prop 2.4]{haettel2023new}}]
	\label{lem:posets}
	If $P$ is a bowtie free weakly graded poset, then any subset $Q\subset P$ with a lower bound has the join, and any subset $Q\subset P$ with an upper bound has the meet.
\end{lem}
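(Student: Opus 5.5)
As printed, the statement pairs ``lower bound'' with ``join'', and that cannot hold. A poset with two incomparable maximal elements $u_1,u_2$ above a common element $a$ is bowtie free (it has no quasi-bowtie) and weakly graded. Yet $Q=\{u_1,u_2\}$ has a lower bound and no join. The cited results \cite[Prop 1.5]{brady2010braids} and \cite[Prop 2.4]{haettel2023new} state the version in which a subset with an \emph{upper} bound has a join. So I will prove that any $Q\subset P$ with an upper bound has a join. The statement about meets then follows by applying this to the opposite poset $P^{op}$. That poset is again bowtie free, since the definition of a center is symmetric, and weakly graded via $-r$. Throughout, $r:P\to\mathbb Z$ is a rank map and $Q$ is nonempty; if $Q$ is empty one needs a minimum of $P$ instead.

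\textbf{Step 1: minimal upper bounds exist.} Let $c$ be an upper bound of $Q$ and fix $q\in Q$. Every upper bound $u$ of $Q$ satisfies $r(u)\ge r(q)$. Any strictly descending chain of upper bounds starting at $c$ has strictly decreasing ranks lying in the finite interval $[r(q),r(c)]$, so it terminates. Hence below every upper bound $c$ there is a minimal upper bound $m\le c$.

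\textbf{Step 2: two-element sets.} Let $Q=\{a,b\}$ have an upper bound. If $a,b$ are comparable, the larger one is the join. Otherwise, let $m,m'$ be minimal upper bounds. Since $a,b$ are incomparable, neither can equal $m$ or $m'$, so all four relations $a<m$, $a<m'$, $b<m$, $b<m'$ are strict. Thus $a\,m\,b\,m'$ is a quasi-bowtie, and bowtie freeness gives a center $z$ with $a,b\le z\le m,m'$. Then $z$ is an upper bound of $\{a,b\}$, and minimality forces $z=m=m'$. So the minimal upper bound $m$ is unique. For any upper bound $c$, Step 1 gives a minimal upper bound below $c$, which must be $m$; hence $m\le c$, and $m$ is the join $a\vee b$.

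\textbf{Step 3: finite sets.} I argue by induction on $|F|$. Let $F'=F\setminus\{q\}$; it has the same upper bounds as $F$ and more. By induction $j=\bigvee F'$ exists and lies below any upper bound $c$ of $F$. Hence $c$ is an upper bound of $\{j,q\}$, so Step 2 gives $j\vee q$. This element is an upper bound of $F$ and lies below every upper bound of $F$, so it is $\bigvee F$.

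\textbf{Step 4: arbitrary $Q$, the main obstacle.} Since $Q$ may be infinite, finite induction does not suffice, and here the rank map does the work. For finite $F\subset Q$, the join $\bigvee F$ exists by Step 3. Its rank is at most $r(c)$ for a fixed upper bound $c$ of $Q$, because $\bigvee F\le c$. So choose a finite $F$ for which $r(\bigvee F)$ is maximal. For any $q\in Q$ we have $\bigvee F\le\bigvee(F\cup\{q\})$; if this were strict, the rank would strictly increase and contradict maximality. Thus $q\le\bigvee F$ for all $q\in Q$, so $\bigvee F$ is an upper bound of $Q$. It is also below every upper bound of $Q$, since any such is an upper bound of $F$. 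Hence $\bigvee F=\bigvee Q$.
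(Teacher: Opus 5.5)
Your proof is correct, and you are right that the printed statement has join and meet swapped. Your three-element poset is a genuine counterexample to the printed version. The paper itself uses the corrected form: in the proof of Lemma~\ref{lem:tripod} a common upper bound yields a join, and in the proof of Lemma~\ref{lem:5cycle} a common lower bound yields a meet. One small slip: under the paper's definition a quasi-bowtie need not have distinct entries, so your example does contain quasi-bowties such as $a\,u_1\,a\,u_2$. Each of them has center $a$, so the poset is still bowtie free, as you claim.

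The paper gives no argument of its own; it only cites the two references. Your proof is self-contained and uses the paper's own definition of bowtie free (every quasi-bowtie has a center). With that definition the two-element case is a single application of the definition to $a\,m\,b\,m'$. With a ``no bowtie'' formulation in the style of Brady--McCammond (four distinct elements with minimal-upper-bound and maximal-lower-bound conditions), one would first have to pass to maximal lower bounds of $\{m,m'\}$ lying above $a$ and $b$ in order to produce a bowtie. Your Steps~3 and~4 then extend the pairwise statement to arbitrary, possibly infinite, subsets. This uses only that ranks are integers bounded above by $r(c)$, so a bounded grading is not needed. Citing buys brevity; your route makes explicit the infinite-subset case the paper relies on. Your caveat about $Q=\emptyset$ is also correct.
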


\begin{definition}
	\label{def:flag}
	A poset $P$ is \emph{upward flag} if any three pairwise upper bounded elements have an upper bound. A poset is \emph{downward flag} if any three pairwise lower bounded elements have a lower bound. A poset is \emph{flag} if it is both upward flag and downward flag.
	
	A poset $P$ is \emph{weakly upward flag} if whenever each pair in $\{x,y,z\}$ have a upper bound in $P$ which is not maximal, then $\{x,y,z\}$ have a common upper bound.
	Similarly, we define weakly downward flag and weakly flag.
\end{definition}
A weakly graded poset $(P,\le)$ is  with a rank function $r:P\to \mathbb Z$ taking value between $1$ and $n$ is \emph{$r$-saturated}, if for any $p\in P$ and any integers $m_1,m_2$ with $n\ge m_1>r(p)>m_2\ge 1$, there is $p_1,p_2\in P$ such that $p_1\ge p\ge p_2$ and $r(p_i)=m_i$ for $i=1,2$. 
\begin{lem}
	\label{lem:bowtie free criterion}
	Let $P$ be an $r$-saturated weakly graded poset with its rank function $r$ taking value between $1$ and $n$. Then $P$ is bowtie free under the following two additional assumptions:
	\begin{enumerate}
		\item for each $p\in P$ with $r(p)=n$, $P_{<p}=\{q\in P\mid q<p\}$ is bowtie free;
		\item each quasi-bowtie $x_1y_1x_2y_2$ in $P$ with $r(x_1)=r(x_2)<r(y_1)=r(y_2)=n$ has a center.
	\end{enumerate}
\end{lem}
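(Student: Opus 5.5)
Take a quasi-bowtie $x_1y_1x_2y_2$ in $P$, so $x_i<y_j$ for all $i,j$. We must find a center $z$ with $x_i\le z\le y_j$. The plan is to push $y_1,y_2$ up to rank $n$ using saturation, use assumption (2) to get a center at the top level, and then descend into a lower interval where assumption (1) gives bowtie freeness.

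First, the degenerate cases. If $x_1\le x_2$, then $z=x_2$ is a center; symmetrically if $x_2\le x_1$. If $y_1\le y_2$, then $z=y_1$ works, and symmetrically. So we may assume the $x_i$ are incomparable and the $y_j$ are incomparable. In particular $r(y_j)<n$ would need checking, but note that if some $y_j$ had rank $n$ we could still proceed as below.

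Next, reduce to the case where the $x_i$ have equal rank and lie just below the $y_j$. Choose $m$ with $\max_j r(x_j)\le m<\min_j r(y_j)$. I would like to replace $x_1,x_2$ by elements of rank exactly $m$, but moving the $x_i$ up via saturation loses the relation $x_i<y_j$. So I will instead keep the $x_i$ as they are and work entirely from above.

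\textbf{Main step.} By $r$-saturation, pick $p_j\ge y_j$ with $r(p_j)=n$ (take $p_j=y_j$ if $r(y_j)=n$). Then $x_1p_1x_2p_2$ is a quasi-bowtie with top elements of rank $n$. The difficulty is that assumption (2) requires $r(x_1)=r(x_2)$.

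To fix this, take $x_1$ with, say, $r(x_1)\le r(x_2)$. Working inside the poset $P_{<p_1}$ (or $P_{\le p_1}$), use saturation to replace $x_1$ by some $x_1'$ with $x_1\le x_1'<y_1$ and $r(x_1')=r(x_2)$. This requires saturation inside the relevant interval, and I would establish it by induction on rank.

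Rather than that route, the cleaner plan is induction on $n - r(x_1) - r(x_2)$ (or on $n$), as follows:

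1. \textbf{Reduce the case $p_1=p_2=:p$.} In this case $x_1,x_2,y_1,y_2<p$, or $y_j=p$. If some $y_j=p$, a center is trivial: $y_1\le y_2$ already handled. Otherwise everything lies in $P_{<p}$, which is bowtie free by assumption (1), giving a center.

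2. \textbf{The general case.} Use assumption (2) on $x_1p_1x_2p_2$, after equalizing the ranks of the $x_i$, to get a center $w$ with $x_i\le w\le p_j$.

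3. \textbf{Descend.} Now the configuration lives in $P_{\le p_1}$. The elements $x_1,x_2,y_1$ together with $w$ are all $\le p_1$. The aim is to show that $y_2$ can be replaced by a meet-type element in $P_{<p_1}$ and then to apply assumption (1). Concretely, in $P_{<p_1}$ the lower set is bowtie free, so by Lemma~\ref{lem:posets}, $x_1,x_2$ have a join $j$ there, and $j\le y_1$ and $j\le w$. Doing the same in $P_{<p_2}$ gives a join $j'\le y_2$ and $j'\le w$. Then $j$ and $j'$ both lie in $P_{<p_1}$, since they are below $w\le p_1$. Hence $j$ is also the join of $x_1,x_2$ computed in $P_{\le w}$, and likewise $j'$; joins in a bowtie free lower set are unique, so $j=j'$. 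Then $z=j$ satisfies $x_i\le z\le y_1,y_2$.

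Two points need care here. The cases where $w=p_1$ must be handled separately, and the rank equalization in step 2 must actually be carried out.

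I expect the \textbf{main obstacle} to be the rank equalization needed to apply assumption (2). My first attempt would use a quasi-bowtie with $r(x_1)<r(x_2)$: apply assumption (1) inside $P_{<p_1}$ to the pair $x_1,x_2$ to raise $x_1$ to an element $x_1'\le j$ of rank $r(x_2)$, using saturation of intervals. If the interval is not saturated, I would instead argue by induction on $r(x_2)-r(x_1)$, replacing $x_2$ by a lower element of rank $r(x_1)$ obtained by saturation below $x_2$. A center for the smaller configuration is then compared with $x_2$ via the joins in $P_{<p_j}$.
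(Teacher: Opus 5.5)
Your degenerate cases, the case $p_1=p_2$, and the equal-rank case are correct. If $r(x_1)=r(x_2)$ and $p_1\neq p_2$, the center $w$ of $x_1p_1x_2p_2$ from assumption (2) satisfies $w<p_1$ and $w<p_2$: indeed $w=p_1$ would give $p_1\le p_2$, hence $p_1=p_2$ by weak gradedness. The join of $x_1,x_2$ in the bowtie free poset $P_{<p_1}$ lies below $w<p_2$, so it is an upper bound in $P_{<p_2}$, and symmetrically. The two joins therefore coincide and give a center. This is a one-step version of the paper's argument, which raises one top element to rank $n$ at a time.

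The genuine gap is the unequal-rank case, which you flag but do not close. Your first route is not available. $r$-saturation only produces elements of prescribed rank above or below a single element, not inside an interval such as $[x_1,y_1]$. Interval saturation is false in general, so no induction will establish it. Moreover, an element $x_1'\le j$ would only be known to lie below $y_1$, not below $y_2$.

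Your second route starts correctly: lower the higher element $x_2$ to some $x_2'<x_2$ with $r(x_2')=r(x_1)$, and take a center $z$ of the equal-rank quasi-bowtie $x_1y_1x_2'y_2$. But \emph{comparing $z$ with $x_2$ via the joins in $P_{<p_j}$} does not finish it. The joins of $z,x_2$ in $P_{<p_1}$ and in $P_{<p_2}$ lie below $y_1$ and $y_2$ respectively. They can only be shown to coincide if $\{z,x_2\}$ has an upper bound strictly below both $p_1$ and $p_2$, i.e.\ if the quasi-bowtie $z\,p_1\,x_2\,p_2$ has a center. Its bottom elements need not have equal rank, so assumption (2) does not apply and you are back at the original problem. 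Induction on $r(x_2)-r(x_1)$ does not rescue this either, since $r(z)$ can exceed $r(x_2)$ by more than $r(x_2)-r(x_1)$.

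The missing step is to feed the new quasi-bowtie $z\,y_1\,x_2\,y_2$ itself back into an induction on the rank sum; the paper does this by taking a counterexample that maximizes $r(x_1)+r(x_2)$.
\begin{itemize}
\item If $r(z)=r(x_1)$, then $z=x_1=x_2'<x_2$, and $x_2$ is already a center.
\item Otherwise $r(z)+r(x_2)>r(x_1)+r(x_2)$, and the sum is bounded by $2n$. So $z\,y_1\,x_2\,y_2$ has a center $z'$, and $z'\ge z\ge x_1$ makes $z'$ a center of $x_1y_1x_2y_2$. Coincidences such as $z=y_1$ are trivial.
\end{itemize}
You do mention induction on $n-r(x_1)-r(x_2)$, which is the right parameter, but you never apply it to a configuration where the rank sum actually increases.
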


\begin{proof}
	First we show any quasi-bowtie $x_1y_1x_2y_2$ with $r(x_1)=r(x_2)$ has a center. Indeed, this follows from Assumption 2 if $r(y_1)=r(y_2)=n$. Now we assume exactly one of $\{y_1,y_2\}$, say $y_2$, has rank $n$. Let $y'_1$ be a rank $n$ element such that is $y'_1>y_1$. Then the quasi-bowtie $x_1y'_1x_2y_2$ has a center $z$ by Assumption 2. If $r(z)=n$, then $y'_1=y_2$ and $y_1\le y_2$, hence $y_1$ is a center for $x_1y_1x_2y_2$. If $r(z)<n$, then $x_1y_1x_2z$ is a quasi-bowtie in $P_{<y'_1}$. By Assumption 1, $x_1y_1x_2z$ has a center $z'$. Note that $z'\le z\le y_2$, hence $z'$ is a center for $x_1y_1x_2y_2$. If both $\{y_1,y_2\}$ have rank $<n$, then let $y'_1$ be a rank $n$ element such that is $y'_1>y_1$. By previous discussion the quasi-bowtie $x_1y'_1x_2y_2$ has a center, and we can repeat the previous argument to produce a center for $x_1y_1x_2y_2$. 
	
	Now we argue by contradiction and
	suppose there is a quasi-bowtie $x_1y_1x_2y_2$ without a center.
	Suppose $x_1$ and $x_2$ are chosen such that $r(x_1)+r(x_2)$ is maximized among all quasi-bowties in $P$ without center. We assume without loss of generality that $r(x_1)>r(x_2)$. Then we find $x'_1\in P$ with $r(x'_1)=r(x_2)$ and $x'_1<x_1$. Then $x'_1y_1x_2y_2$ is a quasi-bowtie which has a center $z$ by previous paragraph. If $r(z)=r(x_2)$, then $x'_1=x_2$ and $x_2< x_1$, hence $x_1$ is a center for $x_1y_1x_2y_2$, contradiction. If $r(z)>r(x_2)$, then the quasi-bowtie $x_1y_1zy_2$ has a center $z'$ as $r(x_1)+r(z)>r(x_1)+r(x_2)$. Then $z'\ge z\ge x_2$ and $z'$ is a center for $x_1y_1x_2y_2$, contradiction. Hence the lemma is proved.
\end{proof}

\begin{cor}
	\label{cor:bowtie free criterion1}
	Let $P$ be an $r$-saturated weakly graded poset with its rank function $r$ taking value between $1$ and $n$. Suppose each quasi-bowtie $x_1y_1x_2y_2$ in $P$ with $r(x_1)=r(x_2)<r(y_1)=r(y_2)$ has a center. Then $P$ is bowtie free.
\end{cor}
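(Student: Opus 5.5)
The plan is to deduce the corollary from Lemma~\ref{lem:bowtie free criterion} by checking its two assumptions. Assumption~2 is immediate: a quasi-bowtie $x_1y_1x_2y_2$ with $r(x_1)=r(x_2)<r(y_1)=r(y_2)=n$ is a special case of the hypothesis of the corollary, so it has a center.

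For Assumption~1, I will argue by induction on $n$. Fix $p$ with $r(p)=n$ and set $Q=P_{<p}$, with rank $r|_Q$ taking values in $\{1,\dots,n-1\}$. I will check three things.
\begin{enumerate}
\item Every element $q\in Q$ satisfies $r(q)\le n-1$, since $q<p$ forces $r(q)<r(p)=n$.
\item $Q$ is $r$-saturated. Take $q\in Q$ and integers $n-1\ge m_1>r(q)>m_2\ge 1$. Saturation of $P$ gives $p_1\ge q\ge p_2$ with $r(p_i)=m_i$. The element $p_2<q<p$ lies in $Q$ automatically. The element $p_1$ need not lie below $p$, so I instead apply saturation of $P$ inside the interval $[q,p]$.
\item Quasi-bowties in $Q$ with $r(x_1)=r(x_2)<r(y_1)=r(y_2)$ are quasi-bowties in $P$. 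They therefore have a center $z$ in $P$, and $z\le y_1<p$ puts $z$ in $Q$.
\end{enumerate}
The base case $n=1$ is trivial, since no quasi-bowtie can have $r(x_i)<r(y_j)$ there. Granting item 2, induction shows $Q$ is bowtie free, and Lemma~\ref{lem:bowtie free criterion} then gives that $P$ is bowtie free.

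The main obstacle is item 2, because plain $r$-saturation does not obviously produce an element of rank $m_1$ between $q$ and $p$. I would try to sidestep this by not restricting to $P_{<p}$ at all. Instead I would rerun the proof of Lemma~\ref{lem:bowtie free criterion} directly, inducting downward on the common rank of $y_1,y_2$, so that Assumption~1 is never used.

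Concretely, let $x_1y_1x_2y_2$ be a quasi-bowtie with $r(x_1)=r(x_2)$. If $r(y_1)=r(y_2)$, the hypothesis gives a center. Otherwise, say $r(y_1)<r(y_2)$. Use saturation to pick $y_2'\le y_2$ with $y_2'\ge x_1$ and $r(y_2')=r(y_1)$; this needs saturation relative to $x_1$ inside the interval $[x_1,y_2]$, which is the same issue as before. If instead one raises $y_1$ to $y_1'$ with $r(y_1')=r(y_2)$ and applies the hypothesis to $x_1y_1'x_2y_2$, one obtains a center $z$ with $z\le y_2$. Then $x_1y_1x_2z$ is a quasi-bowtie whose top ranks $r(y_1),r(z)$ have a strictly smaller sum or difference, and induction on $|r(y_1)-r(y_2)|$ together with the total rank finishes. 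Choosing this induction measure so that it strictly decreases is the step I expect to need care.

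Finally, the reduction to $r(x_1)=r(x_2)$ is exactly as in the second paragraph of the proof of Lemma~\ref{lem:bowtie free criterion}, which only uses downward saturation from $x_1$. That step is unproblematic.
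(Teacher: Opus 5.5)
Your first route has a genuine gap at item 2: $P_{<p}$ need not be $r$-saturated. For instance, take $q<p$ with $r(q)=2$ and $r(p)=4$, where every rank-$3$ element above $q$ fails to lie below $p$. Nothing in the hypotheses gives ``saturation inside $[q,p]$''.

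The paper avoids this by truncating by rank instead of restricting below $p$. The set $P_{[1,k]}=\{q\in P: r(q)\le k\}$ is trivially $r$-saturated. It inherits the corollary's hypothesis, because a center of a quasi-bowtie lies below $y_1$ and hence stays in $P_{[1,k]}$. Bowtie-freeness of $P_{[1,n-1]}$ then yields Assumption~1 of Lemma~\ref{lem:bowtie free criterion}. A quasi-bowtie in $P_{<p}$ is also one in $P_{[1,n-1]}$, and its center lies below $y_1<p$, so it lies in $P_{<p}$. Inducting on $k$ with the lemma finishes the proof.

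Your fallback route is correct and gives a genuinely different, self-contained proof, but you need to fix the measure. Induct on the sum $r(y_1)+r(y_2)$, not on $|r(y_1)-r(y_2)|$. The difference can grow: with $r(y_1)=5$, $r(y_2)=6$ and $r(z)=2$ it goes from $1$ to $3$.

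Suppose $r(y_1)<r(y_2)$. Raise $y_1$ to some $y_1'\ge y_1$ of rank $r(y_2)$, and let $z$ be a center of $x_1y_1'x_2y_2$. Two degenerate outcomes must be handled separately:
\begin{enumerate}
\item If $z=x_1$, then $x_1=x_2$, and $x_1$ is itself a center.
\item If $z=y_2$, then $y_2\le y_1'$ with equal rank forces $y_2=y_1'$. Hence $y_1\le y_2$ and $y_1$ is a center.
\end{enumerate}
In all other cases $r(x_1)<r(z)<r(y_2)$. Then $x_1y_1x_2z$ is a quasi-bowtie with strictly smaller top-rank sum. Its center, which exists by induction, is $\le z\le y_2$, so it is a center of the original quasi-bowtie.

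Combined with the bottom-rank reduction from the lemma's second paragraph, this proves the corollary directly. In effect you reprove Lemma~\ref{lem:bowtie free criterion} under the stronger hypothesis covering all ranks. That is why Assumption~1 becomes unnecessary: you raise $y_1$ only to rank $r(y_2)$, not to rank $n$. The paper's proof is shorter because it uses the lemma as a black box, at the cost of the truncation trick.
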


\begin{proof}
	Let $P_{[1,k]}$ be the collection of elements of $P$ with $r$-value between $1$ and $k$. Then $P_{[1,k]}$ is $r$-saturated. Note that $P_{[1,n-1]}$ is bowtie free implies that Assumption 1 of Lemma~\ref{lem:bowtie free criterion}. Thus Lemma~\ref{lem:bowtie free criterion} implies that $P$ is bowtie free given that $P_{[1,n-1]}$ is bowtie free and Assumption 2 of Lemma~\ref{lem:bowtie free criterion} holds. By repeatedly applying this observation, we deduce the corollary.
\end{proof}

\begin{lem}
	\label{lem:upward flag criterion}
	Let $P$ be an $r$-saturated weakly graded poset with its rank function $r$ taking value between $1$ and $n$.	 Suppose that
	\begin{enumerate}
		\item $P$ is bowtie free;
		\item for each $p\in P$ with $r(p)=1$, $P_{>p}$ is upward flag;
		\item each pairwise upper bounded rank 1 triple have a common upper bound.
	\end{enumerate}
	Then $P$ is upward flag.
\end{lem}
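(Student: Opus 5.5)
The plan is to prove that every pairwise upper bounded triple $\{x_1,x_2,x_3\}$ has a common upper bound, by induction. The induction runs on $r(x_1)+r(x_2)+r(x_3)$, with the goal of reducing to the case covered by Assumption 3. Throughout, the key tool is Lemma~\ref{lem:posets}: since $P$ is bowtie free and weakly graded, any subset with a lower bound has a join. So any two elements $x_i,x_j$ that are upper bounded have a least upper bound $x_i\vee x_j$. In particular, a triple has a common upper bound if and only if $(x_1\vee x_2)$ and $x_3$ have a common upper bound.

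\textbf{Case 1: all three elements have rank $1$.} This is exactly Assumption 3, so there is nothing to prove.

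\textbf{Case 2: some element has rank $>1$.} Say $r(x_1)>1$. By $r$-saturation, pick $p\le x_1$ with $r(p)=1$. The triple $\{p,x_2,x_3\}$ is still pairwise upper bounded, since any upper bound of $x_1$ and $x_j$ bounds $p$ and $x_j$. Its rank sum is smaller, so by induction (or by Case 1 when everything has rank $1$) there is a common upper bound $u$ of $\{p,x_2,x_3\}$.

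The remaining task is to go up from $p$. First, replace $x_2,x_3$ by elements above $p$:
\begin{itemize}
\item set $y_2=p\vee x_2$ and $y_3=p\vee x_3$, which exist since these pairs are bounded, e.g.\ by $u$;
\item set $y_1=x_1$, which already satisfies $y_1\ge p$.
\end{itemize}
All three $y_i$ lie in $P_{\ge p}$. We need them to be pairwise upper bounded:
\begin{itemize}
\item $y_2$ and $y_3$ are both $\le u$.
\item For $y_1$ and $y_2$: the element $x_1\vee x_2$ exists and is $\ge p$ and $\ge x_2$, so it is $\ge p\vee x_2=y_2$. Hence it is an upper bound of $y_1$ and $y_2$. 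The same argument handles $y_1$ and $y_3$.
\end{itemize}

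If some $y_i$ equals $p$, then $y_i\le$ every other $y_j$, and the pairwise bound between the other two already bounds all three. Otherwise all $y_i$ lie in $P_{>p}$, which is upward flag by Assumption 2. This gives a common upper bound, and it dominates $x_1,x_2,x_3$.

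Notice that Case 2 never actually uses the inductive hypothesis beyond producing $u$. So the induction really only needs to handle the bound for $\{p,x_2,x_3\}$. That triple may contain elements of rank $>1$, so I would apply the same reduction repeatedly, lowering one coordinate at a time down to rank $1$. This terminates because the rank sum decreases at each step.

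The main subtlety is the existence and comparison of joins. Specifically, one must check that $x_1\vee x_2\ge p\vee x_2$ follows from the defining property of joins in Lemma~\ref{lem:posets}. One must also check that the joins $p\vee x_j$ exist, which needs the pair to be upper bounded (for the join); that is precisely where the bound $u$ from the inductive step is used.
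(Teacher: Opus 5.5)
Your proof is correct and follows the paper's route. Both arguments lower the rank sum by replacing $x_1$ with a rank-one $p\le x_1$, get a common upper bound $u$ of $\{p,x_2,x_3\}$, push $x_2,x_3$ into $P_{\ge p}$, and then apply Assumption 2. The only difference is cosmetic: the paper takes $z_1,z_2$ to be centers of the quasi-bowties $p\,q_1\,x_2\,u$ and $p\,q_3\,x_3\,u$ (with $q_1,q_3$ upper bounds of $\{x_1,x_2\}$ and $\{x_1,x_3\}$), whereas your joins $p\vee x_2$ and $p\vee x_3$ are exactly such centers, obtained from Lemma~\ref{lem:posets} in the sense that upper-bounded pairs have joins.
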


\begin{proof}
	We argue by contradiction and let $\{p_1,p_2,p_3\}$ be a pairwise upper bounded collection in $P$ which do not have a common upper bound. Suppose $q_i$ is a upper bound for $\{p_i,p_{i+1}\}$ for $i\in \mathbb Z/3\mathbb Z$.
	Then at least one of them, say $p_1$, has rank $>1$. Take $p'_1\in P$ such that $p'_1<p_1$ and $r(p'_1)=1$. Then $\{p'_1,p_2,p_3\}$ is pairwise upper bounded. As the rank sum is smaller, they have a common upper bound $z$. 
	
	Note that $p'_1q_1p_2z$ and $p'_1q_3p_3z$ are quasi-bowties. As $P$ is bowtie free, let $z_1$ and $z_2$ be centers for these quasi-bowties respectively. Thus $\{z_1,z_2,p_1\}$ are pairwise upper bounded, and they are contained in $P_{\ge p'_1}$. As $r(p'_1)=1$, by Assumption 2, $\{z_1,z_2,p_1\}$ have a common upper bound $z'$. Note that $p_2\le z_1\le z'$ and $p_3\le z_2\le z'$. Thus $z'$ is a common upper bound for $\{p_1,p_2,p_3\}$, contradiction. Thus the lemma is proved.
\end{proof}

\begin{cor}
	\label{cor:upward flag criterion}
	Let $P$ be an $r$-saturated weakly graded poset with its rank function $r$ taking value between $1$ and $n$.	  Suppose that
	\begin{enumerate}
		\item  $P$ is bowtie free;
		\item for any $1\le k\le n-1$, each pairwise upper bounded rank $k$ triple have a common upper bound.
	\end{enumerate}
	Then $P$ is upward flag.
\end{cor}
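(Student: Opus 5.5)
The plan is to deduce Corollary~\ref{cor:upward flag criterion} from Lemma~\ref{lem:upward flag criterion} by induction on the bottom of the rank range, in the same way that Corollary~\ref{cor:bowtie free criterion1} was deduced from Lemma~\ref{lem:bowtie free criterion}. For $1\le k\le n$ let $P_{[k,n]}$ be the set of elements of $P$ with $r$-value between $k$ and $n$. Each $P_{[k,n]}$ is $r$-saturated, with $r-k+1$ as a rank function taking values in $\{1,\dots,n-k+1\}$. Each $P_{[k,n]}$ is also bowtie free. Indeed, a quasi-bowtie in $P_{[k,n]}$ has a center $z$ in $P$ by assumption 1, and since $z\ge x_1$ we get $r(z)\ge k$, so $z\in P_{[k,n]}$. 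We will show by descending induction on $k$ that $P_{[k,n]}$ is upward flag; the case $k=1$ is the corollary.

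\textbf{Base case.} Take $k=n$. Then $P_{[n,n]}$ is an antichain, since all its elements have the same rank. In an antichain two distinct elements have no upper bound, so a pairwise upper bounded triple consists of a single element, and that element is an upper bound for the triple. Hence $P_{[n,n]}$ is upward flag.

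\textbf{Inductive step.} Suppose $P_{[k+1,n]}$ is upward flag, where $k\le n-1$. We apply Lemma~\ref{lem:upward flag criterion} to $P_{[k,n]}$, whose shifted rank-$1$ elements are the elements of $P$ of rank $k$. There are three hypotheses to check.
\begin{enumerate}
\item Hypothesis (1) is the bowtie freeness of $P_{[k,n]}$, shown above.
\item Hypothesis (2) asks that $(P_{[k,n]})_{>p}$ be upward flag for each $p$ of rank $k$. This set is $P_{>p}$, which lies in $P_{[k+1,n]}$. Take a triple in $P_{>p}$ that is pairwise upper bounded. Any upper bound of two of its elements also lies above $p$, so the triple is pairwise upper bounded in $P_{[k+1,n]}$. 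By the induction hypothesis it has a common upper bound $u$ there, and $u>p$, so $u\in P_{>p}$.
\item Hypothesis (3) asks that every pairwise upper bounded triple of rank $k$ have a common upper bound. An upper bound in $P$ of elements of rank $k$ automatically has rank $\ge k$, so it lies in $P_{[k,n]}$. Hypothesis (3) is therefore exactly assumption 2 of the corollary for this $k$.
\end{enumerate}
Lemma~\ref{lem:upward flag criterion} then gives that $P_{[k,n]}$ is upward flag, which completes the induction.

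The argument is routine. The only step needing care is the bookkeeping in the inductive step: being upper bounded, having a center, and having a common upper bound all mean the same thing in $P$ as in the truncations $P_{[k,n]}$ and $P_{>p}$. This holds because upper bounds and centers can only move up in rank, so they never leave these truncations.
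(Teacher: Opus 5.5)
Your proof is correct, and it is the intended argument. The paper states this corollary without a written proof, leaving it as the truncation induction on $P_{[k,n]}$ through Lemma~\ref{lem:upward flag criterion}, mirroring how Corollary~\ref{cor:bowtie free criterion1} is derived from Lemma~\ref{lem:bowtie free criterion}. Your checks are exactly the bookkeeping the paper omits: that truncation preserves saturation and bowtie freeness, and that upward flagness of $P_{[k+1,n]}$ passes to $P_{>p}$.
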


We will mainly interested in posets arising from relative Artin complexes and their variations. Let $\Lambda$ be a Coxeter diagram, and let $\Lambda'\subset\Lambda$ be a linear induced subdiagram. We choose a linear order of the vertices $\{s_i\}_{i=1}^n$ of $\Lambda'$, and define a relation on the vertex set of $\Delta_{\Lambda,\Lambda'}$ as follows: for vertices $w,v\in \Delta_{\Lambda,\Lambda'}$, $v<w$ if $v$ and $w$ are adjacent in $\Delta_{\Lambda,\Lambda'}$ and the type $\hat s_i$ of $v$ and the type $\hat s_j$ of $w$ satisfy $i<j$. This relation depends on the choice of the linear order on $\Lambda'$.

We say an induced subdiagram $\Lambda'$ of a Coxeter diagram $\Lambda$ is \emph{admissible}, if for any vertex $s\in \Lambda'$, if $s_1,s_2\in \Lambda'$ are vertices in different connected components of $\Lambda'\setminus\{s\}$, then they are in different components of $\Lambda\setminus\{s\}$. By Lemma~\ref{lem:transitive}, if $\Lambda'$ an admissible linear subdiagram of $\Lambda$, then for any choice of the linear order on $\Lambda'$, the associated relation on $\Delta^0_{\Lambda,\Lambda'}$ is a poset.

\begin{definition}[{\cite[Def 6.8]{huang2023labeled}}]
	\label{def:bowtie free}
	Suppose $\Lambda'$ is an admissible linear subdiagram of an Coxeter diagram $\Lambda$ with consecutive vertices of $\Lambda'$ being $\{s_i\}_{i=1}^n$. We define $\Delta_{\Lambda,\Lambda'}$ is \emph{bowtie free} if the poset defined on its vertex set as above is bowtie free. The property of being bowtie free does not depend on the choice of one of the two linear orders on $\Lambda'$. Similarly, $\Delta_{\Lambda,\Lambda'}$ is \emph{flag} or \emph{weakly flag}, if the poset defined on its vertex set is flag or weakly flag.
\end{definition}

%The interest of the bowtie free condition lies in the following observation.

\subsection{Poset properties of simplicial complexes of type $S$}
\label{subsec:contractible}

Let $S=\{s_1,s_2,\ldots,s_n\}$. A simplicial complex $X$ is of \emph{type $S$} if all the maximal simplices of $X$ has dimension $n-1$ and there is a type function $\type$ from the vertex set of $X$ to $\{\hat s_1,\hat s_2,\ldots,\hat s_n\}$ such that $\type(x)\neq\type(y)$ whenever $x$ and $y$ are adjacent vertices of $X$. This labeling induces a bijection between $\{\hat s_1,\hat s_2,\ldots,\hat s_n\}$ and the vertex set of each maximal simplex of $X$. 

If $A_{S'}$ is an Artin group, and $S\subset S'$, then the relative Artin complex $\Delta_{S',S}$ is a simplicial complex of type $S$. We will be interested in more general simplicial complexes of type $S$, for some $S$ not necessarily made of generators of an Artin group.

\begin{definition}
	\label{def:order}
	Let $X$ be a simplicial complex of type $S$.
	We put a total order on $S$, and define a relation $<$ on the vertex set $V$ of $X$ induced by this total order as follows: $x<y$ if $x$ and $y$ are adjacent, and $\type(x)<\type(y)$.
\end{definition}

As all maximal simplices of $X$ have the same dimension, we know that for each $x\in V$ of type $\hat s$ such that $s$ is not the smallest element in $S$, there exists $x'\in V$ with $x'<x$; and for each $x\in V$ of type $\hat s$ such that $s$ is not the biggest element in $S$, there exists $x'\in V$ with $x'>x$.

%An induced subgraph $\Lambda'$ of $\Lambda$ is \emph{admissible} if for any node $x\in \Lambda'$, if $x_1,x_2\in \Lambda'$ are in different connected components of $\Lambda'\setminus\{x\}$, then they are in different components of $\Lambda\setminus\{x\}$.

\begin{definition}
	\label{def:labeled 4-wheel}
	Let $\Lambda$ be a Coxeter diagram which is a tree, with its vertex set $S$. Let $Z$ be a simplicial complex of type $S$.
	Let $X$ be the 1-skeleton of $Z$ with its vertex types as explained above. We say $Z$ satisfies the \emph{labeled 4-cycle condition} if for any induced 4-cycle in $X$ with consecutive vertices being $\{x_i\}_{i=1}^4$ and their types being $\{\hat s_i\}_{i=1}^4$, there exists a vertex $x\in X$ adjacent to each of $x_i$ such that the type $\hat s$ of $x$ satisfies that  $s$ is in the smallest subtree of $\Lambda'$ containing all of $\{s_i\}_{i=1}^4$.
\end{definition}

\begin{remark}
	\label{rmk:inherit}
	As an immediate consequence of the definition, suppose $\Lambda'$ is a tree induced subdiagram of $\Lambda$ and $\Lambda''\subset \Lambda'$ is a subtree. If $\Delta_{\Lambda,\Lambda'}$ satisfies the labeled 4-cycle condition, then $\Delta_{\Lambda,\Lambda''}$ satisfies the labeled 4-cycle condition.
\end{remark}

The following is a consequence of \cite[Lem 6.14 and Prop 6.17]{huang2023labeled}. 
\begin{lem}
	\label{lem:4wheel}
	Suppose $\Lambda'$ is an admissible tree subdiagram of $\Lambda$. Then the relative Artin complex $\Delta_{\Lambda,\Lambda'}$ satisfies the labeled 4-cycle condition if and only if for all maximal linear subdiagram $\Lambda''\subset \Lambda'$, $\Delta_{\Lambda,\Lambda''}$ is bowtie free.
\end{lem}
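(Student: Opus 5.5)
The lemma is stated as a consequence of \cite[Lem 6.14 and Prop 6.17]{huang2023labeled}, so the plan is to obtain the two implications separately by relating induced $4$-cycles in $\Delta_{\Lambda,\Lambda'}$ to quasi-bowties in the posets of Definition~\ref{def:bowtie free}. First note that every linear subdiagram $\Lambda''\subset\Lambda'$ is again admissible in $\Lambda$. Indeed, if $s_1,s_2\in\Lambda''$ lie in different components of $\Lambda''\setminus\{s\}$, then, since $\Lambda'$ is a tree, they lie in different components of $\Lambda'\setminus\{s\}$. Admissibility of $\Lambda'$ then separates them in $\Lambda\setminus\{s\}$. By Lemma~\ref{lem:transitive}, the relation on $\Delta^0_{\Lambda,\Lambda''}$ is therefore a poset, so bowtie freeness makes sense.

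\emph{($\Rightarrow$).} Let $x_1y_1x_2y_2$ be a quasi-bowtie in $\Delta_{\Lambda,\Lambda''}$, with $\Lambda''$ having consecutive vertices $s_1,\dots,s_n$. By Corollary~\ref{cor:bowtie free criterion1}, it suffices to treat the case $r(x_1)=r(x_2)<r(y_1)=r(y_2)$; the $r$-saturation needed there comes from the fact that every maximal simplex has all types. If the elements are not distinct, a center is trivial. Otherwise $x_1\not\sim x_2$ and $y_1\not\sim y_2$, since they have equal types, so $x_1y_1x_2y_2$ is an induced $4$-cycle of type $\hat s_i\hat s_j\hat s_i\hat s_j$ with $i<j$. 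The labeled $4$-cycle condition in $\Delta_{\Lambda,\Lambda'}$ then gives a vertex $z$ adjacent to all four whose type $\hat s$ has $s$ in the smallest subtree of $\Lambda'$ containing $s_i,s_j$. This subtree is the segment of $\Lambda''$ between $s_i$ and $s_j$. Since $z$ is adjacent to vertices of types $\hat s_i,\hat s_j$, we get $s\neq s_i,s_j$, so $i<s<j$ and $z$ is a center. (Alternatively, one can use Remark~\ref{rmk:inherit} to pass to $\Delta_{\Lambda,\Lambda''}$ first.)

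\emph{($\Leftarrow$).} Let $x_1x_2x_3x_4$ be an induced $4$-cycle in $\Delta_{\Lambda,\Lambda'}$ with types $\hat s_1,\dots,\hat s_4$, and let $T$ be the smallest subtree of $\Lambda'$ containing the $s_i$. The main case is when $T$ is linear, so that it lies in some maximal linear $\Lambda''$. I would order $\Lambda''$ and examine the type pattern.

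\begin{enumerate}
\item If the types alternate as lower, higher, lower, higher, the cycle is a quasi-bowtie. Its center lies strictly between the two types and hence in $T$. (A center may coincide with a vertex of the cycle only if the cycle is not induced.)
\item Otherwise some vertex, say $x_2$, has type strictly between its neighbours' types, say $s_1<s_2<s_3$. Then Lemma~\ref{lem:transitive} forces $x_1\sim x_3$, because $s_1$ and $s_3$ lie in different components of $\Lambda\setminus\{s_2\}$ by admissibility. This contradicts the cycle being induced.
\end{enumerate}

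So all induced $4$-cycles supported on linear $T$ alternate, and bowtie freeness settles them.

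The main obstacle is the case where $T$ is not linear, that is, where $T$ is a tripod-like subtree with the $s_i$ in different branches. Here the argument of \cite[Prop 6.17]{huang2023labeled} is needed. The plan for this case is as follows.
\begin{enumerate}
\item Use Lemma~\ref{lem:transitive} again to show that the cycle's types must alternate between two branch configurations. Otherwise a vertex whose type separates its two neighbours' types in $\Lambda$ would force a diagonal.
\item Replace pairs of opposite vertices by vertices of types lying on a common maximal linear subdiagram. To do this, pass through their common neighbours and use the $4$- and $5$-cycle filling of Lemma~\ref{lem:4-cycle} in the barycentric subdivision to produce auxiliary vertices adjacent to the relevant triples.
\item Apply bowtie freeness in each linear $\Delta_{\Lambda,\Lambda''}$, obtaining joins and meets via Lemma~\ref{lem:posets}, and assemble a common neighbour whose type lies at or near the branch vertex of $T$.
\end{enumerate}
Verifying that this assembled vertex is adjacent to all four $x_i$ requires carefully tracking which types separate which in $\Lambda$. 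I expect this bookkeeping to be the hardest step, and would follow the induction on the size of $T$ used in \cite[Lem 6.14]{huang2023labeled}.
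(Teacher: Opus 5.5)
Your arguments for ($\Rightarrow$), and for ($\Leftarrow$) when the types span a linear subtree $T$, are correct. Sub-segments of $\Lambda'$ are admissible. Corollary~\ref{cor:bowtie free criterion1} reduces ($\Rightarrow$) to induced cycles of type $\hat s_i\hat s_j\hat s_i\hat s_j$. In the linear case, Lemma~\ref{lem:transitive} together with admissibility forces the types to alternate, so bowtie freeness supplies the center.

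The gap is the non-linear case. This is the only substantive part of ($\Leftarrow$), and the paper imports it from \cite[Prop 6.17]{huang2023labeled}. Your three-step plan is not an argument for it.
\begin{itemize}
\item Step (1) claims more than Lemma~\ref{lem:transitive} gives. That lemma only says that $s_i$ does not separate $s_{i-1}$ from $s_{i+1}$, and four distinct leaves of $T$ in any cyclic order satisfy this.
\item Step (2) replaces cycle vertices but does not say how adjacency to the original $x_i$ is recovered. A center of the modified cycle need not be a center of the original one.
\item Step (3) aims at the wrong target. Let $T$ be a star with leaves $a,b,d,e$ and let the cycle have types $\hat a\hat b\hat d\hat e$. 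A center cannot have a leaf type, and every non-leaf vertex $s$ of $T$ lies strictly inside the path from $a$ to $d$ or the path from $b$ to $e$. So a common neighbour of type $\hat s$ would give $x_1\sim x_3$ or $x_2\sim x_4$, by Lemma~\ref{lem:transitive} and admissibility.
\end{itemize}
In the star configuration, the labeled 4-cycle condition can hold only because such induced cycles do not exist. The same happens for an $H$-shaped $T$ whose opposite pairs cross the bridge between the two branch vertices. When the opposite pairs sit on the same side, the center must lie strictly inside the bridge, not at a branch vertex. Nothing in your plan proves non-existence or locates such a center.

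Part of the non-linear case does yield to tools already in the paper. Suppose that for some $i$ the types $s_{i-1},s_i,s_{i+1}$ lie on a common segment $L\subset T$; necessarily $s_{i\pm1}$ then lie on the same side of $s_i$.
\begin{itemize}
\item Order $L$ so that $x_{i\pm1}<x_i$, and let $y$ be the join of $x_{i-1},x_{i+1}$ in $\Delta^0_{\Lambda,L}$ (Lemma~\ref{lem:posets}).
\item Apply Lemma~\ref{lem:4-cycle} to $x_{i-1}yx_{i+1}x_{i+2}$. This gives a vertex of the type of $y$ adjacent to $x_{i-1},x_{i+1},x_{i+2}$.
\item That vertex is a common upper bound of $x_{i\pm1}$, so minimality of the join forces it to equal $y$. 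Hence $y$ is a center with type in $L$.
\end{itemize}
This is exactly the proof of Lemma~\ref{lem:tripod}, and it uses only bowtie freeness of segments. It covers every non-linear configuration: the three-leaf tripod with a repeated opposite type, and three leaves plus an interior type on an arm. The single exception is four distinct leaf types of $T$, where every consecutive triple of types is in tripod position. That residual case is the real content of the cited result. Either give a genuine argument for it, or invoke \cite[Lem 6.14 and Prop 6.17]{huang2023labeled} explicitly, as the paper does, in place of the sketch.
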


\begin{lem}
	\label{lem:tripod}
	Suppose $\Lambda'$ is an admissible tree subdiagram of $\Lambda$ such that $\Delta=\Delta_{\Lambda,\Lambda'}$ satisfies the labeled 4-cycle condition. Let $x_1x_2x_3x_4$ be an embedded 4-cycle in $\Delta$. Suppose $x_i$ has type $\hat a_i$ with $a_i\in \Lambda'$ such that $\{a_1,a_2,a_3\}$ are the valence one vertices of a tripod subdiagram $\Lambda''$ of $\Lambda'$, and $a_2=a_4$.
	
	Then either $x_1$ is adjacent to $x_3$, or there is a vertex $y\in\Delta$ adjacent to each $x_i$ such that $y$ has type $\hat a$ with $a$ contained in the linear subdiagram of $\Lambda''$ from the valence three vertex of $\Lambda''$ to $a_2$.
\end{lem}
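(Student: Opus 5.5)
We would argue directly from the labeled 4-cycle condition (Definition~\ref{def:labeled 4-wheel}). Let $o$ denote the valence three vertex of the tripod $\Lambda''$, and let $L_i$ be the linear subdiagram of $\Lambda''$ from $o$ to $a_i$. First we record the adjacency constraints. Since adjacent vertices have different types, $a_1\neq a_2$ and $a_3\neq a_2$. Also $a_1\neq a_3$, as they are distinct leaves of the tripod.

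Assume $x_1\nsim x_3$. The case $x_2\sim x_4$ cannot occur, because $x_2$ and $x_4$ have the same type $\hat a_2$, so they are not adjacent. Hence $x_1x_2x_3x_4$ is an induced $4$-cycle. The labeled 4-cycle condition then produces a vertex $y$ adjacent to every $x_i$, of type $\hat a$ with $a$ in the smallest subtree of $\Lambda'$ containing $a_1,a_2,a_3$. That subtree is exactly $\Lambda''$, because $\Lambda'$ is a tree and the $a_i$ are the leaves of the tripod. It remains to show that $a\in L_2$, i.e.\ that $a$ can be moved onto the arm of $a_2$.

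Suppose instead that $a\in L_1\setminus\{o\}$; the case $a\in L_3$ is symmetric. If $a=a_1$, then $y\neq x_1$ would be two adjacent vertices of the same type, so $y=x_1$. But $y$ is adjacent to $x_3$, contradicting $x_1\nsim x_3$. So $a$ is an interior vertex of $L_1$, and we use Lemma~\ref{lem:transitive}. Removing $a$ from $\Lambda''$ separates $a_1$ from $\{a_2,a_3\}$. By admissibility of $\Lambda'$ together with the tree structure, $a_1$ and $a_3$ also lie in different components of $\Lambda\setminus\{a\}$. Lemma~\ref{lem:transitive}, applied with $S_2=\{a\}$, $S_1=\{a_1\}$, $S_3=\{a_3\}$, then turns $x_1\sim y\sim x_3$ into $x_1\sim x_3$, again a contradiction. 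Therefore $a\in L_2$, which is the conclusion.

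The main technical point is the admissibility step. Admissibility is stated for vertices of $\Lambda'$ and for components of $\Lambda'\setminus\{a\}$. Since $a_1$ and $a_3$ lie in different components of $\Lambda'\setminus\{a\}$ (the subtree $\Lambda''$ is contained in the tree $\Lambda'$), admissibility applies directly and places them in different components of $\Lambda\setminus\{a\}$.
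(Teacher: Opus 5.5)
Your proof is correct, but it gets the center by a more direct route than the paper.

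\textbf{The paper's route.} The paper does not apply the labeled 4-cycle condition to the tripod directly. It takes the segment $\Lambda_{12}$ from $a_2$ to $a_1$ and orders $\Delta_{\Lambda,\Lambda_{12}}$ with type $\hat a_2$ minimal. Lemma~\ref{lem:4wheel} gives bowtie-freeness, and Lemma~\ref{lem:posets} then gives the join $y$ of $x_2,x_4$, so $y\le x_1$. It then applies Lemma~\ref{lem:4-cycle} to $x_2yx_4x_3$ and uses the minimality of the join to get $y\sim x_3$. The last step is the same transitivity split as yours: it checks whether the type of $y$ lies on the arm toward $a_1$ or on the arm toward $a_2$.

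\textbf{Your route.} You note that if $x_1\nsim x_3$, the cycle is automatically induced, since the diagonal $x_2x_4$ is excluded by types. The definition of the labeled 4-cycle condition then gives a center typed in $\Lambda''$ immediately. Lemma~\ref{lem:transitive} together with admissibility then rules out $L_1\setminus\{o\}$ and $L_3\setminus\{o\}$.

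\textbf{Comparison.} Your approach is shorter: it needs only the definition and Lemma~\ref{lem:transitive}. The paper's approach draws only on bowtie-freeness of the single linear complex $\Delta_{\Lambda,\Lambda_{12}}$, not the full condition on the tripod. It also produces a canonical center, the join of $x_2$ and $x_4$, whose type is already confined to $\Lambda_{12}$, so only one arm has to be excluded.
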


This lemma is an immediate consequence of \cite[Lem 6.18]{huang2023labeled}. However, the original proof of \cite[Lem 6.18]{huang2023labeled} missed one case. Here we give the correct proof which is also much shorter.

\begin{proof}
	Let $\Lambda_{12}$ be the linear subdiagram of $\Lambda$ from $a_2$ to $a_1$. We put a partial order on vertex set $V$ of $\Delta_{\Lambda,\Lambda_{12}}$ as in Section~\ref{subsec:posets} such that vertices of type $\hat s_2$ are minimal. Then $x_1$ is a common upper bound for $\{x_2,x_4\}$ in $(V,<)$.
	By Lemma~\ref{lem:4wheel} and Lemma~\ref{lem:posets}, $x_2$ and $x_4$ have the join $y$ in $(V,<)$. In particular, $y\le x_1$ in $(V,<)$. By applying Lemma~\ref{lem:4-cycle} to $x_2yx_4x_3$, there is a vertex 
	$y'\in \Delta$ of the same type as $y$ such that $y'\sim\{x_2,x_3,x_4\}$. In particular, $y'\in V$, which implies $y=y'$. So $y\sim x_3$.

	Suppose $y$ has type $\hat a$ with $a\in \Lambda_{12}$. Let $a''$ be the valence 3 vertex in $\Lambda''$. If $a$ is in the interval $[a_1,a'']$, then $x_1\sim x_3$ by Lemma~\ref{lem:transitive}. The case $a\in [a'',a_2]$ exactly corresponds the second possibility in the lemma.
\end{proof}

\begin{thm}[{\cite[Thm 8.1]{huang2023labeled}}]
	\label{thm:bowtie free}
	Suppose $A_\Lambda$ is an irreducible spherical Artin group. Then for any linear subdiagram $\Lambda'\subset\Lambda$, $\Delta_{\Lambda,\Lambda'}$ is bowtie free. 
\end{thm}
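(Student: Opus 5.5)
The plan is to induct on the number of vertices of $\Lambda$ and to use Lemma~\ref{lem:bowtie free criterion} to reduce to a single configuration. Write $s_1,\dots,s_n$ for the consecutive vertices of $\Lambda'$, and let $P$ be the poset on the vertices of $\Delta_{\Lambda,\Lambda'}$ with rank function $r(v)=i$ for $v$ of type $\hat s_i$. Linear subdiagrams of a tree are admissible, so $P$ is a genuine poset, by Lemma~\ref{lem:transitive}. Since every maximal simplex contains one vertex of each type, $P$ is $r$-saturated. It therefore suffices to verify the two assumptions of Lemma~\ref{lem:bowtie free criterion}.

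\emph{Assumption 1.} Let $p$ be a vertex of type $\hat s_n$. By Lemma~\ref{lem:link}, the down-set $P_{<p}$ is the poset of $\Delta_{I_{s_n},\Lambda'\setminus\{s_n\}}$. Here $\Lambda'\setminus\{s_n\}$ is linear and connected, so it lies in one component $\Theta$ of $\Lambda\setminus\{s_n\}$. The other components of $\Lambda\setminus\{s_n\}$ contribute only join factors that carry no vertices of $P_{<p}$. Hence $P_{<p}$ is the poset of $\Delta_{\Theta,\Lambda'\setminus\{s_n\}}$. The diagram $\Theta$ is an irreducible spherical diagram with fewer vertices, so $P_{<p}$ is bowtie free by induction. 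The base cases $|\Lambda'|\le 2$ are immediate, since then $\Delta_{\Lambda,\Lambda'}$ is a bipartite graph and a quasi-bowtie with distinct elements has no intermediate ranks to worry about. The symmetric statement for $P_{>p}$ with $r(p)=1$ follows by reversing the order on $\Lambda'$.

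\emph{Assumption 2.} This is the real content. We must show that a quasi-bowtie $x_1y_1x_2y_2$ with $x_1,x_2$ of type $\hat s_i$ and $y_1,y_2$ of type $\hat s_n$ has a center. Translate into cosets: after translating we may take $y_1=A_{\hat s_n}$, $y_2=gA_{\hat s_n}$, and $x_j=h_jA_{\hat s_i}$, with each $x_j$ meeting both $y$'s. I would exploit the Garside structure of the spherical group $A_\Lambda$.
\begin{enumerate}
\item Using the minimal coset representatives in the prefix order (a lattice), write $g$ as a positive element that is the minimal representative of its double coset $A_{\hat s_n}gA_{\hat s_n}$.
\item Use the result that intersections of parabolic subgroups in spherical Artin groups are again parabolic (Cumplido--Gebhardt--Gonz\'alez-Meneses--Wiest). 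This identifies $A_{\hat s_n}\cap gA_{\hat s_n}g^{-1}$ with a parabolic subgroup $hA_Th^{-1}$ contained in $A_{\hat s_n}$.
\item Show that the set of vertices of $\Delta_{\Lambda,\Lambda'}$ adjacent to both $y_1$ and $y_2$ is the vertex set of the subcomplex $h\Delta_{T,\,T\cap\Lambda'}$ (or a join containing it). Then $x_1,x_2$ lie in this subcomplex. Linearity of $\Lambda'$ forces $T\cap\Lambda'$ to be an initial segment $\{s_1,\dots,s_k\}$ with $k\ge i$.
\item A center is then any vertex of type $\hat s_k$ in $h\Delta_{T,T\cap\Lambda'}$ that lies above both $x_1$ and $x_2$. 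It exists by joining $x_1$ and $x_2$ inside the smaller spherical relative complex, which is bowtie free by induction and Lemma~\ref{lem:posets}. If $k=n$, a center is a vertex of top rank, which forces $y_1=y_2$.
\end{enumerate}
Corollary~\ref{cor:bowtie free criterion1}, or directly Lemma~\ref{lem:bowtie free criterion}, then finishes the proof.

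The main obstacle is step 3, together with the precise description of $T$ in step 2. Showing that common neighbours of two cosets of $A_{\hat s_n}$ are exactly the cosets lying in a single parabolic translate requires control of how $h_jA_{\hat s_i}$ meets both $A_{\hat s_n}$ and $gA_{\hat s_n}$. I would first try to settle this using the lattice property of prefix order on minimal double-coset representatives. The idea is to push every element of $h_jA_{\hat s_i}\cap A_{\hat s_n}$ and of $h_jA_{\hat s_i}\cap gA_{\hat s_n}$ into $hA_T$ via left and right greedy normal forms. If this fails, the fallback is the case-by-case verification over the spherical types carried out in \cite{huang2023labeled}.
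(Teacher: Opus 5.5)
Your reduction is sound as far as it goes. $P$ is an $r$-saturated poset, and Assumption~1 of Lemma~\ref{lem:bowtie free criterion} does follow by induction through Lemma~\ref{lem:link}. But all of the content sits in Assumption~2: for an embedded $4$-cycle $x_1y_1x_2y_2$ of type $\hat s_i\hat s_n\hat s_i\hat s_n$ you must produce a common neighbour of type $\hat s_k$ with $i<k<n$. Steps 2--4 do not do this, and Step~3 is false as stated.

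Take $\Lambda=\Lambda'=\{s,t\}$ of type $A_2$, with $y_1=A_{\hat t}=\langle s\rangle$ and $y_2=t\langle s\rangle$. These vertices are distinct and have the common neighbour $A_{\hat s}=\langle t\rangle$. Yet $\langle s\rangle\cap t\langle s\rangle t^{-1}$ is trivial, so $T=\emptyset$ and $h\Delta_{T,T\cap\Lambda'}$ is empty.

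The reason is structural. $A_{\hat s_n}\cap gA_{\hat s_n}g^{-1}$ records what fixes both $y_1$ and $y_2$. Having a common neighbour of type $\hat s_i$ means instead that $g\in A_{\hat s_n}A_{\hat s_i}A_{\hat s_n}$, and the stabilizer intersection says nothing about how many such neighbours exist.

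Steps 3--4 have further gaps:
\begin{itemize}
\item Nothing forces $T\cap\Lambda'$ to be an initial segment of $\Lambda'$.
\item Even granting Step~3, Lemma~\ref{lem:posets} produces a join of $x_1,x_2$ only once you know they have a common upper bound in the smaller complex.
\item The fallback of quoting \cite{huang2023labeled} is quoting the very input you are trying to prove.
\end{itemize}

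Your base case is also not immediate. For $|\Lambda'|=2$, bowtie freeness says exactly that $\Delta_{\Lambda,\Lambda'}$ has no embedded $4$-cycles. This is the $n=2$ instance of Assumption~2 and a genuine fact about the Artin group. In the example above it is the statement that $\langle t\rangle$ is the \emph{only} common neighbour of $y_1$ and $y_2$.

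The missing ingredient is precisely the labeled $4$-cycle condition for $\Delta_\Lambda$ (Theorem~\ref{thm:4 wheel}, \cite[Prop 2.8]{huang2023labeled}). This is how the paper proceeds: it deduces the statement from Theorem~\ref{thm:4 wheel} and Lemma~\ref{lem:4wheel}, using Remark~\ref{rmk:inherit} to pass to an arbitrary linear $\Lambda'$. Concretely:
\begin{itemize}
\item Take a quasi-bowtie $x_1y_1x_2y_2$ with distinct elements, the $x$'s of type $\hat s_i$ and the $y$'s of type $\hat s_j$, $i<j$. It is an induced $4$-cycle in $\Delta_\Lambda$.
\item The labeled $4$-cycle condition gives a common neighbour $z$ of type $\hat r$ with $r$ on the segment of $\Lambda'$ from $s_i$ to $s_j$.
\item Necessarily $r=s_k$ with $i<k<j$, so $x_1,x_2<z<y_1,y_2$ and $z$ is a center.
\item Corollary~\ref{cor:bowtie free criterion1} then gives bowtie freeness, with no induction on $|\Lambda|$.
\end{itemize}
So either quote Theorem~\ref{thm:4 wheel} and replace Steps 1--4 by this argument, or give an actual proof of the labeled $4$-cycle condition for spherical Artin complexes. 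Your stabilizer-intersection idea does not supply one.
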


This theorem is a consequence of Theorem~\ref{thm:4 wheel} below and Lemma~\ref{lem:4wheel}.

\begin{thm}[{\cite[Prop 2.8]{huang2023labeled}}]
	\label{thm:4 wheel}
	Suppose $A_S$ is an irreducible spherical Artin group. Then $\Delta_S$ satisfies the labeled 4-cycle condition.
\end{thm}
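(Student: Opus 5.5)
The plan is to argue by induction on $|S|$, using the Garside structure of the irreducible spherical Artin group $A_S$. Vertices of $\Delta_S$ are cosets $gA_{\hat s}$. Adjacency means the two cosets intersect. The key tool I would use is that every parabolic coset $gA_T$ has a canonical ``minimal'' representative with respect to the prefix order on the Garside group, once it is translated into the positive monoid by a large power of the Garside element. Godelle's results on parabolic subgroups of Garside groups give this, and the prefix order is a lattice, which is what makes meets and joins available.

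First I would reduce to the essential configuration. Take an induced $4$-cycle $x_1x_2x_3x_4$ in $\Delta_S^1$ with types $\hat s_1,\dots,\hat s_4$, and let $\Lambda_0$ be the smallest subtree of $\Lambda$ containing $s_1,\dots,s_4$. Since the cycle is induced, $x_1\nsim x_3$ and $x_2\nsim x_4$. Lemma~\ref{lem:transitive} then forces $s_1,s_3$ to lie in the same component of $\Lambda\setminus\{s_2\}$, and likewise $s_1,s_3$ in the same component of $\Lambda\setminus\{s_4\}$, with the analogous statements for $s_2,s_4$. This pins down the relative position of the four types inside $\Lambda_0$. Translating by the group, I would assume the identity lies in $x_1\cap x_2$, and pick $g_2\in x_2\cap x_3$, $g_3\in x_3\cap x_4$, $g_4\in x_4\cap x_1$.

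The core step is to run a ``shortening'' argument on these four elements. Using the lattice operations (gcd/lcm) of the Garside group, I would replace each $g_i$ by the canonical minimal element of the relevant double-coset intersection, which is well defined by the lattice property. I would then show that the four resulting elements can be chosen to lie in a single coset $hA_{\hat s}$ with $s\in\Lambda_0$. Concretely, I would try to show that the meet of the canonical representatives of $x_2\cap x_3$ and $x_4\cap x_1$, computed inside $x_1\cap x_3$-type data, lands in a parabolic coset of $A_{S\setminus\{s\}}$ containing points of all four $x_i$. Where the configuration localizes to a link, I would instead invoke the induction hypothesis. The link of a vertex of type $\hat s$ is $\Delta_{S\setminus\{s\}}$, a join of Artin complexes of irreducible spherical Artin groups of smaller rank, and each join factor satisfies the condition by induction.

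The main obstacle is controlling the \emph{type} of the center, i.e.\ showing that the vertex $x$ produced has $s\in\Lambda_0$ and not merely some type in $S$. Finding some common neighbour is comparatively easy: since $\Delta_S$ is a spherical building-like object, one can compare with the Coxeter complex $\bC_S=\Delta_S/PA_S$ and take a vertex of the corresponding Coxeter $4$-cycle's center. To force the type into $\Lambda_0$, I would first project the whole configuration into the standard parabolic $A_{\Lambda_0\cup\{\text{neighbours}\}}$. I would then use the support of the Garside normal forms: by the minimality above, the elements involved only use letters from $\Lambda_0$, and the tree structure of $\Lambda$ (with Lemma~\ref{lem:transitive}) lets me discard generators outside $\Lambda_0$. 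If this support argument fails in general, the fallback is a case analysis over the finitely many irreducible spherical types $A_n,B_n,D_n,I_2(m),E_{6,7,8},F_4,H_{3,4}$. In that fallback, the linear cases would be handled through the bowtie-free characterization (Lemma~\ref{lem:4wheel}) and the tripod cases by an analogue of Lemma~\ref{lem:tripod}.
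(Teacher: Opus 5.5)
There is a genuine gap: the proposal never produces a vertex adjacent to all four $x_i$, and that is the entire content of the statement. Your core step is to replace the $g_i$ by canonical minimal representatives and ``show that the four resulting elements can be chosen to lie in a single coset $hA_{\hat s}$''. This only restates the conclusion, because a vertex of type $\hat s$ adjacent to all $x_i$ is exactly a coset $hA_{\hat s}$ meeting all four $x_i$. The one concrete hint, a meet ``computed inside $x_1\cap x_3$-type data'', is vacuous: $x_1\cap x_3=\emptyset$ for an induced cycle. The lattice property of the prefix order gives gcds and lcms of elements, but you never say which gcd or lcm of which elements should produce a coset meeting four prescribed cosets. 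The induction hypothesis only applies once the cycle lies in the link of some vertex, i.e.\ once a center has already been found.

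Your claim that a common neighbour is ``comparatively easy'' via $\bC_S=\Delta_S/PA_S$ is wrong. The image of an induced cycle can collapse completely. For example, in type $A_3$ with commuting end generators $s_1,s_3$, the induced cycle $A_{\hat s_1},\,A_{\hat s_3},\,s_1^2A_{\hat s_1},\,s_3^2A_{\hat s_3}$ maps onto a single edge of $\bC_S$. When the image does have a center $\bar y$, you only get lifts of $\bar y$ adjacent to each $x_i$ separately. Finding one lift adjacent to all four is precisely the theorem, since $\Delta_S$ is not a building and the quotient map is far from a covering.

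You have also put the difficulty in the wrong place. Once you induct on $|S|$, controlling the type is easy. Suppose a center $x$ has type $\hat r$ with $r\notin\Lambda_0$. By Lemma~\ref{lem:link}, the cycle then lies in the join factor $\Delta_\Theta$ of $\lk(x,\Delta_S)\cong\Delta_{S\setminus\{r\}}$, where $\Theta$ is the component of $\Lambda\setminus\{r\}$ containing $\Lambda_0$. The cycle is still induced there, since links in a flag complex are full, and induction gives a center with type in $\Lambda_0$. This is exactly the argument in the proof of Theorem~\ref{thmb:reduction single}.

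So your ``support'' argument is unnecessary, and as stated it is false: adjacencies are in general not witnessed inside $A_{\Lambda_0}$. In $D_4$ with center $c$ and leaves $\ell_i$, take types $\hat\ell_1,\hat c$, so $\Lambda_0=\{\ell_1,c\}$. Then $A_{\hat\ell_1}\cap \ell_2cA_{\hat c}=\ell_2cA_{\{\ell_2,\ell_3\}}$, which misses $A_{\Lambda_0}$, because $\ell_2c$ is a minimal coset representative whose support contains $\ell_2$.

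The fallback is circular:
\begin{itemize}
\item Lemma~\ref{lem:4wheel} is only an equivalence. In this paper, bowtie-freeness of $\Delta_{\Lambda,\Lambda''}$ for spherical $\Lambda$ (Theorem~\ref{thm:bowtie free}) is derived from the very statement you are proving.
\item Lemma~\ref{lem:tripod} assumes the labeled 4-cycle condition.
\item There are not finitely many spherical types, since $A_n,B_n,D_n$ are infinite families.
\end{itemize}

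The paper itself gives no proof here and imports the result from \cite[Prop 2.8]{huang2023labeled}. What your proposal lacks is an actual construction of a vertex adjacent to all four $x_i$.
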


\begin{thm}[{\cite[Prop 6.6]{haettel2021lattices}}]
	\label{thm:triple}
	Let $A_S$ be the Artin group of type $B_n$. Let $S=\{s_1,s_2,\ldots,s_n\}$ be consecutive vertices the Coxeter diagram with $m_{s_{n-1},s_n}=4$. We  order $S$ by $s_1<s_2<\ldots<s_n$. Then the induced poset on $\Delta^0_S$ is upward flag.
\end{thm}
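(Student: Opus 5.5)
The plan is an induction on $n$ through the criterion of Lemma~\ref{lem:upward flag criterion}. Let $P$ be the vertex set of $\Delta_S$ with the order induced by $s_1<\cdots<s_n$, and let the rank of a vertex of type $\hat s_i$ be $i$. Every maximal simplex of $\Delta_S$ contains exactly one vertex of each type, so $P$ is $r$-saturated. We check the three hypotheses of Lemma~\ref{lem:upward flag criterion} in turn.

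First, $P$ is bowtie free. The diagram of $B_n$ is itself linear, so Theorem~\ref{thm:bowtie free} applied with $\Lambda'=\Lambda$ gives this directly.

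Second, for $p$ of rank $1$, i.e. of type $\hat s_1$, we need $P_{>p}$ to be upward flag. The elements above $p$ are exactly the vertices of $\lk(p,\Delta_S)$ of types $\hat s_2,\dots,\hat s_n$. By Lemma~\ref{lem:link}, $\lk(p,\Delta_S)\cong\Delta_{S\setminus\{s_1\}}$, and this isomorphism preserves types, hence the induced order. The diagram $S\setminus\{s_1\}$ is of type $B_{n-1}$, with the edge labelled $4$ still at the top of the order. So this hypothesis is exactly the inductive hypothesis. The base cases $n\le 2$ (type $B_2=I_2(4)$) are immediate: there the poset has height at most $2$, and a direct check on the $\Delta$ of a dihedral Artin group, which is a tree, suffices.

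Third, we must show that any pairwise upper bounded triple $x_1,x_2,x_3$ of type $\hat s_1$ has a common upper bound. By saturation we may take the pairwise upper bounds $y_1,y_2,y_3$ to have type $\hat s_n$. This produces a $6$-cycle $x_1y_1x_2y_2x_3y_3$ of type $\hat s_1\hat s_n\hat s_1\hat s_n\hat s_1\hat s_n$, and we need a vertex adjacent to all three $x_i$ of larger type. This is the main obstacle, since it is the only genuinely non-inductive input.

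For this step I would pass to the topological model. Here $A_{B_n}$ is the group of braids on $n+1$ strands fixing the first strand, equivalently the mapping class group of an $n$-punctured disk with an extra marked puncture $p_0$. Under this model, a vertex of type $\hat s_k$ corresponds to an isotopy class of a round curve. Its stabilizer $A_{k-1}\times B_{n-k}$ is recovered as braiding the $k$ punctures on one side while fixing $p_0$ on the other. Adjacency then corresponds to nesting of curves. With this dictionary, the $y_i$ are curves cutting off $p_0$, and the $x_i$ are the smallest curves nested in two of them. A common upper bound amounts to finding one curve that encloses the appropriate subsets for all three $x_i$ simultaneously.

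I would try to build such a curve by taking the boundary of a regular neighbourhood of $x_1\cup x_2\cup x_3$ together with connecting arcs chosen inside the discs bounded by the $y_i$. To see that the result is round and of the correct type, I would use an innermost bigon or surgery argument to put the $y_i$ in minimal position with respect to one another. Should this become messy, the fallback is to reduce to the braid group of type $A_n$ via the standard inclusion $B_n\hookrightarrow A_n$, which identifies $\Delta_{B_n}$ with the fixed set of an involution on $\Delta_{A_n}$. The $6$-cycle would then be handled by a centre argument for braid groups in the spirit of Theorem~\ref{thm:braid}.
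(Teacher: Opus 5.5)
Your reduction is correct as far as it goes. $P$ is $r$-saturated, and bowtie freeness is Theorem~\ref{thm:bowtie free} with $\Lambda'=\Lambda$. By Lemma~\ref{lem:link}, each $P_{>p}$ with $p$ of type $\hat s_1$ is the type $B_{n-1}$ poset, so hypotheses (1) and (2) of Lemma~\ref{lem:upward flag criterion} hold by induction.

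One correction in the base case: for $n=2$, $\Delta_S$ is not a tree, since $s_1s_2s_1s_2=s_2s_1s_2s_1$ produces embedded $8$-cycles. It does have girth $8>6$, so the closed walk $x_1y_1x_2y_2x_3y_3$ spans a tree and your median argument still works.

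The genuine gap is hypothesis (3): every $6$-cycle of type $(\hat s_1\hat s_n)^3$ has a vertex adjacent to $x_1,x_2,x_3$. This is the entire content of the statement. The paper does not prove it but imports it from Haettel [Prop 6.6], and your proposal does not establish it.

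Your topological dictionary is not correct. In the model of $A_S$ as braids on $n+1$ strands with the strand of $p_0$ fixed, the full twist $\Delta^2$ (the boundary Dehn twist) lies in $A_S$ and fixes every round curve up to isotopy. But it lies in no proper standard parabolic subgroup, so it fixes no vertex of $\Delta_S$. Hence vertices are not isotopy classes of curves, and adjacency is not nesting; you would need a model whose stabilizers are exactly the standard parabolics.

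More seriously, even in a correct model, ``take a regular neighbourhood of $x_1\cup x_2\cup x_3$ with connecting arcs and simplify by bigon surgery'' is not an argument. The whole difficulty is passing from pairwise compatibility (each pair $x_i,x_j$ has its own $y$) to one object compatible with all three. Minimal position of the pieces gives no control over the complementary regions of the triple union.

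The fallback also fails as stated.
\begin{itemize}
\item The inclusion of $A_S$ into the braid group on $n+1$ strands is of finite index, not the fixed subgroup of an involution.
\item That inclusion does induce an injective simplicial map into the Artin complex of type $A_n$, but it sends $\hat s_1,\hat s_n$ to the two end types of the $A_n$ diagram. These commute for $n\ge 3$, so Theorem~\ref{thm:braid}, which needs non-commuting types, does not apply.
\item Even if it did, a common neighbour found there need not lie in the image of $\Delta_S$.
\item The involution description belongs to the embedding into the Artin group of type $A_{2n-1}$. There, vertices of type $\hat s_k$ with $k<n$ correspond to invariant edges rather than vertices, so the cycle is again not of the shape handled by Theorem~\ref{thm:braid}. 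You would also still need an involution-invariant choice of centre.
\end{itemize}
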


%\begin{figure}[h]
%	\centering
%	\includegraphics[scale=1]{D}
%	\caption{Coxeter diagram of type $D_n$.}
%	\label{fig:ad}
%\end{figure}

\subsection{A contractibility criterion for simplicial complexes}
\label{subsec:subdiv}

Given a simplicial graph $\Gamma$ endowed with the path metric such that each edge has length $1$, a \emph{combinatorial ball} in $\Gamma$ is the collection of vertices in the metric ball $B(x,r)$ of radius $r$ centered at a vertex $x$. The graph $\Gamma$ is \emph{Helly} if whenever a collection of combinatorial balls in $Z$ have non-empty pairwise intersection, then the common intersection of these balls is non-empty. %A geodesic metric space is \emph{injective} if whenever a collection of closed metric balls in the space have non-empty pairwise intersection, then the common intersection of these balls is non-empty.

Let $S=\{s_1,\ldots,s_n\}$ with a total order $s_1<s_2<\cdots<s_n$. This gives a relation on the vertex set of a simplicial complex of type $S$ as in Definition~\ref{def:order}. The following is a consequence of work of Haettel \cite[Thm 1.15]{haettel2021lattices}, \cite[\S 7]{haettel2021lattices} and \cite[Sec 4.3, Thm B]{haettel2022link}.

\begin{thm}	
	\label{thm:contractibleII}
	Let $X$ be a simply connected simplicial complex of type $S$. For vertex $x$, let $V_{\ge x}$ (resp. $V_{\le x}$) be the collection of vertices that is $\ge x$ (resp. $\le x$). Assume that
	\begin{enumerate}
		\item the relation $<$ on the vertex set $V$ of $X$ is a partial order;
		\item for each $x\in V$, $V_{\ge x}$ is bowtie free and upward flag;
		\item for each $x\in V$, $V_{\le x}$ is bowtie free and downward flag.
	\end{enumerate}
	Then $X$ is contractible. Moreover, let $\Y$ be a graph whose vertex set is the same as the vertex set of $X$, and two vertices $y_1,y_2\in \Y$ are adjacent if there exist vertices $z_1\in X$ of type $\hat s_1$ and $z_2\in X$ of type $\hat s_n$ such that $z_1\le y_i\le z_2$ for $i=1,2$. Then $\Y$ is a Helly graph.
\end{thm}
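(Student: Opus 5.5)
The plan is to deduce the statement from Haettel's link criterion for injectivity of piecewise $\ell^\infty$ orthoscheme complexes (\cite[Thm 1.15, \S 7]{haettel2021lattices} and \cite[Sec 4.3, Thm B]{haettel2022link}). The main work is translating hypotheses (1)--(3) into the form Haettel uses.

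\textbf{Step 1: $X$ as an order complex.} By (1), $(V,<)$ is a poset, and the type map gives a rank function with values in $\{1,\dots,n\}$. Every simplex of $X$ has pairwise distinct types, so its vertices form a chain in $(V,<)$. Conversely, a chain consists of pairwise adjacent vertices. I expect these to span a simplex, since in the intended applications the complex is flag (e.g.\ Artin complexes by \cite[Prop.~4.5]{godelle2012k}), and this is the setting of Haettel's results. So $X$ is the order complex of a weakly boundedly graded poset, and every maximal simplex is a maximal chain $x_1<\dots<x_n$ with $x_i$ of type $\hat s_i$. Metrize each maximal simplex as the unit $n$-orthoscheme with the $\ell^\infty$-metric, sending $x_i\mapsto v_{i-1}$ (with the usual shift if one uses $n$ versus $n-1$ dimensions). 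These metrics agree on common faces, so $X$ becomes a piecewise $\ell^\infty$ orthoscheme complex.

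\textbf{Step 2: local conditions.} Haettel's criterion asks that, for every vertex $x$, the upper and lower parts of the link behave like flag lattices. Concretely, the relevant posets are the intervals above and below $x$: each pair with a common bound should have a join (resp.\ meet), and pairwise bounded triples should be bounded.
\begin{itemize}
\item By Lemma~\ref{lem:posets}, bowtie-freeness of $V_{\ge x}$ and $V_{\le x}$ from (2) and (3) gives the existence of joins and meets of bounded subsets.
\item Upward flagness of $V_{\ge x}$ and downward flagness of $V_{\le x}$ give the flag conditions.
\end{itemize}
With these checked, Haettel's theorem shows that the $\ell^\infty$ orthoscheme metric is locally injective. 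Since $X$ is simply connected, the local-to-global theorem for injective metric spaces used by Haettel (Miesch's criterion) then shows $X$ is injective, and injective metric spaces are contractible.

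\textbf{Step 3: the Helly graph $\Y$.} I would check that two vertices $y_1,y_2$ are adjacent in $\Y$ exactly when their $\ell^\infty$-distance in $X$ is at most $1$. Indeed, the condition $z_1\le y_i\le z_2$ places both vertices in a common ``cube'' between a minimal-type and a maximal-type vertex, and that cube is a union of orthoschemes of $\ell^\infty$-diameter $1$. Combinatorial balls in $\Y$ are then the vertex sets of metric balls of integer radius centered at vertices. Given pairwise intersecting such balls, injectivity of $X$ provides a common point, possibly not a vertex. Haettel shows \cite[Thm 1.15]{haettel2021lattices} that such a point can be rounded to a vertex using the lattice structure on each cube, namely by taking the join or meet of the cube's vertices in the appropriate interval. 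This yields the Helly property.

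\textbf{Main obstacle.} I expect the hardest part to be matching the hypotheses exactly with Haettel's formulation. He phrases his conditions via lattices and flag conditions on links and intervals, while here they are stated only for $V_{\ge x}$ and $V_{\le x}$. One must verify that these two one-sided conditions at every vertex suffice for all the two-sided interval conditions in his proof. I would try first to show that each interval $[a,b]$ inherits bowtie-freeness and flagness from $V_{\ge a}$ and $V_{\le b}$.
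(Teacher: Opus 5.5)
Your proposal is correct and follows essentially the same route as the paper, which gives no independent proof and instead deduces the statement from Haettel's work. As in Remark~\ref{rmk:injective}, one metrizes each top simplex as an $\ell^\infty$ unit orthoscheme, gets uniform local injectivity from the one-sided link conditions via \cite[\S 7, Thm 6.3]{haettel2021lattices}, and concludes injectivity (hence contractibility) from simple connectivity via \cite[Thm 1.15]{haettel2021lattices}; this is also how the paper drops the local finiteness hypothesis of \cite[Thm B]{haettel2022link}, which supplies the Helly thickening.

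Your hedge in Step 1 is well placed. Hypotheses (1)--(3) only see the $1$-skeleton and the types, so flagness of $X$ (chains spanning simplices) is a genuinely necessary implicit assumption, and it holds in all applications in this paper. For instance, deleting one open top simplex from the $\widetilde C_3$ Coxeter complex preserves (1)--(3), purity and simple connectivity, but leaves a space homotopy equivalent to $S^2$.
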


The graph $\Y$ in the above theorem is called the \emph{thickening} of $X$. 

\begin{remark}
	\label{rmk:injective}
	In \cite[Sec 4.3, Thm B]{haettel2022link}, it is required that $X$ is locally finite. However, for conclusion of the above theorem, the locally finite assumption can be dropped by repeating \cite[\S 7]{haettel2021lattices}. More precisely, following \cite[Def 1.10]{haettel2021lattices}, using the order on the vertex set of each top-dimensional simplex in $X$, we can identify it with a unit orthoscheme (as discussed in Section~\ref{subsec:proof sketch}) such that $v_0$ is identified with the smallest vertex and $v_n$ is identified with the biggest vertex. We put $\ell^\infty$-metric on each top-dimensional simplex in $X$, which induces a piecewise $\ell^\infty$-metric on $X$. Then by the argument in  \cite[\S 7]{haettel2021lattices}, one deduces that $X$ is uniformly locally injective from the assumptions of Theorem~\ref{thm:contractibleII} and \cite[Thm 6.3]{haettel2021lattices}. Thus $X$ is an injective metric space \cite[Thm 1.15]{haettel2021lattices}, hence is contractible.
\end{remark}

A simplicial complex $X$ of type $S$ is \emph{$\widetilde C$-like}, if it is simply-connected and satisfies all the assumptions of Theorem~\ref{thm:contractibleII}. Examples of $\widetilde C$-like complexes includes Coxeter complex of type $\widetilde C_n$, Euclidean building of type $\widetilde C_n$ and Artin complex of type $\widetilde C_n$ \cite{haettel2021lattices}.

\begin{lem}[{\cite[Lem 5.1]{huang2024}}]
	\label{lem:big lattice}
	Suppose $X$ and $(V,\le)$ satisfy the assumptions of Theorem~\ref{thm:contractibleII}.  
	Then $(V,\le)$ is bowtie free and flag.
\end{lem}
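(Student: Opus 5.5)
Lemma~\ref{lem:big lattice} asks for two things about $(V,\le)$: that it is bowtie free, and that it is flag. The idea is to reduce both global properties to the local hypotheses (2) and (3) of Theorem~\ref{thm:contractibleII}, using the rank structure and the injective, Helly geometry. The poset $(V,\le)$ is weakly graded by the type rank $r(x)=i$ when $\type(x)=\hat s_i$. It is also $r$-saturated, because every simplex extends to a maximal simplex of dimension $n-1$, which contains vertices of every type.

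\textbf{Bowtie freeness.} By Corollary~\ref{cor:bowtie free criterion1} it suffices to give a center to every quasi-bowtie $x_1y_1x_2y_2$ with $r(x_1)=r(x_2)<r(y_1)=r(y_2)$. We may assume the four elements are distinct. The configuration $x_1,y_1,x_2,y_2$ is then an embedded $4$-cycle in $X^1$, and it bounds a disk since $X$ is simply connected. The plan is to show that $\{x_1,x_2\}$ has a join. Choose a vertex $z_2$ of type $\hat s_n$ above $y_1$ and a vertex $z_1$ of type $\hat s_1$ below $x_1$. All four vertices then lie in the interval structure used to define the thickening $\Y$.

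I would use the Helly property of $\Y$ from Theorem~\ref{thm:contractibleII}, or equivalently the injectivity of the orthoscheme metric from Remark~\ref{rmk:injective}. Consider the combinatorial balls of radius $1$ in $\Y$ around $x_1,x_2,y_1,y_2$. These pairwise intersect, so the Helly property gives a vertex $w$ that is $\Y$-close to all four. One then has to upgrade $w$ to an honest center. To do this, work inside $V_{\ge z}$ or $V_{\le z}$ for suitable $z$ comparable to $w$, where hypotheses (2) and (3) give bowtie freeness. There the existing meets and joins from Lemma~\ref{lem:posets} move $w$ to an element between the $x_i$ and the $y_j$.

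An alternative route is available if the Helly upgrade is awkward. Any rank-$1$ vertex $z$ below both $x_1$ and $x_2$ turns the quasi-bowtie into one inside $V_{\ge z}$, and hypothesis (2) then finishes the argument directly. So the real content is producing a common lower bound, or symmetrically a common upper bound, of rank $1$ or $n$. I would get that bound from local injectivity, i.e.\ the metric ball intersection at scale $\le 1$ in the $\ell^\infty$ orthoscheme metric.

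\textbf{Flagness.} With bowtie freeness in hand, I would apply Corollary~\ref{cor:upward flag criterion}. Take a pairwise upper bounded triple of rank $k$ and common upper bounds $q_{ij}$. Bowtie freeness gives joins $j_{ij}$ of each pair. The three joins are pairwise bounded above only if we can find a top-rank vertex above two of them. For this I would again use the Helly property of $\Y$: apply it to the balls around $p_1,p_2,p_3$ in $\Y$, which pairwise intersect since the pairs have upper bounds. This produces a point which, after passing to a rank-$n$ vertex above it, lies above all three $p_i$. Downward flagness is symmetric, reversing the order.

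The main obstacle in all of this is the translation between $\Y$-adjacency or Helly centers and genuine order relations in $(V,\le)$. A Helly center $w$ is only known to lie in a common interval with each point. Turning that into a comparability requires a lemma that in a $\widetilde C$-like complex, two vertices in a common interval $[z_1,z_2]$ with $z_1$ fixed have their join inside $V_{\ge z_1}$. That lemma follows from hypothesis (2) via Lemma~\ref{lem:posets}, and I expect this bookkeeping to be the delicate step.
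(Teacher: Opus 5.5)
There is a genuine gap. Nothing in the proposal actually converts $\Y$-information into order relations, and that conversion is the whole content of the lemma.

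\textbf{The alternative route cannot work.} Corollary~\ref{cor:bowtie free criterion1} forces you to handle quasi-bowties with $r(x_i)=1$ and $r(y_j)=n$. There $x_1\ne x_2$ are minimal and $y_1\ne y_2$ are maximal, so no common lower bound of the $x_i$ and no common upper bound of the $y_j$ exists. In the $\widetilde C_2$ Coxeter complex, the type $\hat s_1$ and $\hat s_3$ neighbours of a type $\hat s_2$ vertex form exactly such a quasi-bowtie. It never lies in a single $V_{\ge z}$ or $V_{\le z}$. Local injectivity cannot supply the bound either: a torus quotient of that complex still satisfies hypotheses (1)--(3) of Theorem~\ref{thm:contractibleII}, yet has a centerless quasi-bowtie running along a closed diagonal. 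So simple connectivity is essential, and it enters only through the Helly property of $\Y$.

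\textbf{The main route stalls at the upgrade.} It does use the Helly property, but the Helly point $w$ for balls around $x_1,x_2,y_1,y_2$ only shares \emph{some} interval $[c,d]$ ($c$ of type $\hat s_1$, $d$ of type $\hat s_n$) with each of the four vertices, a different interval for each. The bridging lemma you propose, about joins inside a single $V_{\ge z_1}$, is immediate from hypothesis (2) and does not address this. The flagness step fails the same way: a type $\hat s_n$ vertex above $w$ need not lie above any $p_i$.

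\textbf{The missing idea.} $\Y$-adjacency to an \emph{extremal} vertex is comparability. If $a$ has type $\hat s_1$ and $d_\Y(w,a)\le 1$, the type $\hat s_1$ common lower bound in the definition of $\Y$-adjacency must be $a$ itself, so $a\le w$. Dually, $w\le b$ whenever $b$ has type $\hat s_n$. You did pick $z_1\le x_1$ and $z_2\ge y_1$, but a single such pair does not bound $x_2,y_2$. Instead, choose $a_i\le x_i$ of type $\hat s_1$ and $b_j\ge y_j$ of type $\hat s_n$, and center the Helly balls at these. The balls $B_\Y(a_i,1)$ and $B_\Y(b_j,1)$ pairwise intersect, and any Helly point $w$ satisfies $a_i\le w\le b_j$.

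Then slide $w$ using local bowtie freeness:
\begin{enumerate}
\item take a center $z_1$ of $x_1b_1wb_2$ in $V_{\ge a_1}$;
\item take a center $z_2$ of $x_2b_1z_1b_2$ in $V_{\ge a_2}$, so that $x_1,x_2\le z_2\le b_1,b_2$;
\item take a center $z_3$ of $x_1y_1x_2z_2$ in $V_{\le b_1}$;
\item take a center $z_4$ of $x_1y_2x_2z_3$ in $V_{\le b_2}$; this $z_4$ is a center of $x_1y_1x_2y_2$.
\end{enumerate}

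For flagness, use Lemma~\ref{lem:upward flag criterion} rather than its corollary. Its condition (2) follows from hypothesis (2) of Theorem~\ref{thm:contractibleII}. For a pairwise upper bounded triple of type $\hat s_1$ vertices, the Helly point of the radius-$1$ balls is automatically a common upper bound. Downward flagness is dual.
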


\section{Normal forms and combinatorial convexity on orthoscheme complexes}
\label{sec:nf}

%\todo{add a summarizing paragraph here for the whole section}
%In this section we introduce a local normal form for certain $\widetilde C_n$-like complexes, prove a Bestvina-type inequality for such normal forms, and use the normal form to define a notion of convex subcomplexes. In Section~\ref{subsec:biHelly} we review work of first named author and Munro on direct geodesics in bi-Helly graphs. In Section~\ref{}, we explain why such direct geodesics give normal forms in $\widetilde C_n$-like complexes with special properties. Applications of normal forms are in Section~\ref{subsec:bestvina} and Section~\ref{subsec:convex subcomplexes}.

\subsection{Bi-Helly graphs and normal forms}
\label{subsec:biHelly}
In this section we summarize recent work of the first named author and Munro on directed geodesics in bi-Helly graphs.  We will need these results in later sections in order to define normal forms in orthoscheme complexes.

We begin with some basic definitions.  Let $\Gamma$ be a bipartite simplicial graph.  For a subset $S \subset V(\Gamma)$ we use the notation $B(S,n)$ to indicate the $n$-neighborhood of $S$:  \[ B(S,n) = \{v \in V(\Gamma) : \text{$d(v,u) \le n$ for some $u \in S$} \}. \]
The \emph{half-ball} in $\Gamma$ of radius $n \in \N$ centered at a vertex $u \in V(\Gamma)$ is the set \[\HB(u,n) = \{v \in V(\Gamma) : \text{$d(v,u) \le n$ and $d(v,u) \equiv n \pmod 2$} \}.\]
Note that $\HB(u,n)$ is the intersection of the metric ball $B(u,n)$ with one of the two parts of the bipartition of $V(\Gamma)$.

\begin{definition}[\cite{Bandelt_Dahlmann_Schutte:Absolute_Retract_Bipartite:1987}]
	\label{def:bihelly}
	A bipartite simplicial graph $\Gamma$ is \emph{bi-Helly} if any pair-wise intersecting collection of half-balls $\bigl\{\HB(u_{\alpha},k_{\alpha})\bigr\}_{\alpha}$ has a common vertex.
\end{definition}
Bi-Helly graphs were first introduced by Bandelt, D\"ahlmann and Sch\"utte as \emph{absolute bipartite retracts} \cite{Bandelt_Dahlmann_Schutte:Absolute_Retract_Bipartite:1987}.  They are a bipartite analog of Helly graphs with which they share many similar properties \cite{Bandelt_Farber_Hell:AbsoluteReflexiveBipartiteRetract:1993,CCGHO:Helly:2025}.

The following definitions and results on \emph{directed geodesics} in bi-Helly graphs are the work, in preparation, of the first named author and Munro \cite{HodaMunro:BiHelly_Directed_Geodesics:2026}, inspired by analogous definitions for systolic complexes of Januszkiewicz and {\'S}wi{\c{a}}tkowski \cite{JanuszkiewiczSwiatkowski2006}.

A \emph{near-clique} $K$ in a bi-Helly graph $\Gamma$ is a non-empty collection of vertices that are pair-wise at distance $2$, i.e. $d(u,u') = 2$ for all $u,u' \in K$.  Two near-cliques $K$ and $K'$ are at \emph{uniform distance $n$} if $d(u,u') = n$ for all $u \in K$ and $u' \in K'$.  The \emph{residue} of a near clique $K$ is the set $\Res(K) = \{v \in V(\Gamma) : \text{$d(v,u) = 1$ for all $u \in K$}\}$, i.e., the residue of $K$ is the set of all vertices that are adjacent to every vertex of $K$.

\begin{definition}[\cite{HodaMunro:BiHelly_Directed_Geodesics:2026}]
	\label{def:directed geodesics}
	A sequence of near-cliques $K_0, K_1, \dots, K_n$ of length $n \ge 2$ in a bi-Helly graph $\Gamma$ is a \emph{directed geodesic} if
	\begin{enumerate}
		\item $K_0$ and $K_n$ are at uniform distance $n$; and
		\item $K_i = \Res(K_{i-1}) \cap B(K_n,n-i)$ for all $i = 1, 2, \dots, n-1$.
	\end{enumerate}
\end{definition}

% An inductive argument following from the bi-Helly property ensures that, for any ordered pair of near-cliques at uniform distance, there exists a unique directed geodesic.  Indeed, by induction, the near-clique $K_{i-1}$ is at uniform distance $n-i+1$ and so \[\Res(K_{i-1}) \cap B(K_n,n-i) = \bigcup_{u \in K_n} \Res(K_{i-1}) \cap B(u,n-i) = \bigcup_{u \in K_n}\bigcap_{v \in K_{i-1}}\HB(v,1) \cap \HB(u,n-i)\]
% It follows that any path $v_0, v_1, \dots, v_n$ in $\Gamma$ for which $v_i \in K_i$, for all $i$, is a geodesic.

\begin{lem}[\cite{HodaMunro:BiHelly_Directed_Geodesics:2026}]
	\label{lem:existence and uniqueness of direct geodesic}
	For every pair of near-cliques $K,K'$ at uniform distance $n$ in a bi-Helly graph there exists a unique directed geodesic $K = K_0, K_1, \dots, K_n = K'$.
\end{lem}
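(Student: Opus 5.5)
Uniqueness is forced by the recursive definition, so the real work is existence. Suppose $K_0=K, K_1,\dots,K_n=K'$ and $K_0=K, L_1,\dots,L_n=K'$ are two directed geodesics. Condition (2) of Definition~\ref{def:directed geodesics} gives
\[K_i=\Res(K_{i-1})\cap B(K_n,n-i),\]
and the same formula defines $L_i$ from $L_{i-1}$. Both sequences start at $K$ and end at $K'$, so an induction on $i$ gives $K_i=L_i$ for all $i$.

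For existence, I would set $K_0=K$ and define $K_i=\Res(K_{i-1})\cap B(K',n-i)$ for $1\le i\le n-1$. I would then prove by induction on $i$ that the following invariant holds:
\begin{quote}
$K_i$ is a nonempty near-clique at uniform distance $n-i$ from $K'$.
\end{quote}
Once the invariant holds for $K_{n-1}$, we also need $K_{n-1}$ to be at uniform distance $1$ from $K_n=K'$, so that every vertex of $K_{n-1}$ is adjacent to every vertex of $K'$. This is exactly the $i=n-1$ case of the invariant. The sequence is then a directed geodesic.

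\textbf{Nonemptiness.} Assume $K_{i-1}$ is a near-clique at uniform distance $m=n-i+1\ge 2$ from $K'$. Since $\Gamma$ is bipartite and all the distances involved have the right parity, we can write
\[\Res(K_{i-1})\cap B(u,m-1)=\bigcap_{v\in K_{i-1}}\HB(v,1)\,\cap\,\HB(u,m-1)\]
for each $u\in K'$. To find a common vertex I would apply the bi-Helly property to the family $\{\HB(v,1)\}_{v\in K_{i-1}}\cup\{\HB(u,m-1)\}$ for a single fixed $u\in K'$. The pairwise intersections are nonempty for the following reasons:
\begin{itemize}
\item two vertices $v,v'\in K_{i-1}$ are at distance $2$, so $\HB(v,1)\cap\HB(v',1)\neq\emptyset$;
\item since $d(v,u)=m$, a geodesic from $v$ to $u$ gives a neighbour of $v$ at distance $m-1$ from $u$, so $\HB(v,1)\cap\HB(u,m-1)\neq\emptyset$.
\end{itemize}
Hence $K_i\neq\emptyset$.

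\textbf{Near-clique and uniform distance.} Vertices of $K_i$ all lie in the same bipartition class and share a common neighbour in $K_{i-1}$, so any two distinct ones are at distance exactly $2$. Thus $K_i$ is a near-clique. For uniform distance, take $w\in K_i$ and $u'\in K'$. Because $w$ is adjacent to a vertex of $K_{i-1}$, which is at distance $m$ from $u'$, we get $d(w,u')\ge m-1$. For the upper bound we only know that $d(w,u)\le m-1$ for \emph{some} $u\in K'$, not for all $u'$. This is the main obstacle: membership in $B(K',n-i)$ is controlled by one vertex of $K'$, whereas uniform distance requires control by every vertex of $K'$.

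I would first try to settle this directly with the bi-Helly property. One route is to show that $\Res(K_{i-1})\cap B(u,m-1)$ is independent of the choice of $u\in K'$. Another is to strengthen the induction hypothesis so that it also records that $K_{i-1}\subseteq \HB(u,m)$ for all $u\in K'$. Neither is automatic. If both fail, I would modify the construction by working in reverse: define a $K_i$ that is uniformly at distance $i$ from $K$ and at distance $n-i$ from all of $K'$ via the total intersection $\bigcap_{v\in K}\HB(v,i)\cap\bigcap_{u\in K'}\HB(u,n-i)$, which is nonempty by bi-Helly. I would then verify that the residue recursion stays inside such sets. This residue/ball compatibility step is where I expect most of the difficulty to lie.
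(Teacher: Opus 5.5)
Your uniqueness argument and your nonemptiness and near-clique steps are correct. The step you flag as the main obstacle, however, is a genuine gap, and with the definitions as written it cannot be filled. If $B(K',r)$ means, as in the paper, the union $\bigcup_{u\in K'}B(u,r)$, then the invariant ``$K_i$ is at uniform distance $n-i$ from $K'$'' is false. The existence claim also fails once $|K'|\ge 2$.

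Here is a counterexample. Take the square grid $\mathbb{Z}^2$ with its path metric, which is the $\ell^1$ distance. It is bi-Helly: on each colour class, after a translation and the change of coordinates $(x,y)\mapsto ((x+y)/2,(x-y)/2)$, half-balls become axis-parallel boxes, and boxes have the Helly property. (It is also the bi-Helly subgraph of the $\widetilde C_2$ Coxeter complex.) Let $K=\{(1,2)\}$ and $K'=\{(0,0),(2,0)\}$, which are at uniform distance $3$.
\begin{itemize}
\item $K_1=\Res(K)\cap B(K',2)=\{(0,2),(2,2),(1,1)\}$ is a near-clique, but $d((0,2),(2,0))=4$.
\item $\Res(K_1)=\{(1,2)\}$, which lies at distance $3$ from $K'$.
\item Hence $K_2=\emptyset$, and no directed geodesic from $K$ to $K'$ exists.
\end{itemize}

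The same example rules out each of your fallback routes:
\begin{itemize}
\item $\Res(K)\cap B(u,2)$ is $\{(0,2),(1,1)\}$ for $u=(0,0)$ but $\{(2,2),(1,1)\}$ for $u=(2,0)$, so it is not independent of $u$.
\item No strengthening of the induction hypothesis can help, since the failure already occurs at $i=1$ with $K_0$ a single vertex.
\item $K_1$ is not contained in $\HB((1,2),1)\cap\HB((0,0),2)\cap\HB((2,0),2)=\{(1,1)\}$, so the residue recursion does not stay inside your total-intersection sets.
\end{itemize}

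The paper gives no proof of this lemma; it is quoted from the Hoda--Munro preprint. Its only application, in Proposition~\ref{prop:existence and uniqueness}, has a singleton target $K_n=\{y\}$, and that proof even uses that consecutive $K_i$ span complete bipartite graphs ``as $K_n$ is a singleton''. There are two correct versions you can prove:
\begin{itemize}
\item \textbf{Singleton target.} If $K'=\{u\}$, your argument already closes. Here $B(K',m-1)=B(u,m-1)$, so every $w\in K_i$ has $d(w,u)\le m-1$, and your lower bound $d(w,u)\ge m-1$ gives equality.
\item \textbf{Intersection reading.} If $B(K_n,n-i)$ is read as $\bigcap_{u\in K_n}B(u,n-i)$, add all the half-balls $\HB(u,m-1)$, $u\in K'$, to your bi-Helly family. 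They pairwise intersect because $d(u,u')=2$ and $m-1\ge 1$: use $u$ itself if $m-1$ is even, and a common neighbour of $u,u'$ if it is odd. Uniform distance $m-1$ from every vertex of $K'$ is then automatic.
\end{itemize}
You should state and prove one of these two versions explicitly. The version with union neighbourhoods and non-singleton $K'$ is false, as the example shows.
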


\begin{lem}[\cite{HodaMunro:BiHelly_Directed_Geodesics:2026}]
	Let $K_0, K_1, \dots, K_n$ be a directed geodesic in a bi-Helly graph $\Gamma$ and let $v_0, v_1, \dots, v_n$ be a path in $\Gamma$ satisfying $v_i \in K_i$ for all $i$.  Then $v_0, v_1, \dots, v_n$ is a geodesic.
\end{lem}

\begin{thm}[Local characterization of directed geodesics \cite{HodaMunro:BiHelly_Directed_Geodesics:2026}]
	\label{thm:local to global}
	A sequence of near-cliques $K_0, K_1, \dots, K_n$ of length $n \ge 2$ in a bi-Helly graph is a directed geodesic if and only if, for every $i = 1,2,\dots, n-1$, the subsequence $K_{i-1}, K_i, K_{i+1}$ is a directed geodesic, i.e.,
	\[ \text{$K_{i-1}$ and $K_{i+1}$ are at uniform distance $2$} \]
	and
	\begin{equation}
		\label{eq:local}
		K_i = \Res(K_{i-1}) \cap B(K_{i+1},1).
	\end{equation}
\end{thm}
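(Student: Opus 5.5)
The forward direction is immediate from the definitions, so the plan concentrates on the converse. For "only if", suppose $K_0,\dots,K_n$ is a directed geodesic. Each $K_{i\pm1}$ lies on it, so by the geodesic lemma above (paths through the $K_j$ are geodesics), $K_{i-1}$ and $K_{i+1}$ are at uniform distance $2$. Moreover, $K_i=\Res(K_{i-1})\cap B(K_n,n-i)$ contains $\Res(K_{i-1})\cap B(K_{i+1},1)$: any $w$ adjacent to some $u\in K_{i+1}$ satisfies $d(w,K_n)\le 1+(n-i-1)$. For the reverse inclusion, take $v\in K_i$. By the definition of $K_{i+1}$, every vertex of $\Res(K_i)\cap B(K_n,n-i-1)$ lies in $K_{i+1}$. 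I will use the bi-Helly property to produce such a vertex adjacent to $v$. The half-balls $\HB(v,1)$, $\HB(u,n-i-1)$ for $u\in K_n$ (one fixed $u$ suffices, since uniform distance holds) and $\HB(v',1)$ for $v'\in K_i$ pairwise intersect, because $K_i$ is a near-clique at uniform distance $n-i$ from $K_n$. Their common vertex lies in $K_{i+1}$ and is adjacent to $v$. Hence $v\in B(K_{i+1},1)$, which gives \eqref{eq:local}.

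For "if", I will argue by downward induction on the starting index. Let $P(j)$ denote the statement that $K_j,\dots,K_n$ is a directed geodesic (with $P(n-2)$ given by hypothesis). The key claim is the following. Suppose $K_{j},\dots,K_n$ is a directed geodesic and $K_{j-1},K_j,K_{j+1}$ is locally directed. Then
\begin{enumerate}
\item $K_{j-1}$ and $K_n$ are at uniform distance $n-j+1$, and
\item $\Res(K_{j-1})\cap B(K_{j+1},1)=\Res(K_{j-1})\cap B(K_n,n-j)$.
\end{enumerate}
Part (2) is only needed for the first step, since the later $K_i$ with $i>j$ are already determined by $K_j$ (the defining formula for $K_i$ with $i\ge j+1$ involves only $K_{i-1}$ and $K_n$). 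Thus $P(j-1)$ follows from the claim, and the theorem follows from $P(0)$.

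The inclusion $\subseteq$ in (2) is trivial, given that $K_{j+1}$ is at uniform distance $n-j-1$ from $K_n$. The main obstacle is (1) together with the inclusion $\supseteq$ in (2); both amount to showing that the local geodesic does not "turn back". Take $a\in K_{j-1}$ and $z\in K_n$, and suppose $d(a,z)<n-j+1$. By parity, $d(a,z)\le n-j-1$. Then, for $b\in K_j$, the half-balls $\HB(a,1)$, $\HB(z,n-j-2)$ and $\HB(b',2)$ for $b'\in K_{j+1}$ should pairwise intersect. A common vertex $w$ would be adjacent to $a$ and at distance $\le n-j-2$ from $K_n$. I then want to force $w\in K_j$ using the local condition $K_j=\Res(K_{j-1})\cap B(K_{j+1},1)$ together with the residue characterization. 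However, $w$ is adjacent only to $a$, not to all of $K_{j-1}$, so I will instead first apply bi-Helly to the family $\{\HB(a',1)\}_{a'\in K_{j-1}}\cup\{\HB(z,n-j-2)\}$ to get $w\in\Res(K_{j-1})$ close to $z$. Then I show $w\in B(K_{j+1},1)$ by a second Helly argument, using that $K_{j-1}$ and $K_{j+1}$ are at uniform distance $2$. This places $w\in K_j$, contradicting $d(K_j,K_n)=n-j$. The same two-step Helly argument gives $\supseteq$ in (2).

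I expect this pairwise-intersection bookkeeping to be the delicate point, in particular verifying that $\HB(z,n-j-2)$ meets each $\HB(a',1)$. For that I would first establish, by the same argument applied to each $a'$ individually, that the distance from $K_n$ is constant on $K_{j-1}$.
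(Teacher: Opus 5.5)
\noindent\textbf{The ``only if'' half.} Your uniform-distance step breaks at $i=n-1$. The geodesic lemma applies only to an actual path $v_0,\dots,v_n$. For $a\in K_{n-2}$ and $c\in K_n$ you therefore need some $b\in K_{n-1}$ adjacent to $c$. But $K_{n-1}=\Res(K_{n-2})\cap B(K_n,1)$ only makes each vertex of $K_{n-1}$ adjacent to \emph{some} vertex of $K_n$. (For $i\le n-2$ your argument is fine, since $K_{i+1}\subseteq\Res(K_i)$.)

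This cannot be patched, because the claim itself fails. The grid $\mathbb Z^2$ is bi-Helly: in coordinates $(x+y,x-y)$ the distance is the max of the coordinate differences, and every half-ball is a product of two arithmetic progressions of step $2$, so the Helly property holds coordinatewise. Now take
\[K_0=\{(3,1)\},\quad K_1=\{(2,1),(3,0)\},\quad K_2=\{(2,0)\},\quad K_3=\{(1,0),(0,1)\}.\]
This satisfies Definition~\ref{def:directed geodesics}:
\begin{itemize}
\item $K_0$ and $K_3$ are at uniform distance $3$;
\item among the neighbours of $(3,1)$, exactly $(2,1)$ and $(3,0)$ lie within $2$ of $K_3$;
\item $\Res(K_1)=\{(2,0),(3,1)\}$, and only $(2,0)$ lies within $1$ of $K_3$.
\end{itemize}
Yet $d((3,0),(0,1))=4$, so $K_1$ and $K_3$ are not at uniform distance $2$. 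Hence, with $B(S,n)$ the union neighbourhood, this direction needs an extra hypothesis, e.g.\ that $K_{n-1}$ and $K_n$ span a complete bipartite graph. That hypothesis is automatic when $K_n$ is a singleton, which is the only case where the paper uses this direction (Proposition~\ref{prop:existence and uniqueness}), and under it your argument is correct. So this direction is not ``immediate from the definitions''.

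The same example refutes a premise you use twice, namely that $K_i$ is at uniform distance $n-i$ from $K_n$ ($K_1$ is not). Fortunately nothing needs it:
\begin{itemize}
\item $K_i\subseteq B(K_{i+1},1)$ holds because $K_{i+1}\subseteq\Res(K_i)$ is nonempty, or because $K_{n-1}\subseteq B(K_n,1)$ by definition. No Helly argument is required.
\item The inclusion $\subseteq$ in your claim (2) only uses $K_{j+1}\subseteq B(K_n,n-j-1)$.
\end{itemize}

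\noindent\textbf{The ``if'' half.} Your downward induction is the right skeleton, but step (1) is circular as written. Making $\HB(z,n-j-2)$ meet every $\HB(a',1)$ requires $d(a',z)\le n-j-1$ for \emph{all} $a'\in K_{j-1}$. Your fallback, ``the same argument applied to each $a'$'', presupposes exactly this again. The second Helly argument putting $w$ in $B(K_{j+1},1)$ is also never specified.

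The missing observation is that $K_j\subseteq\Res(K_{j-1})$ means $K_{j-1}\subseteq\Res(K_j)$. By $P(j)$, $d(\cdot,z)$ is constantly $n-j$ on $K_j$, so each $a\in K_{j-1}$ has $d(a,z)=n-j\pm 1$. If it were $n-j-1$, then $a\in\Res(K_j)\cap B(K_n,n-j-1)=K_{j+1}$ (valid since $j+1\le n-1$). This contradicts $d(K_{j-1},K_{j+1})=2$. No Helly argument is needed for (1).

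For $\supseteq$ in (2), take $w\in\Res(K_{j-1})$ and $z\in K_n$ with $d(w,z)\le n-j$; by (1), $d(w,z)=n-j$. The family $\{\HB(b,1)\}_{b\in K_j}\cup\{\HB(w,1),\HB(z,n-j-1)\}$ pairwise intersects:
\begin{itemize}
\item $K_j$ is a near-clique;
\item $w$ and each $b\in K_j$ have common neighbours in $K_{j-1}$;
\item $d(w,z)=d(b,z)=n-j\ge 2$.
\end{itemize}
So bi-Helly gives $c\in\Res(K_j)\cap B(K_n,n-j-1)=K_{j+1}$ adjacent to $w$. With these two repairs your ``if'' direction goes through.
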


\subsection{Bi-Helly subgraphs of $\widetilde C$-like complexes}
Let $S=\{s_1,\ldots,s_n\}$ with $s_1<s_2<\cdots<s_n$.
Let $X$ be a $\widetilde C$-like simplicial complex of type $S$. A vertex of $X$ is \emph{extremal} if it is of type $\hat s_1$ or type $\hat s_n$.

The following proposition, whose proof relies on Theorem~\ref{thm:contractibleII} of Haettel ensures that the subgraph of extremal vertices of $X$ is bi-Helly.

\begin{prop}
	\label{prop:isometric}
	Let $X$ be $\widetilde C$-like, let $\Gamma$ be the induced subgraph of $X^1$ spanned on extremal vertices of $X$ and let $\Y$ be the Helly thickening of $X^1$ described in Theorem~\ref{thm:contractibleII}.
	
	Then $\Gamma$ is bi-Helly and the embedding $\Gamma \to \Y$ is isometric with respect to the path metric where edges have length $1$.  In particular, the embedding $\Gamma \to X^1$ is also isometric.
\end{prop}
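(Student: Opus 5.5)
The plan is to prove the isometry statement first and then derive bi-Hellyness from the Helly property of $\Y$ given by Theorem~\ref{thm:contractibleII}. Since $X$ satisfies the assumptions of Theorem~\ref{thm:contractibleII}, $(V,\le)$ is a poset. By Lemma~\ref{lem:big lattice} it is moreover bowtie free and flag. As noted after Definition~\ref{def:order}, every vertex lies above some vertex of type $\hat s_1$ and below some vertex of type $\hat s_n$.

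\textbf{Basic facts about $\Gamma$ and $\Y$.} Vertices of the same type are never adjacent in $X$. Hence $\Gamma$ is bipartite, with parts $P_1$ (type $\hat s_1$) and $P_n$ (type $\hat s_n$). Every edge $ab$ of $\Gamma$ with $a\in P_1$, $b\in P_n$ is an edge of $\Y$: take $z_1=a$ and $z_2=b$. So $d_\Y\le d_\Gamma$ on extremal vertices. Two distinct vertices of $P_1$ are never adjacent in $\Y$, because $z_1\le y$ with $y\in P_1$ forces $z_1=y$; the same holds for $P_n$. Also, if $z_1\le y\le z_2$ then $z_1\le z_2$, so $z_1z_2$ is an edge of $\Gamma$.

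\textbf{Step 1: $d_\Gamma\le d_\Y$.} Let $u=y_0,y_1,\dots,y_k=v$ be a $\Y$-geodesic between extremal vertices. For each $i$ choose $a_i\in P_1$ and $b_i\in P_n$ with $a_i\le y_i,y_{i+1}\le b_i$. I will build a $\Gamma$-walk $u=w_0,w_1,\dots,w_k$ by induction. The walk should alternate between $P_1$ and $P_n$, and each $w_i$ should be comparable to $y_i$: $w_i\le y_i$ when $w_i\in P_1$, and $w_i\ge y_i$ when $w_i\in P_n$.

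For the inductive step, suppose $w_i\in P_1$ with $w_i\le y_i$. Since $y_i\le b_i$, transitivity gives $w_i\le b_i$, so $w_i\sim b_i$. Set $w_{i+1}=b_i$, which satisfies $b_i\ge y_{i+1}$. The case $w_i\in P_n$ is symmetric, using $a_i$.

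At the end, $v=y_k$ is comparable to $w_k$. If $w_k$ and $v$ have the same type, comparability forces $w_k=v$, and $d_\Gamma(u,v)\le k$ follows.

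\textbf{Main obstacle: the parity mismatch.} The remaining case is that $w_k$ and $v$ have different types. Then they are adjacent, and we only get $d_\Gamma\le k+1$. To fix this I will alter the last step. Take the final $\Y$-edge $y_{k-1}v$ and its interval $[a_{k-1},b_{k-1}]$; here $v$ itself is $a_{k-1}$ or $b_{k-1}$, since $v$ is extremal and comparable to both. Choose $w_k:=v$ directly.

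Say $v\in P_1$, so $v=a_{k-1}\le y_{k-1}$. If $w_{k-1}\in P_n$ with $w_{k-1}\ge y_{k-1}$, then $v\le w_{k-1}$, so $v\sim w_{k-1}$ and the walk closes with length $k$.

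If instead $w_{k-1}\in P_1$, I will redo the induction with the opposite choice at $w_1$, so that the types are forced from the start. Possibly I will instead replace $u$ by the appropriate element among $a_0,b_0$, which equals $u$. If that fails, I will use the bowtie-freeness/join structure from Lemma~\ref{lem:posets} to modify two consecutive choices. I expect this parity bookkeeping to be the only delicate point. Together with the reverse inequality, this shows $\Gamma\to\Y$ is isometric. Since $\Gamma\subset X^1\subset\Y$ as graphs on extremal vertices (edges of $X^1$ are edges of $\Y$), we get $d_\Gamma\ge d_{X^1}\ge d_\Y=d_\Gamma$, so $\Gamma\to X^1$ is isometric as well.

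\textbf{Step 2: bi-Helly.} Let $\{\HB(u_\alpha,k_\alpha)\}$ be pairwise intersecting half-balls in $\Gamma$. Half-balls lying in different parts are disjoint, so all of them lie in a single part, say $P_1$. By Step 1, $\HB(u_\alpha,k_\alpha)\subset B_\Y(u_\alpha,k_\alpha)$, so these $\Y$-balls pairwise intersect. The Helly property of $\Y$ (Theorem~\ref{thm:contractibleII}) then gives a vertex $y$ in all of them.

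Pick $a\in P_1$ with $a\le y$. Then $a\sim y$ in $\Y$, so $d_\Y(a,u_\alpha)\le k_\alpha+1$. By Step 1, $d_\Gamma(a,u_\alpha)\le k_\alpha+1$. Its parity is determined by the parts of $a$ and $u_\alpha$, and it agrees with $k_\alpha$ because $\HB(u_\alpha,k_\alpha)$ consists of $P_1$-vertices. Hence $d_\Gamma(a,u_\alpha)\le k_\alpha$, so $a$ is a common vertex of all the half-balls.
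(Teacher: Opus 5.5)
Your Step 1 has a genuine gap, and it sits exactly where you flag it.

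Your walk alternates between $P_1$ and $P_n$, so the type of $w_k$ is determined by the type of $u$ and the parity of $k$ alone. In the mismatch case, no re-choice of intermediate vertices can help, and your construction only yields $d_\Gamma(u,v)\le k+1$. None of your proposed remedies closes this:
\begin{itemize}
\item The case $w_{k-1}\in P_n$ is precisely the case where already $w_k=a_{k-1}=v$, so it adds nothing.
\item There is no ``opposite choice'' at $w_1$. From $u\in P_1$ you must step to some vertex of $P_n$ above $y_1$, and any such choice leaves the final type unchanged.
\item Replacing $u$ by $a_0$ is a no-op, since $u=a_0$.
\item Bowtie-freeness is not relevant here.
\end{itemize}
Step 2 inherits the gap, because it invokes the full inequality $d_\Gamma\le d_\Y$ to pass from $d_\Y(a,u_\alpha)\le k_\alpha+1$ to $d_\Gamma(a,u_\alpha)\le k_\alpha+1$.

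The missing observation is a shift by one index. If $u\in P_1$, then $u=a_0$ and $a_0\le y_1$, so $u$ already satisfies your invariant at index $1$, not just at index $0$. Symmetrically, $u=b_0\ge y_1$ if $u\in P_n$. Start the induction with $w_1:=u$. After $k-1$ steps you reach $w_k$ comparable to $v$. Then either $w_k=v$, or one more edge $w_kv$ finishes, giving $d_\Gamma(u,v)\le k$. This is exactly the paper's path $y_1,z_2,y_3,\dots$, built from $y_i\le x_i\le z_{i+1}$.

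With this fix, your Step 2 is correct and a bit slicker than the paper's. The paper instead reruns the unshifted walk from a vertex $v\le x$ along a $\Y$-geodesic from $x$ to $u_\alpha$, then applies the same parity argument. Your own unshifted walk, started at $a\le y$ along a $\Y$-geodesic from $y$ to $u_\alpha$, would also have sufficed there without needing Step 1 at all.
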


\begin{proof}
	We begin by proving that that the embedding $\Gamma \to \Y$ is isometric.  Recall that $\Y$ has the same vertex set as $X^1$ and two distinct vertices of $\Y$ are joined by an edge if and only if they have a common lower bound of type $\hat s_1$ and a common upper bound of type $\hat s_n$.  Note that these upper and lower bounds are vertices of $\Gamma$.
	
	Because $\Gamma$ embeds in $\Y$, it suffices to show that for any geodesic $x_0, x_1, \dots, x_k$ of $\Y$ between vertices of $x_0, x_k \in \Gamma$, there exists a path of length at most $k$ in $\Gamma$ between $x_0$ and $x_k$.  Without loss of generality, we may assume that $x_0$ is of type $\hat s_1$.  For each $i = 1, 2, \dots k$, there exist vertices $y_i,z_i \in \Gamma$ such that $y_i \le x_{i-1} \le z_i$ and $y_i \le x_i \le z_i$.  Then, for $i = 1, 2, \dots, k-1$, we have $y_i \le x_i \le z_{i+1}$ and $z_i \ge x_i \ge y_{i+1}$ so that $y_i$ and $z_{i+1}$ are adjacent and $z_i$ and $y_{i+1}$ are adjacent in $\Gamma$.  Thus there exists a path in $\Gamma$ of length $k-1$ from $y_1$ to either $y_k$ or $z_k$, depending on the parity of $k$.  But $x_0$ is of type $\hat s_1$ and $x_k$ is either of type $\hat s_1$ or $\hat s_n$ so $x_0 = y_1$ and either $x_k = y_k \le z_k$ or $x_k = z_k \ge y_k$.  Therefore there is a path of length at most $k$ from $x_0$ to $x_k$ in $\Gamma$.  This concludes the proof that $\Gamma \to \Y$ is an isometric embedding.
	
	We now use the isometric embeddedness of $\Gamma$ in the Helly graph $\Y$ to prove that $\Gamma$ is bi-Helly.  Let $\{\HB_{\Gamma}(u_{\alpha},k_{\alpha})\}_{\alpha}$ be a pairwise intersecting family of half-balls in $\Gamma$.  Then, since each half-ball is contained in a part of the bipartition of $\Gamma$, the union $\bigcup_{\alpha} \HB_{\Gamma}(u_{\alpha},k_{\alpha})$ must also be contained in one part of the bipartition.  Thus, without loss of generality, we may assume that every vertex of $\bigcup_{\alpha} \HB_{\Gamma}(u_{\alpha},k_{\alpha})$ is of type $\hat s_1$.  Since $\Gamma$ is isometrically embedded in $\Y$, we have $\HB_{\Gamma}(u_{\alpha},k_{\alpha}) \subset B_{\Gamma}(u_{\alpha},k_{\alpha}) \subset B_\Y(u_{\alpha},k_{\alpha})$, for each $\alpha$, where $B_{\Gamma}(\cdot,\cdot)$ and $B_\Y(\cdot,\cdot)$ denote metric balls in the vertex sets of $\Gamma$ and $\Y$, respectively, with the graph path metric.  Thus $\{B_\Y(u_{\alpha},k_{\alpha})\}_{\alpha}$ is a pairwise intersecting family of balls in $\Y$ so that, since $\Y$ is Helly, there is a common vertex $x \in \bigcap_{\alpha} B_\Y(u_{\alpha},k_{\alpha})$.  Let $v$ be any vertex of type $\hat s_1$ satisfying $v \le x$.  We will show that, for every $\alpha$, there exists a path in $\Gamma$ of length at most $k_{\alpha}$ from $v$ to $u_{\alpha}$.
	
	Let  $x = x_0,x_1,\dots,x_k=u_{\alpha}$ be a geodesic in $\Y$ from $x$ to $u_{\alpha}$.  Then $k \le k_{\alpha}$ and, for each $i = 1, 2, \dots k$, there exist vertices $y_i,z_i \in \Gamma$ such that $y_i \le x_{i-1} \le z_i$ and $y_i \le x_i \le z_i$.  Thus, by transitivity, we have a path $v,z_1,y_2,z_3,\ldots,w_k$ of length $k$ in $\Gamma$ where $w_k = y_k$ if $k$ is even and $w_k = z_k$ if $k$ is odd.  Since $u_{\alpha}$ is a vertex of $\Gamma$, either $u_{\alpha}$ has type $\hat s_1$ and $u_{\alpha} = y_k \le z_k$ or $u_{\alpha}$ has type $\hat s_n$ and $u_{\alpha} = z_k \ge y_k$.  Thus if either $k$ is even and $u_{\alpha}$ has type $\hat s_1$ or $k$ is odd and $u_{\alpha}$ has type $\hat s_n$ then there is a path of length at most $k \le k_{\alpha}$ from $v$ to $u_{\alpha}$.  So we may assume that either $k$ is even and $u_{\alpha}$ has type $\hat s_n$ or $k$ is odd and $u_{\alpha}$ has type $\hat s_1$.  Notice that $k_{\alpha}$ is even if and only if $u_{\alpha}$ and the vertices of $\HB_{\Gamma}(u_{\alpha},k_{\alpha})$ have the same.  It follows that, since $\HB_{\Gamma}(u_{\alpha},k_{\alpha})$ consists of vertices of type $\hat s_1$, we see that $k$ and $k_{\alpha}$ do not have the same parity in either case.  So $k < k_{\alpha}$ and $v,z_1,y_2,z_3,\ldots,w_k,u_{\alpha}$ is a path of length $k+1 \le k_{\alpha}$ from $v$ to $u_{\alpha}$ in $\Gamma$.
	
	Thus $d(u_{\alpha},v) \le k_{\alpha}$.  Since $v$ and the vertices of $\HB_{\Gamma}(u_{\alpha},k_{\alpha})$ are all of type $\hat s_1$ (i.e. in the same part of the bipartition of $\Gamma$) we also have $d(u_{\alpha},v) \equiv k_{\alpha} \pmod 2$ and so $v \in \HB_{\Gamma}(u_{\alpha},k_{\alpha})$.  Then $v \in \bigcap_{\alpha} \HB_{\Gamma}(u_{\alpha},k_{\alpha})$, which completes the proof that $\Gamma$ is bi-Helly.
\end{proof}

We call the subgraph $\Gamma$ of $X^1$ from Proposition~\ref{prop:isometric} the \emph{bi-Helly subgraph} of $X$.

\subsection{Normal forms on orthoscheme complexes}
Let $X$ be a simplicial complex of type $S$ such that the relation on $X^0$ in Definition~\ref{def:order} is a partial order.
We define $X$ to be \emph{locally determined} if for any pair of vertices $x<y$ in $X$, there is an extremal $y'$ with $x<y'$ such that $y'$ and $y$ are not comparable; and there is an extremal vertex $x'$ with $x'<y$ such that $x'$ and $x$ are not comparable. The meaning of locally determined is justified by the following lemma, which is an immediate consequence of the definition and Lemma~\ref{lem:posets}. 
\begin{lem}
	\label{lem:locally determined}
	Suppose $(X^0,<)$ is bowtie free (this holds when $X$ is $\wtC$-like, see Lemma~\ref{lem:big lattice}). If $X$ is locally determined, then for each vertex $x\in X$, $x$ is the meet of all type $\hat s_n$ elements in $X$ that are $\ge x$, and $x$ is the join of all type $\hat s_1$ elements in $X$ that are $\le x$.
\end{lem}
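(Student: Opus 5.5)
We prove the second assertion; the first is symmetric, obtained by reversing the order. Fix a vertex $x$ and let $L=\{z\in X^0 : z\le x,\ \type(z)=\hat s_1\}$. If $x$ itself has type $\hat s_1$, then $x\in L$ and $x$ is trivially the join of $L$. Otherwise $L$ is nonempty: the remark after Definition~\ref{def:order} gives vertices strictly below $x$, and iterating down the finitely many types produces an element of type $\hat s_1$ below $x$. Then $L$ has the upper bound $x$, so by Lemma~\ref{lem:posets}, applied with the rank map given by the index of the type, $L$ has a join $j$ in $(X^0,<)$. Since $x$ is an upper bound of $L$, we get $j\le x$. It remains to show $j=x$.

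Suppose for contradiction that $j<x$. Apply the second clause of local determinacy to the pair $j<x$. This gives an extremal vertex $x'$ with $x'<x$ such that $x'$ and $j$ are not comparable. Since $x'<x$ and $x$ is not of type $\hat s_1$, the extremal vertex $x'$ cannot be of type $\hat s_n$, which is maximal in the order. Hence $x'$ has type $\hat s_1$, and so $x'\in L$. But $j$ is the join of $L$, so $x'\le j$, contradicting that $x'$ and $j$ are not comparable. Therefore $j=x$.

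The only delicate points are bookkeeping: we must check that $L$ is nonempty, and that the extremal vertex supplied by local determinacy has the right type. The latter uses that a vertex of type $\hat s_n$ has nothing strictly above it, since types of comparable vertices are strictly ordered. The bowtie-free hypothesis enters only through Lemma~\ref{lem:posets}, to guarantee the join exists; when $X$ is $\wtC$-like this is supplied by Lemma~\ref{lem:big lattice}. The meet statement follows identically, using the first clause of local determinacy and vertices of type $\hat s_n$ above $x$.
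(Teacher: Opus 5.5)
Your proof is correct and is exactly the argument the paper intends when it calls this lemma an immediate consequence of the definition of locally determined and Lemma~\ref{lem:posets}: bowtie-freeness gives the join (resp.\ meet), and the second (resp.\ first) clause of local determinacy rules out strict inequality, since an extremal vertex strictly below $x$ must have type $\hat s_1$. You use Lemma~\ref{lem:posets} in its intended form (a nonempty upper-bounded subset has a join), which is also how the paper applies it elsewhere, even though the printed statement has ``join'' and ``meet'' interchanged.
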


For each vertex $x$, the \emph{lower link} (resp. \emph{upper link}) of $x$ in $X$, denoted by $\lk^-(x,X)$ (resp. $\lk^+(x,X)$), is the full subcomplex of $\lk(x,X)$ spanned by vertices of $X$ that are $<x$ (resp. $>x$). 

An \emph{up-down} path in $X$ is an edge path $x_1x_2\ldots x_n$ in $X$ such that $x_i$ and $x_{i+1}$ are adjacent vertices for $1\le i\le n-1$, and for each $2\le i\le n-1$, $x_{i-1}<x_i$ implies $x_i>x_{i+1}$ and $x_{i-1}>x_i$ implies $x_{i-1}<x_i$. This path is \emph{tight}, if $x_{i-1}<x_i>x_{i+1}$ implies that $x_i$ is the join of $x_{i-1}$ and $x_{i+1}$, and $x_{i-1}>x_i<x_{i+1}$ implies that $x_i$ is the meet of $x_{i-1}$ and $x_{i+1}$.

\begin{definition}
	\label{def:local normal}
	Let $\omega=x_1\ldots x_n$ be an up-down path in $X$. We say $\omega$ is a \emph{local normal form path from $x_1$ to $x_n$} if for each $2\le i\le n-1$ the following holds:
	\begin{enumerate}
		\item if $x_{i-1}<x_i>x_{i+1}$, then there do not exist a sequence of vertices $a_1=x_{i-1},a_2,a_3,a_4=x_{i+1}$ in $\lk^-(x_{i},X)$ with $a_1\le a_2\ge a_3\le a_4$;
		\item if $x_{i-1}>x_i<x_{i+1}$, then there do not exist a sequence of vertices $a_1=x_{i-1},a_2,a_3,a_4=x_{i+1}$ in $\lk^+(x_{i},X)$ with $a_1\ge a_2\le a_3\ge a_4$.
	\end{enumerate}
\end{definition} 

In the case $n=2$ and $\omega$ is a single edge, $\omega$ is automatically a local normal form from $x_1$ to $x_2$ as both two requirements above do not apply.

Given an up-down path $\omega=x_1\cdots x_n$ in $X$, let $K_i$ be the set of all extremal vertices of $X$ that is $>x_i$ (resp. $<x_i$) if $x_i$ has a neighbour in $\omega$ that is $<x_i$ (resp. $>x_i$). 

\begin{prop}
	\label{prop:translate}
	Suppose $X$ is $\widetilde C$-like and locally determined, with its bi-Helly subgraph $\Gamma$.	
	Suppose $\omega=x_1\ldots x_n$ be an up-down path in $X$ which is in local normal form. Then $\{K_{i-1},K_i,K_{i+1}\}$ is a directed geodesic in $\Gamma$ for $1<i<n$.
	
	Conversely, suppose $\{K_1,\ldots,K_n\}$ is a sequence of near-cliques in $\Gamma$ from such that for each $1\le i\le n-1$, $K_i$ and $K_{i+1}$ form a complete bipartite subgraph in $\Gamma$; and $\{K_{i-1},K_i,K_{i+1}\}$ is a directed geodesic in $\Gamma$ for $1<i<n$. Let $x_i\in X$ be the meet (resp. join) of all elements in $K_i$ if elements of $K_i$ are of type $\hat s_1$ (resp. $\hat s_n$). Then $x_1\ldots x_n$ is a locally normal form from $x_1$ to $x_n$.
\end{prop}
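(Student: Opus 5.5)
The plan is to reduce everything to the local characterization of Theorem~\ref{thm:local to global} and to translate each of its two conditions into the link conditions of Definition~\ref{def:local normal}. By symmetry (reversing the total order on $S$) I treat only the case of a local maximum $x_{i-1}<x_i>x_{i+1}$. In that case $K_i$ is the set of type $\hat s_n$ vertices $\ge x_i$, and $K_{i\pm1}$ is the set of type $\hat s_1$ vertices $\le x_{i\pm1}$, since $x_{i\pm 1}$ has the neighbour $x_i$ above it.

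First I record the basic dictionary. $K_i$ is a near-clique: any two elements share the lower bound $x_i$, hence a common type $\hat s_1$ lower bound, so they are at distance $2$ in $\Gamma$ by Proposition~\ref{prop:isometric}. Also $K_{i-1}$ and $K_i$ span a complete bipartite graph, by transitivity through $x_{i-1}<x_i$. By Lemma~\ref{lem:locally determined}, $x_i=\bigwedge K_i$ and $x_{i\pm1}=\bigvee K_{i\pm1}$, these meets and joins existing by Lemma~\ref{lem:posets}. Hence a type $\hat s_n$ vertex $v$ lies in $\Res(K_{i-1})$ exactly when $v\ge x_{i-1}$. A type $\hat s_n$ vertex lies in $B(K_{i+1},1)$ exactly when it is $\ge$ some element $\le x_{i+1}$.

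For the forward direction I check the two conditions in turn.

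Uniform distance $2$ between $K_{i-1}$ and $K_{i+1}$: every pair is at distance $\le 2$ through any element of $K_i$. Distance $0$ would give a common type $\hat s_1$ element $a\le x_{i-1},x_{i+1}$. Then $a_1=x_{i-1}\ge a\le x_{i+1}$ is a forbidden zigzag in $\lk^-(x_i)$ after padding: take $a_2=x_{i-1}$, and either $a$ already lies in $\lk^-(x_i)$ or one replaces it by the meet $x_{i-1}\wedge x_{i+1}$. This contradicts Definition~\ref{def:local normal}(1).

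Equation \eqref{eq:local}: the inclusion $K_i\subseteq \Res(K_{i-1})\cap B(K_{i+1},1)$ is clear. For the reverse, let $v$ be type $\hat s_n$ with $v\ge x_{i-1}$ and $v\ge b$, where $b$ is type $\hat s_1$ with $b\le x_{i+1}$, and suppose $v\notin K_i$, i.e.\ $v\not\ge x_i$. I produce a zigzag as follows. Let $a_2$ be the meet of $v$ and $x_i$, which exists since both lie above $x_{i-1}$. Then $x_{i-1}\le a_2<x_i$. Let $a_3$ be a suitable meet of $a_2$ with $x_{i+1}$, or a center from bowtie-freeness of the quasi-bowtie formed by $b$, $x_{i-1}$, $v$, $x_i$. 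This gives $a_1=x_{i-1}\le a_2\ge a_3\le x_{i+1}$ inside $\lk^-(x_i)$, contradicting (1). Making $a_3$ lie below $x_i$ and above something comparable to $x_{i+1}$ is the delicate point. I expect to use that $\{x_{i+1},a_2,b\}$-type configurations are pairwise bounded, together with the flagness of Lemma~\ref{lem:big lattice}.

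For the converse I define $x_j$ as the meet or join of $K_j$. The complete bipartite hypothesis gives comparabilities, so the path is up-down. Then I run the same dictionary backwards: a forbidden zigzag $x_{i-1}\le a_2\ge a_3\le x_{i+1}$ in $\lk^-(x_i)$ yields a type $\hat s_n$ vertex $v\ge a_2$ with $v\not\ge x_i$. This uses local determinacy applied to $a_2<x_i$. That $v$ lies in $\Res(K_{i-1})\cap B(K_{i+1},1)$ via a type $\hat s_1$ vertex below $a_3$, but not in $K_i$, contradicting \eqref{eq:local}.

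I expect the main obstacle to be the reverse inclusion in \eqref{eq:local} for the forward direction. There one must manufacture the middle vertices $a_2,a_3$ inside the lower link of $x_i$ from purely extremal data, and this is where bowtie-freeness and flagness of $(X^0,\le)$ must be combined carefully.
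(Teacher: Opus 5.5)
Your overall structure matches the paper's. The uniform-distance check and the converse are exactly the paper's arguments:
\begin{itemize}
\item For uniform distance, a common element $a\in K_{i-1}\cap K_{i+1}$ gives $x_{i-1}\le x_{i-1}\ge a\le x_{i+1}$ in $\lk^-(x_i,X)$. Note that $a<x_i$ automatically, so no replacement by a meet is needed.
\item For the converse, local determinacy applied to $a_2<x_i$ gives a type $\hat s_n$ vertex $z\ge a_2$ with $z\notin K_i$, contradicting \eqref{eq:local}.
\end{itemize}
However, you leave unfinished the step you call the main obstacle: the inclusion $\Res(K_{i-1})\cap B(K_{i+1},1)\subseteq K_i$. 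You never actually produce $a_3$. You also misjudge the difficulty. No flagness is needed, and the center of the quasi-bowtie formed by $b,x_{i-1},v,x_i$ would play the role of $a_2$, not $a_3$.

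The missing observation is that the witness $b$ already lies in the lower link. Since $b\le x_{i+1}<x_i$, we have $b<x_i$.
\begin{itemize}
\item So $b$ is a common lower bound of $v$ and $x_i$, and for your choice $a_2=v\wedge x_i$ you get $b\le a_2$.
\item Also $a_2\ne x_i$, since otherwise $x_i\le v$.
\item Taking $a_3=b$ then gives $x_{i-1}\le a_2\ge b\le x_{i+1}$ inside $\lk^-(x_i,X)$, contradicting Definition~\ref{def:local normal}(1).
\end{itemize}
Your first option, $a_3=a_2\wedge x_{i+1}$, would also work, but only once you know $b\le a_2$, which is exactly this observation.

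The paper does the dual construction. It takes $a_2$ to be the join of $x_{i-1}$ and $b$, which exists because $v$ and $x_i$ are common upper bounds. Then $a_2\le x_i$, and $a_2\neq x_i$, since otherwise $x_i\le v$ and $v\in K_i$. Either version closes the step in one line; as written, your proposal does not.

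A minor point: in the converse you should also say why $x_i\neq x_{i+1}$, so that the path is genuinely up-down. The paper deduces this from $K_{i-1}\cap K_{i+1}=\emptyset$.
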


Note that the $x_i$ in the second paragraph of the proposition exists by Lemma~\ref{lem:big lattice} and Lemma~\ref{lem:posets}.

\begin{proof}
	Suppose $x_i$ has a neighbour in $\omega$ that is $<x_i$. If $K_{i-1},K_i,K_{i+1}$ fails \eqref{eq:local}, then there is a vertex $z\notin K_i$ of type $\hat s_n$ such that $z$ is adjacent to each vertex in $K_{i-1}$ and a vertex $a_3\in K_{i+1}$. As $a_3\le x_{i+1}$ and $x_{i+1}< x_i$, we have $a_3< x_{i}$. As $X$ is locally determined, $z\ge x_{i-1}$. Then both $z$ and $x_i$ are common upper bounds for $\{x_{i-1},a_3\}$. Let $a_2$ be the join of $x_{i-1}$ and $a_3$, which exists by Lemma~\ref{lem:big lattice}. Then $a_2\le x_i$. Note that $a_2\neq x_i$, otherwise $x_i\le z$ which contradicts $z\notin K_i$. Thus $a_2<x_i$ and $a_2\in \lk^-(x_i,X)$. By construction, $x_{i-1}\le a_2\ge a_3\le x_{i+1}$, contradicting that $\omega$ is locally normal at $x_i$.  It remains to show $K_{i-1}$ and $K_{i+1}$ are at uniform distance. From the definition of each $K_i$, it suffices to show $K_{i-1}\cap K_{i+1}=\emptyset$. Suppose we can find $a\in K_{i-1}\cap K_{i+1}$, then $a\le x_{i-1}<x_i$ and $a\le x_{i+1}<x_i$, which gives $x_{i-1}\le x_{i-1}>a<x_{i+1}$ in $\lk^-(x_i,X)$, contradiction. Thus $K_{i-1}\cap K_{i+1}=\emptyset$, and this finishes the proof that $\{K_{i-1},K_i,K_{i+1}\}$ is a directed geodesic in $\Gamma$.
	The case $x_i$ has a neighbour in $\omega$ that is $>x_i$ is similar.
	
	For the converse, as $K_i$ and $K_{i+1}$ span a complete bipartite subgraph, $x_i$ and $x_{i+1}$ are comparable. Note that $x_i\neq x_{i+1}$, otherwise $x_{i-1}$ is comparable to $x_{i+1}$ and $K_{i-1}\cap K_{i+1}\neq\emptyset$, contradiction. Thus $x_1\ldots x_n$ forms an up-down path in $X$. Now we show this path is locally normal from $x_1$ to $x_n$. We only consider the case of $x_{i-1}<x_i>x_{i+1}$ as the other case is similar. Suppose there is a sequence of vertices $a_1=x_{i-1},a_2,a_3,a_4=x_{i+1}$ in $\lk^-(x_i,X)$ with $a_1\le a_2\ge a_3\ge a_4$. As $a_2<x_i$ and $X$ is locally determined, there is $z$ of type $\hat s_n$ such that $z\notin K_i$ and $a_2\le z$. Then $x_{i-1}\le z\ge a_3$. Let $y\in X$ be a vertex of type $\hat s_1$ such that $y\le a_3$ (such $y$ exists as $X$ is a simplicial complex of type $S$). Then $x_{i-1}\le z\ge y\le x_{i+1}$. Thus $z$ is adjacent to each vertex of $K_{i-1}$, and $z$ is adjacent to $y\in K_{i+1}$. As $z\notin K_i$, this contradicts \eqref{eq:local}.
\end{proof}

\begin{definition}
	\label{def:normal}
	Let $\omega=x_1\ldots x_n$ be an up-down path in $X$. Let $d$ be the path metric on $X^1$ with unit edge length. We say $\omega$ is a \emph{normal form path} from $x_1$ to $x_n$, if $\omega$ is geodesic in $X^1$, and for each $1\le i\le n-1$ and any vertex $y$ of $X$ adjacent to $x_i$ with $d(y,x_n)=n-i-1$, we must have either $x_i<x_{i+1}\le y$ or $x_i>x_{i+1}\ge y$. 
\end{definition}

\begin{prop}
	\label{prop:local vs global}
	Suppose $X$ is $\widetilde C$-like and locally determined.
	If up-down path $\omega=x_1\ldots x_n$ is in locally normal form $x_1$ to $x_n$, then it is in normal form from $x_1$ to $x_n$.
\end{prop}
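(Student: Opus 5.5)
The strategy is to pass to the bi-Helly subgraph $\Gamma$ and use the local-to-global principle for directed geodesics. Let $\omega=x_1\ldots x_n$ be locally normal, and let $K_1,\dots,K_n$ be the associated sets of extremal vertices. By Proposition~\ref{prop:translate}, each triple $K_{i-1},K_i,K_{i+1}$ is a directed geodesic in $\Gamma$.

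First I will check that each $K_i$ is a near-clique. Since $X$ is $\wtC$-like, $(X^0,\le)$ is bowtie free and flag by Lemma~\ref{lem:big lattice}. All elements of $K_i$ are comparable to $x_i$ on the same side, so any two of them have a common upper or lower bound of the opposite extremal type. Hence they are at distance $2$ in $\Gamma$, where distinctness gives distance exactly $2$. Consecutive $K_i,K_{i+1}$ span a complete bipartite graph by transitivity through $x_i$ and $x_{i+1}$. Theorem~\ref{thm:local to global} then shows that $K_1,\dots,K_n$ is a directed geodesic in $\Gamma$, so $K_1$ and $K_n$ are at uniform distance $n-1$. (The indexing shifts by one relative to Definition~\ref{def:directed geodesics}; the cases $n\le 2$ are trivial.)

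Next I will show that $\omega$ is geodesic in $X^1$. Suppose instead $d_{X^1}(x_1,x_n)=m<n-1$. Replace the endpoints by extremal vertices: choose $u\in K_1$ and $v\in K_n$. Comparable vertices are adjacent, so adjacency in $X^1$ between arbitrary vertices can be upgraded to adjacency between extremal ones. Using the isometric embedding of Proposition~\ref{prop:isometric}, this gives $d_\Gamma(u,v)\le m+$(small correction), which contradicts $d_\Gamma(u,v)=n-1$.

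The parity and correction bookkeeping is the delicate part. The cleanest route is to work in the Helly thickening $\Y$. Two comparable vertices are $\Y$-adjacent, since they have a common lower bound of type $\hat s_1$ and a common upper bound of type $\hat s_n$. For an up-down path, $x_i$ and $u$ with $u\ge x_i$ also share these bounds. So $d_\Y(x_1,x_n)\ge d_\Y(u,v)-2$ only in the worst case. To avoid this loss, I will instead argue directly that $d_\Y(x_1,x_n)\ge n-1$. Any $\Y$-path from $x_1$ to $x_n$ produces, as in the proof of Proposition~\ref{prop:isometric}, a $\Gamma$-path between suitably chosen $u\in K_1$ and $v\in K_n$ of the same length. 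The choice of $u$ and $v$ is dictated by whether $x_1<x_2$ or $x_1>x_2$, and $K_1$ is on the correct side to absorb the endpoint. Since $X^1\subset\Y$, geodesicity follows.

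Finally I will verify the normal-form condition. Let $y\sim x_i$ with $d(y,x_n)=n-i-1$; say $x_i<x_{i+1}$, the other case being symmetric. First I rule out $y<x_i$. In that case $y$ lies below $x_i$, so the extremal vertices above $y$ include $K_i$. Running the uniqueness of directed geodesics (Lemma~\ref{lem:existence and uniqueness of direct geodesic}) from the near-clique $K_{i-1}$ or $K_i$ toward $K_n$ would then yield a shorter path, contradicting geodesicity. So $y>x_i$. Then every type $\hat s_n$ vertex $z\ge y$ lies in $B(K_n,n-i-1)$ and is adjacent to all of $K_i$ (indeed to all vertices below $x_i$). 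By the defining equation $K_{i+1}=\Res(K_i)\cap B(K_n,n-i-1)$, it follows that $z\in K_{i+1}$. By Lemma~\ref{lem:locally determined}, $y$ is the meet of the type $\hat s_n$ vertices above it, and $x_{i+1}$ is the meet of $K_{i+1}$. This gives $x_{i+1}\le y$.

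The main obstacle is the distance bookkeeping in the second step and the exclusion of $y<x_i$ in the last step: one must track the parity of $n-i$ and the endpoint types carefully.
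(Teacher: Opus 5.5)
Your Steps 1 and 2 are sound, and Step 2 takes a somewhat different route from the paper. You transfer an arbitrary $\Y$-path of length $m$ from $x_1$ to $x_n$ into a $\Gamma$-path. It starts at whichever of $y_1,z_1$ lies in $K_1$ and, after appending at most one edge, ends in $K_n$, so $n-1=d_\Gamma(K_1,K_n)\le m$ directly. The paper instead first proves $d_X(x_1,\beta_n)=d_X(\beta_1,x_n)=n-1$ for $\beta_j\in K_j$, using a type/parity argument on shortest paths whose interior vertices are replaced by extremal ones. Only afterwards does it deduce $d_X(x_1,x_n)=n-1$. Your route is fine for geodesicity. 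The problem is that you never establish the paper's extremal-endpoint bound, and the last step needs it.

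The gap is the exclusion of $y<x_i$. First, you have the side reversed. When $x_i<x_{i+1}$, the set $K_i$ consists of the type $\hat s_1$ vertices below $x_i$, as you yourself use one sentence later. So the correct observation is that any extremal $\beta\le y$ lies in $K_i$. The path $x_i\,y\,w\cdots x_n$ is a geodesic, hence up-down, so $y<w$ and therefore $\beta<w$. This gives $d_X(\beta,x_n)\le n-i-1$. However, uniform distance $d_\Gamma(K_i,K_n)=n-i$ only gives $d_X(\beta,x_n)\ge n-i-1$, so there is no contradiction yet, and uniqueness of directed geodesics does not supply one. What is missing is the sharper bound $d_X(\beta,x_n)\ge n-i$ for every $\beta\in K_i$. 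This is exactly the paper's intermediate claim $d_X(\beta_1,x_n)=n-1$, applied to the locally normal subpath $x_i\cdots x_n$.

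You can get it from your own Step 2 by running the $\Y$-to-$\Gamma$ transfer starting at $\beta$ itself. Since $\beta$ has type $\hat s_1$, the lower bound $y_1\le\beta$ is forced to equal $\beta$. So a $\Y$-path of length $m$ from $\beta$ to $x_n$ yields a $\Gamma$-path of length at most $m$ from $\beta$ into $K_n$, whence $m\ge n-i$.

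Your assertion $z\in B(K_n,n-i-1)$ likewise needs the parity argument written out. The geodesic $x_i\,y\,w\cdots x_n$ has the same length and initial direction as $x_i\cdots x_n$, so its last step points the same way as $x_{n-1}x_n$. You can then replace $x_n$ by any element of $K_n$, and $y$ by $z$ (as $z\ge y>w$), without lengthening the path. Proposition~\ref{prop:isometric} then converts this to a bound in $d_\Gamma$. With these repairs, the rest of your argument matches the paper: the type $\hat s_n$ vertices above $y$ lie in $K_{i+1}$, and Lemma~\ref{lem:locally determined} gives $x_{i+1}\le y$.
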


\begin{proof}	
	%	Suppose $\omega$ is normal. By contradiction we assume it fails to be locally normal at $x_i$. We consider the case $x_{i-1}<x_i>x_{i+1}$ (the other case is similar). Let $\{a_i\}_{i=1}^4$ be as in Definition~\ref{def:local normal} (1). As $x_{i+1}<x_{i+2}$ and $a_3\le x_{i+1}$, $a_3<x_{i+2}$. So $x_{i-1}a_2a_3x_{i+2}\ldots x_n$ is geodesic. However, $a_2<x_i$ contradicts Definition~\ref{def:normal} (with the role of $y$ played by $a_2$). Thus $\omega$ is locally normal.\todo{It seems this only works when $i\le n-2$.}
	We assume $n\ge 3$, otherwise the proposition is clear.
	Suppose $\omega$ is a local normal path from $x_1$ to $x_n$.
	We first show $\omega$ is geodesic.	
	We define $\{K_1,\ldots,K_n\}$ as before, which satisfies \eqref{eq:local} by Proposition~\ref{prop:translate}. By Theorem~\ref{thm:local to global}, $\{K_1,\ldots,K_n\}$ is a directed geodesic in the bi-Helly subgraph $\Gamma$ in the sense of Definition~\ref{def:directed geodesics}, and $d_\Gamma(k_1,k_n)=n-1$ for any $k_1\in K_1,k_n\in K_n$, where $d_\Gamma$ denotes the path metric measured in $d_\Gamma$. For each $i$, take $\beta_i\in K_i$.
	Proposition~\ref{prop:isometric} implies $d_X(\beta_1,\beta_n)=n-1$ with $d_X$ being the path metric measured in $X^1$, hence $d_X(x_1,\beta_n)\ge n-2$. 
	
	We claim $d_X(x_1,\beta_n)=n-1$ and $d_X(\beta_1,x_n)=n-1$. By symmetry, we only prove the first statement.
	If $d(x_1,\beta_n)=n-2$, then consider a shortest edge path $\omega_1$ from $x_1$ to $\beta_n$ in $X^1$.  Let $z$ be the vertex in $\omega_1$ adjacent to $x_1$. Note that $\omega_1$ is an up-down path. By replacing vertices of $\omega_1$ by appropriate extremal vertices, we can assume in addition that all vertices of $\omega_1$ are extremal except $x_1$.
	As $\beta_n$ is extremal and $\beta_2\ldots \beta_n$ is an up-down path as well, $x_1<z$ if and only if $\beta_2<\beta_3$ if and only if $x_2<x_3$. Thus $z\in K_1$, which contradicts that $d_X(z,\beta_n)=n-3$. Hence the claim follows. 
	
	Now we prove $d_X(x_1,x_n)=n-1$. We only treat the case $x_1<x_2$, as the other case is similar.
	By previous paragraph, $d_X(x_1,x_n)\ge n-2$. If $d_X(x_1,x_n)=n-2$, then let $\omega_2$ be shortest edge path connecting $x_1$ and $x_n$. As before, we can assume $\omega_2$ is an up-down path with all its interior vertices being extremal. Let $y_2$ be the vertex of $\omega_2$ adjacent to $x_1$. If $y_2<x_1$, then $y_2\in K_1$. As $d(y_2,x_n)=n-3$, this contradicts $d_X(\beta_1,x_n)=n-1$ as $\beta_1$ can be any element in $X_1$. If $y_2>x_1$, then by comparing the up-down path $\omega_2$ and $x_1x_2\cdots x_{n-1}$, we know $z<x_n$ if and only if $x_{n-2}<x_{n-1}$ if and only if $x_{n-1}>x_n$ where $z$ is the neighbour of $x_n$ in $\omega_2$. Thus $z\in K_n$ and $d_X(x_1,z)=n-3$. However, the previous paragraph implies that $d_X(x_1,z)=n-1$, contradiction. Thus $\omega$ is geodesic.
	
	It remains to prove $\omega$ is normal. We only verify the case $i=1$ in Definition~\ref{def:normal}, as the other cases are similar. As before, we will assume $x_1<x_2$.
	Take vertex $x\in X$ adjacent to $x_1$ such that $d_X(x,x_n)=n-2$. Then there is a geodesic $\omega'$ from $x_1$ to $x_n$ passing through $x$. As $\omega'$ is distance minimizing, it is an up-down path. If $x_1>x$, then $x<z$ where $z$ is the other neighbour of $x$ in $\omega'$. We take $\beta$ to be an extremal vertex $\le x$. Then $x_1>\beta<z$. Hence $\beta\in K_1$, and $d_X(\beta,x_n)=n-2$, which contradicts the above claim. Thus $x_1< x$. Let $K_x$ be the collection of extremal vertices that are $>x$. Then $x_1<\alpha>z$ for any $\alpha\in K_x$. Thus $\alpha$ is adjacent to each vertices of $K_1$ and $d_\Gamma(\alpha,\beta_n)=n-2$ for each $\alpha\in K_x$. By Definition~\ref{def:directed geodesics} (2), $K_x\subset K_2$. As $X$ is locally determined, by Lemma~\ref{lem:locally determined} we have $x\ge x_2$, as desired.
\end{proof}

\begin{prop}
	\label{prop:existence and uniqueness}
	Suppose $X$ is $\widetilde C$-like and locally determined. Then for any vertices $x,y\in X$ such that $y$ is extremal, there is a unique local normal form path from $x$ to $y$. Thus there is a unique normal form path from $x$ to $y$. 
\end{prop}

\begin{proof}
	Suppose $d_X(x,y)=n$. We assume $n\ge 2$, otherwise the proposition is clear.
	As any geodesic in $X^1$ from $x$ to $y$ is an up-down path and $y$ is extremal, we can assume the neighbour of $x$ in any such geodesic is $>x$ (up to possibly reserving the total order on $S$). Let $z$ be one such neighbour of $x$. Then $d_X(z,y)=n-1$.
	Let $K_x$ be the collection of all extremal vertices that is $<x$. As $x<z$, $\alpha<z$ for any $\alpha\in K_x$. Thus $d_X(\alpha,y)\le n$ for any $\alpha\in K_x$ by considering the edge path $\omega$ which is the concatenation of $\alpha z$ with a geodesic from $z$ to $y$. Hence $d_\Gamma(\alpha,y)\le n$ by Proposition~\ref{prop:isometric}, where $\Gamma$ is the bi-Helly subgraph of $X$. Note that $\omega$ is an up-down path, so we can replace interior vertices of $\omega$ by extremal vertices of $X$ to obtain an edge path in $\Gamma$ of length $n$ from $\alpha$ to $y$.
	If $d_\Gamma(\alpha,y)<n$, then $d_\Gamma(\alpha,y)\le n-2$ as $\Gamma$ is bipartite. Thus $d_X(\alpha,y)\le n-2$ by Proposition~\ref{prop:isometric}. However, this contradicts that $d_X(x,y)=n$ and $d(\alpha,x)=1$. So $d_\Gamma(\alpha,y)=n$ for any $\alpha\in K_x$. By Lemma~\ref{lem:existence and uniqueness of direct geodesic},  there is a directed geodesic form $\{K_1,\ldots,K_n\}$ in $\Gamma$ from $K_1=K_x$ and $K_n=\{y\}$, which also satisfies \eqref{eq:local} by Theorem~\ref{thm:local to global}. As $K_n$ is a singleton, by Definition~\ref{def:directed geodesics}, $K_i$ and $K_{i+1}$ span a complete bipartite subgraph for $1\le i\le n-1$. This gives a local normal form path from $x$ to $y$ by Proposition~\ref{prop:translate}. The uniqueness of local normal form path follows from Proposition~\ref{prop:translate}, Theorem~\ref{thm:local to global}, the uniqueness in Lemma~\ref{lem:existence and uniqueness of direct geodesic} and $X$ being locally determined (Lemma~\ref{lem:locally determined}). The existence of normal form path follows from Proposition~\ref{prop:local vs global}, and the uniqueness follows from definition.
\end{proof}

\subsection{Convexity of combinatorial distance function}

\begin{lem}
	\label{lem:strip}
	Suppose $X$ is a simplicial complex of type $S$ which is $\widetilde C$-like and locally determined. Let $\omega=x_n\ldots x_1$ and $\omega'=y_m\ldots y_1$ be normal form paths from $x_n$ to $x_1$ and $y_m$ to $y_1$ respectively. Suppose $x_1=y_1$ is extremal, $n\le m$ and $x_n$ and $y_m$ are adjacent in $X$. Then  either $x_i\le y_i$ for $1\le i\le n$ or $x_i\ge y_i$ for $1\le i\le n$.
\end{lem}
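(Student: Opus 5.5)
Since the statement is about two normal form paths ending at the common extremal vertex $x_1=y_1$, I will argue by induction on $i$, walking backwards from the common endpoint. I will use the uniqueness statement of Proposition~\ref{prop:existence and uniqueness} together with the defining property of normal forms in Definition~\ref{def:normal}. The indexing matters here. With $\omega=x_n\ldots x_1$ going from $x_n$ to $x_1$, the path has $n-1$ edges and $d(x_i,x_1)=i-1$. Likewise $d(y_j,y_1)=j-1$. Since $x_n\sim y_m$, we get $m-1\le n$, so $m\in\{n,n+1\}$.

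\emph{Step 1: comparability at each level.} I first show that $x_i$ and $y_i$ are comparable or equal for each $i$, by downward induction starting at the top. At the top I use the edge $x_ny_m$. If $m=n+1$, then $y_{n+1}\sim x_n$ and $d(x_n,x_1)=n-1=d(y_{n+1},x_1)-1$. So $x_n$ is a neighbour of $y_{n+1}$ lying on some geodesic to $y_1$, and Definition~\ref{def:normal} forces $y_{n+1}<y_n\le x_n$ or $y_{n+1}>y_n\ge x_n$. If $m=n$, then $x_n$ and $y_n$ are adjacent, hence comparable. For the inductive step, suppose $x_i$ and $y_i$ are comparable, say $x_i\le y_i$. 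The case $x_i=y_i$ is immediate from uniqueness of normal forms, since the tails then coincide. Otherwise $x_i<y_i$, and I apply the normal form property at the vertex whose next step points in the direction compatible with the inequality. For instance, if $x_{i-1}<x_i$ fails, i.e.\ $x_i<x_{i-1}$, and also $y_{i-1}<y_i$, then $y_i$ has a neighbour among $x_i, x_{i-1}$ at the right distance. The up-down property should force the step directions of the two paths to be compatible. Applying Definition~\ref{def:normal} at $y_i$ with $y=x_i$ then yields $y_{i-1}\le x_i$ or $y_{i-1}\ge x_i$ in the appropriate direction, and transitivity gives comparability of $x_{i-1}$ and $y_{i-1}$.

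\emph{Step 2: consistency of the direction.} Next I show that the sign of the inequality cannot flip. Suppose $x_i<y_i$ but $x_{i-1}>y_{i-1}$. Then $x_i,y_i,x_{i-1},y_{i-1}$ form a quasi-bowtie-like configuration. Because $(X^0,\le)$ is bowtie free and flag (Lemma~\ref{lem:big lattice}), and the directed geodesic structure in the bi-Helly subgraph $\Gamma$ is unique (Lemma~\ref{lem:existence and uniqueness of direct geodesic}, Proposition~\ref{prop:translate}), a flip would produce a second normal form path from a vertex to $x_1$, or a violation of the local normal form condition of Definition~\ref{def:local normal}, via a sequence $a_1\le a_2\ge a_3\le a_4$ in a lower or upper link. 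I would translate everything into the near-cliques $K_i$ and use $K_i=\Res(K_{i-1})\cap B(K_{i+1},1)$ to rule out the flip.

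\emph{Main obstacle.} The delicate part is the parity and alternation bookkeeping. Up-down paths alternate, so $x_i<x_{i-1}$ versus $x_i>x_{i-1}$ depends on the parity of $i$ and on the type of $x_1$. Since both paths end at the same extremal vertex $x_1$, the last steps of the two paths have the same direction, and hence so does every step at equal index. I expect this synchronization to be the key point: it makes the neighbour relations needed in Step 1 available, and it is what makes the inequality in Step 2 propagate monotonically. Making this alternation argument airtight, especially at the top where $m=n+1$, is where I expect the most care to be needed.
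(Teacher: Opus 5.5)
Your setup is sound. You get $m\in\{n,n+1\}$, you handle the top when $m=n+1$ by applying Definition~\ref{def:normal} at $y_{n+1}$ with test vertex $x_n$, and you observe that, since both paths end at the same extremal vertex and up-down paths alternate, the steps $x_i\to x_{i-1}$ and $y_i\to y_{i-1}$ always go in the same direction.

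The inductive step, however, does not work as written, and it is the actual content of the lemma. You apply Definition~\ref{def:normal} at $y_i$ with test vertex $y=x_i$. That definition only constrains neighbours of $y_i$ lying one step closer to the endpoint, i.e.\ at distance $i-2$ from $y_1$, whereas $d(x_i,y_1)=i-1$. Your sample configuration ($x_i<x_{i-1}$, $y_{i-1}<y_i$) has opposite step directions, which you have just argued cannot occur. Moreover, ``transitivity gives comparability'' fails, because comparability is not transitive: $x_{i-1}<x_i$ together with $y_{i-1}\le x_i$ says nothing about $x_{i-1}$ versus $y_{i-1}$. Step~2 is not an argument either. Naming bowtie-freeness, flagness and the near-clique recursion without exhibiting the contradiction leaves the fixed-sign conclusion unproved.

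The missing idea is to choose, at each level, \emph{which} path's normal form property to invoke according to the common step direction, taking the test vertex from level $i-1$. Suppose $x_i\le y_i$.
\begin{itemize}
\item If both steps go down, then $x_{i-1}<x_i\le y_i$. So $x_{i-1}$ is a neighbour of $y_i$ with $d(x_{i-1},y_1)=i-2$, and Definition~\ref{def:normal} for $y_i\ldots y_1$ gives $y_i>y_{i-1}\ge x_{i-1}$.
\item If both steps go up, then $x_i\le y_i<y_{i-1}$. So $y_{i-1}$ is a neighbour of $x_i$ with $d(y_{i-1},x_1)=i-2$, and Definition~\ref{def:normal} for $x_i\ldots x_1$ gives $x_i<x_{i-1}\le y_{i-1}$.
\end{itemize}
Either way $x_{i-1}\le y_{i-1}$ with the \emph{same} sign. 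Comparability and consistency therefore come out together: a flip $x_{i-1}>y_{i-1}$ would contradict antisymmetry immediately. So no Step~2 is needed, and neither uniqueness of normal forms nor the bowtie-free or bi-Helly machinery is required. This alternating use of the two normal form properties, after reducing the case $m=n+1$ to equal lengths, is exactly the paper's proof.
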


\begin{proof}
	First suppose $n=m$. If $x_{n-1}<x_n$, then $y_{n-1}<y_n$ since both $\omega$ and $\omega'$ are up-down paths and $x_1=y_1$ is extremal. Suppose $x_n\le y_n$. Then $x_{n-1}<y_n$. As $y_m\ldots y_1$ is a normal form and $d(x_{n-1},y_1)=n-2$, by Definition~\ref{def:normal}, $x_{n-1}\le y_{n-1}$. As $y_{n-1}<y_{n-2}$, $x_{n-1}<y_{n-2}$. As $x_{n-1}\ldots x_1$ is a normal form and $d(x_1,y_{n-2})=n-3$, by Definition~\ref{def:normal} $x_{n-2}\le y_{n-2}$. Repeating this argument we obtain $x_i\le y_i$ for $1\le i\le n$. Similarly, if $x_n\ge y_n$, then $x_i\ge y_i$ for $1\le i\le n$. The case of $x_{n-1}>x_n$ is similar.
	
	Now suppose $m=n+1$. Suppose $y_n<y_{n+1}$. As $y_{n+1}\ldots y_1$ is a normal form and $d(x_n,y_1)=n-1$, Definition~\ref{def:normal} implies that $x_n<y_{n+1}$ and $x_n\le y_n$. Now we are reduced to the previous paragraph and deduce $x_i\le y_i$ for $1\le i\le n$. The case $y_n>y_{n+1}$ is similar.
\end{proof}

\begin{prop}
	\label{prop:bestvina}
	Suppose $X$ is a simplicial complex of type $S$ which is $\widetilde C$-like and locally determined.
	Let $\omega=x_1\ldots x_n$ be a local normal form path in $X$ from $x_1$ to $x_n$. Let $z$ be an extremal vertex in $X$. Then the function $f(i)=d(z,x_i)$ first decreases, then strictly increases. 
\end{prop}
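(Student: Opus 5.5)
The plan is to reduce the statement to a local claim about consecutive triples and then run a ``once it increases, it keeps increasing'' induction. Let $f(i)=d(z,x_i)$. Consecutive vertices are adjacent, so $|f(i+1)-f(i)|\le 1$. The goal is the following claim: \emph{if $f(i+1)\ge f(i)$ for some $1\le i\le n-2$, then $f(i+2)=f(i+1)+1$.}

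Granting the claim, the statement follows by induction. Let $i_0$ be the first index with $f(i_0+1)\ge f(i_0)$. Before $i_0$, $f$ strictly decreases. At $i_0$ either $f$ increases by one or stays constant. From $i_0+1$ on, the claim gives $f(i+1)=f(i)+1$ for every $i\ge i_0+1$, by induction on $i$. So $f$ first (weakly) decreases and then strictly increases. Since subpaths of local normal form paths are local normal form paths, it suffices to prove the claim for a path of length three, $x_1x_2x_3$.

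To prove the claim, first reverse the path and apply Proposition~\ref{prop:existence and uniqueness} to get the normal form path $\eta$ from $x_i$ to the extremal vertex $z$. Write $k=f(i)$. Next use the second half of Definition~\ref{def:normal}. Suppose $f(i+1)=k-1$ or $f(i+1)=k$, and suppose toward a contradiction that $f(i+2)\le f(i+1)$.

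Case $f(i+1)=k-1$. Then $x_{i+1}$ is adjacent to $x_i$ and one step closer to $z$. Definition~\ref{def:normal} applied to $\eta$ forces the second vertex $y$ of $\eta$ to lie between $x_i$ and $x_{i+1}$. Up to reversing the order, assume $x_i<y\le x_{i+1}$. Now compare with $x_{i+2}<x_{i+1}$, which holds because $\omega$ is up-down. The normal form path from $x_{i+1}$ to $z$ then goes down. So $x_{i+1}$ has two lower neighbours, $x_i$ and $x_{i+2}$, and both are at distance at most $k$ from $z$.

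Apply Lemma~\ref{lem:strip} to the normal form paths from $x_i$ and from $x_{i+2}$ to $z$ (after the obvious reindexing), or to the path from $x_{i+1}$ to $z$ together with these two paths. The strip lemma makes them comparable coordinatewise. In the lower link of $x_{i+1}$ this produces a zigzag $x_i\le a_2\ge a_3\le x_{i+2}$, where $a_2$ is the join of $x_i$ and the next vertex toward $z$, and $a_3$ is a common lower bound; these exist by Lemma~\ref{lem:big lattice} and Lemma~\ref{lem:posets}. This zigzag contradicts local normality (Definition~\ref{def:local normal}) at $x_{i+1}$.

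Case $f(i+1)=k$. Here $x_i$ and $x_{i+1}$ are adjacent vertices at equal distance from $z$. Apply Lemma~\ref{lem:strip} with $n=m$ to the normal form paths from $x_i$ and from $x_{i+1}$ to $z$. Their vertices are coordinatewise comparable, so in particular the second vertices $y,y'$ satisfy $y\le y'$ (or the reverse) and both lie in the link of $x_{i+1}$. If $f(i+2)\le k$, the same comparison with the path from $x_{i+2}$ again produces a forbidden pattern $a_1\le a_2\ge a_3\le a_4$ in $\lk^-(x_{i+1},X)$, or its dual in $\lk^+$.

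The main obstacle is producing the forbidden zigzag explicitly. One must check that $a_2,a_3$ are strictly below $x_{i+1}$ and genuinely lie in $\lk^-(x_{i+1},X)$, which requires care when some comparabilities are equalities. If this direct approach gets messy, the fallback is to translate to the bi-Helly subgraph. Use Proposition~\ref{prop:translate} and Proposition~\ref{prop:isometric}, and read off the claim from the directed geodesic $K_{i},K_{i+1},K_{i+2}$ together with half-ball distances to $z$. There the condition $K_{i+1}=\Res(K_i)\cap B(K_{i+2},1)$ and the bi-Helly property for the half-balls $\HB(z,\cdot)$ should directly forbid two non-increasing steps in a row.
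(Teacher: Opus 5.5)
Your overall scheme is fine: reduce to three-vertex subpaths, then propagate ``once it goes up it keeps going up''. The problem is the claim you feed into it, which is false. You assert that $f(i+1)\ge f(i)$ forces $f(i+2)=f(i+1)+1$, which would allow at most one flat step. The proposition deliberately allows the decreasing part to be non-strict, and flat stretches of length more than one occur.

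For a counterexample, realize the $\widetilde C_2$ Coxeter complex (which is $\wtC$-like and locally determined) on $\mathbb Z^2$. Give $(a,b)$ type $\hat s_1$ if $a,b$ are both even, $\hat s_3$ if both are odd, and $\hat s_2$ otherwise. Cone off each $2\times 2$ square centred at an odd--odd point from its centre. Take $z=(0,0)$ and $\omega=(2,-2),(2,-1),(2,0),(2,1),(2,2)$.
\begin{itemize}
\item $\lk^-((2,\pm1),X)$ consists of two incomparable vertices.
\item $\lk^+((2,0),X)$ is an $8$-cycle in which $(2,-1)$ and $(2,1)$ have no common upper bound.
\end{itemize}
So $\omega$ is in local normal form, yet every vertex of $\omega$ is at distance $2$ from $z$.

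This is exactly where your ``Case $f(i+1)=k$'' breaks. Lemma~\ref{lem:strip} only yields the longer zigzag $(2,-1)\le(1,-1)\ge(1,0)\le(1,1)\ge(2,1)$ in $\lk^+((2,0),X)$, and Definition~\ref{def:local normal} does not forbid it. Your bi-Helly fallback (``no two non-increasing steps in a row'') fails on the same example.

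Your case split is also mis-indexed. The hypothesis of your claim is $f(i+1)\ge f(i)$, yet you treat $f(i+1)=k-1$. A decrease followed by a non-increase is perfectly possible (e.g.\ along a geodesic towards $z$), so no contradiction can be reached in that case.

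What is needed, and what the paper proves, is only that a strict increase is never followed by a non-increase. That is, there is no $i$ with $f(i-1)<f(i)>f(i+1)$, and no $i$ with $f(i-1)<f(i)=f(i+1)$. Say $x_{i-1}<x_i>x_{i+1}$; the other case is dual.

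\emph{Strict local maximum.} Let $y$ be the second vertex of the normal form path from $x_i$ to $z$. Definition~\ref{def:normal}, applied at $x_i$ to both $x_{i-1}$ and $x_{i+1}$, gives $x_{i-1}\le y\ge x_{i+1}$. This is the forbidden pattern with $a_2=y$ and $a_3=a_4=x_{i+1}$. No joins are needed, and Lemma~\ref{lem:strip} is not needed either. It would not even apply to paths starting at the two outer vertices, since these are not adjacent.

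\emph{Increase followed by a flat step.} Let $a_2,a_3$ be the second vertices of the normal form paths from $x_i$ and from $x_{i+1}$ to $z$.
\begin{itemize}
\item Definition~\ref{def:normal} gives $x_{i-1}\le a_2<x_i$.
\item Parity gives $a_3<x_{i+1}$, because the two paths have equal length and $z$ is extremal.
\item Lemma~\ref{lem:strip}, applied to these two paths whose starting points are adjacent, gives $a_2\ge a_3$.
\end{itemize}
Hence $x_{i-1}\le a_2\ge a_3<x_{i+1}$ in $\lk^-(x_i,X)$, a contradiction.

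These two exclusions give exactly ``weakly decreasing, then strictly increasing'', and the example above shows that nothing stronger holds.
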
	
The decreasing part of $f$ is not necessarily strict. Also one of the decreasing part and the increasing part of $f$ could be trivial. %In particular, the proposition does not exclude the situation that $f$ is the constant function (this can indeed happen by considering appropriate configuration in Coxeter complex of type $\widetilde C_2$).
\begin{proof}
	%	By Proposition~\ref{prop:local vs global}, $\omega$ is a local normal form path from $x_1$ to $x_n$.
	We assume $z\notin \omega$, otherwise the proposition is clear. We claim there does not exist $i$ such that $d(x_{i-1},z)<d(x_i,z)>d(x_{i+1},z)$. First we consider the case $x_{i-1}<x_i>x_{i+1}$. Let $\omega$ be a normal form path from $x_i$ to $z$ which exists by Proposition~\ref{prop:existence and uniqueness}, and let $y_i$ be the neighbour of $x_i$ in $\omega$.  Then Definition~\ref{def:normal} implies $x_{i-1}\le y_i\ge x_{i+1}$, contradicting Definition~\ref{def:local normal} (1) (note that $x_{i-1}\le y_i\ge x_{i+1}\le x_{i+1}$ play the role of $a_1,a_2,a_3,a_4$). The case of $x_{i-1}>x_i<x_{i+1}$ is similar.
	
	We claim there does not exist $i$ with $d(x_{i-1},z)<d(x_i,z)=d(x_{i+1},z)$. Suppose $x_{i-1}<x_i>x_{i+1}$. Let $a_2$ (resp. $a_3$) be neighbour of $x_{i}$ (resp. $x_{i+1}$) in a normal form path from $x_{i}$ (resp. $x_{i+1}$) to $z$. By Definition~\ref{def:normal}, $a_2<x_i$ and $a_2\ge x_{i-1}$. As $d(x_{i+1},z)=d(x_i,z)$ and $z$ is extremal, $a_2<x_i$ implies $a_3<x_{i+1}$. Thus $a_3<x_i$. By Lemma~\ref{lem:strip}, $x_{i}>x_{i+1}$ implies $a_2\ge a_3$. So in $\lk^-(x_i,X)$ we have $x_{i-1}\le a_2\ge a_3< x_{i+1}$, contradicting Definition~\ref{def:local normal} (1). The case of $x_{i-1}>x_i<x_{i+1}$ is similar. 
	
	These two claim imply that if $f(i)<f(i+1)$ for some $i$, then $f(j)<f(j+1)$ for any $j\ge i$. Thus the proposition follows.
\end{proof}

\subsection{A notion of convex subcomplexes}

\begin{definition}
	\label{def:local convex}
	Let $X$ be a simplicial complex of type $S=\{s_1,\ldots,s_n\}$ which is $\widetilde C$-like and locally determined. We say a full subcomplex $Y\subset X$ is \emph{locally convex}, if 
	\begin{enumerate}
		\item $Y$ is also a simplicial complex of type $S$;
		\item for any vertex $y\in Y$, any tight up-down edge path of length $2$ in $\lk^+(y,X)$ with endpoints in $\lk^+(y,Y)$ is contained in $\lk^+(y,Y)$, and any tight up-down edge path of length $2$ in $\lk^-(y,X)$ with endpoints in $\lk^-(y,Y)$ is contained in $\lk^-(y,Y)$;
		\item for any vertex $y\in Y$, any tight up-down edge path $a_1<a_2>a_3<a_4$ in $\lk^-(y,X)$ with $a_1,a_4\in\lk^-(y,Y)$ is contained in $\lk^-(y,Y)$, and any tight up-down edge path $a_1>a_2<a_3>a_4$ in $\lk^+(y,X)$ with $a_1,a_4\in \lk^+(y,Y)$ is contained in $\lk^-(y,Y)$.
	\end{enumerate}
\end{definition}

\begin{prop}
	\label{prop:convex subcomplex}
	Let $X$ be a simplicial complex of type $S=\{s_1,\ldots,s_n\}$ which is $\widetilde C$-like and locally determined. Let $Y\subset X$ be a connected locally determined locally convex subcomplex. Then $Y$ itself is $\widetilde C$-like. Moreover, if $y_1,y_2$ are vertices of $Y$ with a local normal form path $\omega\subset X$ from $y_1$ to $y_2$ with $y_2$ extremal, then $\omega\subset Y$.
\end{prop}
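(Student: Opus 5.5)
We verify the three hypotheses of Theorem~\ref{thm:contractibleII} for $Y$ together with simple connectivity, and then deduce the statement about normal form paths.

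\textbf{The poset axioms.} The order on $Y^0$ is the restriction of the order on $X^0$, so it is a partial order. For $y\in Y$, the set $V_{\ge y}(Y)$ is the vertex set of $\lk^+(y,Y)$ with $y$ adjoined, and similarly for $V_{\le y}(Y)$.

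To see that $V_{\ge y}(Y)$ is bowtie free, take a quasi-bowtie $x_1y_1x_2y_2$ in $V_{\ge y}(Y)$. It has a center in $V_{\ge y}(X)$, by Lemma~\ref{lem:big lattice}. Let $m$ be the join of $x_1,x_2$ in $X$, which exists by Lemma~\ref{lem:posets}. Then $x_1<m>x_2$ is a tight up-down path of length $2$ in $\lk^+(y,X)$ whose endpoints lie in $Y$. Condition (2) of Definition~\ref{def:local convex} puts $m$ in $Y$, and $m$ is then a center.

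For upward flagness, take a pairwise upper bounded triple $p_1,p_2,p_3$ in $V_{\ge y}(Y)$. It has a common upper bound in $X$, by Lemma~\ref{lem:big lattice}. Replace the pairwise upper bounds by pairwise joins $j_{12},j_{23},j_{13}$; these lie in $Y$ by the previous step. The join of $j_{12}$ and $p_3$ in $X$ is a common upper bound, and it lies in $Y$ by condition (2) again. Where this is not directly a length-$2$ tight path in the link, we use condition (3): the tight path $j_{12}>p_2<j_{23}>\cdots$ shows that the relevant intermediate vertices stay in $Y$. The arguments for $V_{\le y}$ are symmetric. The case $y$ itself is trivial.

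\textbf{Simple connectivity.} This is the step I expect to be the main obstacle, since local convexity is a local condition and does not immediately give a global one. My first attempt is to take a loop in $Y$, homotope it in $X$ to an edge loop, and fill it there. A cleaner route is as follows. Choose an extremal basepoint $p\in Y$. For each vertex $y\in Y$, consider the normal form path from $y$ to $p$ in $X$, which exists by Proposition~\ref{prop:existence and uniqueness}.

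\begin{enumerate}
\item Prove the ``moreover'' part first, by induction on length. This requires a separate local-to-global argument, since it cannot rely on $Y$ being $\widetilde C$-like. The induction runs along the path starting from the extremal endpoint $y_2$. At each vertex $x_i$ of a local normal form path with $x_{i-1}$ and $x_{i+1}$ given, Definition~\ref{def:local normal} together with Proposition~\ref{prop:translate} identifies $x_i$ as the meet or join determined by the residue condition \eqref{eq:local}. Conditions (2) and (3) of Definition~\ref{def:local convex} are designed exactly to keep such vertices inside $Y$. The induction should be carried out via the unique directed geodesic built from $Y$-data: since $Y$ is connected and locally determined, one constructs inside $Y$ a local normal form path from $y_1$ to $y_2$, and uniqueness (Proposition~\ref{prop:existence and uniqueness}) identifies it with $\omega$.
\item Given step 1, the normal form paths from the vertices of $Y$ to $p$ stay in $Y$. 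Lemma~\ref{lem:strip} shows that for adjacent $y,y'$ the two paths are pointwise comparable, so they bound a strip of simplices of $Y$. Coning edge loops toward $p$ along these strips then contracts every loop in $Y$.
\end{enumerate}

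Once $Y$ is simply connected and satisfies the axioms above, it is $\widetilde C$-like.

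The delicate point is the construction in step 1 of a local normal form path inside $Y$ before knowing that $Y$ is $\widetilde C$-like. If the direct construction fails, I would instead argue by contradiction: take the first vertex of $\omega$, counted from $y_2$, that leaves $Y$. Then the two-step or three-step configuration in the link of the preceding vertex is a tight path of the type forbidden to leave $Y$ by conditions (2) and (3).
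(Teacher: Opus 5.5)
Your verification of the link conditions is fine and agrees with the paper's first step. Condition (2) of Definition~\ref{def:local convex} makes $V_{\ge y}(Y)$ and $V_{\le y}(Y)$ closed under the joins and meets that exist in $X$, which gives bowtie freeness and flagness (condition (3) is not needed there).

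The gap is in simple connectivity and the ``moreover'' part, which is the real content of the proposition. Your primary plan is circular. Producing a local normal form path inside $Y$ needs Proposition~\ref{prop:existence and uniqueness} for $Y$, which rests on the bi-Helly subgraph and the Helly thickening of Theorem~\ref{thm:contractibleII}, and these require $Y$ to be simply connected. Your fallback also fails. At the first vertex $x_i$ of $\omega$ (counted from $y_2$) outside $Y$, you only know that its neighbour $x_{i+1}$ lies in $Y$. Conditions (2) and (3) only constrain \emph{interior} vertices of tight paths whose \emph{endpoints} already lie in $Y$, and $x_i$ is not an interior vertex of any configuration you control. That $\omega$ returns to $Y$ at $y_1$ is global information invisible to link conditions, so some topological input is unavoidable, as with ``connected and locally convex implies convex'' in CAT$(0)$ geometry.

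The missing idea is to pass to the universal cover $\widetilde Y$. Being of type $S$, bowtie free and flag in links, and locally determined are all local conditions, so $\widetilde Y$ is $\widetilde C$-like and locally determined, and local normal form paths exist there.

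Conditions (2) and (3) are then used in the direction opposite to yours: a local normal form path in $\widetilde Y$ projects to a local normal form path in $X$. Indeed, suppose there were a witness $\bar y_{i-1}\le \bar a_2\ge \bar a_3\le \bar y_{i+1}$ in $\lk^-(\bar y_i,X)$. It can be replaced by a tight up-down path of length at most $3$ from $\bar y_{i-1}$ to $\bar y_{i+1}$. By local convexity that path lies in $\lk^-(\bar y_i,Y)$, so it lifts to $\lk^-(y_i,\widetilde Y)$, contradicting local normality upstairs.

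If the covering were nontrivial, two distinct extremal vertices of $\widetilde Y$ would have the same image. The local normal form path between them would project to a closed local normal form path of positive length in $X$, contradicting Proposition~\ref{prop:local vs global}. So the covering is trivial and $Y$ is simply connected, hence $\widetilde C$-like.

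The ``moreover'' part then follows at once. The local normal form path from $y_1$ to $y_2$ inside $Y$ is locally normal in $X$, so it equals $\omega$ by uniqueness in $X$. Your strip-coning step is not needed.
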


\begin{proof}
	For each vertex $y\in Y$, by Definition~\ref{def:local convex}, the vertex set of $\lk^+(y,Y)$ (resp. $\lk^-(y,Y)$) is closed under taking join and meet in $\lk^+(y,X)$ (resp. $\lk^-(y,X)$). Thus $Y$ satisfies all the requirements of $\widetilde C$-like complexes except simply connectedness. Let $\widetilde Y$ be the universal cover of $Y$ with the induced structure of simplicial complex of type $S$. Then $\widetilde Y$ is $\widetilde C$-like and locally determined. 
	
	Let $\pi:\widetilde Y\to Y$ be the covering map. Given $\omega=y_1\ldots y_n\subset \widetilde Y$ be a local normal path  from $y_1$ to $y_n$, we claim $\bar\omega=\bar y_1\ldots\bar y_n$ is a local normal path in $X$ from $\bar y_1$ to $\bar y_n$, where $\bar y_i=\pi(y_i)$. Suppose $\bar\omega$ is not locally normal at $\bar y_i$ with $\bar y_{i-1}<\bar y_i>\bar y_{i+1}$. Then there are vertices $\bar a_2,\bar a_3\in \lk^-(\bar y_i,X)$ with $\bar y_{i-1}\le \bar a_2\ge \bar a_3\le \bar y_{i+1}$. Up to replacing $\bar a_2$ and $\bar a_3$ by appropriate vertices, we can assume either $\bar y_{i-1}$ and $\bar y_{i+1}$ are connected by a tight up-down path $P$ of length $\le 2$ in $\lk^-(\bar y_i,X)$, or $\bar y_{i-1}<\bar a_2>\bar a_3<\bar y_{i+1}$ gives a tight up-down path $P$ in $\lk^-(\bar y_i,X)$. As $Y$ is locally convex, in either cases we have $P\subset \lk^-(\bar y_i,Y)$. By lifting $P$ to $\lk^-(y_i,\widetilde Y)$, we deduce that $\omega$ is not locally normal at $y_i$, contradiction. The case $\bar y_{i-1}>\bar y_i<\bar y_{i+1}$ is similar. Thus the claim follows. 
	
	We conclude that $\pi$ is the identity map, otherwise there exist two extremal vertices $y\neq y'\in \widetilde Y$ that are mapped to the same vertex $\bar y\in Y$ and the local normal path from $y$ to $y'$ (which exists by Proposition~\ref{prop:existence and uniqueness}) is mapped to a local normal path from $\bar y$ to itself, contradicting Proposition~\ref{prop:local vs global}. Thus $Y$ is simply-connected, hence it is a $\widetilde C$-like complex. The moreover part of the proposition follows from the above claim, and Proposition~\ref{prop:existence and uniqueness}.
\end{proof}

\section{Subdivision of some relative Artin complexes}
\label{sec:subdivision}
We review subdivision of certain types of relative Artin complexes from \cite{huang2024}.

For $n\ge 2$, a \emph{$D_{n+1}$-like} subdiagram $\Lambda'$ of a Coxeter diagram $\Lambda$ is an induced subdiagram which is a copy of the Coxeter diagram of type $D_{n+1}$ (see Figure~\ref{fig:BD} left) with arbitrary edge labels. For $n\ge 3$, a $D_{n+1}$-like subdiagram is \emph{$\widetilde B_{n}$-like}, if the label of edge $b_nb_{n+1}$ is $\ge 4$.
For $n\ge 1$, a \emph{$\widetilde D_{n+3}$-like} subdiagram $\Lambda'$ of $\Lambda$ is an induced subdiagram that is a copy of the Coxeter diagram of type $\widetilde D_{n+3}$ (see Figure~\ref{fig:BD} right) with arbitrary edge labels.
\begin{definition}
	\label{def:subdivision}
	Let $\Lambda$ be a Coxeter diagram with an induced $D_{n+1}$-like subdiagram $\Lambda'$. Let $\{b_i\}_{i=1}^n$ be vertices of $\Lambda'$ as in Figure~\ref{fig:BD} left.
	
	Let $\Delta=\Delta_{\Lambda,\Lambda'}$ be the associated relative Artin complex. We subdivide each edge of $\Delta$ connecting a vertex of type $\hat b_1$ and a vertex of type $\hat b_2$. We say the middle point of such edge is of type $m$. Cut each top dimensional simplex in $\Delta$ into two simplices along the codimensional 1 simplex spanned by vertices of type $m$ and $\{\hat b_i\}_{i=3}^{n+1}$. This gives a new simplicial complex, which we denoted by $\Delta'$. Define a map $\tau$ from the vertex set $V\Delta'$ of $\Delta'$ to $\{1,2,\ldots,n,n+1\}$ by sending vertices of type $\hat b_1,\hat b_2$ to $1$, vertices of type $m$ to $2$, vertices of type $\hat b_i$ to $i$ for $i\ge 3$. We then view $\Delta'$ as a simplicial complex of type $S=\{1,2,\ldots,n+1\}$ and define a relation $<$ on $V\Delta'$ as follows. For $x,y\in V\Delta'$, $x<y$ if $x$ and $y$ are adjacent and $\tau(x)<\tau(y)$. The simplicial complex $\Delta'$, together with the relation $<$ on its vertex set, is called the \emph{$(b_1,b_2)$-subdivision of $\Delta_{\Lambda,\Lambda'}$}. 
\end{definition}

\begin{figure}[h]
	\centering
	\includegraphics[scale=1]{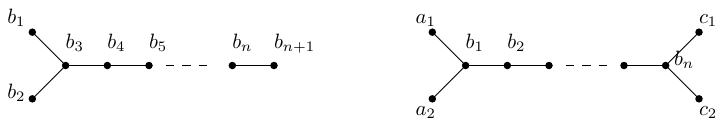}
	\caption{$\widetilde B$-like and $\widetilde D$-like subdiagrams.}
	\label{fig:BD}
\end{figure}

When  $\Lambda'$ is an admissible subgraph of $\Lambda$, the relation $<$ is a partial order \cite[Lem 8.2]{huang2024}.
The $(b_1,b_2)$-subdivision $\Delta'$ of $\Delta$ is \emph{upward or downward flag or bowtie free}, if $(V\Delta',<)$ is a poset which is upward or downward flat or bowtie free.

\begin{lem}
	\label{lem:subdivision bowtie free}
	Let $\Lambda'$ be an induced admissible $D_{n+1}$-like subdiagram of $\Lambda$. If $\Delta_{\Lambda,\Lambda'}$ satisfies the labeled 4-cycle condition, then the $(b_1,b_2)$-subdivision of $\Delta_{\Lambda,\Lambda'}$ is bowtie free.
\end{lem}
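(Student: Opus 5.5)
We use the criterion Corollary~\ref{cor:bowtie free criterion1}: the subdivision $\Delta'$ is a simplicial complex of type $\{1,\dots,n+1\}$ with rank function $\tau$. First we check that $(V\Delta',<)$ is $\tau$-saturated. Every vertex of $\Delta'$ lies in a top-dimensional simplex of $\Delta'$, and such a simplex has one vertex of each $\tau$-value. This follows from the way each top simplex of $\Delta$ is cut into two, each piece having vertices of type $\hat b_1$ (or $\hat b_2$), $m$, $\hat b_3,\dots,\hat b_{n+1}$. Hence above and below any element we can find comparable elements of every rank. It therefore suffices to show that every quasi-bowtie $x_1y_1x_2y_2$ with $\tau(x_1)=\tau(x_2)=k<\ell=\tau(y_1)=\tau(y_2)$ has a center.

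\textbf{Case $k\ge 2$.} Here the $x_i,y_j$ are unsubdivided vertices of type $\hat b_k$ and $\hat b_\ell$ with $k,\ell\ge 3$, or midpoints when $k=2$. For $k\ge 3$ the comparisons are adjacencies in $\Delta$. The 4-cycle $x_1y_1x_2y_2$ is either degenerate or an induced 4-cycle in $\Delta$. The labeled 4-cycle condition then gives a vertex $z$ adjacent to all four whose type $\hat b$ has $b$ in the subtree spanned by $b_k,b_\ell$, i.e. the segment $[b_k,b_\ell]$ of the linear part of $\Lambda'$. By Lemma~\ref{lem:transitive} (using admissibility), such $z$ is comparable in the correct way with all four, so it is a center. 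If $z$ has type $\hat b_k$ or $\hat b_\ell$ it coincides with one of the vertices, and that vertex is the center.

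For $k=2$, a midpoint $m$ of an edge $uv$ of types $\hat b_1,\hat b_2$ is below $y$ iff both $u,v\sim y$. So we run the same argument on the cycles formed by the endpoints. Using Lemma~\ref{lem:4wheel}, $\Delta_{\Lambda,\Lambda''}$ is bowtie free for the linear subdiagrams $\Lambda''$ through $b_1$ resp. $b_2$. Lemma~\ref{lem:posets} gives meets: take the meet $z$ of $y_1,y_2$ in both posets and check that it lies above both edges. If $z$ has type $\hat b_j$ with $j\ge 3$ we are done. Otherwise $z$ is one of $u_i,v_i$, and we argue that both edges then coincide, so that $x_1=x_2$.

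\textbf{Case $k=1$.} This is the main obstacle, since $x_1,x_2$ may be of type $\hat b_1$, of type $\hat b_2$, or one of each. If both have the same type $\hat b_i$ and $\ell\ge3$, apply the labeled 4-cycle condition with $\Lambda''$ the linear subdiagram through $b_i$. The center found has type in $[b_i,b_\ell]$, which avoids the other fork vertex, and by Lemma~\ref{lem:transitive} it is comparable correctly.

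If $x_1$ has type $\hat b_1$ and $x_2$ type $\hat b_2$ with $\ell\ge 3$, the cycle is a 4-cycle whose types span the tripod with leaves $b_1,b_2,b_\ell$. We apply Lemma~\ref{lem:tripod}, or more directly the labeled 4-cycle condition with tripod support. This yields either $x_1\sim x_2$, in which case the midpoint of $x_1x_2$ is the center, or a vertex adjacent to all four of type $\hat a$ with $a$ on the segment from the valence-three vertex $b_3$ to $b_\ell$, which is again a center.

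When $\ell=2$, the $y_j$ are midpoints, and the claim is that two vertices below two distinct midpoints force a degenerate configuration. For instance, $x_1,x_2$ both of type $\hat b_1$ lying below the midpoints of edges $x_1v_1$ and $x_2v_2$ forces $x_1=x_2$, since each midpoint has a unique lower vertex of each type.

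The delicate bookkeeping is in the mixed-type case and in the case with midpoints. There one must repeatedly convert the comparisons with $m$-vertices into adjacency statements in $\Delta$ and ensure that the center produced has $\tau$-value strictly between $k$ and $\ell$ or coincides with an endpoint.
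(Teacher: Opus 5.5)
Your cases $k\ge 3$ and $k=1$ are fine. The case $k=2$, however, has a real hole.

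\textbf{The gap at $k=2$.} There you work with two different posets, $\mathcal Q_1$ on $\Delta^0_{\Lambda,b_1b_3\cdots b_{n+1}}$ and $\mathcal Q_2$ on $\Delta^0_{\Lambda,b_2b_3\cdots b_{n+1}}$. The meets of $y_1,y_2$ in these two posets are a priori different vertices. The meet $z$ in $\mathcal Q_1$ is only known to be adjacent to the $\hat b_1$-endpoints $u_1,u_2$ of the two edges. Nothing you cite transfers this adjacency to the $\hat b_2$-endpoints $v_1,v_2$. In particular, Lemma~\ref{lem:transitive} cannot be applied along $v_i\sim u_i\sim z$, since the leaf $b_1$ separates nothing. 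So ``check that it lies above both edges'' is the entire content of this case, and it is not proved. The fallback is also wrong as stated: if the $\mathcal Q_1$-meet has type $\hat b_1$, you only get $u_1=u_2$, and $v_1\neq v_2$ is still possible.

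\textbf{How the paper avoids it.} The paper never meets quasi-bowties with $k\ge 2$, because it uses Lemma~\ref{lem:bowtie free criterion} with the order reversed instead of Corollary~\ref{cor:bowtie free criterion1}. It then suffices to check two things. First, $\mathcal P_{>p}$ must be bowtie free for $\tau(p)=1$. This set is a linear relative Artin complex ($\Delta_{\Lambda,\Lambda'\setminus\{b_1\}}$ or $\Delta_{\Lambda,\Lambda'\setminus\{b_2\}}$) in which the midpoints over $p$ become ordinary vertices, so Lemma~\ref{lem:4wheel} applies. Second, quasi-bowties with $\tau(x_1)=\tau(x_2)=1$ must have centers, which is exactly your $k=1$ analysis.

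\textbf{Repairing your route.} If you prefer to keep your approach, the missing step can be supplied as follows.
Suppose the $\mathcal Q_1$-meet $z$ has type $\hat b_j$ with $3\le j<\ell$, and suppose $z\nsim v_1$. Then $v_1y_1zy_2$ is an induced $4$-cycle. The labeled 4-cycle condition gives $w$ adjacent to all four, of type $\hat b_{j'}$ with $3\le j'<\ell$ and $j'\neq j$.
\begin{itemize}
\item If $j'>j$, then $w$ is a lower bound of $y_1,y_2$ in $\mathcal Q_1$ with $w>z$, contradicting that $z$ is the meet.
\item If $j'<j$, then $b_{j'}$ separates $b_2$ from $b_j$, so Lemma~\ref{lem:transitive} together with admissibility gives $v_1\sim z$, again a contradiction.
\end{itemize}
Hence $z\sim v_1$, and likewise $z\sim v_2$, so $z$ is a center.

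If instead $z=u_1=u_2$, repeat the argument with the meet in $\mathcal Q_2$ and the roles of $b_1,b_2$ exchanged. Either that meet is a center, or $v_1=v_2$ and hence $x_1=x_2$.
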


\begin{proof}
	Let $\Delta'$ be the $(b_1,b_2)$-subdivision of $\Delta_{\Lambda,\Lambda'}$, with the poset structure on its vertex set as in Definition~\ref{def:subdivision}. Let $\mathcal P$ be the poset $((\Delta')^0,<)$. For any $p\in \mathcal P$ with $\tau(p)=1$, then $\mathcal P_{>p}$ can be identified with the vertex set of $\Delta_{\Lambda,\Lambda'\setminus\{b_1\}}$ or $\Delta_{\Lambda,\Lambda'\setminus\{b_2\}}$, which is bowtie free by Lemma~\ref{lem:4wheel}. Given a quasi-bowtie $x_1y_1x_2y_2$ in $\mathcal P$ with $1=\tau(x_1)=\tau(x_2)<\tau(y_1)=\tau(y_2)$. If $\tau(y_1)=\tau(y_2)=2$, then either $x_1=x_2$ or $y_1=y_2$. If $\tau(y_1)=\tau(y_2)\ge 3$, then Lemma~\ref{lem:tripod} gives the center for this quasi-bowtie. Now the lemma follows from Lemma~\ref{lem:bowtie free criterion}.
\end{proof}

\begin{lem}
	\label{lem:weakly flag equivalent}
	Suppose $\Lambda$ is a Coxeter diagram which contains an admissible $D_3$-like subdiagram $\Lambda'$ with its three consecutive vertices being $\{b_1,b_3,b_2\}$.	
	Suppose $\Delta_{\Lambda,\Lambda'}$ is bowtie free. Then $\Delta_{\Lambda,\Lambda'}$ is weakly flag (Definition~\ref{def:flag}) if and only if the $(b_1,b_2)$-subdivision of $\Delta=\Delta_{\Lambda,\Lambda'}$ is downward flag.
\end{lem}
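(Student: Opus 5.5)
We first make the two posets concrete. In $\Delta=\Delta_{\Lambda,\Lambda'}$ we use the order $b_1<b_3<b_2$. The vertex types are $\hat b_1,\hat b_3,\hat b_2$, and the vertices of type $\hat b_1$ are the minimal ones. In the $(b_1,b_2)$-subdivision $\Delta'$ we have $\tau$-values $1$ (types $\hat b_1,\hat b_2$), $2$ (type $m$, i.e.\ midpoints of $\hat b_1\hat b_2$ edges) and $3$ (type $\hat b_3$). In $\Delta'$ a vertex $v$ of type $\hat b_3$ is above every $\hat b_1$ and $\hat b_2$ vertex adjacent to it in $\Delta$. It is also above every midpoint of an edge $uw$ with $u,w\sim v$. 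A midpoint $m_{uw}$ lies above exactly $u$ and $w$.

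Downward flagness of $\Delta'$ concerns triples that are pairwise lower bounded. We reduce to the essential case: three vertices of $\tau$-value $3$ or $2$. If one vertex of the triple has $\tau=1$, it is itself a common lower bound as soon as it is below the other two, and pairwise boundedness forces this. Triples involving type-$m$ vertices reduce to the statement about their endpoints $u,w$. Namely, a lower bound of $m_{uw}$ is $u$ or $w$, so the question becomes whether one of finitely many explicit vertices is below the rest. The core case is three $\hat b_3$-vertices $v_1,v_2,v_3$ such that each pair $v_i,v_j$ has a common neighbour $p_{ij}$ of type $\hat b_1$ or $\hat b_2$. We want a single vertex of type $\hat b_1$ or $\hat b_2$ adjacent to all three.

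Next we compare this with weak flagness of $\Delta$ in the order $b_1<b_3<b_2$. An upper bound of a pair that is not maximal means an upper bound of type $\hat b_3$. Dually, a lower bound that is not minimal also means a $\hat b_3$ vertex. So weak upward flagness is a statement about triples $x,y,z$ containing $\hat b_1$ or $\hat b_3$ vertices that are pairwise joined through $\hat b_3$ vertices. We translate in both directions using bowtie freeness and Lemma~\ref{lem:posets}. These give joins and meets: two $\hat b_3$ vertices with a common $\hat b_1$ neighbour have a meet, and two $\hat b_1$ vertices with a common $\hat b_3$ upper bound have a join. With these tools we prove the two implications.

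\textbf{($\Rightarrow$).} Start with a triple $v_1,v_2,v_3$ of $\hat b_3$-vertices with pairwise neighbours $p_{ij}$. Suppose all $p_{ij}$ have type $\hat b_1$. Then $v_1,v_2,v_3$ are pairwise lower bounded by minimal elements. Weak downward flagness in its dual reading, reversing the order and swapping $b_1\leftrightarrow b_2$, should produce a common lower bound. This is where we must check that the hypothesis ``not maximal/minimal'' matches; if it does not, we first replace the $p_{ij}$ using Lemma~\ref{lem:4-cycle} on the $6$-cycle $v_1p_{12}v_2p_{23}v_3p_{31}$. Suppose instead the $p_{ij}$ have mixed types, say $p_{12}$ of type $\hat b_1$ and $p_{23}$ of type $\hat b_2$. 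We use Lemma~\ref{lem:transitive}: $b_1$ and $b_2$ lie in different components of $\Lambda\setminus\{b_3\}$ by admissibility, so $p_{12}\sim v_2\sim p_{23}$ gives $p_{12}\sim p_{23}$. This yields configurations to which bowtie freeness or weak flagness of $\Delta$ applies.

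\textbf{($\Leftarrow$).} Start with a triple in $\Delta$ that is pairwise bounded by $\hat b_3$ vertices. Its elements and bounds become vertices of $\Delta'$. The edges of $\hat b_1\hat b_2$ type become midpoints, which are lower bounds in $\Delta'$. Downward flagness of $\Delta'$ gives a common lower bound. A common lower bound of $\tau$-value $2$ yields the required $\hat b_3$ bound through the simplex structure, and one of value $1$ directly gives the required bound.

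We expect the main obstacle to be the mixed-type case analysis in ($\Rightarrow$). There we must show that a triple whose pairwise lower bounds are of different types $\hat b_1/\hat b_2$, or are midpoints, still has a common lower bound. For this we would combine Lemma~\ref{lem:transitive} with the join and meet existence coming from bowtie freeness, exactly as in the proof of Lemma~\ref{lem:tripod}.
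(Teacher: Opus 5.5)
Your ($\Leftarrow$) direction has the same shape as the paper's proof. You apply downward flagness of $\Delta'$ to the three $\hat b_3$-vertices $q_1,q_2,q_3$ that pairwise bound a triple $p_1,p_2,p_3$ of type $\hat b_1$, and push the common lower bound $q$ down to $\tau(q)=1$. But the closing claim, ``one of value $1$ directly gives the required bound,'' is false, and this is exactly where the bowtie-free hypothesis has to be used.

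\begin{itemize}
\item If $q$ has type $\hat b_2$, you still need Lemma~\ref{lem:transitive}, using that $b_1,b_2$ are separated by $b_3$, to get $q\sim p_i$.
\item If $q$ has type $\hat b_1$, it is not an upper bound of anything. The paper first reduces to $q_1,q_2,q_3$ pairwise distinct; otherwise some $q_i$ already bounds all three $p_j$. Then $q$ and $p_1$ are both lower bounds of $q_1\neq q_3$, so bowtie freeness forces the center of that quasi-bowtie to have type $\hat b_1$, giving $q=p_1$. Likewise $q=p_2=p_3$.
\end{itemize}

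Your remark about $\tau$-value $2$ is also off. A midpoint lower bound should simply be replaced by its $\hat b_2$ endpoint; no $\hat b_3$ vertex is involved.

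For ($\Rightarrow$), the paper just cites \cite[Lem 8.7]{huang2024}, but your argument for the case where all $p_{ij}$ have type $\hat b_1$ would fail. The $v_i$ are pairwise lower bounded only by minimal elements, so weak downward flagness says nothing about $\{v_1,v_2,v_3\}$. Reversing the order makes those bounds maximal, so weak upward flagness does not apply either. Lemma~\ref{lem:4-cycle} only produces vertices of the same types, so it cannot repair the mismatch.

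The fix is to apply weak upward flagness to the other triple, $\{p_{12},p_{23},p_{31}\}$. Their pairwise upper bounds $v_2,v_3,v_1$ are not maximal, so you get a common upper bound $z$, which you may take of type $\hat b_2$. Discard degenerate cycles: if two $p_{ij}$ coincide, that vertex is already below all $v_i$. Otherwise the center of the quasi-bowtie $p_{12}\,v_1\,p_{31}\,z$ must be $v_1$, so $v_1\le z$, and similarly $v_2,v_3\le z$. Hence $z$ is a common lower bound of the $v_i$ in $\Delta'$. Your mixed-type case is fine: Lemma~\ref{lem:transitive} plus bowtie freeness suffice there.
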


\begin{proof}
	The only if direction is \cite[Lem 8.7]{huang2024}. It remains to prove the if direction. Let $\Delta'$ be the $(b_1,b_2)$-subdivision of $\Delta$ and suppose $\Delta'$ is downward flag. We assume the partial order on $\Delta^0$ is induced by $b_1<b_3<b_2$.
	Given $\{p_1,p_2,p_3\}\in \Delta^0$ of type $\hat b_1$ such that $p_i$ and $p_{i+1}$ have a common upper bound $q_i$ in $(\Delta^0,<)$ of type $\hat b_3$ for $i\in \mathbb Z/3\mathbb Z$. We can assume $\{q_1,q_2,q_3\}$ are pairwise distinct, otherwise we are already done. Then $\{q_1,q_2,q_3\}$ are pairwise lower bounded in $((\Delta')^0,<)$, hence they have a common lower bound $q$ in $((\Delta')^0,<)$, which can be chosen to have type $\hat b_1$ or type $\hat b_2$. If $q$ has type $\hat b_1$, then $q$ and $p_1$ are both common lower bounds for $\{q_1,q_3\}$ in $(\Delta^0,<)$. As $q_1\neq q_3$ and $(\Delta^0,<)$ is bowtie free, $q=p_1$. Similarly, $q=p_1=p_2=p_3$. If $q$ has type $\hat b_2$, then we deduce $p_1\sim q$ from $q_1\sim q_i$, $q_i\sim q$ and Lemma~\ref{lem:transitive}. Similarly, $p_i\sim q$ for $1\le i\le 3$, so $q$ is common upper bound for $\{p_1,p_2,p_3\}$ in $(\Delta^0,<)$, as desired. This shows $(\Delta^0,<)$ is weakly upward flag. Weakly downward flagness can be proved in a similar way.
\end{proof}

\begin{lem}
	\label{lem:d3 upward flag}
	Let $\Lambda'=b_1b_3b_2$ be a $D_3$-like subdiagram of a tree Coxeter diagram $\Lambda$. Suppose $(\Delta_{\Lambda,\Lambda'})^0$ with the partial order induced by $\hat b_1<\hat b_3<\hat b_2$ is bowtie free and upward flag.
	Then the $(b_1,b_2)$-subdivision of $\Delta_{\Lambda,\Lambda'}$ is bowtie free and downward flag.
\end{lem}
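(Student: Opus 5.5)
Write $\Delta=\Delta_{\Lambda,\Lambda'}$, let $\Delta'$ be its $(b_1,b_2)$-subdivision and $\mathcal P=((\Delta')^0,<)$. Here $n=2$, so $\tau$ takes values in $\{1,2,3\}$. Rank $1$ consists of the vertices of type $\hat b_1$ and $\hat b_2$, rank $2$ of the midpoints $m$ of edges $\hat b_1\hat b_2$, and rank $3$ of the vertices of type $\hat b_3$. Two rank-$1$ vertices are never comparable. A midpoint $m$ of an edge $uv$ lies above exactly $u$ and $v$, and below exactly those $\hat b_3$-vertices adjacent to both $u$ and $v$ (flagness of $\Delta$). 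The poset $\mathcal P$ is $\tau$-saturated, so I would use Corollary~\ref{cor:bowtie free criterion1} for bowtie freeness and the downward analogue of Corollary~\ref{cor:upward flag criterion} for downward flagness.

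\textbf{Bowtie free.} By Corollary~\ref{cor:bowtie free criterion1} it suffices to treat quasi-bowties $x_1y_1x_2y_2$ with $\tau(x_1)=\tau(x_2)<\tau(y_1)=\tau(y_2)$.
\begin{enumerate}
\item If $\tau(x_i)=2$ and $\tau(y_i)=3$, then both $x_i$ lie below $y_1,y_2$. In the original order $b_1<b_3<b_2$, the four endpoints of $x_1,x_2$ give a quasi-bowtie against $y_1,y_2$ (one endpoint of type $\hat b_1$ and one of type $\hat b_2$ for each $x_i$). I would argue that bowtie freeness of $\Delta$ forces $x_1=x_2$ or $y_1=y_2$. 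Concretely, if $y_1\ne y_2$, then the $\hat b_1$-endpoints of $x_1,x_2$ are both maximal lower bounds, hence equal; similarly the $\hat b_2$-endpoints are equal. Thus $x_1=x_2$, which is a center.
\item If $\tau(x_i)=1$ and $\tau(y_i)=2$, then the edge determined by $y_1$ and $y_2$ forces $x_1=x_2$ or $y_1=y_2$.
\item If $\tau(x_i)=1$ and $\tau(y_i)=3$, with $x_1,x_2$ of the same type, say $\hat b_1$, then the quasi-bowtie already lives in $(\Delta^0,<)$ and has a center there. The only possible center type is $\hat b_1$ or $\hat b_3$, so this center is $x_i$ or $y_j$, which is also a center in $\mathcal P$.
\item If $\tau(x_i)=1$ and $\tau(y_i)=3$, with $x_1$ of type $\hat b_1$ and $x_2$ of type $\hat b_2$, then $x_1<y_j<x_2$ in $\Delta$. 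Hence $x_1\sim x_2$ by the lower/upper-bound structure, and the midpoint of the edge $x_1x_2$ is a center, by flagness of $\Delta$.
\end{enumerate}

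\textbf{Downward flag.} I would use the downward version of Corollary~\ref{cor:upward flag criterion}: it suffices to check that pairwise lower bounded triples of rank $3$ and of rank $2$ have a common lower bound.

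For rank $2$, three midpoints that are pairwise lower bounded share endpoints pairwise. Either they share a common endpoint, which is then a common lower bound, or they form a triangle of edges on three rank-$1$ vertices. The latter is impossible: those three vertices would be pairwise adjacent, yet only two types ($\hat b_1,\hat b_2$) are available, so two of them would be adjacent vertices of the same type.

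For rank $3$, take $\hat b_3$-vertices $q_1,q_2,q_3$ that are pairwise lower bounded in $\mathcal P$, by rank-$1$ vertices $p_{ij}$ say. If two of the $p_{ij}$ have type $\hat b_1$, use upward flagness of $(\Delta^0,<)$ in its reversed form (reverse the order, $b_2<b_3<b_1$, which is symmetric). The hypothesis includes upward flagness for the order $\hat b_1<\hat b_3<\hat b_2$. Upward flag in that order controls triples bounded above by $\hat b_2$-vertices, that is, common lower bounds in $\mathcal P$ of type $\hat b_2$ for $\hat b_3$-triples. Pairwise lower bounds of type $\hat b_1$ are handled by the dual statement: upper bounds in $\Delta$ among $\hat b_3$ above $\hat b_1$. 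I expect the main obstacle here, since only upward flag is assumed. I would handle the mixed case by a bowtie/transitivity argument similar to the proof of Lemma~\ref{lem:weakly flag equivalent}, using Lemma~\ref{lem:transitive} to push a $\hat b_1$ bound to a $\hat b_2$ bound. When the pairwise bounds have the same type $\hat b_2$, the $q_i$ and these bounds sit in $\Delta^0$ with the $q_i$ pairwise lower bounded by maximal elements. I would convert this via Lemma~\ref{lem:posets} joins to a configuration where upward flagness applies, then read off a common lower bound in $\mathcal P$.
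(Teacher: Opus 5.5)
Your bowtie-free part is correct. It is essentially the paper's route via Lemma~\ref{lem:bowtie free criterion} and Corollary~\ref{cor:bowtie free criterion1}, and your four cases check out. The rank-$2$ case of downward flagness is also correct. So is the case where all pairwise lower bounds of $q_1,q_2,q_3$ have type $\hat b_2$; there you need no joins, since upward flagness of $(\Delta^0,<)$ applied directly to the $q_i$ gives a common upper bound of type $\hat b_2$ (unless the $q_i$ coincide), which is a common lower bound in $\mathcal P$.

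\textbf{The gap.} The problem is the rank-$3$ case where the pairwise lower bounds $p_{12},p_{23},p_{31}$ have type $\hat b_1$, which is the only case with real content. You cannot treat it by ``upward flagness in reversed form, which is symmetric.'' Reversing the order turns the hypothesis into \emph{downward} flagness of $(\Delta^0,<)$, which is not assumed. The hypothesis is not symmetric in $b_1,b_2$; in the application in Lemma~\ref{lem:weak atomic}, $\Lambda'$ is $B_3$-like and only the one-sided statement is available. Nor can Lemma~\ref{lem:transitive} push a $\hat b_1$-bound to a $\hat b_2$-bound. To get $q_1\sim w$ from $q_1\sim p_{12}\sim w$ you would need $b_3$ and $b_2$ in different components of $\Lambda\setminus\{b_1\}$, which fails because they are adjacent. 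As written, this case is left open.

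\textbf{The missing idea.} Apply upward flagness to the $\hat b_1$-vertices rather than to the $q_i$.
\begin{itemize}
\item If two of the $p_{ij}$ coincide, that vertex already lies below all three $q_i$.
\item Otherwise $\{p_{12},p_{23},p_{31}\}$ is pairwise upper bounded in $(\Delta^0,<)$ by the $q_i$, so it has a common upper bound $w$.
\item For each $i$, the two $p$'s below $q_i$, together with $q_i$ and $w$, form a quasi-bowtie (or $w=q_i$). Its center is $\le q_i$ and $\ge$ two distinct $\hat b_1$-vertices, so it must be $q_i$. Hence $q_i\le w$.
\item So either $q_1=q_2=q_3$, or $w$ has type $\hat b_2$ and is a common lower bound in $\mathcal P$.
\end{itemize}
The mixed cases then follow from bowtie freeness alone. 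For example, suppose $p_{12},p_{31}$ have type $\hat b_1$ and $p_{23}$ has type $\hat b_2$. Then $p_{12}<q_2<p_{23}$ and $p_{31}<q_3<p_{23}$. The center of the quasi-bowtie $p_{12}\,q_1\,p_{31}\,p_{23}$ is either $p_{12}=p_{31}$ or $q_1$, giving the common lower bound $p_{12}$ or $p_{23}$ respectively. This argument is what the paper packages as ``upward flag implies weakly flag'' (as in \cite[Lem 12.2]{huang2024}), combined with the only-if direction of Lemma~\ref{lem:weakly flag equivalent}.
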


\begin{proof}
	The bowtie free part follows from Lemma~\ref{lem:bowtie free criterion}, and the downward flag part reduces to proving $\Delta_{\Lambda,\Lambda'}$ is weakly flag by Lemma~\ref{lem:weakly flag equivalent}. However, upward flagness of $\Delta_{\Lambda,\Lambda'}$ implies the weakly flagness of $\Delta_{\Lambda,\Lambda'}$, whose proof is identical to \cite[Lem 12.2]{huang2024}.
\end{proof}

\begin{prop}[{\cite[Prop 2.24]{huang2024}}]
	\label{prop:ori link0}
	Let $\Lambda$ be a Coxeter diagram with an admissible $\widetilde B_n$-like subdiagram $\Lambda'$. Let $\{b_i\}_{i=1}^{n+1}$ be vertices of $\Lambda'$ as in Figure~\ref{fig:BD} left. Suppose $n\ge 3$.
	For $i=1,2,n+1$, let $\Lambda_{i}$ (resp. $\Lambda'_{i}$) be the connected component of $\Lambda\setminus\{b_i\}$ (resp. $\Lambda'\setminus\{b_i\}$) that contains $b_3$. 
	Suppose that the following holds:
	\begin{enumerate}
		\item the $(b_1,b_2)$-subdivision of $\Delta_{\Lambda_{n+1},\Lambda'_{n+1}}$ is bowtie free and downward flag;
		\item for $i=1,2$, the vertex set of the relative Artin complex $\Delta_{\Lambda_{i},\Lambda'_{i}}$, endowed with the partial order induced from $b_i<b_3<b_4<\cdots<b_{n+1}$ is bowtie free and upward flag for $i=1,2$.
	\end{enumerate}
	Then the $(b_1,b_2)$-subdivision of $\Delta_{\Lambda,\Lambda'}$, viewed as a simplicial complex of type $S$ with $S=\{1,\ldots,n+1\}$, satisfies all the assumptions of Theorem~\ref{thm:contractibleII}. In particular $\Delta_{\Lambda,\Lambda'}$ is contractible.
\end{prop}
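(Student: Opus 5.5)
We verify the three hypotheses of Theorem~\ref{thm:contractibleII} for the $(b_1,b_2)$-subdivision $\Delta'$ of $\Delta=\Delta_{\Lambda,\Lambda'}$. Once they hold, contractibility of $\Delta$ follows, since $\Delta'$ is a subdivision of $\Delta$.

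\textbf{Step 1: the global facts.} Simple connectedness of $\Delta'$ is inherited from $\Delta$, which is simply connected by Lemma~\ref{lem:sc} because $|\Lambda'|=n+1\ge 4$. That $<$ is a partial order on $V\Delta'$ is \cite[Lem 8.2]{huang2024}, using admissibility of $\Lambda'$. Both facts are quoted, so this step is routine.

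\textbf{Step 2: identify the upper and lower sets.} For each vertex $x$ of $\Delta'$ we describe $V_{\ge x}$ and $V_{\le x}$ in terms of links, using Lemma~\ref{lem:link}; the cases are sorted by $\tau(x)$.
\begin{enumerate}
\item If $\tau(x)=1$, then $x$ has type $\hat b_i$ with $i\in\{1,2\}$. Here $V_{\le x}=\{x\}$, and $V_{>x}$ is the vertex set of $\lk(x,\Delta)$ with the $b_{3-i}$-vertices removed. Since the $m$-vertices adjacent to $x$ correspond bijectively to the neighbours of type $\hat b_{3-i}$, $V_{>x}$ is identified with the vertex set of $\Delta_{\Lambda_i,\Lambda'_i}$, ordered by $b_{3-i}<b_3<\cdots<b_{n+1}$. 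This matches the ordering in hypothesis~(2) after renaming. Hypothesis~(2) then gives bowtie freeness and upward flagness, and adjoining the minimum $x$ preserves both.
\item If $\tau(x)=n+1$, then $x$ has type $\hat b_{n+1}$. Here $V_{\ge x}$ is trivial, and $V_{<x}$ is exactly the vertex set of the $(b_1,b_2)$-subdivision of $\lk(x,\Delta)\cong\Delta_{\Lambda_{n+1},\Lambda'_{n+1}}$. Hypothesis~(1) applies directly.
\item If $\tau(x)=2$, then $x$ is the midpoint of an edge $uv$. Here $V_{<x}=\{u,v\}$ is an antichain, which is trivially bowtie free and downward flag. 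Also $V_{>x}$ is the common upper link of $u$ and $v$, namely the vertex set of $\lk(u)\cap\lk(v)$, which is a relative Artin complex of type $\{b_3,\dots,b_{n+1}\}$. It is a sub-poset of $V_{>u}$ and should inherit the required properties from case~(1): it is an upward-closed interval $V_{>u}\cap V_{>v}$, and intervals in bowtie free, flag posets with joins remain so.
\item If $3\le\tau(x)\le n$, then $\lk(x,\Delta')$ is a join of its lower and upper parts. The upper part is a relative Artin complex on the $b_j$ with $j>\tau(x)$. The lower part is a $(b_1,b_2)$-subdivided complex on $\{b_1,b_2,b_3,\dots\}$. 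We handle these as subintervals: $V_{\ge x}$ is a sub-interval of $V_{\ge p}$ for any $p\le x$ with $\tau(p)=1$, and $V_{\le x}$ is a sub-interval of $V_{\le q}$ for any $q\ge x$ of type $\hat b_{n+1}$.
\end{enumerate}

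\textbf{Step 3: restriction to intervals.} To make the reductions in cases~(3) and~(4) rigorous, we prove a small general lemma: if $P$ is bowtie free and upward flag, and $x\in P$, then $P_{\ge x}$ is bowtie free and upward flag. The same holds for the downward versions. For bowtie freeness, a quasi-bowtie in $P_{\ge x}$ has a center in $P$, and this center automatically lies above $x$ because it lies above $x_1\ge x$. For upward flagness, a common upper bound is automatically $\ge x$. Applying the lemma to $V_{>p}$ handles $V_{\ge x}$ whenever $\tau(x)\ge 2$, and applying it to $V_{<q}$ handles $V_{\le x}$ whenever $\tau(x)\le n$.

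The remaining point is to check that $V_{\ge x}$ really equals $(V_{>p})_{\ge x}$, and that $V_{\le x}$ equals the corresponding interval in $V_{<q}$. This is the transitivity of $<$ in $\Delta'$ across the subdivision, and it follows from Lemma~\ref{lem:transitive} together with admissibility.

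I expect the main obstacle to be the bookkeeping for the $m$-vertices in case~(1) and in the upper link of an $m$-vertex. One must check carefully that the order induced on the link of a $\hat b_1$-vertex coincides with the relative Artin complex order in hypothesis~(2). Concretely, comparabilities through $m$-vertices must correspond exactly to adjacency with $\hat b_2$-vertices. This uses flagness of $\Delta$ and Lemma~\ref{lem:transitive}.
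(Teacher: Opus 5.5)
Your argument is correct. The paper gives no proof of its own here: the statement is quoted from \cite[Prop 2.24]{huang2024}, and your verification is the natural one. You identify $V_{>x}$ for $\tau(x)=1$, and $V_{<x}$ for $x$ of type $\hat b_{n+1}$, with the posets in hypotheses (2) and (1) via Lemma~\ref{lem:link} and flagness of $\Delta$. In doing so you correctly read the order in (2) as $b_{3-i}<b_3<\cdots<b_{n+1}$, since $b_i\notin\Lambda'_i$. You then handle all remaining vertices with your interval-restriction lemma, which only needs transitivity of $<$.
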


\begin{prop}[{\cite[Prop 9.1]{huang2024}}]
	\label{prop:propagation}
	Let $\Lambda,\Lambda',\Lambda_i,\Lambda'_i,\{b_i\}_{i=1}^{n+1}$ be as in Proposition~\ref{prop:ori link0}. Suppose all the assumptions in Proposition~\ref{prop:ori link0} holds true. Then the following holds true:
	\begin{enumerate}
		\item For $i=1,2$, the vertex set of the relative Artin complex $\Delta_{\Lambda,\Lambda'_i}$, endowed with the order induced from $b_i<b_3<\cdots<b_{n+1}$, is a bowtie free, upward flag poset.
		\item The $(b_1,b_2)$-subdivision of $\Delta_{\Lambda,\Lambda'_{n+1}}$ is bowtie free and downward flag.
		\item Assume in addition that $\Delta_{\Lambda_{n+1},\Lambda'_{n+1}}$ satisfies the labeled 4-cycle condition, then $\Delta_{\Lambda,\Lambda'}$ satisfies the labeled 4-cycle condition.
	\end{enumerate}
\end{prop}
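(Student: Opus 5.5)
The plan is to run the argument of Proposition~\ref{prop:ori link0} one level up. The key point is that a vertex link of $\Delta_{\Lambda,\Lambda'_i}$ (resp.\ of the subdivision of $\Delta_{\Lambda,\Lambda'_{n+1}}$) can be described through the contractible $\widetilde C$-like complex $\Delta'$, the $(b_1,b_2)$-subdivision of $\Delta_{\Lambda,\Lambda'}$ produced by Proposition~\ref{prop:ori link0}. Since $\Delta'$ satisfies all assumptions of Theorem~\ref{thm:contractibleII}, Lemma~\ref{lem:big lattice} shows that $(V\Delta',<)$ is bowtie free and flag. We then transfer these global poset properties to the relative Artin complexes on the smaller subdiagrams $\Lambda'_i$ and $\Lambda'_{n+1}$.

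For (1), fix $i=1$; the case $i=2$ is symmetric. Consider the vertex set $V_1$ of $\Delta_{\Lambda,\Lambda'_1}$, of types $\hat b_2,\hat b_3,\dots,\hat b_{n+1}$, ordered by $b_2<b_3<\dots<b_{n+1}$. (I read the order ``$b_i<b_3<\cdots$'' as the order on $\Lambda'_i$, which contains the other vertex of $\{b_1,b_2\}$.) These vertices embed in $V\Delta'$ with $\tau$-values $1,3,4,\dots,n+1$. The first thing to check is that this embedding preserves and reflects the order. It preserves order because adjacency in $\Delta$ between a vertex of type $\hat b_2$ and a vertex of type $\hat b_j$ with $j\ge3$ is the same as comparability in $\Delta'$. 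For the converse, if $x\le m\le y$ in $\Delta'$ with $x$ of type $\hat b_2$, then $x\sim y$ in $\Delta$ by Lemma~\ref{lem:transitive}.

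Bowtie freeness then goes as follows. Take a quasi-bowtie in $V_1$. It is a quasi-bowtie in $V\Delta'$, so it has a center $z$ there. If $\tau(z)\ge3$, then $z\in V_1$ and we are done. If $\tau(z)\in\{1,2\}$, then $z$ lies below the two upper elements. When $\tau(z)=1$ and $z$ has type $\hat b_1$, we replace it using that the lower elements are of type $\hat b_2$ or higher. Concretely, we take the join in $V\Delta'$ (Lemma~\ref{lem:posets}) of the two lower elements; this join is bounded by the upper ones, and we argue it has $\tau\ge3$ or type $\hat b_2$. Upward flagness follows the same pattern: three pairwise upper bounded elements of $V_1$ are pairwise upper bounded in $V\Delta'$, hence have a common upper bound there, and upper bounds of elements of type $\ge\hat b_2$ with $\tau\ge 3$ lie in $V_1$. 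If the common upper bound has $\tau\le 2$, we push it upward using saturation together with Lemma~\ref{lem:transitive}.

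For (2), the $(b_1,b_2)$-subdivision of $\Delta_{\Lambda,\Lambda'_{n+1}}$ is exactly the full subcomplex of $\Delta'$ on vertices with $\tau\le n$. This is a downward-closed-type truncation. Meets of lower bounded sets and common lower bounds computed in $V\Delta'$ therefore automatically have $\tau\le n$, which gives downward flagness at once. For bowtie freeness, a center of a quasi-bowtie in the truncation, computed in $V\Delta'$, lies below the upper elements, so it has $\tau\le n$.

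For (3), by Lemma~\ref{lem:4wheel} it suffices to show that $\Delta_{\Lambda,\Lambda''}$ is bowtie free for every maximal linear subdiagram $\Lambda''\subset\Lambda'$. The maximal linear subdiagrams are $b_1b_3\cdots b_{n+1}$, $b_2b_3\cdots b_{n+1}$, and $b_1b_3b_2$. The first two are $\Lambda'_2$ and $\Lambda'_1$, and these are handled by (1). For $b_1b_3b_2\subset\Lambda'_{n+1}$, the added hypothesis together with Remark~\ref{rmk:inherit} gives the labeled 4-cycle condition for $\Delta_{\Lambda_{n+1},\Lambda'_{n+1}}$. To lift this to $\Delta_{\Lambda,b_1b_3b_2}$, we use (2) and the transfer of quasi-bowties through $\Delta'$. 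Namely, a quasi-bowtie with bottoms of type $\hat b_1$ and tops of type $\hat b_2$ has bottoms and tops mutually $\sim$, so all four are $\le$ a common vertex of type $\hat b_3$ or higher in $\Delta'$. We then reduce to the link of a vertex of type $\hat b_{n+1}$, which is $\Delta_{\Lambda_{n+1},\Lambda'_{n+1}}$ by Lemma~\ref{lem:link}. The main obstacle I expect is this last step, together with the $\tau\in\{1,2\}$ case in (1). In both places a center produced in $\Delta'$ must be moved back into the smaller complex, and this needs careful use of Lemma~\ref{lem:transitive} and the tripod Lemma~\ref{lem:tripod}.
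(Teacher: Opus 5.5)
Parts (1) and (2) of your proposal are fine. Both posets are subposets of the bowtie-free, flag poset $V\Delta'$ (Lemma~\ref{lem:big lattice}), and the one in (2) is downward closed. The $\tau(z)\in\{1,2\}$ cases you worry about in (1) are trivial:
\begin{itemize}
\item a center with $\tau(z)=1$ equals both bottoms;
\item a center with $\tau(z)=2$ is a midpoint with a unique vertex of type $\hat b_2$ below it, so the bottoms coincide.
\end{itemize}
The reduction of (3) via Lemma~\ref{lem:4wheel} is also right. You should say explicitly that quasi-bowties in $\Delta_{\Lambda,b_1b_3b_2}$ with rank pattern $(\hat b_1,\hat b_3)$ or $(\hat b_3,\hat b_2)$ are handled by (1), since Corollary~\ref{cor:bowtie free criterion1} requires them.

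\textbf{The gap is the remaining case in (3).} Take an induced $4$-cycle $x_1y_1x_2y_2$ of type $\hat b_1\hat b_2\hat b_1\hat b_2$. You assert that, because bottoms and tops are mutually $\sim$, all four lie below a common vertex of $\Delta'$ with $\tau\ge 3$. Nothing you cite gives this.

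In $\Delta'$ the data is the $8$-cycle $x_1<m_{11}>y_1<m_{21}>x_2<m_{22}>y_2<m_{12}>x_1$, where $m_{ij}$ is the midpoint of $x_iy_j$ and has only $x_i,y_j$ below it. There is no quasi-bowtie here. The triples flagness would need, such as $\{x_1,y_1,x_2\}$, are pairwise upper bounded only if you already know $x_1,x_2$ (or $y_1,y_2$) have a common upper bound. So Lemma~\ref{lem:big lattice} gives nothing.

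No argument using only these poset properties can work. A quotient of the flat complex underlying the $\widetilde B_n$ Coxeter complex by a coarse type-preserving translation lattice has a bowtie-free flag vertex poset. Yet it contains a wrapping cycle of this type whose four vertices have no common upper bound. So simple connectivity must enter.

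This claim is also the whole content of (3). Once a common upper bound exists, push it to a vertex $w$ of type $\hat b_{n+1}$. The cycle then lies in $\lk(w)\cong\Delta_{\Lambda_{n+1},\Lambda'_{n+1}}$ (Lemma~\ref{lem:link}), and the hypothesis gives a center, necessarily of type $\hat b_3$. Lemma~\ref{lem:tripod} cannot be used globally, because it presupposes the labeled 4-cycle condition for $\Delta_{\Lambda,\Lambda'}$, which is what (3) asserts.

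\textbf{One way to fill the gap (for tree $\Lambda$).} Here $\Delta'$ is locally determined by Lemma~\ref{lem:ld for Ccore}, so the tools of Section~\ref{sec:nf} apply.
\begin{itemize}
\item Geodesics in $(\Delta')^1$ between $\tau=1$ vertices are up-down paths of even length, and $x_1m_{11}y_1m_{21}x_2$ has length $4$. So $d(x_1,x_2)\in\{2,4\}$.
\item Suppose $d(x_1,x_2)=4$. Take the normal form path from $x_1$ to $x_2$ (Propositions~\ref{prop:existence and uniqueness} and~\ref{prop:local vs global}). Both $m_{11}$ and $m_{12}$ are neighbours of $x_1$ at distance $3$ from $x_2$. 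Definition~\ref{def:normal} then forces the second vertex to be $>x_1$ and $\le m_{11}$, $\le m_{12}$, hence equal to both. This is absurd since $y_1\neq y_2$.
\item Hence $x_1<a>x_2$ for some $a$, and $\tau(a)\ge 3$.
\item The set $\{a,m_{11},m_{21}\}$ is pairwise lower bounded (by $x_1$, $x_2$, $y_1$). Downward flagness of $V\Delta'$ gives a common lower bound, which must lie in $\{x_1,y_1,m_{11}\}\cap\{x_2,y_1,m_{21}\}=\{y_1\}$. So $y_1\le a$.
\item Symmetrically, $y_2\le a$.
\end{itemize}
The link argument above then finishes.
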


\begin{definition}
	\label{def:subdivisionD}
	Let $\Lambda$ be a Coxeter diagram with a $\widetilde D_m$-like subdiagram $\Lambda'$ such that $m\ge 4$.
	Let $a_1,a_2,\{b_i\}_{i=1}^n,c_1,c_2$ be vertices of $\Lambda'$ as in Figure~\ref{fig:BD} right.
	Let $\Delta=\Delta_{\Lambda,\Lambda'}$ be the associated relative Artin complex. We add midpoint to each edge of $\Delta$ of type $\hat a_1\hat a_2$ or type $\hat c_1\hat c_2$, and define the midpoint to be of type $\hat a$ or $\hat c$ respectively.
	Cut each top dimensional simplex in $\Delta$ into four simplices whose vertex set is of type $\{\hat a_i,\hat a,\hat b_1,\ldots,\hat b_n,\hat c,\hat c_j\}_{1\le i,j\le 2}$. This gives a new simplicial complex $\Delta'$. Define a map $\tau$ from the vertex set $V\Delta'$ of $\Delta'$ to $\{1,\ldots,n+4\}$ by sending vertices of type $\hat a_i,\hat a,\hat b_1,\ldots,\hat b_n,\hat c,\hat c_j$ to $1,2,3,\ldots,n+2,n+3,n+4$ respectively.  We will then view $\Delta'$ as a simplicial complex of type $S=\{1,\ldots,n+4\}$. We define a relation $<$ on $V\Delta'$ as follows. For $x,y\in V\Delta'$, $x<y$ if $x$ and $y$ are adjacent and $\tau(x)<\tau(y)$. $(V\Delta',\le)$ is a poset, under the additional assumption that $\Lambda'$ is admissible in $\Lambda$.
\end{definition}

\begin{prop}[{\cite[Prop 2.26]{huang2024}}]
	\label{prop:ori link2}
	Let $\Lambda$ be a Coxeter diagram with a $\widetilde D_m$-like admissible subdiagram $\Lambda'$ such that $m\ge 4$. Let $a_1,a_2,\{b_i\}_{i=1}^n,c_1,c_2$ be vertices of $\Lambda'$ as in Figure~\ref{fig:BD} right.
	Let $\Lambda_{a_i}$ (resp. $\Lambda'_{a_i}$) be the connected component of $\Lambda\setminus\{a_i\}$ (resp. $\Lambda'\setminus\{a_i\}$) that contains $\{b_i\}_{i=1}^n$. Similarly we define $\Lambda_{c_i}$ and $\Lambda'_{c_i}$. 
	Suppose that the following holds for $i=1,2$:
	\begin{enumerate}
		\item the $(a_1,a_2)$-subdivision of $\Delta_{\Lambda_{c_i},\Lambda'_{c_i}}$ is bowtie free and downward flag;
		\item the $(c_1,c_2)$-subdivision of $\Delta_{\Lambda_{a_i},\Lambda'_{a_i}}$ is bowtie free and downward flag.
	\end{enumerate}
	Then the simplicial complex $\Delta'$ in Definition~\ref{def:subdivisionD}, viewed as a simplicial complex of type $S$ with $S=\{1,\ldots,n+4\}$, satisfies all the assumptions of Theorem~\ref{thm:contractibleII}. In particular $\Delta_{\Lambda,\Lambda'}$ is contractible.
\end{prop}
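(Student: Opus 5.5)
The plan is to verify the four hypotheses of Theorem~\ref{thm:contractibleII} for $\Delta'$ one at a time. These are simple connectivity, that $<$ is a partial order, and the bowtie-free/flag conditions on the sets $V_{\ge x}$ and $V_{\le x}$. Contractibility of $\Delta_{\Lambda,\Lambda'}$ then follows because $\Delta'$ is a subdivision of it.

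\textbf{Simple connectivity and the order.} Simple connectivity is immediate: $\Delta'$ is homeomorphic to $\Delta_{\Lambda,\Lambda'}$, which is simply connected by Lemma~\ref{lem:sc} since $|\Lambda'|\ge 4$. That $<$ is a partial order is stated in Definition~\ref{def:subdivisionD} under admissibility. I would check it via Lemma~\ref{lem:transitive}. The only nontrivial case of transitivity is a chain passing through a midpoint of type $\hat a$ or $\hat c$, and there one uses that $a_1,a_2$ (resp.\ $c_1,c_2$) are separated from the rest by $b_1$ (resp.\ $b_n$) in $\Lambda$ by admissibility.

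\textbf{Reduction to links.} Two elementary facts will be used throughout.
\begin{enumerate}
\item If $P$ is bowtie free and upward flag, then so is every upper set $P_{\ge p}$. A center $z$ of a quasi-bowtie in $P_{\ge p}$ satisfies $z\ge x_i\ge p$, and a common upper bound in $P$ of elements $\ge p$ is again $\ge p$. The dual statement holds for lower sets.
\item Adjoining a minimum (resp.\ maximum) element to a bowtie free, upward (resp.\ downward) flag poset preserves these properties.
\end{enumerate}
Hence it suffices to show that $V_{>x}$ is bowtie free and upward flag, and that $V_{<x}$ is bowtie free and downward flag. By Lemma~\ref{lem:link}, these strict sets are exactly the upper and lower parts of the induced structure on $\lk(x,\Delta')$.

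\textbf{Case analysis on $\tau(x)$.}
\begin{itemize}
\item If $x$ has type $\hat a_i$, then $V_{<x}=\emptyset$. The set $V_{>x}$ is the $(c_1,c_2)$-subdivision of $\lk(x,\Delta)\cong\Delta_{\Lambda_{a_i},\Lambda'_{a_i}}$, ordered with the $c$-types on top. Assumption (2) gives bowtie freeness and downward flagness when the $c$-types are at the bottom. Reversing the order converts downward flag into upward flag and preserves bowtie freeness.
\item If $x$ has type $\hat a$, then $V_{<x}$ consists of the two endpoints of the subdivided edge and is trivially fine. The set $V_{>x}$ equals the common upper set of those two endpoints. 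It therefore lies inside $V_{>y}$ for $y$ an endpoint of type $\hat a_1$, as the upper set of $x$ there, and the previous case together with fact (1) applies.
\item The cases $\hat c_i$ and $\hat c$ are symmetric to the two above, using assumption (1).
\item If $x$ has type $\hat b_j$, then $\lk(x,\Delta')$ is a join. $V_{<x}$ is the $(a_1,a_2)$-subdivided relative Artin complex on the component of $\Lambda\setminus\{b_j\}$ containing $a_1,a_2$, and $V_{>x}$ is the analogous $c$-side piece. Now take any vertex $w$ of type $\hat c_1$ adjacent to $x$, and regard $x$ as a vertex of the complex in assumption (1). The lower set of $x$ in that $(a_1,a_2)$-subdivision is identified, via Lemma~\ref{lem:link} applied twice, with $V_{<x}$. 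Fact (1) then gives bowtie freeness and downward flagness. The upper side follows symmetrically from assumption (2).
\end{itemize}

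\textbf{Main obstacle.} The delicate step is the bookkeeping behind these identifications. One must check that the link of $x$ in $\Delta'$, with the order induced by $\tau$, is order-isomorphic to the corresponding upper or lower set in the smaller subdivided complexes of assumptions (1) and (2). In particular, the midpoints of the subdivision must correspond exactly, and the components $I_s$ of Lemma~\ref{lem:link} must match the diagrams $\Lambda_{a_i},\Lambda_{c_i}$; admissibility is what guarantees this. Once these identifications are in place, the remaining verifications are formal.
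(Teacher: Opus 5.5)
Your proposal is correct and follows essentially the same route as the argument behind the cited result \cite[Prop 2.26]{huang2024}. That argument checks the hypotheses of Theorem~\ref{thm:contractibleII} vertex type by vertex type, identifies the upper set of a $\hat a_i$-vertex (resp.\ lower set of a $\hat c_i$-vertex) with the order-reversed (resp.\ original) subdivided complex of assumption (2) (resp.\ (1)) via Lemma~\ref{lem:link}, and realizes the remaining upper and lower sets of midpoints and $\hat b_j$-vertices as upper or lower sets inside those links.

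One small imprecision: for $j=1$ (resp.\ $j=n$), admissibility puts $a_1,a_2$ (resp.\ $c_1,c_2$) in different components of $\Lambda\setminus\{b_j\}$, so $V_{<x}$ is not literally a complex on a single component containing both. Your argument through $\lk(w,\Delta)$ is unaffected, since Lemma~\ref{lem:transitive} shows every vertex below $x$ is adjacent to $w$.
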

\begin{prop}[{\cite[Prop 9.2]{huang2024}}]
	\label{prop:propagation2}
	Let $\Lambda,\Lambda',\{\Lambda_{a_i}\}_{i=1}^2,\{\Lambda_{c_i}\}_{i=1}^2$, $\{\Lambda'_{a_i}\}_{i=1}^2$, $\{\Lambda'_{c_i}\}_{i=1}^2$ be as in Proposition~\ref{prop:ori link2}. Suppose all the assumptions in Proposition~\ref{prop:ori link2} holds true. 
	Then the following holds true.
	\begin{enumerate}
		\item For $i=1,2$, the $(a_1,a_2)$-subdivision of $\Delta_{\Lambda,\Lambda'_{c_i}}$ is bowtie free and downward flag.
		\item For $i=1,2$, the $(c_1,c_2)$-subdivision of $\Delta_{\Lambda,\Lambda'_{a_i}}$ is bowtie free and downward flag.
		\item Assume in addition that $\Delta_{\Lambda_{c_i},\Lambda'_{c_i}}$ and $\Delta_{\Lambda_{a_i},\Lambda'_{a_i}}$ satisfy the labeled 4-cycle condition, then $\Delta_{\Lambda,\Lambda'}$ satisfies the labeled 4-cycle condition.
	\end{enumerate}
\end{prop}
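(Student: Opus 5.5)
The plan is to derive everything from the global structure given by Proposition~\ref{prop:ori link2}. Under the stated assumptions, the subdivided complex $\Delta'$ of Definition~\ref{def:subdivisionD} satisfies all assumptions of Theorem~\ref{thm:contractibleII}. Hence, by Lemma~\ref{lem:big lattice}, the poset $(V\Delta',<)$ is bowtie free and flag, and by Lemma~\ref{lem:posets} bounded subsets have joins and meets. Each of (1)--(3) concerns a poset on the vertex set of a relative Artin complex of a subdiagram of $\Lambda'$ containing all the $b_k$. I would realize that poset as a subposet $Q\subset V\Delta'$ and transfer the properties from $V\Delta'$ to $Q$.

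For (1), fix $i$ and let $j\neq i$. The $(a_1,a_2)$-subdivision of $\Delta_{\Lambda,\Lambda'_{c_i}}$ has vertices of types $\hat a_1,\hat a_2,\hat a,\hat b_1,\dots,\hat b_n,\hat c_j$. I identify it with the subposet $Q$ of $V\Delta'$ obtained by discarding vertices of type $\hat c_i$ and the midpoints of type $\hat c$. The order on $Q$ agrees with the restricted order on $V\Delta'$, using admissibility and Lemma~\ref{lem:transitive}.

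Bowtie freeness of $Q$ is checked with Corollary~\ref{cor:bowtie free criterion1}. $Q$ is $\tau$-saturated because $\Delta_{\Lambda,\Lambda'_{c_i}}$ is of type $\Lambda'_{c_i}$, so it suffices to find centers for quasi-bowties $x_1y_1x_2y_2$ with equal ranks on each level. Such a quasi-bowtie has a center $z$ in $V\Delta'$. If $z\in Q$ we are done. Otherwise $\tau(z)\in\{n+3,n+4\}$, which forces $y_1,y_2$ to be of type $\hat c_j$. I would then move $z$ down: use Lemma~\ref{lem:4-cycle} on the cycle $y_1zy_2\cdots$, or take the meet of $y_1,y_2$ (exists by Lemma~\ref{lem:posets}), and use Lemma~\ref{lem:transitive} to see that this meet lies in $Q$ and is still above $x_1,x_2$.

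Upward flagness is handled similarly with Corollary~\ref{cor:upward flag criterion}. A pairwise upper bounded triple of equal rank has a common upper bound in $V\Delta'$, since that poset is flag. If the bound has a discarded type, replace it by the join of the triple, which exists by Lemma~\ref{lem:posets}. Then check via Lemma~\ref{lem:transitive} that this join has type at most $\hat c_j$ in $Q$. Downward flagness, needed because (1) and (2) assert it for subdivisions, is proved by the symmetric argument with meets. Statement (2) is the same argument with the roles of the $a$'s and the $c$'s exchanged.

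For (3), I would use Lemma~\ref{lem:4wheel}. The maximal linear subdiagrams of $\Lambda'$ are the paths $a_k b_1\cdots b_n c_l$, so it suffices to show each $\Delta_{\Lambda,a_kb_1\cdots b_nc_l}$ is bowtie free. Again I would embed the vertex set as the subposet of $V\Delta'$ of types $\hat a_k,\hat b_1,\dots,\hat b_n,\hat c_l$ and apply Lemma~\ref{lem:bowtie free criterion}.

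Assumption (1) of that lemma concerns the posets $P_{<p}$ for $p$ of type $\hat c_l$. These are relative Artin complexes on links, namely $\Delta_{\Lambda_{c_l},\cdot}$, which are bowtie free by the added labeled 4-cycle hypothesis together with Lemma~\ref{lem:4wheel}. Assumption (2) of that lemma concerns quasi-bowties with top vertices of type $\hat c_l$. A center for such a quasi-bowtie comes from $V\Delta'$, after projecting away midpoint types as in (1), possibly invoking Lemma~\ref{lem:tripod} for the tripods at $a_1,a_2,b_1$ and $c_1,c_2,b_n$.

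The step I expect to be the main obstacle is the repeated \emph{projection} step. A center or common bound found in $V\Delta'$ may have a discarded type: a midpoint $\hat a$ or $\hat c$, or the wrong leaf $\hat c_i$ versus $\hat c_j$. It must be replaced by one of an allowed type while keeping the comparabilities. I would handle this first by taking meets and joins in $V\Delta'$, and then by verifying adjacency with Lemma~\ref{lem:transitive}, using the fact that $c_1$ and $c_2$ lie in different components of $\Lambda\setminus\{b_n\}$.
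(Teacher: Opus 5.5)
Your argument for (3) uses the wrong list of maximal linear subdiagrams. $\Lambda'$ is a tree with four leaves $a_1,a_2,c_1,c_2$, so its maximal linear subdiagrams are the paths joining two leaves. Besides the four paths $a_kb_1\cdots b_nc_l$, these include $a_1b_1a_2$ and $c_1b_nc_2$. Lemma~\ref{lem:4wheel} therefore also requires $\Delta_{\Lambda,a_1b_1a_2}$ and $\Delta_{\Lambda,c_1b_nc_2}$ to be bowtie free. Concretely, every embedded 4-cycle $x_1y_1x_2y_2$ of type $\hat a_1\hat a_2\hat a_1\hat a_2$ must have a center of type $\hat b_1$, and likewise with $c_1,c_2,b_n$. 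These two cases are the actual content of (3). The four long paths follow from (1) and (2), or directly from your subposet trick, without the extra hypothesis. The paper reduces (3) exactly to the two short paths and proves them as in \cite[Prop 9.1 (3)]{huang2024}.

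Your method cannot reach these two cases. In $\Delta'$, vertices of types $\hat a_1$ and $\hat a_2$ both have $\tau$-value $1$, and every $\hat a_1\hat a_2$ edge is subdivided. So the order $\hat a_1<\hat b_1<\hat a_2$ is not induced from $(V\Delta',<)$, and such a 4-cycle is not a quasi-bowtie there. A center of type $\hat b_1$ amounts to a common upper bound of the four midpoints of the edges $x_ky_l$. Neither bowtie-freeness nor flagness of $V\Delta'$ produces one, because these midpoints are not known to be pairwise upper bounded.

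This is where the added hypothesis has to be used. Since $a_1b_1a_2$ is a maximal linear subdiagram of $\Lambda'_{c_l}$, the hypothesis gives bowtie-freeness of $\Delta_{\Lambda_{c_l},a_1b_1a_2}$. Hence the center exists as soon as the 4-cycle lies in the link of a vertex of type $\hat c_l$ (Lemma~\ref{lem:link}). What is missing is an argument producing a vertex of type $\hat c_1$ or $\hat c_2$ adjacent to all of $x_1,y_1,x_2,y_2$, and symmetrically a vertex of type $\hat a_l$ for $c_1b_nc_2$. Invoking Lemma~\ref{lem:tripod} at $b_1$ would be circular, since that lemma presupposes the labeled 4-cycle condition on a tree containing $a_1b_1a_2$.

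Your treatment of (1) and (2) via $Q$ is essentially fine, and easier than you make it. A center in $V\Delta'$ lying outside $Q$ must be a $\hat c$-midpoint below $y_1,y_2$ of type $\hat c_j$. Such a midpoint lies below only one vertex of type $\hat c_j$, which forces $y_1=y_2$. Common lower bounds can always be pushed down to rank $1$, which lies in $Q$. Upward flagness is not asserted, and your join argument for it is not justified.
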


Note that Assertion 3 is not explicitly stated in \cite[Prop 9.2]{huang2024}, however, by Assertions 1 and 2, and Lemma~\ref{lem:4wheel}, to prove Assertion 3, it remains to prove $\Delta_{\Lambda,a_1b_1a_2}$ and $\Delta_{\Lambda,c_1b_nc_2}$ are bowtie free. However, this can be proved in the same way as \cite[Prop 9.1 (3)]{huang2024}.

\section{Reduction from tree to special tripods}
\label{sec:propagation}

\subsection{Atomic BD-robustness}
Let $\Lambda$ be a Coxeter diagram.
An induced subdiagram $\Lambda'$ of $\Lambda$ is $B_n$-like, if it is a linear subdiagram with consecutive vertices $s_1\cdots s_n$ such that $m_{s_{n-1},s_n}\ge 4$ (other edge labels are arbitrary). A $B_n$-like subdiagram is \emph{$\widetilde C_{n-1}$-like} if in addition $m_{s_1,s_2}\ge 4$.
Suppose $n\ge 2$. A $B_n$-like subdiagram $\Lambda'=s_1\cdots s_n\subset \Lambda$ with $m_{s_{n-1},s_n}\ge 4$ is \emph{$\Lambda$-atomic}, if all other edges have label $=3$ and the embedding $\Lambda'\to \Lambda$ preserve valence of interior vertices of $\Lambda'$ except at $s_2$ and $s_{n-1}$. A $D_{n+1}$-like subdiagram $\Lambda'$ is \emph{$\Lambda$-atomic} if all edges of $\Lambda'$ are labeled by $3$ and the embedding $\Lambda'\to \Lambda$ preserves valence of interior vertices of $\Lambda'$ except at vertices $b_3$ and $b_n$ in Figure~\ref{fig:BD} left.
\begin{definition}
	\label{def:good}
	Let $\Lambda$ be a tree Coxeter diagram. We define $\Lambda$ is \emph{BD-robust} if the following two conditions hold simultaneously for all $n\ge 2$:
	\begin{enumerate}
		\item (\emph{B}-robust) for any $B_n$-like subdiagram $\Lambda'$ with consecutive vertices $s_1\ldots s_n$ and $m_{s_{n-1},s_n}\ge 4$, $\Delta_{\Lambda,\Lambda'}$ is bowtie free and upward flag, where the partial order on vertices of $\Delta_{\Lambda,\Lambda'}$ is induced by $s_1<s_2<\cdots<s_n$;
		\item (\emph{D}-robust) for any $D_{n+1}$-like subdiagram $\Lambda'$ with $b_1,b_2\in \Lambda'$ be as in Figure~\ref{fig:BD} left, the $(b_1,b_2)$-subdivision of $\Delta_{\Lambda,\Lambda'}$ is bowtie free and downward flag with respect to the partial order in Definition~\ref{def:subdivision}, moreover, $\Delta_{\Lambda,\Lambda'}$ satisfies the labeled 4-cycle condition.
	\end{enumerate} 
	We say $\Lambda$ is \emph{atomic BD-robust} if the above two condition are only required to hold for $\Lambda$-atomic $B_n$-like subdiagrams and $D_n$-like subdiagrams.
\end{definition}

In later application of Definition~\ref{def:good}, sometimes we might reserve the partial order on the vertex set of $\Delta_{\Lambda,\Lambda'}$ (or its appropriate subdivision), in which case downward flag and upward flag are exchanged.

An induced subdiagram $\Lambda'$ of a Coxeter diagram $\Lambda$ is a \emph{$\widetilde C$-core}, if $\Lambda'$ is either $\widetilde B_n$-like, or $\widetilde C_n$-like or $\widetilde D_n$-like. 
%We say $\Lambda$ is $\widetilde C$-elementary, if it does not contain a $\widetilde C$-core.
Given a $\widetilde C$-core $\Lambda'$ of $\Lambda$, we define
$\bar\Delta_{\Lambda,\Lambda'}$ to be the subdivision of $\Delta_{\Lambda,\Lambda'}$ as in Definition~\ref{def:subdivision} if $\Lambda'$ is $\widetilde B_n$-like; $\bar\Delta_{\Lambda,\Lambda'}=\Delta_{\Lambda,\Lambda'}$ if $\Lambda'$ is $\widetilde C_n$-like; and $\bar\Delta_{\Lambda,\Lambda'}$ is the subdivision of $\Delta_{\Lambda,\Lambda'}$ as in Definition~\ref{def:subdivisionD} if $\Lambda'$ is $\widetilde D_n$-like. In each case the vertex set of $\bar\Delta_{\Lambda,\Lambda'}$ is endowed with a partial order and let $\tau(x)$ be defined in Definition~\ref{def:subdivision} and Definition~\ref{def:subdivisionD} (if $\Lambda'=s_1\ldots s_{n+1}$ is $\widetilde C_n$-like, then $\tau(x)=i$ if $x$ is of type $\hat s_i$). Sometimes we will reserve the partial order (hence the value of $\tau$) on $(\bar\Delta_{\Lambda,\Lambda})^0$ to obtain another partial order. However, if $((\bar\Delta_{\Lambda,\Lambda})^0,<)$ satisfies the assumptions of Theorem~\ref{thm:contractibleII}, then the same holds after reserving the partial order. Given a vertex $s\in \Lambda'$, note that all vertices in $\Delta_{\Lambda,\Lambda'}$ of type $\hat s$ have the same $\tau$-value, which is defined to $\tau(s)$.
A vertex of $\bar\Delta_{\Lambda,\Lambda'}$ is \emph{fake}, if it is not a vertex of $\Delta_{\Lambda,\Lambda'}$, otherwise this vertex is \emph{real}. Fake vertices of $\bar\Delta_{\Lambda,\Lambda'}$ are barycenters of edges in $\Delta_{\Lambda}$, hence we can define their types as in Section~\ref{subsec:rel Artin complex}.

\begin{lem}
	\label{lem:ld for Ccore}
	Suppose $\Lambda$ is a tree Coxeter diagram and $\Lambda'\subset\Lambda$ is a $\wtC$-core. Then $\bar\Delta_{\Lambda,\Lambda'}$ is locally determined.
\end{lem}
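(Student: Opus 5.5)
We need to show: for any $x<y$ in $\bar\Delta_{\Lambda,\Lambda'}$ there is an extremal vertex $y'>x$ not comparable with $y$. By reversing the order we also get an extremal $x'<y$ not comparable with $x$. We also need the relation to be a partial order. This holds because $\Lambda'$ is admissible in a tree $\Lambda$: an induced linear, $D$-like or $\widetilde D$-like subtree of a tree is automatically admissible. So we may cite Lemma~\ref{lem:transitive}, \cite[Lem 8.2]{huang2024} and Definition~\ref{def:subdivisionD}.

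\textbf{Reduction to links.} Work in the link of $x$. If $x$ is real of type $\hat s$, then by Lemma~\ref{lem:link} the link $\lk(x,\Delta_{\Lambda,\Lambda'})$ is the relative Artin complex $\Delta_{\Lambda\setminus\{s\},\Lambda'\setminus\{s\}}$. This link is a join over the components of $\Lambda'\setminus\{s\}$, and the upper link of $x$ is the join factor coming from the side with larger $\tau$-values. We seek $y'$ among vertices whose $\tau$-value is maximal, namely $\tau=n+1$ (resp.\ $n+4$), in that upper factor; such vertices are extremal. If $y$ is itself extremal, take any extremal $y'\ne y$ in the upper link with $y'\not\sim y$; it is automatically incomparable with $y$, since distinct vertices of the same type are never adjacent. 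If $y$ is not extremal, let $t\in\Lambda'$ be the generator of $y$'s type and $r$ the generator of top type. Then $t$ and $r$ lie in the same component of $\Lambda\setminus\{s\}$: they lie on the same side of $s$ in $\Lambda'$, and hence in the same component of $\Lambda$. Lemma~\ref{lem:nonadj}, applied in $\Delta_\Lambda$ to the edge $xy$, gives $z\sim x$ of type $\hat r$ with $z\nsim y$. Since $z$ is adjacent to $x$ and has top type, $z>x$. Since $z\nsim y$, the vertices $z$ and $y$ are incomparable. Set $y'=z$.

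\textbf{Subdivisions and fake vertices.} The remaining work concerns the subdivided cases.
\begin{enumerate}
\item If $x$ is real of type $\hat b_i$ with $i\ge3$ and $y$ is real, the argument above works verbatim, because upper vertices are real.
\item If $x$ has $\tau=1$, say type $\hat b_1$, the upper link of $x$ in $\bar\Delta$ is the cone over $\Delta_{\Lambda,\Lambda'\setminus\{b_1\}}$ restricted to the vertices of $\tau\ge2$. When $y$ is the midpoint of an edge $xw$ with $w$ of type $\hat b_2$, we apply Lemma~\ref{lem:nonadj} to the edge $xw$ to get a top-type $z\sim x$ with $z\nsim w$. Then $z$ is not adjacent to the midpoint $y$ in $\bar\Delta$, because a simplex containing $y$ and $z$ would force $z\sim w$.
\item If $x$ is fake, i.e.\ the midpoint of an edge $uv$, its upper link corresponds to the common link of $u$ and $v$. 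This is $\Delta_{\Lambda\setminus\{b_1,b_2\},\Lambda'\setminus\{b_1,b_2\}}$ for the $D$-case, and similarly for the $\widetilde D$-case. Since $b_1$ and $b_2$ are leaves adjacent to $b_3$, the top generator and the type of $y$ remain in a common component of $\Lambda\setminus\{b_1,b_2\}$. Lemma~\ref{lem:nonadj} is then applied to the Artin complex of that component, identifying $\lk(u,\Delta)\cap\lk(v,\Delta)$ with the join of complexes of the components.
\item When the top of the order is itself a midpoint type $\hat c$ (the $\widetilde D$ case, $\tau=n+4$ being $\hat c_j$), extremal vertices are still real. The same arguments apply symmetrically, with midpoints $\hat c$ handled as in item~3.
\end{enumerate}

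\textbf{Main obstacle.} The hardest part is the fake-vertex bookkeeping: verifying that adjacency in $\bar\Delta$ between a midpoint and a real vertex matches adjacency in $\Delta_\Lambda$ to both endpoints. Once this dictionary is in place, every case reduces to one application of Lemma~\ref{lem:nonadj} inside a suitable link, together with the tree and connectivity hypothesis ensuring that the two relevant generators lie in the same component.
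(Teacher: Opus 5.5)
Your proposal is correct and follows the paper's approach. The paper's entire proof is the one line that the lemma follows from Lemma~\ref{lem:nonadj}, and your case analysis (real versus midpoint vertices, plus the trivial case where $y$ is already extremal) is the bookkeeping that one line leaves implicit. A few slips do not affect the argument: the upper link of a $\tau=1$ vertex is not a ``cone''; item~1's claim that upper vertices are real fails in the $\widetilde D$ case, though your item~4 covers it; and ``reversing the order'' in the $\widetilde B$ case leaves a few configurations, such as a midpoint lying just below a pair of extremal types, that are trivial but not literally handled.
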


\begin{proof}
	This follows from Lemma~\ref{lem:nonadj}.
\end{proof}

A $\widetilde C$-core $\Lambda'$ of $\Lambda$ is \emph{robust}, if 
$\bar\Delta_{\Lambda,\Lambda'}$ is $\widetilde C$-like. 

\subsection{Convexity of certain subcomplexes}
\label{subsec:convex}
%\begin{lem}
%	\label{lem:subcomplex}
%	Suppose $\Lambda$ is a tree Coxeter diagram such that any proper induced subdiagram of $\Lambda$ is BD-robust. Then for any $\widetilde C$-core $\Lambda'\subset \Lambda$, $\bar\Delta_{\Lambda,\Lambda'}$ is $\widetilde C_n$-like and wide.
%\end{lem}

\begin{lem}
	\label{lem:5cycle}
	Let $\Lambda$ be a tree Coxeter diagram with a $D_{n+1}$-like subdiagram $\Lambda_0$ such that its vertices are labeled as Figure~\ref{fig:core} left. Let $\bar\Delta_{\Lambda,\Lambda_0}$ be the $(b_1,b_2)$-subdivision of $\Delta_{\Lambda,\Lambda_0}$ and assume $(\bar\Delta_{\Lambda,\Lambda_0})^0$ is endowed with the partial order in Definition~\ref{def:subdivision} such that vertices of type $\hat b_1$ and $\hat b_2$ are minimal. We assume $((\bar\Delta_{\Lambda,\Lambda_0})^0,<)$ is bowtie free and downward flag. Let $s$ be a vertex in $\Lambda$ such that $b_1$ is the vertex in $\Lambda_0$ that is closest to $s$. Let vertex $x\in \Delta_\Lambda$ be of type $s$ and let vertices $x_1\le x_2\ge x_3\le x_4$ be in $\bar\Delta_{\Lambda,\Lambda_0}$. We assume $x\sim\{x_1,x_4\}$. Then either $x_1$ and $x_4$ are adjacent in $\bar\Delta_{\Lambda,\Lambda_0}$, or $x\sim x_2$. 
\end{lem}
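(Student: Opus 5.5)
Reduce to the 4-cycle lemma and the transitivity lemma; throughout, \(x\) is the vertex of type \(\hat s\) in the statement.

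\textbf{Key observation.} Since \(b_1\) is the vertex of \(\Lambda_0\) closest to \(s\), removing \(b_1\) separates \(s\) from every other vertex of \(\Lambda_0\) in the tree \(\Lambda\). So by Lemma~\ref{lem:transitive}, if \(x\sim y\) with \(y\) of type \(\hat b_1\), and \(y\sim z\) for any vertex \(z\) of \(\bar\Delta_{\Lambda,\Lambda_0}\), then \(x\sim z\). For a fake vertex \(z\) of type \(\{\hat b_1,\hat b_2\}\) we apply this coset-wise; the relevant difference set is then \(\{b_2\}\), which also lies off \(s\)'s side.

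\textbf{Reduction to a common lower bound of type \(\hat b_1\).} Since vertices of type \(\hat b_1,\hat b_2\) are minimal, \(x_2\) has a lower bound of minimal \(\tau\)-value. It therefore suffices to produce \(y\) of type \(\hat b_1\) with \(y\sim x\) and \(y\le x_2\). The observation then gives \(x\sim x_2\).

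\textbf{Constructing \(y\) when \(x_1,x_4\) are not adjacent.} Consider the closed chain \(x, x_1, x_2, x_3, x_4, x\). Consecutive vertices are \(\sim\), since comparable vertices of \(\bar\Delta\) lie in a common simplex of \(\Delta_\Lambda\). Apply the 5-cycle part of Lemma~\ref{lem:4-cycle} in \(\Delta'_\Lambda\), choosing the labelling so that the two replaced vertices are \(x_1\) and \(x_4\). Alternatively, apply the 4-cycle version to \(x,x_1,x_2,\cdot\) after merging \(x_3\le x_4\) via the join/meet structure from Lemma~\ref{lem:posets}. Either way we obtain vertices \(x_1',x_4'\) of the same types as \(x_1,x_4\) with \(x_1'\sim\{x,x_2\}\) and \(x_4'\sim\{x,x_2,x_3\}\), or similar.

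Next we transfer these to type \(\hat b_1\). Using the observation backwards, each neighbour of \(x\) inside \(\bar\Delta_{\Lambda,\Lambda_0}\) is adjacent to a type-\(\hat b_1\) vertex adjacent to \(x\). Concretely: take \(x\)'s type-\(\hat b_1\) neighbours \(y_1\le x_1\) and \(y_4\le x_4\), which exist by Lemma~\ref{lem:4-cycle} applied to \(x, x_1\) and a lower vertex. Then \(y_1\le x_1\le x_2\ge x_3\ge\dots\) and \(y_4\le x_4\), with \(y_1,y_4\sim x\).

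Now \(y_1, y_4\) and \(x_2\) are pairwise lower bounded only under suitable adjacency, so instead we use downward flagness. The elements \(x_2\), \(x_4\) and an upper element \(u\) of \(\bar\Delta\) adjacent to \(x\), containing \(y_1,y_4\) in its lower link, are pairwise lower bounded: \(x_2,x_4\) by \(x_3\); \(x_2,u\) by \(y_1\); \(x_4,u\) by \(y_4\). Downward flagness then gives a common lower bound, which we take of type \(\hat b_1\) or \(\hat b_2\). Bowtie freeness (meets via Lemma~\ref{lem:posets}) and the case analysis from Lemma~\ref{lem:weakly flag equivalent} push it to type \(\hat b_1\) adjacent to \(x\). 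If it is forced to have type \(\hat b_2\) and \(x_1,x_4\) have a common neighbour structure, then \(x_1\) and \(x_4\) become comparable, i.e.\ adjacent.

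\textbf{Main obstacle.} The hard part is producing the auxiliary vertex \(u\) and controlling the type-\(\hat b_2\) alternative. I expect this to need Lemma~\ref{lem:nonadj}-style coset arguments, with the fake vertices in the middle requiring separate care.
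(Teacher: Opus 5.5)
Your key observation (transitivity through a type-$\hat b_1$ vertex via Lemma~\ref{lem:transitive}) is correct, and so is the reduction to finding $y\le x_2$ of type $\hat b_1$ with $y\sim x$. The case where $x_1$ is not of type $\hat b_2$ is also immediate from Lemma~\ref{lem:transitive} applied to $x\sim x_1\sim x_2$. The construction for the only real case, $x_1$ of type $\hat b_2$, does not work, for three reasons.

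\begin{itemize}
\item The 5-cycle part of Lemma~\ref{lem:4-cycle} replaces two \emph{consecutive} vertices of the cycle and makes them adjacent to the vertex opposite them. Since $x_1$ and $x_4$ are not consecutive in $x\,x_1x_2x_3x_4$, they cannot be the replaced pair. Replacing them would also throw away the data $x\sim x_1,x_4$.
\item A vertex $y_1\le x_1$ of type $\hat b_1$ cannot exist when $x_1$ has type $\hat b_2$, because $x_1$ is minimal.
\item Nothing guarantees an upper element $u\sim x$ above $y_1$ and $y_4$. Even granting $u$, a common lower bound of $\{x_2,x_4,u\}$ from downward flagness lies below $u$, on the same side as $s$. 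So Lemma~\ref{lem:transitive} cannot transfer $u\sim x$ to it, and invoking Lemma~\ref{lem:weakly flag equivalent} does not repair this.
\end{itemize}

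Two ideas are missing: the correct use of the 5-cycle lemma, and a descent argument.

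\emph{Correct use of the 5-cycle lemma.} Apply Lemma~\ref{lem:4-cycle} to the consecutive pair $x_2,x_3$, with $x$ as the opposite vertex. This gives $z_1,z_2$ of the types of $x_2,x_3$, with $x_1\le z_1\ge z_2\le x_4$ and $z_1,z_2\sim x$. This $z_1$ is the auxiliary element you could not produce, and the goal becomes showing $z_1=x_2$.

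\emph{Descent.} Define three new vertices.
\begin{itemize}
\item Let $x_4'$ be the meet of $z_1$ and $x_4$. It satisfies $x_4'\sim x$, by the 4-cycle lemma applied to $x_4\,x_4'\,z_1\,x$ together with uniqueness of the meet.
\item Let $x_3'$ be a common lower bound of the pairwise lower bounded triple $\{x_4,z_1,x_2\}$, which exists by downward flagness.
\item Let $x_2'$ be the join of $x_1$ and $x_3'$. Then $x_2'\le x_2$ and $x_2'\le z_1$.
\end{itemize}
Now $x_1\le x_2'\ge x_3'\le x_4'$ with $x\sim\{x_1,x_4'\}$ is again a configuration as in the lemma. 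If $x_2'=x_2$, then $x_2\le z_1$ and equal types force $x_2=z_1\sim x$. Otherwise $\tau(x_2')+\tau(x_4')<\tau(x_2)+\tau(x_4)$, so in a minimal counterexample the lemma holds for the new configuration. Either $x_1\le x_4'\le x_4$, a contradiction, or $x_2'\sim x$. In the latter case $x_2'\ne x_1$, so $\tau(x_2')\ge 2$, and Lemma~\ref{lem:transitive} with $x_2'\le x_2$ gives $x\sim x_2$. Without this induction on $\tau(x_2)+\tau(x_4)$, a single application of downward flagness does not reach the conclusion.
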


Recall that $x\sim y$ means $x$ and $y$ are contained in the same simplex of $\Delta_\Lambda$. For $x,y\in \bar\Delta_{\Lambda,\Lambda_0}$, $x\sim y$ does not necessarily mean they are adjacent in $\bar\Delta_{\Lambda,\Lambda_0}$. Also note that $s=b_1$ is allowed in the lemma.

\begin{proof}
	Suppose $x_i$ has type $\hat S_i$ for $S_i\subset \Lambda_0$. If $x_1$ is not type $\hat b_2$, then any vertex in $S_2\setminus S_1$ and $s$ are different connected components of $\Lambda\setminus S_2$. Lemma~\ref{lem:transitive} implies that $x\sim x_2$. Now assume $x_1$ has type $\hat b_2$. Let $\tau_0$ be the function on $(\bar\Delta_{\Lambda,\Lambda_0})^0$ as in Definition~\ref{def:subdivision}.
	Suppose the lemma fails. Then we can find a counterexample to the lemma with smallest possible value of $\tau_0(x_2)+\tau_0(x_4)$. By Lemma~\ref{lem:4-cycle} applying to $xx_1x_2x_3x_4$, there are $z_1,z_2\in \bar\Delta_{\Lambda,\Lambda_0}$ with $x_1\le z_1\ge z_2\le x_4$ such that $z_2,x_3$ have the same type and $z_1,x_2$ have the same type. Let $z'_2$ be the meet of $x_4$ and $z_1$ in $((\bar\Delta_{\Lambda,\Lambda_0})^0,<)$, which exists as this poset is bowtie free (Lemma~\ref{lem:posets}). By Lemma~\ref{lem:4-cycle} applying to $x_4z'_2z_1x$, there is vertex $z''_2\in \bar\Delta_{\Lambda,\Lambda_0}$ such that $z''_2$ and $z'_2$ have the same type and $z''_2\sim\{z_1,x,x_4\}$. Thus $z''_2$ is a lower bound for $\{z_1,x_4\}$ in $((\bar\Delta_{\Lambda,\Lambda_0})^0,<)$. By definition of $z'_2$, we obtain $z'_2=z''_2$.  Thus $z'_2\sim x$. 
	
	%By Lemma~\ref{lem:link}, $\lk^-(y,\bar X)$ can be realized as a (possibly subdivided) subcomplex of $\Delta_{\Lambda_0,\Lambda'_0}$ for a proper induced subdiagram $\Lambda_0$ of $\Lambda$ and $\Lambda'_0\subset \Lambda'$ being $D_m$-like or $B_m$-like. 
	
	As $\{x_4,z_1,x_2\}$ are pairwise lower bounded in $((\bar\Delta_{\Lambda,\Lambda_0})^0,<)$, by assumption of the lemma, 
	there is a vertex $x'_3\in \bar\Delta_{\Lambda,\Lambda_0}$ which is a common lower bound for $\{x_4,z_1,x_2\}$. As $z'_2$ is the meet of $x_4$ and $z_1$, $z'_2\ge x'_3$.
	Let $x'_2$ be the join of $x'_3$ and $x_1$ in $((\bar\Delta_{\Lambda,\Lambda_0})^0,<)$. Then $x'_2\le z_1$ and $x'_2\le x_2$. Set $x'_4=z'_2$. Then $x_1\le x'_2\ge x'_3\le x'_4$. By construction, $x'_4\le x_4$, $x'_2\le x_2$, and $x_1,x'_4\sim x$. If $x'_2\neq x_2$, then $\tau_0(x'_2)+\tau_0(x'_4)<\tau_0(x_2)+\tau_0(x_4)$. Thus either $x_1$ is adjacent to $x'_4$ in $\bar\Delta_{\Lambda,\Lambda_0}$ or $x'_2\sim x$. In the former case, as $\tau_0(x_1)=1$, $x_1\le x'_4$. As $x'_4\le x_4$, we obtain $x_1\le x_4$, contradiction to that $\{x_1,x_2,x_3,x_4\}$ is a counterexample to the lemma. In the latter case, 
	as $\tau_0(x_1)=1$, we must have $x'_2\neq x_1$, otherwise we have $x'_3=x_1$, which implies that $x_1$ and $x_4$ are adjacent in $\bar\Delta_{\Lambda,\Lambda_0}$, contradiction. Thus $\tau_0(x'_2)>1$. Then $x'_2\ge x_2$ implies that any vertex in $S_2\setminus S'_2$ and $s$ are in different connected components of $\Lambda\setminus S'_2$, where $x'_2$ has type $\hat S'_2$. Lemma~\ref{lem:transitive} implies that $x_2\sim x$, contradiction again. If $x_2=x'_2$, then we deduce $x_2=z_1$ from $\tau_0(x_2)=\tau_0(z_1)$. Hence $x_2\sim x$, contradiction again. So there are no counterexamples to the lemma and the proof is finished.
\end{proof}

Given a $\widetilde C$-core $\Lambda'\subset\Lambda$ and a vertex $s\in \Lambda$, we say $s$ is \emph{$\Lambda'$-extremal} if the vertex $t\in \Lambda'$ closest to $s$ is a leaf vertex of $\Lambda'$. Such $s$ is  \emph{$\Lambda'$-maximal} (resp. \emph{$\Lambda'$-minimal}) if $\tau(t)$ achieves maximal (resp. minimal) value. 

\begin{lem}
	\label{lem:convex}
	Suppose $\Lambda$ is a tree Coxeter diagram such that with a robust $\widetilde C$-core $\Lambda'$. Let $s\in \Lambda$ be a vertex that is $\Lambda'$-extremal. Let $x\in \Delta_{\Lambda}$ be a vertex of type $\hat s$ and let $X$ be the full subcomplex of $\Delta_{\Lambda,\Lambda'}$ spanned by vertices that are adjacent or equal to $x$. Let $\bar X$ be the subcomplex of $\bar\Delta_{\Lambda,\Lambda'}$ corresponding to $X$. Then for any vertex $y\in \bar X$ with $x_1\le x_2\ge x_3\le x_4$ in $\lk^-(y,\bar\Delta_{\Lambda,\Lambda'})$ (or $\lk^+(y,\bar\Delta_{\Lambda,\Lambda'})$) such that $x_1,x_4$ are non-adjacent vertices in $\bar X$, we have $x_3\in \bar X$ if $s$ is $\Lambda'$-maximal and $x_2\in \bar X$ if $s$ is $\Lambda'$-minimal.
\end{lem}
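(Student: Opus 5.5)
We reduce to the case of a single link and argue via Lemma~\ref{lem:5cycle} and Lemma~\ref{lem:transitive}. Let $t\in\Lambda'$ be the leaf vertex closest to $s$. Since $y\in\bar X$, either $y$ is real and $y\sim x$, or $y$ is fake and both endpoints of the subdivided edge are adjacent to $x$. We then replace $x$ by the relevant geometric data in the link.

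By Lemma~\ref{lem:link}, together with the description of links in the subdivided complexes from Section~\ref{sec:subdivision}, $\lk^-(y,\bar\Delta_{\Lambda,\Lambda'})$ is identified with a (possibly subdivided) relative Artin complex $\bar\Delta_{\Lambda_y,\Lambda'_y}$. Here $\Lambda_y$ is the component of $\Lambda$ minus the support of the type of $y$ that contains $t$ and $s$; if $s$ lies in a different component, Lemma~\ref{lem:transitive} gives $x\sim$ every vertex of the link, and the conclusion is trivial. The subdiagram $\Lambda'_y$ is a $B_m$-like, $D_m$-like or linear piece of $\Lambda'$ containing $t$. Moreover, $x\sim z$ for $z$ in the link if and only if the corresponding vertex of $\Delta_{\Lambda_y}$ is adjacent to the vertex $x_y$ in $\lk(y,\Delta_\Lambda)$ determined by $x$. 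So the statement reduces to a statement inside this link.

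\textbf{Minimal case.} Suppose $s$ is $\Lambda'$-minimal, so $t$ has minimal $\tau$-value. The link poset is bowtie free and downward (or upward) flag, since $\bar\Delta_{\Lambda,\Lambda'}$ is $\wtC$-like and Theorem~\ref{thm:contractibleII} holds. When the link is of $D$-type with $t=b_1$, the configuration $x_1\le x_2\ge x_3\le x_4$ with $x\sim\{x_1,x_4\}$ and $x_1,x_4$ non-adjacent is exactly that of Lemma~\ref{lem:5cycle}, which yields $x\sim x_2$. When the link is of linear type, we run the same argument as in the proof of Lemma~\ref{lem:5cycle}, using only bowtie freeness, downward flagness and Lemma~\ref{lem:4-cycle}; this argument works verbatim without the fake vertices. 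In every case $x\sim x_2$, which means $x_2\in\bar X$, since vertices of $\bar X$ are exactly those $\sim x$.

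\textbf{Maximal case.} If $s$ is $\Lambda'$-maximal, we reverse the partial order, which swaps upward and downward flag; as remarked after Definition~\ref{def:good}, this is allowed. The path then reads $x_4\ge x_3\le x_2\ge x_1$, that is, $x_1\le x_2\ge x_3\le x_4$ becomes a path whose ``$x_2$''-role is played by $x_3$ in the reversed order. Applying the minimal-case argument gives $x\sim x_3$. The version of the lemma with $\lk^+$ in place of $\lk^-$ is handled symmetrically.

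\textbf{Expected obstacle.} The main difficulty is the bookkeeping in the link identification when $y$ is fake, or when the link is a subdivided $\widetilde D$-type piece. In those cases one has to check that $t$ corresponds to a minimal-type leaf in the link, as Lemma~\ref{lem:5cycle} requires, and that adjacency to $x$ passes correctly through the subdivision. I would verify this case by case on the shapes of $\Lambda'$ ($\widetilde B$, $\widetilde C$, $\widetilde D$), using Lemma~\ref{lem:transitive} repeatedly.
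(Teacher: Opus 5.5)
Your proposal is correct and is essentially the paper's proof. Both reverse the order to pass from the $\Lambda'$-maximal to the $\Lambda'$-minimal case, identify $\lk^-(y,\bar\Delta_{\Lambda,\Lambda'})$ via Lemma~\ref{lem:link} with a $(b_1,b_2)$-subdivided relative Artin complex that is bowtie free and downward flag because $\bar\Delta_{\Lambda,\Lambda'}$ is $\wtC$-like, and conclude with Lemma~\ref{lem:5cycle}.

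The paper handles the easy cases more economically. Lemma~\ref{lem:transitive} gives $x\sim x_2$ immediately unless $x_1$ has type $\hat b_2$ (the second leaf of $\Lambda'$ with $\tau$-value $1$), and that case forces $\tau(x_1)=1$, hence the $\lk^-$ case with $\tau(y)\ge 3$. So your ``linear type'' rerun of Lemma~\ref{lem:5cycle} and the $\lk^+$ case both collapse to that one application of Lemma~\ref{lem:transitive}. Note that after your order reversal the $\lk^+$ case is one you genuinely need, not a mirror image, so ``handled symmetrically'' should be replaced by this observation.
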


\begin{proof}
	We assume $y\neq x$, otherwise the lemma is clear. We only treat the case when $s$ is $\Lambda'$-minimal. The $\Lambda'$-maximal case follows by reserving the partial order on  $(\bar\Delta_{\Lambda,\Lambda'})^0$. Let $b_1\in \Lambda'$ be the vertex closest to $s$. Then $\tau(b_1)=1$. Note that there is at most one another vertex in $\Lambda'$ with $\tau$-value being $1$. This vertex, if exists, is denoted by $b_2$, and $b_2$ must be a leaf vertex of $\Lambda'$. The lemma follows immediately from Lemma~\ref{lem:transitive} if $x_1$ is not of type $\hat b_2$. So we will assume $b_2$ exists and $x_1$ is of type $\hat b_2$. Then $\tau(x_1)=1$. Hence $\{x_i\}_{i=1}^4$ are in $\lk^-(y,\bar\Delta_{\Lambda,\Lambda'})$. Suppose $\tau(y)\ge 3$ (otherwise $x_1=x_2=x_3=x_4$ and the lemma is trivial), and suppose $y$ of type $\hat T$. Let $\Gamma$ be the connected component of $\Lambda\setminus T$ that contains $s,b_1,b_2$, and let $\Gamma_0=\Gamma\cap \Lambda'$. By Lemma~\ref{lem:link}, $\lk^-(y,\bar \Delta_{\Lambda,\Lambda'})$ can be identified with $\bar\Delta_{\Gamma,\Gamma_0}$, which is the $(b_1,b_2)$-subdivision of $\Delta_{\Gamma,\Gamma_0}$. By our assumption, $\bar \Delta_{\Lambda,\Lambda'}$ is $\widetilde C$-like, so $((\bar\Delta_{\Gamma,\Gamma_0})^0,<)$ is bowtie free and downward flag. Note that $x$ can be viewed as a vertex in $\Delta_\Gamma$, and $\{x_i\}_{i=1}^4$ are vertices in $\bar\Delta_{\Gamma,\Gamma_0}$. Now we are done by Lemma~\ref{lem:5cycle}.
\end{proof}

\begin{cor}
	\label{cor:convex}
	The following hold under the same assumption as Lemma~\ref{lem:convex}:
	\begin{enumerate}
		\item  $\bar X$ is locally convex in $\bar \Delta_{\Lambda,\Lambda'}$ in the sense of Definition~\ref{def:local convex};
		\item if $y_1,y_2$ are vertices of $\bar X$ with a local normal form path $\omega$ in $\bar\Delta_{\Lambda,\Lambda'}$ from $y_1$ to $y_2$ with $y_2$ extremal, then $\omega\subset \bar X$;
		\item if $y_1,y_2$ are vertices of $\bar X$ such that $y_1$ and $y_2$ have a common upper bound (resp. lower bound) in $\bar\Delta_{\Lambda,\Lambda'}$, then the join (resp. meet) of $y_1$ and $y_2$ in $\bar\Delta_{\Lambda,\Lambda'}$ is contained in $\bar X$. 
	\end{enumerate}
\end{cor}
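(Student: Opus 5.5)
We verify the three assertions of Corollary~\ref{cor:convex} in order, with (1) doing most of the work and (2), (3) following from it.

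For (1) we check the three conditions of Definition~\ref{def:local convex}.

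\emph{Condition (1).} We must show that $\bar X$ is a full subcomplex of type $S$, i.e. every simplex of $\bar X$ extends to a top-dimensional simplex inside $\bar X$. By definition, $X$ is spanned by vertices of $\Delta_{\Lambda,\Lambda'}$ adjacent or equal to $x$. It can be identified, via $\lk(x,\Delta_\Lambda)\cong\Delta_{\Lambda\setminus\{s\}}$ and Lemma~\ref{lem:link}, with a relative Artin complex of type $\Lambda'$ (joined with $x$ when $s\in\Lambda'$). Every simplex of such a complex lies in a maximal simplex containing all types in $\Lambda'$. Passing to the subdivision $\bar X$ keeps this property, since the subdivision is performed simplexwise.

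\emph{Condition (2).} Let $y\in\bar X$ and let $a_1<a_2>a_3$ be a tight up-down path of length 2 in $\lk^-(y,\bar\Delta_{\Lambda,\Lambda'})$ with $a_1,a_3\in\bar X$; then $a_2$ is the join of $a_1,a_3$. Suppose $s$ is $\Lambda'$-minimal and $a_1,a_3$ are nonadjacent. Apply Lemma~\ref{lem:convex} to the sequence $a_1\le a_2\ge a_3\le a_3$; it gives $a_2\in\bar X$. If $a_1,a_3$ are adjacent, the path degenerates and there is nothing to prove.

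For a path $a_1>a_2<a_3$, with $a_2$ the meet, and still $s$ minimal, we instead use Lemma~\ref{lem:transitive} directly. Take an extremal vertex $z\le a_2$ of minimal $\tau$-value. Then $a_2$ is the join of extremal vertices below $a_1$ and $a_3$, by Lemma~\ref{lem:locally determined}. Here we expect to need the admissibility of $\Lambda'$ and the fact that $b_1$ separates $s$ from the rest of $\Lambda'$. The $\Lambda'$-maximal case is symmetric after reversing the order.

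\emph{Condition (3).} For a tight path $a_1<a_2>a_3<a_4$ in a lower link with $a_1,a_4\in\bar X$, Lemma~\ref{lem:convex} yields $a_2\in\bar X$ when $s$ is minimal and $a_3\in\bar X$ when $s$ is maximal. We then recover the remaining vertex from the length-2 case above. For instance, if $a_2\in\bar X$, then $a_3$ is the meet of $a_2$ and $a_4$, both in $\bar X$, so we reduce to a length-2 tight path. The same argument handles upper links. Connectedness of $\bar X$ holds because it is a cone on $x$ in $\Delta_\Lambda$; more precisely, its link structure is that of a connected relative Artin complex of type $\Lambda'$ (Lemma~\ref{lem:sc} when $|\Lambda'|\ge3$). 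Local determinedness of $\bar X$ follows from Lemma~\ref{lem:nonadj} exactly as in Lemma~\ref{lem:ld for Ccore}, applied inside $\lk(x,\Delta_\Lambda)$.

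For (2), with (1) established, Proposition~\ref{prop:convex subcomplex} applies directly: $\bar X$ is connected, locally determined and locally convex, so any local normal form path in $\bar\Delta_{\Lambda,\Lambda'}$ from $y_1\in\bar X$ to an extremal $y_2\in\bar X$ lies in $\bar X$.

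For (3), let $y_1,y_2$ have a common upper bound, and let $j$ be their join, which exists by Lemma~\ref{lem:big lattice} and Lemma~\ref{lem:posets}. If $y_1,y_2$ are comparable, $j$ is one of them. Otherwise $y_1<j>y_2$ is a tight up-down path of length 2. To place it in a link, choose a maximal extremal vertex $w\ge j$; then the path lies in $\lk^-(w,\bar\Delta_{\Lambda,\Lambda'})$. We then need $w\in\bar X$, and we expect to obtain it from the claim that extremal upper bounds in $\bar X$ exist for pairs bounded in $\bar\Delta$. Failing that, we would use (2): take the normal form path from $y_1$ to an extremal vertex above $y_2$, which passes through $j$. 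Given $w\in\bar X$, condition (2) of local convexity gives $j\in\bar X$. Meets are handled dually.

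We expect the main obstacle to be the case of (1) not covered by Lemma~\ref{lem:convex}, namely the ``wrong direction'' (meets when $s$ is minimal), together with producing an ambient vertex of $\bar X$ in whose link the path sits, as needed in (3).
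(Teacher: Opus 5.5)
Your proposal has genuine gaps in all three assertions.

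\textbf{Local convexity and assertion (3).} The obstacle you flag is where your argument stops. For condition (2) of Definition~\ref{def:local convex}, Lemma~\ref{lem:convex} only handles the join $a_1<a_2>a_3$ when $s$ is $\Lambda'$-minimal (and, by symmetry, the meet when $s$ is $\Lambda'$-maximal). Your sketch for the other direction does not close. Your treatment of condition (3) recovers $a_3$ as the meet of $a_2,a_4$, so it depends on this same missing case. Your assertion (3) needs an ambient vertex $w\in\bar X$ above the join, which you never produce.

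The missing idea is Lemma~\ref{lem:4-cycle}. Take $y\in\bar X$ and a tight path $a_1a_2a_3$ in $\lk^{\pm}(y,\bar\Delta_{\Lambda,\Lambda'})$ with $a_1,a_3\in\bar X$. The vertices $x,a_1,a_2,a_3$ form a 4-cycle with respect to $\sim$ inside $\lk(y,\Delta_\Lambda)$. So there is $a_2'$ of the same type as $a_2$ with $a_2'\sim x,a_1,a_3,y$. Then $a_2'\in\bar X$, and it is a common upper (resp.\ lower) bound of $a_1,a_3$ of the same rank as their join (resp.\ meet) $a_2$. This forces $a_2'=a_2$.

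This handles both directions and both extremal cases of $s$ at once. Lemma~\ref{lem:convex} is then needed only to reduce condition (3) to condition (2). The same argument applied to the cycle $x,y_1,j,y_2$ proves assertion (3) of the corollary directly, with no need for $w$.

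\textbf{Assertion (2).} You invoke Proposition~\ref{prop:convex subcomplex}, which requires $\bar X$ to be locally determined. That holds when $s\notin\Lambda'$, where $\bar X\cong\bar\Delta_{\Lambda\setminus\{s\},\Lambda'}$ and Lemma~\ref{lem:ld for Ccore} applies. It fails when $s\in\Lambda'$, which is allowed since $s$ may be a leaf of $\Lambda'$. In that case $X$ is the closed star of $x$. If no other vertex of $\Lambda'$ has the $\tau$-value of $s$, then $x$ is the only vertex of $\bar X$ of that rank. So for $x<w$ in $\bar X$ there is no extremal vertex of $\bar X$ below $w$ that is incomparable with $x$, and $\bar X$ is not locally determined.

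That case needs a different argument. The Bestvina-type inequality (Proposition~\ref{prop:bestvina} with $z=x$) shows that the distance to $x$ along the path never exceeds the larger of its endpoint values, which is $\le 1$. When another leaf $s'$ has $\tau(s')=\tau(s)$, the endpoints can be at distance $2$ from $x$. You then need an additional argument, using normal form paths and the flagness of $\bar\Delta_{\Lambda,\Lambda'}$, showing that the second and penultimate vertices of the path are adjacent to $x$. Bestvina's inequality then finishes.
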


%$\bar X$ is connected, locally determined, and locally convex in $\bar \Delta_{\Lambda,\Lambda'}$ in the sense of Definition~\ref{def:local convex}. 

\begin{proof}
	Each vertex $y\in \bar X$ is contained in a maximal simplex of $\bar\Delta_{\Lambda,\Lambda'}$, by considering the maximal simplex of $\Delta_\Lambda$ that contains $y$ and $x$ simultaneously. Thus Definition~\ref{def:local convex} (1) follows. For Definition~\ref{def:local convex} (2), we assume $y\neq x$, and consider a tight up-down edge path $x_1x_2x_3$ in $\lk^+(y,\bar \Delta_{\Lambda,\Lambda'})$ with $x_1,x_3\in \lk^+(y,\bar X)$. As $y\sim\{x,x_1,x_2,x_3\}$, by Lemma~\ref{lem:link} and Lemma~\ref{lem:4-cycle} applying to $xx_1x_2x_3$ in $\lk(y,\Delta_\Lambda)$, there exists vertex $x'_2\in \bar\Delta_{\Lambda,\Lambda'}$ such that $x'_2\sim\{x_1,x,y,x_3\}$ and $x'_2$ has the same type as $x_2$. In particular, $x'_2\in \lk(y,\bar X)$. The tightness of $x_1x_2x_3$ implies $x'_2=x_2$, as desired. Other cases of Definition~\ref{def:local convex} (2) can be checked similarly.
	Definition~\ref{def:local convex} (3) reduces to Definition~\ref{def:local convex} (2) by Lemma~\ref{lem:convex}.
	
	Now we prove Assertion 2. First we consider the case $s\notin \Lambda'$. 
	Note that $X^0$ is made of vertices in $\lk(x,\Delta_{\Lambda})$ that are of type $\hat t$ with $t\in \Lambda'$. If $s\notin \Lambda$, then Lemma~\ref{lem:link} implies $\lk(x,\Delta_{\Lambda})\cong \Delta_{\Lambda\setminus\{s\}}$. Hence $X\cong \Delta_{\Lambda\setminus\{s\},\Lambda'}$, which is connected (actually simply-connected) by Lemma~\ref{lem:sc}. By Lemma~\ref{lem:ld for Ccore},  $\bar X\cong \bar \Delta_{\Lambda\setminus\{s\},\Lambda'}$ is locally determined. Now Assertion 2 follows by Lemma~\ref{lem:ld for Ccore} (again) and Proposition~\ref{prop:convex subcomplex}.

	Suppose $s\in \Lambda'$. Then $x\in \Delta_{\Lambda,\Lambda'}$. If there does not exists vertex $s'\neq s$ of $\Lambda'$ with $\tau(s')=\tau(s)$, then $\bar X$ is the closed star of $s$ in $\bar\Delta_{\Lambda,\Lambda'}$, and Assertion 2 follows from Proposition~\ref{prop:bestvina} and Lemma~\ref{lem:ld for Ccore}. Suppose there is a vertex $s'\neq s$ of $\Lambda'$ with $\tau(s')=\tau(s)$. We assume without loss of generality that $\tau(s)=\tau(s')=1$. Let $\omega=z_1\cdots z_k$ be a local normal form in $\bar\Delta_{\Lambda,\Lambda'}$ from $z_1=y_1$ to $z_k=y_2$. Let $d$ be the path metric on the 1-skeleton of $\bar\Delta_{\Lambda,\Lambda'}$ with unit edge length.
	As $\omega$ is geodesic in $\bar\Delta_{\Lambda,\Lambda'}$ by Lemma~\ref{lem:ld for Ccore} and Proposition~\ref{prop:local vs global}, $k\le 5$. Note that $d(z_i,x)\le 2$ for $i=1,k$, hence this holds for $1\le i\le k$ by Proposition~\ref{prop:bestvina}. We claim that if $d(z_1,x)=2$, then $d(z_2,x)=1$; and if $d(z_k,x)=2$, then $d(z_{k-1},x)=1$. Note that this claim together with Proposition~\ref{prop:bestvina} implies that $d(z_i,x)\le 1$ for $1<i<k$, which gives $\omega\subset \bar X$.

	%If both $d(z_i,x)\le 1$ for $i=1,k$, then $\omega\subset \bar X$ by Proposition~\ref{prop:bestvina}. Now we assume at least one of $\{z_1,z_k\}$, say $z_1$, is not adjacent to $x$ in $\bar\Delta_{\Lambda,\Lambda'}$, hence is of type $\hat s'$. Note that $d(z_1,x)=2$ and $d(z_k,x)\le 2$. By Proposition~\ref{prop:bestvina}, $d(z_i,x)\le 2$ for all $i$.
	
	It remains to prove the claim. We will only prove $d(z_1,x)=2$ implies $d(z_2,x)=1$, as the other part is similar.
	Assume by contradiction that $d(z_2,x)=2$. Let $z_2z'_2x$ be the local normal form path from $z_2$ to $x$ in $\bar\Delta_{\Lambda,\Lambda'}$, which exists by Lemma~\ref{lem:ld for Ccore} and Proposition~\ref{prop:existence and uniqueness}. Then $z_2z'_2x$ is also a normal form path by Proposition~\ref{prop:local vs global}. As $\tau(x)=1$, we have $x<z'_2>z_2$. If $d(z_3,x)=1$, then by Definition~\ref{def:normal} applied to $z_2z'_2x$, we have $z_3>z_2$, which contradicts $z_1<z_2>z_3$ (as $\omega$ is an up-down path). Thus $d(z_3,x)=2$. For $i=1,3$, let $z_iz'_ix$ be a local normal form path from $z_i$ to $x$, which is also in normal form from $z_i$ to $x$. Then $x<z'_i>z_i$ for $i=1,3$. Note that $\{z'_1,z_2,z'_3\}$ is pairwise lower bounded, hence they have a common lower bound $z$ by Lemma~\ref{lem:big lattice}. We can assume $\tau(z)=1$. As $\tau(z'_1)=2$, we obtain $z=z_1$ or $x$. If $z=x$, then $d(z_2,x)=d(z_2,z)=1$. If $z=z_1$, then $z_1\le z'_3$. As $x<x'_3$ and $z'_1$ is the join of $\{z_1,x\}$, $z'_1\le z'_3$. As $z_2$ is the join of $\{z_1,z_3\}$ and $z'_3$ is a common upper bound for $\{z_1,z_3\}$, $z_2\le z'_3$. If $k=3$, then $z_3$ is of type $\hat s'$ and $\tau(z'_3)=2$. As $z'_1\le z'_3$ and $\tau(z'_1)=2$, we have $z'_1=z'_3$ and $z_3<z'_1$. Hence $z_3=z_1$ or $z_3=x$. The former is impossible as $\omega$ is geodesic, and the latter implies $d(z_2,x)=1$. Now assume $k>3$. If $d(z_4,x)=1$, then $ z'_3\le z_4$ as $z_3z'_3x$ is in normal form from $z_3$ to $x$. Then $z_2\le z'_3\le z_4$, contradicting that $\omega$ is geodesic. Thus $d(z_4,x)=2$. It follows that $k>4$, otherwise $z_4\in \bar X$ and $\tau(z_4)=1$, contradicting $z_3<z_4$. Let $z_4z'_4x$ be the local normal form (hence normal form) from $z_4$ to $x$. Then $z_4<z'_4>x$. If $d(z_5,x)=1$, then $x<z_5>z_4$, contradicting $z_4>z_5$ (as $\omega$ is up-down). So $d(z_5,x)=2$. Let $z_5z'_5x$ be the local normal form $z_5$ to $x$. As $z_5\in \bar X$, we know $z_5$ is of type $\hat s'$ and $\tau(z'_5)=2$. Hence $\{z'_3,z'_5,z_4\}$ is pairwise lower bounded, and we argue as before to deduce that $z_4\le z'_3$. By considering $z_2\le z'_3\ge z_4$ in $\lk^+(z_3,\bar \Delta_{\Lambda,\Lambda'})$, this contradicts that $z_2z_3z_4$ is locally normal. Thus the claim is proved.
	
	Now we prove Assertion 3. Let $z$ be the join of $y_1$ and $y_2$ in $\bar\Delta_{\Lambda,\Lambda'}$. By considering $xy_1zy_2$ and using Lemma~\ref{lem:4-cycle}, there exists $z'$ of the same type as $z$ such that $z'\sim\{x,y_1,y_2\}$. In particular, $z'\in \bar X$. The type of $z'$ implies $z'$ is a vertex of $\bar\Delta_{\Lambda,\Lambda'}$, so $z'$ is a common upper bound for $\{y_1,y_2\}$. Thus $z'=z$ and Assertion 3 follows. The case of $z$ being the meet of $y_1$ and $y_2$ is similar.
\end{proof}

\subsection{Intersection of convex subcomplexes}
\label{subsec:convex intersection}
Throughout Section~\ref{subsec:convex intersection}, we will be working under Assumption~\ref{assum:1} below.

\begin{assumption}
	\label{assum:1}
	Let $\Lambda$ be a tree Coxeter diagram with a robust $\widetilde C$-core $\Lambda'$. Let $\Lambda_0$ be a $B_m$-like $(m\ge 2)$ or $D_m$-like $(m\ge 4)$ subdiagram of $\Lambda$.
	
	If $\Lambda_0$ is $B_m$-like, we label $\Lambda_0=s_1\ldots s_m$ with $m_{s_{m-1},s_m}\ge 4$ and assume that $\Lambda_0\cap \Lambda'$ is the edge $s_{m-1}s_m$, as in Figure~\ref{fig:core} (1), (2), and (5).
	
	If $\Lambda_0$ is $D_m$-like, we label the vertices of $\Lambda_0$ as in Figure~\ref{fig:core} (0) and assume that $\Lambda_0\cap \Lambda'$ is $D_4$-like, as in Figure~\ref{fig:core} (3) and (4).
	
	Let $\bar\Delta_{\Lambda,\Lambda_0}$ denote $\Delta_{\Lambda,\Lambda_0}$ if $\Lambda_0$ is $B_m$-like, and the $(b_m,b_{m-1})$-subdivision of $\Delta_{\Lambda,\Lambda_0}$ if $\Lambda_0$ is $D_m$-like. We endow $\bar\Delta_{\Lambda,\Lambda_0}$ with a poset structure on its vertex set such that vertices of type $\hat s_m$ are maximal when $\Lambda_0$ is $B_m$-like, and vertices of type $\hat b_m$ or $\hat b_{m-1}$ are maximal when $\Lambda_0$ is $D_m$-like (that is, we reverse the partial order from Definition~\ref{def:subdivision}).
	
	Suppose that for each vertex $s\in \Lambda'\setminus \Lambda_0$, the complex $\bar\Delta_{\Omega,\Lambda_0}$ is bowtie-free and upward flag, where $\Omega$ is the connected component of $\Lambda\setminus\{s\}$ containing $\Lambda_0$. The  poset structure on  $(\bar\Delta_{\Omega,\Lambda_0})^0$ is defined similar to the previous paragraph.

	If $\Lambda'$ is $\widetilde D_4$-like, we further assume that $(\bar\Delta_{\Gamma,\Gamma'})^0$ is upward flag, where  $\Gamma'=(\Lambda'\cup\Lambda_0)\setminus\{b_{m-1}\}$, $\Gamma$ is the connected component of $\Lambda\setminus\{b_{m-1}\}$ containing $b_{m-2}$, and $\bar\Delta_{\Gamma,\Gamma'}$ is the subdivision of $\Delta_{\Gamma,\Gamma'}$ as in Definition~\ref{def:subdivision}, endowed with the poset structure in which vertices of type $\hat b_1$ are minimal.
	
	Let $t_1$ be the vertex of $\Lambda'$ that is closest to the vertices in $\Lambda_0\setminus \Lambda'$. Let $\theta_1,\theta_2,\theta_3$ be vertices in $(\Lambda_0\setminus \Lambda')\cup\{t_1\}$. For each $i=1,2,3$, let $x_i\in \Delta_\Lambda$ be a vertex of type $\hat \theta_i$, and let $X_i$ be the full subcomplex of $\Delta_{\Lambda,\Lambda'}$ spanned by vertices that are adjacent to or equal to $x_i$.
	
	Define $\bar\Delta_{\Lambda,\Lambda'}$, $\bar X_i$, and $\tau\colon (\bar\Delta_{\Lambda,\Lambda'})^0\to \mathbb{Z}$ as before, and assume $\tau$ takes values in $\{1,\ldots,n\}$. Up to reversing the partial order on $(\bar\Delta_{\Lambda,\Lambda'})^0$, we assume that $\theta_1,\theta_2,\theta_3$ are $\Lambda'$-minimal. There is at most one additional vertex of $\Lambda'$ with $\tau$-value $1$; if it exists, we denote it by $t'_1$ (in Figure~\ref{fig:core} (3) and (4), $t'_1=b_{m-1}$).
\end{assumption}

\begin{figure}[h]
	\centering
	\includegraphics[scale=0.9]{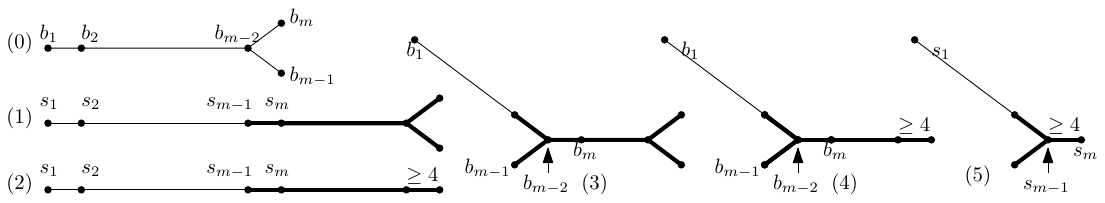}
	\caption{The thickened subdiagram indicates the robust $\wtC$-core $\Lambda'$. In (3), $\Lambda'$ can be $\widetilde D_4$-like, in which case $b_m$ is a leaf vertex of $\Lambda'$. Similarly, in (4), $\Lambda'$ can be $\widetilde B_3$-like, in which case $b_m$ is a leaf vertex of $\Lambda'$.}
	\label{fig:core}
\end{figure}

%Let $\Gamma'$ be the subtree of $\Lambda$ spanned by $s_1$ or $b_1$ and all vertices of $\Lambda'$ with $\tau$-value $n$. Then $\Gamma'$ is either $B_m$-like or $D_m$-like and $\Gamma'$ contains all vertices of $\Lambda'$ except $t'_1$. Let $\Gamma$ be the connected component of $\Lambda\setminus\{t'_1\}$ that contains $\Gamma'$ (if $t'_1$ does not exist, then $\Gamma=\Lambda$). Let $\bar\Delta_{\Gamma,\Gamma'}$ be either $\Delta_{\Gamma,\Gamma'}$ or its appropriate subdivision (when $\Gamma'$ is $D_m$-like). We order vertices in $\bar\Delta_{\Gamma,\Gamma'}$ such that elements of type $\hat s_1$ or $\hat b_1$ are minimal.

Note that $\bar X_i\subset \bar\Delta_{\Lambda,\Lambda'}$ satisfies the conclusion of Corollary~\ref{cor:convex}. As $\bar\Delta_{\Lambda,\Lambda'}$ is $\widetilde C$-like and locally determined by assumption, there is a local normal form path between any two extremal vertices of $\bar\Delta_{\Lambda,\Lambda'}$ by Proposition~\ref{prop:existence and uniqueness}, which is also a normal form path by Proposition~\ref{prop:local vs global}.

\begin{lem}
	\label{lem:corner1}
	Under Assumption~\ref{assum:1}, suppose that $\bar X_2\cap \bar X_3$ is nonempty and contains a vertex of type $\hat s$ for some $s\in \Lambda_0$. Let $(u_1,u_2,u_3)$ be three extremal vertices in $\bar\Delta_{\Lambda,\Lambda'}$ such that $u_1,u_2\in \bar X_2$ and $u_2,u_3\in \bar X_3$. Assume that $u_2$ is chosen so as to minimize the quantity
	\begin{equation}
		\label{eq:ds0}
		d(u_1,u_2)+d(u_2,u_3),
	\end{equation}
	among all extremal vertices $u_2\in \bar X_2\cap \bar X_3$, where $d$ denotes the path metric on the $1$-skeleton of $\bar\Delta_{\Lambda,\Lambda'}$ with unit edge lengths. Suppose further that $d(u_1,u_2)\ge 2$.
	
	Let $\omega=y_1y_2\cdots y_k$ be a path from $u_2$ to $u_3$ that is in local normal form either from $u_2$ to $u_3$ or from $u_3$ to $u_2$. Then the following hold:
	\begin{enumerate}
		\item If $\tau(u_2)=1$, then $d(u_1,y_2)>d(u_1,u_2)$.
		\item If $\tau(u_2)=n$ and we are not in the special case where $\Lambda'$ is $\widetilde D_4$-like, $u_2$ is of type $\hat b_m$, and $y_2$ is of type $\hat b_{m-1}$, then
		\[
		d(u_1,y_i)\ge d(u_1,u_2)\quad \text{for all } 1\le i\le k.
		\]
		\item If $\Lambda'$ is $\widetilde D_4$-like, $u_2$ is of type $\hat b_m$, and $y_2$ is of type $\hat b_{m-1}$, then there exists at most one index $i$ such that $d(u_1,y_i)<d(u_1,u_2)$; moreover, if such an index exists, then $i=2$.
	\end{enumerate}
\end{lem}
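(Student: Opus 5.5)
The plan is to combine three facts: $\omega$ lies in $\bar X_3$ (Corollary~\ref{cor:convex}(2), since $u_2,u_3\in\bar X_3$ are extremal); the vertices of $\omega$ cannot lie in $\bar X_2$ too close to $u_1$ without contradicting the minimality of \eqref{eq:ds0}; and $f(i)=d(u_1,y_i)$ is controlled by the Bestvina-type inequality, Proposition~\ref{prop:bestvina}. That proposition applies to $\omega$ read in whichever direction it is a local normal form, since $u_1$ is extremal. It says $f$ first (weakly) decreases and then strictly increases, in the chosen direction of reading.

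\textbf{Key lemma.} Suppose $y_i$ is extremal, lies in $\bar X_2$, and satisfies $d(u_1,y_i)<d(u_1,u_2)$. Then $y_i\in\bar X_2\cap\bar X_3$. Moreover $d(y_i,u_3)\le d(u_2,u_3)$, because $\omega$ is a geodesic ending at $u_3$ (Proposition~\ref{prop:local vs global}). So $y_i$ would strictly decrease \eqref{eq:ds0}, a contradiction. Hence every extremal $y_i\in\bar X_2$ satisfies $d(u_1,y_i)\ge d(u_1,u_2)$.

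For non-extremal $y_i$ I will replace $y_i$ by an extremal neighbour. Say $y_{i-1}<y_i>y_{i+1}$. Take an extremal vertex $z\ge y_i$ lying in $\bar X_2\cap\bar X_3$. Such a $z$ should exist: both $\bar X_j$ are type-$S$ subcomplexes containing $y_i$ (Definition~\ref{def:local convex}(1)), and Corollary~\ref{cor:convex}(3) lets me take joins and meets inside them. The real point is showing $y_i\in\bar X_2$ in the first place. For this I will use the normal form path from $y_i$ towards $u_1$ and Lemma~\ref{lem:strip}, comparing it with the normal form path from $u_2$ to $u_1$, which lies in $\bar X_2$ since $u_1,u_2\in\bar X_2$ and by Corollary~\ref{cor:convex}(2). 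This forces the vertex $y_2$ adjacent to $u_2$ to be sandwiched with a vertex of $\bar X_2$. I then iterate along $\omega$ as long as $f$ does not increase.

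\textbf{Assertion (1).} Let $\tau(u_2)=1$. If $d(u_1,y_2)\le d(u_1,u_2)$, let $u_2w\cdots u_1$ be the normal form path from $u_2$ to $u_1$. It satisfies $u_2<w$, and by Definition~\ref{def:normal} every neighbour of $u_2$ closer to $u_1$ must be $\ge w$. In the equal-distance case I use the second claim in the proof of Proposition~\ref{prop:bestvina}. Either way I obtain a vertex $v\in\{y_2,w\}$ with $v\ge u_2$ and $v\in\bar X_2$. Taking the join of $y_2$ and $w$ (Corollary~\ref{cor:convex}(3)) and then an extremal vertex above it gives an extremal vertex of $\bar X_2\cap\bar X_3$ at distance $\le d(u_2,u_3)-1$ from $u_3$ and $\le d(u_1,u_2)$ from $u_1$. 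This contradicts minimality. The hypothesis $d(u_1,u_2)\ge2$ is used to guarantee that $w$ is not $u_1$ itself.

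\textbf{Assertions (2) and (3).} Let $\tau(u_2)=n$. By Proposition~\ref{prop:bestvina}, if $f$ ever drops below $f(1)$, it first drops at $i=2$, or it stays level and then drops. I propagate the argument of (1) along the initial non-increasing stretch of $f$, showing inductively that each extremal vertex near $y_i$ lies in $\bar X_2$. The Key lemma then forbids any value below $d(u_1,u_2)$.

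The obstacle, which I expect to be the hardest step, is membership of $y_2$ in $\bar X_2$ when $u_2$ has maximal type. Here the link $\lk^-(u_2,\bar\Delta_{\Lambda,\Lambda'})$ involves the vertex $b_{m-1}$ of equal $\tau$-value, and Lemma~\ref{lem:convex} only controls $x_3$ (not $x_2$) for the relevant extremality. In the exceptional $\widetilde D_4$ case, with $u_2$ of type $\hat b_m$ and $y_2$ of type $\hat b_{m-1}$, this fails: $y_2$ may escape $\bar X_2$. I will instead use the extra upward flag hypothesis on $\bar\Delta_{\Gamma,\Gamma'}$ from Assumption~\ref{assum:1}. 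It should give a common upper bound for the three vertices $y_3$, $w$ and $u_2$, and hence a vertex of $\bar X_2$ adjacent to $y_3$. This shows that only $i=2$ can dip below, and $f$ recovers from $i=3$ on.
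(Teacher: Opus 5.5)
There is a genuine gap in assertion (2). Your Key lemma, the use of Proposition~\ref{prop:bestvina}, and the outline of (1) are fine. The problem is the case $\tau(u_2)=n$, first drop of $f$ at $i=2$, with $y_2$ of type $\hat t'_1$ and $\Lambda'$ \emph{not} in the $\widetilde D_4$ exception. Write $u_2=y_1^{a_1},y_1^{a_1-1},\dots,u_1$ for the normal form path from $u_2$ to $u_1$; it lies in $\bar X_2$. Definition~\ref{def:normal} gives $y_2\le y_1^{a_1-1}<y_1^{a_1-2}$. If $y_2$ is not of type $\hat t'_1$, Lemma~\ref{lem:transitive} carries $x_3$ up to $y_1^{a_1-2}$, and an extremal vertex above it beats $u_2$. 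But $t'_1$ does not separate $\theta_2,\theta_3$ from the rest of $\Lambda'$. So once $\tau(y_1^{a_1-1})>3$, no poset or transitivity argument puts $y_2$ into $\bar X_2$ or $y_1^{a_1-2}$ into $\bar X_3$.

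This is exactly what the hypothesis that $\bar X_2\cap\bar X_3$ contains a vertex of type $\hat s$ with $s\in\Lambda_0$ is for. Your plan never uses it, nor paragraph 5 of Assumption~\ref{assum:1}. The paper takes such a vertex $z$ and makes it adjacent to $u_2$ via Lemma~\ref{lem:4-cycle} applied to $x_2zx_3u_2$. It then applies Lemma~\ref{lem:5cycle} to the $5$-cycle $z\,x_2\,y_1^{a_1-1}\,y_2\,x_3$ in $\lk(u_2,\Delta_\Lambda)$, read inside $\Delta_\Omega$, where $\bar\Delta_{\Omega,\Lambda_0}$ is bowtie free and upward flag. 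The outcome is a dichotomy: $x_2\sim y_2$, or $x_3\sim y_1^{a_1-1}$. The second branch moves a vertex of the $u_2$--$u_1$ path into $\bar X_3$, so a strategy that only tries to prove $y_i\in\bar X_2$ cannot reach it.

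The $\widetilde D_4$ exception is the case where $u_2$ has type $\hat b_m$ with $b_m\in\Lambda_0$, so this link argument is unavailable. There the paper feeds $\{y_1^{a_1-1},y_2^{a_2-1},x_3\}$ to the upward flag hypothesis on $\bar\Delta_{\Gamma,\Gamma'}$, and the common upper bound is forced to be $y_1^{a_1-2}$. Your triple $\{y_3,w,u_2\}$ cannot work. Since $u_2$ is maximal in that poset, any common upper bound would be $u_2$ itself, making $y_3$ adjacent to $u_2$ and contradicting that $\omega$ is geodesic. So that triple is not pairwise upper bounded, and flagness cannot be applied to it.

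Two smaller points.
\begin{enumerate}
\item In (1), Corollary~\ref{cor:convex}(3) only places a join in $\bar X_j$ when both inputs lie in $\bar X_j$. Here $y_2\in\bar X_3$ and $w\in\bar X_2$, so it says nothing about $\bar X_2\cap\bar X_3$. The mechanism that works is that each $\bar X_j$ is upward closed above any of its vertices with $\tau\ge 2$, by Lemma~\ref{lem:transitive} since $\theta_j$ is $\Lambda'$-minimal. In the equal-distance case you also need Lemma~\ref{lem:strip} to produce the common upper bound: $y_2^{a_2-1}\ge w$ and $y_2^{a_2-1}>y_2$.
\item For a first drop at some $i>2$, ``propagating (1)'' forward along the level stretch does not put the lower vertices into $\bar X_2$, because $\bar X_2$ is not closed downward. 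The paper instead applies Lemma~\ref{lem:convex} in $\lk^-(y_{i-1},\bar\Delta_{\Lambda,\Lambda'})$ to get $y_{i-1}^{a_{i-1}-1}\in\bar X_3$. It then propagates membership in $\bar X_3$ backwards two indices at a time, using meets from Corollary~\ref{cor:convex}(3) together with Lemma~\ref{lem:convex}. This reduces to the $i=2$ analysis above.
\end{enumerate}
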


\begin{proof}
	By Corollary~\ref{cor:convex}, $\omega\subset \bar X_3$.
	For each $i$, let $y^{a_i}_{i}y^{a_i-1}_{i}\cdots y^1_{i}$ be the local normal form path form $y_i=y^{a_i}_{i}$ to $y^1_{i}=u_1$ ($a_i=d(y_i,u_1)-1$), whose existence is guaranteed by Proposition~\ref{prop:existence and uniqueness}. This path is also a normal form path by Proposition~\ref{prop:local vs global}. 
	
	\begin{figure}[h]
		\centering
		\includegraphics[scale=1]{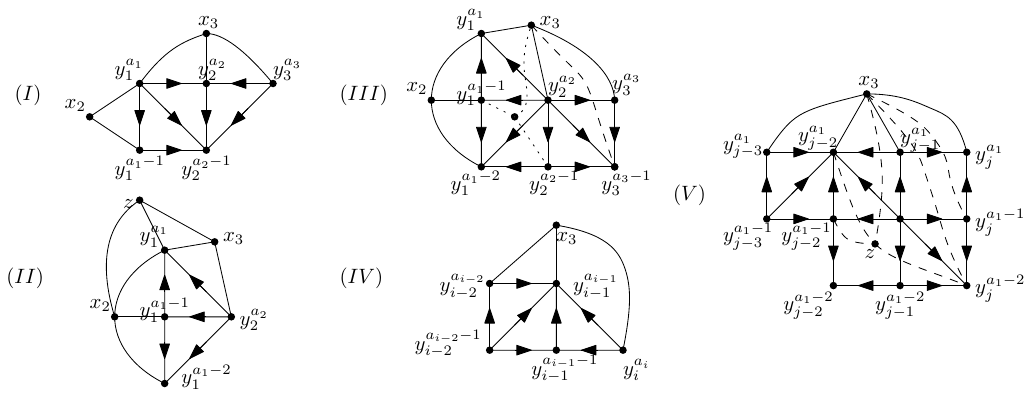}
		\caption{Some diagrams in the proof of Lemma~\ref{lem:corner1}.}
		\label{fig:corner}
	\end{figure}
	
	First we consider the case when $\tau(u_2)=1$. See Figure~\ref{fig:corner} (I). Then $y^{a_2}_2>y^{a_1}_1$ and $y^{a_1-1}_1>y^{a_1}_1$. We first consider the subcase $d(u_1,y_2)=d(u_1,u_2)$.  By Lemma~\ref{lem:strip}, $y^{i}_2\ge y^{i}_1$ for any $i$, in particular, $y^{a_2-1}_2\ge y^{a_1-1}_1$. As $y^{a_1-2}_1<y^{a_1-1}_1$, we deduce $y^{a_1-2}_1<y^{a_2-1}_2$. Then $y^{a_2-1}_2>y^{a_2}_2$, otherwise $y^{a_2}_2>y^{a_1-2}_1$ by transitivity, which implies that $d(y^{a_2}_2,u_1)\le d(y^{a_1-2}_1,u_1)+1=d(u_1,u_2)-1$, contradiction. 	As $\omega$ is an up-down path, $y^{a_3}_3<y^{a_2}_2$. Thus $y^{a_3}_3<y^{a_2-1}_2$ and $y^{a_1}_1<y^{a_1-1}_2$. Note that $x_2\sim y^{a_1-1}_1$ and $y^{a_1-1}_1\sim y^{a_2-1}_2$. As $\tau(y^{a_1-1}_1)>1$, Lemma~\ref{lem:transitive} implies $x_2\sim y^{a_2-1}_2$. Hence $y^{a_2-1}_2\in \bar X_2$. Similarly, $y^{a_2-1}_2\in \bar X_3$. Let $u'_2\in \bar\Delta_{\Lambda,\Lambda'}$ be an extremal vertex with $u'_2\ge y^{a_2-1}_2$. As $\tau( y^{a_2-1}_2)\ge 2$, Lemma~\ref{lem:transitive} implies $u'_2\sim\{x_2,x_3\}$. 
	Note that $u'_2\ge y^{a_2-2}_2$ and $u'_2\ge y^{a_3}_3$. Hence $d(u'_2,u_1)\le d(u_2,u_1)-1$ and $d(u'_2,u_3)\le d(u_2,u_3)-1$. Thus replacing $u_2$ by $u'_2$ decreases \eqref{eq:ds0}, contradiction. The subcase $d(u_1,y_2)<d(u_1,u_2)$ is similar.
	
	Suppose $\tau(u_2)=n$. Suppose there is $i$ such that $d(u_1,y_i)<d(u_1,u_2)$, and let $i$ be the smallest possible such value. By Proposition~\ref{prop:bestvina} and the choice of $\omega$, $d(u_1,y_j)=d(u_1,u_2)$ for $j<i$. First assume $i=2$. See Figure~\ref{fig:corner} (II). Then $y^{a_1-1}_1<y^{a_1}_1$ and $y^{a_2}_1<y^{a_1}_1$. Definition~\ref{def:normal} implies that $y^{a_2}_2\le y^{a_1-1}_1$. As $y^{a_1-1}_1<y^{a_1-2}_1$ (the normal form path from $y_1$ to $u_1$ is an up-down path), $y^{a_2}_2<y^{a_1-2}_1$. If $y_2$ is not of type $\hat t'_1$, then Lemma~\ref{lem:transitive} implies that $x_3\sim y^{a_1-2}_1$ and we can decrease \eqref{eq:ds0} by replacing $u_2$ by any $u'_2$ such that $\tau(u'_2)=n$ and $u'_2\ge y^{a_1-2}$. 
	Now assume $y_2$ is of type $\hat t'_1$. %Then $\Lambda_0$ is $D_m$-like. 
	If $\tau(y^{a_1-1}_1)\le 3$, then Lemma~\ref{lem:transitive} implies that $x_2\sim y_2$, hence we can decrease \eqref{eq:ds0} by replacing $u_2$ by $y_2$. So we assume $\tau(y^{a_1-1}_1)>3$, in which case $n\ge 5$.

	Suppose $y^{a_1}_1=u_2$ has type $\hat t$ with $t\notin \Lambda_0$.
	Let $\Omega$ be the connected component of $\Lambda\setminus t$ that contains $b_1$. 
	Our assumption gives vertex $z\in \bar\Delta_{\Lambda,\Lambda'}$ of type $\hat s$ with $s\in \Lambda_0\cap\Lambda'$ such that $z\sim \{x_2,x_3\}$. By Lemma~\ref{lem:transitive}, we can assume $s=b_m$ or $b_{m-1}$. By applying Lemma~\ref{lem:4-cycle} to the 4-cycle $x_2zx_3y_1$ in $\Delta_\Lambda$, up to replacing $z$, we can assume $z$ is adjacent to $y_1$ in $\Delta_\Lambda$. We first assume that we are not in the special situation that $\Lambda'$ is $\widetilde D_{n-1}$-like and $\tau(y^{a_1-1}_1)=n-1$ hold simultaneously.
	As $\tau(y^{a_1-1}_1)>3$, $zx_2y^{a_1-1}_1y_2x_3$ forms a 5-cycle in $\lk(y^{a_1}_1,\Delta_\Lambda)$.  Lemma~\ref{lem:link} implies we can view $zx_2y^{a_1-1}_1y_2x_3$ as a 5-cycle in $\Delta_{\Omega}$ with  $z,x_2,y_2,x_3\in \Delta_{\Omega,\Lambda_0}$. As $(\bar\Delta_{\Omega,\Lambda_0})^0$ is upward flag and bowtie free (Assumption~\ref{assum:1}, paragraph 5), Lemma~\ref{lem:5cycle} implies that either $x_2$ and $y_2$ are adjacent in $\Delta_\Lambda$, in which case we conclude as the previous paragraph; or $x_3$ is adjacent to $y^{a_1-1}_1$. As $y^{a_1-1}_1<y^{a_1-2}_1$ and $\tau(y^{a_1-1}_1)\ge 2$, Lemma~\ref{lem:transitive} implies $y^{a_1-2}_1\in \bar X_3$. Hence replacing $u_2$ by a maximal vertex in $\bar\Delta_{\Lambda,\Lambda'}$ that is $\ge y^{a_1-2}_1$ decreases \eqref{eq:ds0}. In the special case that $\Lambda'$ is $\widetilde D_{n-1}$-like and $\tau(y^{a_1-1}_1)=n-1$, $y^{a_1-1}_1$ is the midpoint of the edge $y^{a_1}_1y^{a_1-2}_1$ of $\Delta_\Lambda$, so  $zx_2y^{a_1-2}_1y_2x_3$ forms a 5-cycle in $\lk(y^{a_1}_1,\Delta_\Lambda)$ and we can finish as before.
	
	Now we consider the case $t\in \Lambda_0$. This can only happen when $\Lambda'$ is $\widetilde D_4$-like and $t=b_m$. Then $n=5$ and $\tau(y^{a_1-1}_1)=4$.
	See Figure~\ref{fig:corner} (III) for the following discussion.
	If $d(y_3,u_1)=d(y_2,u_1)$, as $y^{a_1-1}_1<y^{a_1-2}_1$, we know $y^{a_2}_2<y^{a_2-1}_2$ and $y^{a_3}_3<y^{a_3-1}_3$.
	As $y_2<y_3$, by Lemma~\ref{lem:strip}, $y^{a_2-1}_2<y^{a_3-1}_3$. As $\theta(y^{a_3}_3)\ge 2$, $x_3\sim y^{a_3}_3$ and $y^{a_3}_3\sim y^{a_3-1}_3$, Lemma~\ref{lem:transitive} implies that $x_3\sim y^{a_3-1}_3$. Let $\mathcal P=((\bar\Delta_{\Gamma,\Gamma'})^0,<)$ be in Assumption~\ref{assum:1}.
	Note that $y^{a_2}_2\sim\{y^{a_1-1}_1,y^{a_2-1}_2,x_3\}$. By Lemma~\ref{lem:link}, we view $\{y^{a_1-1}_1,y^{a_2-1}_2,x_3\}$ as in $\mathcal P$ that are pairwise upper bounded (if $\tau(y^{a_2-1}_2)=2$, then $y^{a_2-2}_2$ exists and we replace $y^{a_2-1}_2$ by $y^{a_2-2}_2$). Thus they have a common upper bound $z$ in $\mathcal P$. As $y^{a_1}_1$ and $y^{a_1-2}_1$ are the only two elements in $\mathcal P$ that is $>y^{a_1-1}$, we can assume $z=y^{a_1}_1$ or $y^{a_1-2}_1$. The former is impossible as $d(y^{a_1}_1,y^{a_2-1}_2)=2$. The latter implies $y^{a_1-2}_1\sim x_3$ and we conclude the proof as before. The case of $d(y_3,u_1)<d(y_2,u_1)$ can be ruled out similarly. Thus $d(y_3,u_1)>d(y_2,u_1)$ and Proposition~\ref{prop:bestvina} implies $d(y_j,u_1)\ge d(u_1,u_2)$ for any $j\ge 3$.

	Now we consider the case $i>2$. See Figure~\ref{fig:corner} (IV). As $y^{a_1-1}_{1}<y^{a_1}_1$ and $d(u_1,y_j)=d(u_1,y_1)$ for $1\le j<i$, we know $y^{a_j-1}_j<y^{a_j}_j$ for $1\le j<i$. In particular $y^{a_{i-1}-1}_{i-1}<y^{a_{i-1}}_{i-1}$. Thus $y^{a_i}_i<y^{a_{i-1}}_{i-1}$ as $y^{a_i}_i$ is contained in a geodesic from $u_1$ to $y^{a_{i-1}}_{i-1}$. Thus $i$ is even. Lemma~\ref{lem:strip} implies that $y^{a_{i-2}-1}_{i-2}\le y^{a_{i-1}-1}_{i-1}$ and $y^{a_i}_i\le y^{a_{i-1}-1}_{i-1}$. Thus by considering $y^{a_{i-2}}_{i-2}\ge y^{a_{i-2}-1}_{i-2}\le y^{a_{i-1}-1}_{i-1}\ge y^{a_i}_i$ in $\lk^-(y^{a_{i-1}}_{i-1},\bar\Delta_{\Lambda,\Lambda'})$ and Lemma~\ref{lem:convex}, we know $y^{a_{i-1}-1}_{i-1}\in \bar X_3$.

	Next we show if $y^{a_1-1}_j\in \bar X_3$ for $j$ odd with $i>j>3$, then $y^{a_{1}-1}_{j-2}\in \bar X_3$ (note that $a_j=a_1$ for any $j<i$). See Figure~\ref{fig:corner} (V). 
	Indeed, as $j$ is odd, by Lemma~\ref{lem:strip}, $y^p_{j-3}\le y^p_{j-2}\ge y^p_{j-1}\le y^p_j$ for $1\le p\le a_1$. If $y^{a_1-1}_{j-1}=y^{a_1-1}_j$, then by considering the path $y^{a_1}_{j-3}y^{a_1-1}_{j-3}y^{a_1-1}_{j-2}y^{a_1-1}_{j-1}$ in $\lk^-(y^{a_1}_{j-2},\bar\Delta_{\Lambda,\Lambda'})$ and Lemma~\ref{lem:convex}, we have $y^{a_{1}-1}_{j-2}\in \bar X_3$. Now assume $y^{a_1-1}_{j-1}<y^{a_1-1}_j$. Then $\tau(y^{a_1-1}_j)\ge 2$. As $x_3\sim y^{a_1-1}_j$ and $y^{a_1-1}_j<y^{a_1-2}_j$, Lemma~\ref{lem:transitive} implies $x_3\sim y^{a_1-2}_j$. Thus $y^{a_1-2}_j\in \bar X_3$. By Corollary~\ref{cor:convex}, the meet $z$ of $y^{a_1-2}_j$ and $y^{a_1}_{j-2}$ is contained in $\bar X_3$ (note that $y^{a_1-2}_j$ and $y^{a_1}_{j-2}$ have a common lower bound $y^{a_1-1}_{j-1}$). As $d(u_1,z)=a_1-1=d(u_1,y_{j-2})-1$, $z$ is in a geodesic from $y_{j-2}$ to $u_1$. Definition~\ref{def:normal} implies that $z\le y^{a_1-1}_{j-2}$. By considering $y^{a_1}_{j-3}\ge y^{a_1-1}_{j-3}\le y^{a_1-1}_{j-2}\ge z$ in $\lk^-(y^{a_1}_{j-2},\bar\Delta_{\Lambda,\Lambda'})$ and Lemma~\ref{lem:convex}, then $y^{a_{1}-1}_{j-2}\in \bar X_3$.
	
	The previous two paragraphs imply that $y^{a_1-1}_3\in \bar X_3$. As $y^{a_1-1}_2\le y^{a_1-1}_3\ge y^{a_1-1}_4$ by Lemma~\ref{lem:strip}, if $\tau(y^{a_1-1}_3)=1$, then $y^{a_1-1}_2=y^{a_1-1}_4$, contradicting that $\omega$ is a geodesic. Thus $\tau(y^{a_1-1}_3)\ge 2$. As $x_3\sim y^{a_1-1}_3$ and $y^{a_1-1}_3<y^{a_1-2}_3$, Lemma~\ref{lem:transitive} implies that $y^{a_1-2}_3\in \bar X_3$. Let $y$ be the meet of $y^{a_1}_1$ and $y^{a_1-2}_3$. Then $y\in \bar X_3$ by Corollary~\ref{cor:convex}. Similar to the previous paragraph, $y\le y^{a_1-1}_1$. If $\tau(y)\ge 2$, then $x_3\sim y$ and Lemma~\ref{lem:transitive} implies that $y^{a_1-1}_1\sim x_3$. By Lemma~\ref{lem:transitive} again $y^{a_1-2}_1\sim x_3$. Then we can replace $u_2$ by any $u'_2$ with $\tau(u'_2)=n$ and $u'_2\ge y^{a_1-2}_1$ to decrease \eqref{eq:ds0}. If $\tau(y)=1$, as $y^{a_1-1}_2$ is a common lower bound for $y^{a_1}_1$ and $y^{a_1-2}_3$, $y=y^{a_1-1}_2$. Hence $y^{a_1-1}_2\in \bar X_3$, and we can argue in the same way as the $i=2$ case to finish the proof.
\end{proof}
%Suppose $(\bar\Delta_{\Gamma,\Gamma'})^0$ is upward flag. Let $t_1,t'_1,X_1,X_2,X_3$ and $\Lambda'\subset \Lambda$ be as in the beginning of Section~\ref{subsec:convex intersection}.
\begin{prop}
	\label{prop:intersection}
	Under Assumption~\ref{assum:1}, suppose for $1\le i\neq j\le 3$, $X_i\cap X_j$ contains a vertex of type $\hat s$ with $s\in \Lambda_0$. Then $X_1\cap X_2\cap X_3\neq\emptyset$.
\end{prop}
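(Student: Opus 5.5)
We follow the triangle-minimization strategy from the introduction. First we produce extremal vertices in the pairwise intersections. For each pair $i\neq j$, take a vertex of $X_i\cap X_j$ of type $\hat s$ with $s\in\Lambda_0$. Such a vertex lies in $\Delta_{\Lambda,\Lambda'}$, so $s\in\Lambda_0\cap\Lambda'$. Using Lemma~\ref{lem:transitive}, we push it to an extremal vertex of $\bar\Delta_{\Lambda,\Lambda'}$ that still lies in $\bar X_i\cap\bar X_j$: the $\theta_i$ are $\Lambda'$-minimal, so moving upward to a vertex of $\tau$-value $n$ preserves adjacency to $x_i$ and $x_j$. Thus the set $\Theta$ of triples $(u_1,u_2,u_3)$ of extremal vertices with $u_1\in\bar X_1\cap\bar X_2$, $u_2\in\bar X_2\cap\bar X_3$ and $u_3\in\bar X_3\cap\bar X_1$ is nonempty. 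Choose a triple minimizing
\[
d(u_1,u_2)+d(u_2,u_3)+d(u_3,u_1).
\]
By Corollary~\ref{cor:convex}(2) and Proposition~\ref{prop:existence and uniqueness}, the normal form paths between the $u_i$ lie in the corresponding $\bar X_j$. The aim is to show that the minimal triple is degenerate, i.e.\ two of the $u_i$ coincide or are adjacent, and then to produce a common vertex.

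The main step applies Lemma~\ref{lem:corner1} at each corner. Minimality of the perimeter implies that $u_2$ minimizes \eqref{eq:ds0} with $u_1,u_3$ fixed, and similarly at the other two corners after permuting indices. Suppose $d(u_1,u_2)\ge 2$. Let $\omega$ be the local normal form path from $u_2$ to $u_3$, which lies in $\bar X_3$. Lemma~\ref{lem:corner1} says that $d(u_1,\cdot)$ does not drop below $d(u_1,u_2)$ along $\omega$, except possibly at the single index $i=2$ in the $\widetilde D_4$ special case. In particular $d(u_1,u_3)\ge d(u_1,u_2)$. Running the same argument at every corner with both orientations gives
\[
d(u_1,u_3)\ge d(u_1,u_2),\qquad d(u_1,u_2)\ge d(u_2,u_3),\qquad d(u_2,u_3)\ge d(u_1,u_3),
\]
and the reverse chain, so all three sides are equal.

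Next we show the equilateral case is impossible when the sides have length $\ge 2$. Combine Lemma~\ref{lem:corner1}(1), which gives strict increase when $\tau(u_2)=1$, with Proposition~\ref{prop:bestvina}, which says $d(u_1,\cdot)$ first decreases and then strictly increases along $\omega$. If $\tau(u_2)=1$, the function increases immediately and therefore strictly all the way, so $d(u_1,u_3)>d(u_1,u_2)$, contradicting equality. If $\tau(u_2)=n$, use parity in the bipartite graph $\Gamma$ (Proposition~\ref{prop:isometric}): the three vertices cannot all have $\tau$-value $n$ with all sides of equal length unless the lengths are even. Pick a corner whose type or parity forces strictness, and run the same argument there with a corner of $\tau$-value $1$.

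Once some side has length $\le1$, say $d(u_1,u_2)\le1$, we finish directly. If $u_1=u_2$, then $u_1\in\bar X_1\cap\bar X_2\cap\bar X_3$ and we are done. If $u_1$ and $u_2$ are adjacent and comparable, apply Corollary~\ref{cor:convex}(3) or Lemma~\ref{lem:transitive} to the pair together with $u_3$, to obtain a vertex in all three $\bar X_i$; that vertex is then a real vertex of $X_1\cap X_2\cap X_3$.

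The main obstacle is the second paragraph in the exceptional case of Lemma~\ref{lem:corner1}(3), where $\Lambda'$ is $\widetilde D_4$-like and $d(u_1,\cdot)$ may dip once at $y_2$. There I would use the extra upward-flag assumption on $\bar\Delta_{\Gamma,\Gamma'}$, as in the proof of Lemma~\ref{lem:corner1}, to replace $u_2$ by $y_2$ or by a vertex above it, and then check directly that this strictly decreases the perimeter.
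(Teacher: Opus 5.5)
Your setup and first step match the paper. You push the pairwise intersection vertices to extremal ones via Lemma~\ref{lem:transitive}, minimize the perimeter, and apply Lemma~\ref{lem:corner1} at each corner. Outside the $\widetilde D_4$ case this gives three equal sides of length $\ge 2$ and $\tau(u_1)=\tau(u_2)=\tau(u_3)=n$. With Proposition~\ref{prop:bestvina} it also gives that every vertex of a normal form path between two of the $u_i$ is at constant distance from the third.

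\textbf{The gap.} Your parity argument does not rule out this configuration. Three vertices of $\tau$-value $n$ lie in the same part of the bipartite graph $\Gamma$, so even side lengths are exactly what one expects. There is also no corner of $\tau$-value $1$ to switch to. Nothing in Lemma~\ref{lem:corner1} or Proposition~\ref{prop:bestvina} excludes a constant distance function along a side; the paper explicitly notes that the decreasing part need not be strict. So the local corner analysis cannot finish the proof, and the step ``pick a corner whose type or parity forces strictness'' has nothing to apply to.

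\textbf{The missing idea: a global filling argument.} This is the core of the paper's proof. Write $\omega=y_1\cdots y_k$ ($k$ odd) for the path from $u_2$ to $u_3$, and take the normal form paths from each $y_i$ to $u_1$.
\begin{enumerate}
\item These paths give an admissible map of a directed grid into the poset, with orientations coming from Lemma~\ref{lem:strip}.
\item The grid is collapsed layer by layer through $P_m,P_{m-1},\dots,P_1$. Each step chooses common upper or lower bounds of triples (flagness, Lemma~\ref{lem:big lattice}) and meets with $u_1$.
\item Along the way, Lemma~\ref{lem:convex}, Corollary~\ref{cor:convex} and Lemma~\ref{lem:transitive} keep the west and east corners in $\bar X_2$ and $\bar X_1$ with $\tau$-value $\ge 2$.
\item A common upper bound $*$ of $p_{1,w},p_{1,e},p_{1,n}$ with $\tau(*)=n$ then lies in $\bar X_1\cap\bar X_2$. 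It satisfies $d(*,y_1),d(*,y_k)\le k-1$ but $d(*,y_{(k+1)/2})\le k-3$.
\item Replacing $u_1$ by $*$ does not increase the perimeter, so by minimality the constant-distance property must hold for $*$ too. This contradicts the bound at the midpoint of $\omega$.
\end{enumerate}

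\textbf{Smaller points.} Your last paragraph on sides of length $\le 1$ is unnecessary outside the $\widetilde D_4$ case, since distinct $u_i$ already force all sides $\ge 2$. The step ``apply Corollary~\ref{cor:convex}(3) or Lemma~\ref{lem:transitive} together with $u_3$'' is also not justified as written. In the $\widetilde D_4$ case your chain of inequalities breaks when $k=2$ and $y_2=u_3$. The paper handles this by reducing to configurations with all sides $\le 2$. It then treats those using Lemma~\ref{lem:corner1} with roles permuted and the upward flagness of $\bar\Delta_{\Gamma,\Gamma'}$, not a perimeter-decreasing replacement.
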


\begin{proof}
	By Lemma~\ref{lem:transitive}, if $\bar X_i\cap \bar X_j$ contains a vertex which is not of type $\hat t'_1$, then $\bar X_i\cap \bar X_j$ contains a vertex with $\tau$-value $n$. Thus $\bar X_i\cap \bar X_j$ always contain an extremal vertex. Let $\Xi$ be the triple of extremal vertices $(z_1,z_2,z_3)$ in $\bar\Delta_{\Lambda,\Lambda'}$ such that $z_i\in \bar X_i\cap \bar X_{i+1}$ for $i\in \mathbb Z/3\mathbb Z$. Let $(u_1,u_2,u_3)$ be an element in $\Xi$ that minimizes 
	\begin{equation}
		\label{eq:ds}
		d(u_1,u_2)+d(u_2,u_3)+d(u_3,u_1)
	\end{equation}
	among all elements in $\Xi$, where $d$ denotes the path distance in the 1-skeleton of $\bar\Delta_{\Lambda,\Lambda'}$ with edge length $1$. We will prove two of $\{u_1,u_2,u_3\}$ are the same, which implies the proposition. For now we assume $\{u_1,u_2,u_3\}$ are pairwise distinct, and aim at deducing contradictions. Let $\omega=y_1y_2\cdots y_k$ be the local normal form path from $u_2$ to $u_3$. Then $\omega\subset \bar X_3$ by Corollary~\ref{cor:convex}. For each $i$, let $y^{a_i}_{i}y^{a_i-1}_{i}\cdots y^1_{i}$ be the local normal form path form $y_i=y^{a_i}_{i}$ to $y^1_{i}=u_1$ (Proposition~\ref{prop:existence and uniqueness}), which is also a normal form path by Proposition~\ref{prop:local vs global}. 
	
	We first treat the case when $\Lambda'$ is not $\widetilde D_4$-like.
	As each of $\{u_1,u_2,u_3\}$ is extremal, they can not be pairwise adjacent in $\bar\Delta_{\Lambda,\Lambda'}$. So there must be two of them, say $u_1,u_2$, satisfies $d(u_1,u_2)\ge 2$. Then Lemma~\ref{lem:corner1} implies that $d(u_1,u_3)\ge d(u_1,u_2)\ge 2$. Now applying Lemma~\ref{lem:corner1} with roles of $u_2$ and $u_3$ exchanged implies that $d(u_1,u_2)\ge d(u_1,u_3)$. Thus $d(u_1,u_2)=d(u_1,u_3)$. By Proposition~\ref{prop:bestvina}, each vertex in $\omega$ has the same distance to $u_1$. Similarly, each vertex in a local normal form path from $u_i$ to $u_j$ (or from $u_j$ to $u_i$) has the same distance to $u_k$ for any pairwise distinct $\{i,j,k\}=\{1,2,3\}$. Lemma~\ref{lem:corner1} also implies $\tau(u_i)=n$ for $1\le i\le 3$.

	\begin{figure}
		\centering
		\includegraphics[scale=0.88]{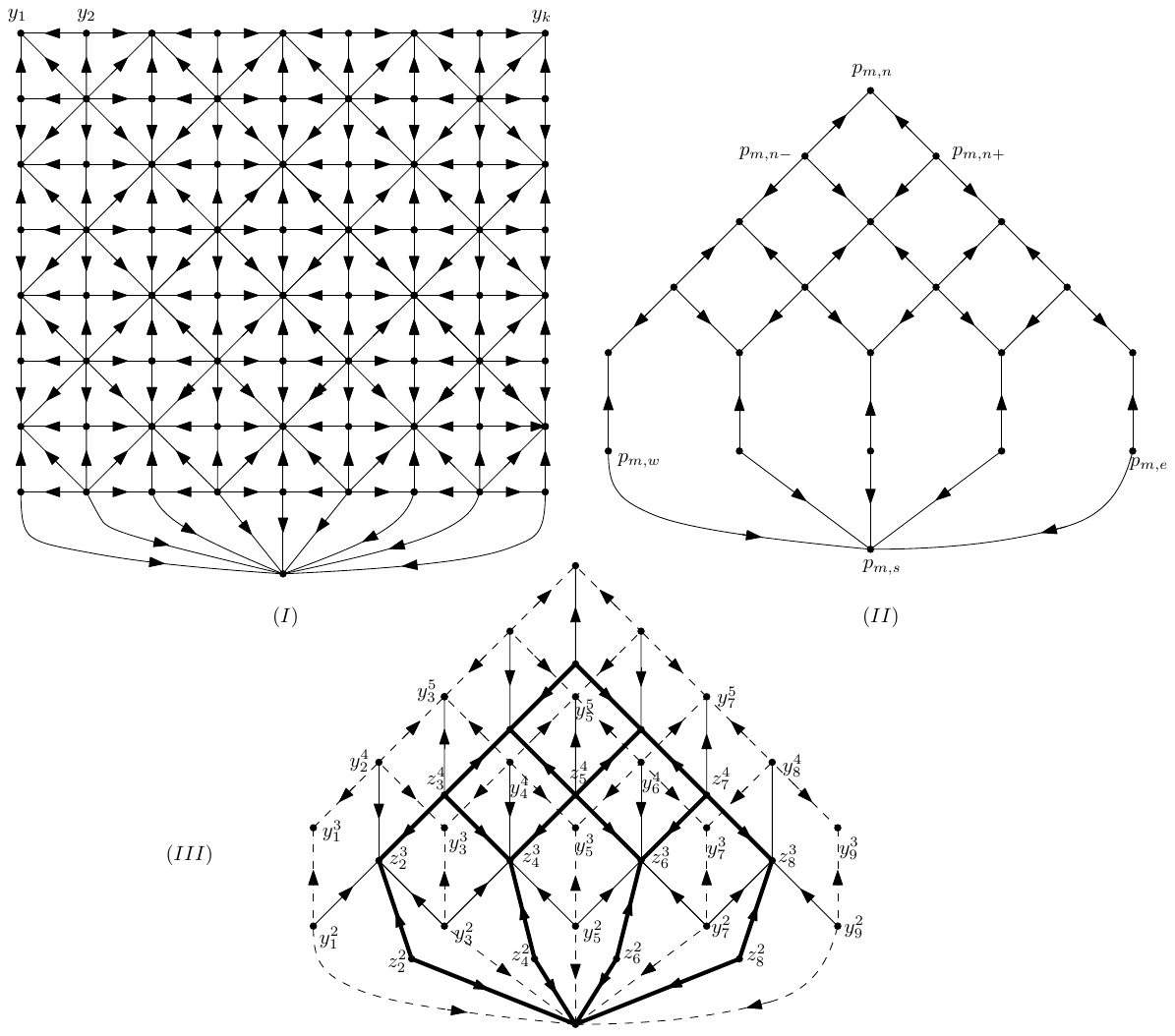}
		\caption{Some diagrams in the proof of Proposition~\ref{prop:intersection}.}
		\label{fig:round}
	\end{figure}
	%
	%Suppose two of $\{u_1,u_2,u_3\}$, say $u_1$ and $u_2$, have distance $\ge 3$ in $\bar\Delta_{\Lambda,\Lambda'}$. Then Lemma~\ref{lem:corner1} implies $d(u_1,u_3)\ge d(u_1,u_2)\ge 3$. Now applying Lemma~\ref{lem:corner1} with roles of $u_2$ and $u_3$ reversed implies that $d(u_1,u_2)\ge d(u_1,u_3)$. Hence $d(u_1,u_2)=d(u_1,u_3)$. Similarly, $d(u_2,u_3)=d(u_1,u_3)\ge 3$. Thus Lemma~\ref{lem:corner1} and Proposition~\ref{prop:bestvina} imply that each vertex in $\omega$ has the same distance to $u_1$. Similarly, each vertex in a normal form path from $u_i$ to $u_j$ (or from $u_j$ to $u_i$) has the same distance to $u_k$ for any pairwise distinct $\{i,j,k\}=\{1,2,3\}$. Lemma~\ref{lem:corner1} also implies $\tau(u_i)=n$ for $1\le i\le 3$.
	
	%Let $a=a_1=a_2=\cdots=a_k$. By Lemma~\ref{lem:strip}, $y^j_i\le y^j_{i+1}$ for $1\le j\le a$ and $i$ even, and $y^j_i\ge y^j_{i+1}$ for $1\le j\le a$ and $i$ odd. 
	
	Let $Z$ be the directed graph as in Figure~\ref{fig:round} (I) with a map $f:Z^0\to (\bar\Delta_{\Lambda,\Lambda'})^0$ such that $f$ maps the top horizontal path of $Z$ to $\omega$, and maps each vertical path to the normal form path from $y_i$ to $u_1$ for some $i$. The map $f$ is \emph{admissible}, i.e. if there is an oriented edge from $p$ to $p'$, then $f(p)\le f(p')$ (orientation of horizontal edges is derived from Lemma~\ref{lem:strip} and orientation of bevel edges follows from transitivity of the relation $<$). We slightly abuse notation and use the same name for a vertex in $Z$ and its $f$-image. 
	
	Let $m=(k-1)/2$. Let $P_m$ be the direct subgraph of $Z$ which is the full subgraph spanned by the following collection of vertices: $u_1$, $y^i_j$ with $i=2$ and $j$ odd, and $y^i_j$ for $3\le i\le (5+k)/2$ and $j=i-2,i,i+2,i+4,\ldots,k-(i-3)$. See Figure~\ref{fig:round} (II) for $P_4$. In general $P_m$ is made of a bottom layer made of $m$ hexagons, and layers of squares above them. We highlight a few special vertices $p_{m,s},p_{m,n},p_{m,n-},p_{m,n+},p_{m,w}$ and $p_{m,e}$ in $P_m$, see Figure~\ref{fig:round} (II).  Let $f_m:(P_m)^0\to \bar\Delta_{\Lambda,\Lambda'}$ be the restriction of $f$. We now construct an admissible map $f_{m-1}:(P_{m-1})^0\to (\bar\Delta_{\Lambda,\Lambda'})^0$ from $f_m$ such that (we slightly abuse notation and use the same symbol for a vertex and its image)
	\begin{enumerate}
		\item $p_{m,w}\le p_{m-1,w}$, $p_{m,e}\le p_{m-1,e}$	and $p_{m-1,s}=p_{m,s}$;
		\item $p_{m,n^-},p_{m,n},p_{m,n^+}$ are comparable to $p_{m-1,n^-},p_{m-1,n},p_{m-1,n^+}$ respectively.
	\end{enumerate}
	See Figure~\ref{fig:round} (III) for the following discussion.
	For positive interger $d$, let $Q_d$ be the set of vertices in $P_{m-1}$ that are distance $d$ from $p_{m-1,s}$. We construct the map $f_{m-1}$ inductively on $Q_d$.
	Note $\{y^2_i,y^4_{i+1},y^2_{i+2}\}$ are pairwise upper bounded for each odd $i$, and let $z^3_{i+1}$ be a common upper bound of them (Lemma~\ref{lem:big lattice}). Then vertices in $Q_2$ are mapped to $z^3_{i}$ with $i$ even. Let $z^2_i$ be the meet of $z^3_i$ and $u_1$. Then vertices in $Q_1$ are mapped to $z^2_i$ with $i$ even.
	By construction $\{z^3_i,y^5_{i+1},z^3_{i+2}\}$ are pairwise lower bounded for $i=2,4,6,\ldots,k-1$, then let $z^4_{i+1}$ be a common lower bound for them. Vertices in $Q_3$ are mapped to $z^4_i$ with $i=3,5,7,\ldots,k-2$. By repeating this procedure we obtain $f_{m-1}$ with the desired properties. 
	
	Now we show $f_{m-1}(p_{m-1,w})=z^2_2\in \bar X_2$ and $\tau(z^2_2)\ge 2$. Let $z$ be the join of $y^4_2$ and $y^2_1$, which exists by Lemma~\ref{lem:big lattice} and Lemma~\ref{lem:posets}. Then $z\le y^3_1$ and $z\le z^3_2$.  As $y^4_1y^3_1y^2_1$ is in normal form and $y^4_1\ge y^4_2$, we know $d(y^4_2,y^2_1)=2$. Hence $z\neq y^2_1$ and $\tau(z)>\tau(y^2_1)$. In particular $\tau(z)\ge 2$.
	If $z=y^3_1$, then we deduce from $x_2\sim z$, $z\le z^3_2$ and Lemma~\ref{lem:transitive} that $z^3_2\in \bar X_2$. As $z^2_2\ge y^2_1$, by Corollary~\ref{cor:convex}, $z^2_2\in \bar X_2$. If $z\neq y^3_1$, by considering the path $y^2_1zy^4_2y^4_1$ in $\lk(y^3_1,\bar\Delta_{\Lambda,\Lambda'})$ and Lemma~\ref{lem:convex}, $z\in \bar X_2$. As $\tau(z)>\tau(y^2_1)>1$. As $x_2\sim z$ and $z\le z^3_2$, by Lemma~\ref{lem:transitive}, $x_2\sim z^3_2$ and we deduce $z^2_2\in \bar X_2$ as before. 
	If $\tau(z^2_2)=1$, as $z^2_2$ is the meet of $z^3_2$ and $u_1$, we must have $y^2_1=y^2_3$. As the distance between $y^2_3$ and $y_k$ in $Z$ is $k-2$, hence $d(y^2_1,u_3)<d(u_3,u_1)$, contradicting that vertices of a local normal form path between $u_1$ and $u_2$ have equal distance to $u_3$. Thus $\tau(z^2_2)\ge 2$. Similarly, we have $f_{m-1}(p_{m-1,e})=z^2_{k-1}\in \bar X_1$ and $\tau(z^2_{k-1})\ge 2$.

	By iterating the preceding procedure, we obtain a sequence of admissible maps  
	\[
	\{f_r : (P_r)^0 \to (\bar\Delta_{\Lambda,\Lambda'})^0\}_{1 \le r \le m}
	\]
	such that
	\[
	p_{r,w} \le p_{r-1,w}, \quad 
	p_{r,e} \le p_{r-1,e}, \quad 
	p_{r-1,s} = p_{r,s},
	\]
	and each of \(p_{r,n^-}, p_{r,n}, p_{r,n^+}\) is comparable with the \(p_{r-1,n^-}, p_{r-1,n}, p_{r-1,n^+}\), respectively. It follows that $ z^2_2 \le p_{1,w}$ and  
	$z^2_{k-1} \le p_{1,e}$.
	Note that
	\[
	d(p_{m,n}, y_{\frac{k+1}{2}}) \le \frac{k-5}{2}, \quad
	d(p_{m,n^-}, y_1) \le \frac{k-3}{2}, \quad
	d(p_{m,n^+}, y_k) \le \frac{k-3}{2}.
	\]
	Hence
	\[
	d(p_{1,n}, y_{\frac{k+1}{2}})
	\le d(p_{m,n}, y_{\frac{k+1}{2}}) + (m-1)
	\le \frac{k-5}{2} + \frac{k-1}{2} - 1
	= k-4.
	\]
	Similarly,
	\[
	d(p_{1,n^+}, y_k) \le k-3,
	\quad
	d(p_{1,n^-}, y_1) \le k-3.
	\]
	Since \(\tau(z^2_2) \ge 2\) and \(\tau(z^2_{k-1}) \ge 2\), Lemma~\ref{lem:transitive} implies $
	p_{1,w} \in \bar X_2$ and $p_{1,e} \in \bar X_1$.
	The vertices \(p_{1,w}, p_{1,e}, p_{1,n}\) admit a common upper bound; let \( * \) denote one such bound. We may assume \(\tau(*) = n\). Then $
	* \in \bar X_1 \cap \bar X_2,$
	and
	\[
	d(*, y_{(k+1)/2}) \le k-3, \quad
	d(*, y_k) \le k-1, \quad
	d(*, y_1) \le k-1.
	\]
	We now replace \(u_1\) by \( * \). By the minimality of \((u_1,u_2,u_3)\), we have
	\[
	d(*,u_2) = d(*,u_3) = d(u_2,u_3).
	\]
	As before, it follows that all vertices along a local normal form path from \(u_2\) to \(u_3\) are equidistant from \( * \), contradicting the bound $
	d(*, y_{(k+1)/2}) \le k-3$.
	
	%By repeating the previous process, we obtain a sequence of admissible maps $\{f_r:(P_r)^0\to (\bar \Delta_{\Lambda,\Lambda'})^0\}_{1\le r\le m}$ such that $$p_{r,w}\le p_{r-1,w}, p_{r,e}\le p_{r-1,e},p_{r-1,s}=p_{r,s}$$ and $p_{r,n^-},p_{r,n},p_{r,n^+}$ are comparable to $p_{r-1,n^-},p_{r-1,n},p_{r-1,n^+}$ respectively. Then $z^2_2\le p_{1,w}$ and $z^2_{k-1}\le p_{1,e}$. Note that $$d(p_{m,n},y_{\frac{k+1}{2}})\le \frac{k-5}{2},d(p_{m,n-},y_1)\le \frac{k-3}{2}, d(p_{m,n+},y_k)\le \frac{k-3}{2}.$$ Thus $$d(p_{1,n},y_{\frac{k+1}{2}})\le d(p_{m,n},y_{\frac{k+1}{2}})+m-1\le \frac{k-5}{2}+\frac{k-1}{2}-1=k-4.$$ Similarly, $$d(p_{1,n+},y_k)\le k-3, d(p_{1,n-},y_1)\le k-3.$$ As $\tau(z^2_2)\ge 2$ and $\tau(z^2_{k-1})\ge 2$, Lemma~\ref{lem:transitive} implies that $p_{1,w}\in \bar X_2$ and $p_{1,e}\in \bar X_1$. As $p_{1,w},p_{1,e},p_{1,n}$ are pairwise upper bounded, let $*$ be their common upper bound. We can assume $\tau(*)=n$. Then $*\in \bar X_2\cap \bar X_1$ and $d(*,y_{(k+1)/2})\le k-3$, $d(*,y_k)\le k-1$ and $d(*,y_1)\le k-1$. Now we replace $u_1$ by $*$. By minimality of $(u_1,u_2,u_3)$, we know $d(*,u_2)=d(*,u_3)=d(u_2,u_3)$ and we deduce as before that vertices in a normal form from $u_2$ to $u_3$ have the same distance to $*$, which contradicts that  $d(*,y_{(k+1)/2})\le k-3$.
	
	It remains to consider $\Lambda'$ is $\widetilde D_4$-like. Lemma~\ref{lem:corner1} and Proposition~\ref{prop:bestvina} imply that for $\{i,j,k\}$ pairwise distinct, if $d(u_i,u_j)\ge 2$ and $d(u_j,u_k)\ge 2$, then $d(u_i,u_j)=d(u_i,u_k)$; if in addition $d(u_j,u_k)\ge 3$, then all vertices in a local normal form path from $u_j$ to $u_k$ or from $u_k$ to $u_j$ have the same distance to $u_i$. Thus if among $\{d(u_1,u_2),d(u_2,u_3),d(u_1,u_3)\}$, one of them is $\ge 3$, and another one is $\ge 2$, then we repeat the previous argument to obtain a contradiction. By triangle inequality, it remains to consider $d(u_i,u_j)\le 2$ for any $1\le i\neq j\le 2$.

	Suppose two of $\{u_1,u_2,u_3\}$, say $u_2,u_3$, are adjacent in $\bar\Delta_{\Lambda,\Lambda'}$. If $d(u_1,u_2)=d(u_1,u_3)$, then $y^{a_1}_1<y^{a_1-1}_1$ if and only if $y^{a_2}_2<y^{a_2-1}_1$. Thus one of $u_2,u_3$ is not extremal, contradiction. Now we assume without loss of generality that $2=d(u_1,u_2)>d(u_1,u_3)=1$. Lemma~\ref{lem:corner1} implies $u_2$ is of type $\hat b_m$ and $u_3$ is of type $\hat b_{m-1}$. We assume $\tau(y^{a_1-1}_1)=4$ (as $\tau(y^{a_1-1}_1)\le 3$ and Lemma~\ref{lem:transitive} imply $u_3\sim x_2$, which gives $u_3\in X_1\cap X_2\cap X_3$). Thus $u_1$ is not of type $\hat b_m$. Now we apply Lemma~\ref{lem:corner1} with the role of $u_1,u_2,u_3$ in the lemma replaced by $u_2,u_1,u_3$ respectively, and deduce that $d(u_2,u_3)\ge d(u_2,u_1)$, contradiction.
	
	Suppose $d(u_i,u_j)=2$ for any $i\neq j$. By previous discussion, it suffices to consider the case that up to permutations of $\{u_1,u_2,u_3\}$, vertices of $\omega$ are not at constant distance from $u_1$. Assume $d(y_2,u_1)=1$. Lemma~\ref{lem:corner1} implies that $u_2$ and $u_3$ have type $\hat b_m$ and $y_2$ has type $\hat b_{m-1}$. We assume $y^{a_1-1}_1\neq y_2$, otherwise $x_2\sim y_2$ and $y_2\in X_2\cap X_3$, which reduces to the case in the previous paragraph. Similarly we assume $y^{a_3-1}_1\neq y_2$. In particular $\tau(y^{a_1-1}_1)\ge 2$ and $\tau(y^{a_3-1}_1)\ge 2$.
	Then $\{y^{a_1-1}_1,y^{a_3-1}_1,x_3\}$ is pairwise upper bounded in $((\bar\Delta_{\Gamma,\Gamma'})^0,<)$ with $\bar\Delta_{\Gamma,\Gamma'}$ defined in Assumption~\ref{assum:1}, and their common upper bound is contained in $X_1\cap X_2\cap X_3$. 
\end{proof}

In the following discussion, we will often use the following simple observation: for Coxeter diagrams $\Lambda_1\subset\Lambda_2$ with $\Lambda_1$ connected, $\Delta_{\Lambda_2,\Lambda_1}$ is canonically isomorphic to $\Delta_{\Lambda'_2,\Lambda_1}$ where $\Lambda'_2$ is the connected component of $\Lambda_2$ that contains $\Lambda_1$. 

\begin{cor}
	\label{cor:robust0}
	Under Assumption~\ref{assum:1}, suppose that:
	\begin{enumerate}
		\item for any vertex $s\in \Lambda'\setminus\Lambda_0$, $(\bar\Delta_{\Lambda\setminus\{s\},\Lambda_0})^0$ is bowtie-free; moreover, for any terminal vertex $r\in \Lambda_0$ which is not $s_1$ or $b_1$, we require $\Delta_{\Lambda\setminus\{r\},\Lambda_0\setminus\{r\}}$ is bowtie free;
		%if $\Lambda_0$ is $D_m$-like, then we also require $\Delta_{\Lambda\setminus\{b_{m-1}\},\Lambda_0\setminus\{b_{m-1}\}}$ is bowtie free;
		\item for each leaf vertex $s\in \Lambda'$, $\Delta_{\Lambda\setminus \{s\},\Lambda'\setminus\{s\}}$ satisfies the labeled 4-cycle condition.
	\end{enumerate}
	Then	$(\bar\Delta_{\Lambda,\Lambda_0})^0$ is bowtie-free, and $\Delta_{\Lambda,\Lambda_0}$ satisfies the labeled 4-cycle condition. Assume in addition that
	\begin{enumerate}[resume]
		%\item when $\Lambda'$ is $\widetilde D_k$-like for some $k$, we assume in addition that $\Delta_{\Lambda\setminus\{t_1\},\Lambda'\setminus\{t_1\}}$ satisfies the labeled $4$-cycle condition;
		\item if $\Lambda_0$ is $D_m$-like, then for any connected proper subdiagram $\Omega'\subset \Lambda$ containing $b_{m-2},b_{m-1},b_m$, the $(b_{m-2},b_m)$-subdivision of $\Delta_{\Omega',b_{m-2}b_{m-1}b_m}$ with the induced poset structure from $(\bar\Delta_{\Lambda,\Lambda_0})^0$ is upward flag.
	\end{enumerate}
	Then	$(\bar\Delta_{\Lambda,\Lambda_0})^0$ is upward flag.
\end{cor}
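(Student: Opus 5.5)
The plan is to verify the poset conditions through the rank-reduction criteria of Section~\ref{subsec:posets}, with Proposition~\ref{prop:intersection} supplying the one genuinely new input. The poset $(\bar\Delta_{\Lambda,\Lambda_0})^0$ is $\tau$-saturated, because every vertex lies in a top-dimensional simplex.

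\textbf{Bowtie freeness.} I will apply Corollary~\ref{cor:bowtie free criterion1}, which only requires centers for quasi-bowties $x_1y_1x_2y_2$ with $\tau(x_1)=\tau(x_2)<\tau(y_1)=\tau(y_2)$. Orient the order so that the vertices of type $\hat s_m$ (or $\hat b_m,\hat b_{m-1}$) are maximal, and split into cases according to the rank of the $y_j$.

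If the $y_j$ are not maximal, they have some type $\hat t$ with $t\in\Lambda_0\setminus\Lambda'$, or $t=t_1$. In that case the whole configuration lies in the link of a suitable vertex. By Lemma~\ref{lem:link}, this link is identified with $\bar\Delta_{\Lambda\setminus\{r\},\Lambda_0\setminus\{r\}}$ or with $\bar\Delta_{\Lambda\setminus\{s\},\Lambda_0}$, and hypothesis (1) gives the center. Using Lemma~\ref{lem:bowtie free criterion}, I reduce to the case where $y_1,y_2$ are maximal, with types in $\Lambda_0\cap\Lambda'$, and the $x_i$ have a common type $\hat\theta$ with $\theta\in(\Lambda_0\setminus\Lambda')\cup\{t_1\}$.

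In this remaining case, a center is a vertex $z$ adjacent to $x_1,x_2,y_1,y_2$. To produce it, take $X_i$ to be the set of vertices of $\Delta_{\Lambda,\Lambda'}$ adjacent to $x_i$, and let $Y_j$ be the analogous subcomplexes attached to the $y_j$. Equivalently, I find a vertex of $\Delta_{\Lambda,\Lambda'}$ adjacent to $x_1$ and $x_2$ inside the link of $y_1$, and then use Lemma~\ref{lem:4-cycle} and Lemma~\ref{lem:transitive} to transport it to a common lower bound of $y_1,y_2$ above the $x_i$.

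A more direct route may also work here. Because hypothesis (2) says the link-level complexes satisfy the labeled 4-cycle condition, Lemma~\ref{lem:4wheel} and Lemma~\ref{lem:tripod}, applied to the $4$-cycle $x_1y_1x_2y_2$ in $\Delta_\Lambda$, give a center whose type lies on the subtree spanned by the types. If that center's type lies outside $\Lambda_0$, Lemma~\ref{lem:transitive} lets me slide it back into $\Lambda_0$.

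\textbf{Labeled 4-cycle condition.} By Lemma~\ref{lem:4wheel}, this condition reduces to bowtie freeness of the maximal linear subdiagrams of $\Lambda_0$. The only maximal linear subdiagram not already covered by the previous step is the short one through $b_{m-2},b_{m-1},b_m$ in the $D_m$-like case. There the subdivision is bowtie free by Lemma~\ref{lem:subdivision bowtie free}, with the needed $4$-cycle input coming from hypothesis (1) on $\Delta_{\Lambda\setminus\{r\},\Lambda_0\setminus\{r\}}$.

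\textbf{Upward flagness.} I will use Corollary~\ref{cor:upward flag criterion}: take a pairwise upper bounded triple of a fixed rank $k$ and show it has a common upper bound. If $k$ corresponds to types in $\Lambda_0\setminus\Lambda'$ or to $t_1$, the triple $x_1,x_2,x_3$ has types $\hat\theta_i$ exactly as in Assumption~\ref{assum:1}. By Lemma~\ref{lem:transitive}, the pairwise upper bounds can be pushed up to vertices of $\Delta_{\Lambda,\Lambda'}$ of type $\hat s$ with $s\in\Lambda_0$, so $X_i\cap X_j$ contains such a vertex. Proposition~\ref{prop:intersection} then gives $X_1\cap X_2\cap X_3\neq\emptyset$, and any vertex in this intersection, lifted to a maximal one, is the desired common upper bound.

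For the remaining ranks, the triple lies in the part of $\Lambda_0$ contained in $\Lambda'$. In the $B_m$ case this is just the edge $s_{m-1}s_m$, and the statement is trivial. In the $D_m$ case it is the $D_3$-piece $b_{m-2}b_{m-1}b_m$. There I pass to the link of a common lower bound, which is possible because the bounding vertices have small rank. This lands in some $\Delta_{\Omega',b_{m-2}b_{m-1}b_m}$, and hypothesis (3) gives the common upper bound.

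The main obstacle is checking that the hypotheses of Proposition~\ref{prop:intersection} really hold in this setting. In particular, the pairwise upper bounds must be improvable to vertices of $\Delta_{\Lambda,\Lambda'}$ with types in $\Lambda_0$. When the bound has type $\hat t'_1$, Lemma~\ref{lem:transitive} fails, and I expect to use Lemma~\ref{lem:4-cycle} to replace it by a bound of type $\hat b_m$.
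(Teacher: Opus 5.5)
Your plan has a genuine gap in the bowtie-free step, in the one case that carries real content: a quasi-bowtie $x_1y_1x_2y_2$ with $y_1,y_2$ maximal and $x_1,x_2$ of type $\hat\theta$, $\theta\in\Lambda_0\setminus\Lambda'$. Neither of your routes produces a center there.

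\begin{enumerate}
\item Lemma~\ref{lem:4-cycle} and Lemma~\ref{lem:transitive} are local statements. They do not by themselves give a vertex lying between the $x_i$ and the $y_j$.
\item The ``direct route'' is circular. Hypothesis~(2), via Proposition~\ref{prop:propagation}(3) and Proposition~\ref{prop:propagation2}(3), gives the labeled 4-cycle condition only for $\Delta_{\Lambda,\Lambda'}$, which does not contain the $x_i$. Applying Lemma~\ref{lem:4wheel} or Lemma~\ref{lem:tripod} to $x_1y_1x_2y_2$ would need that condition on a relative Artin complex containing type $\hat\theta$, which is essentially what you are proving.
\end{enumerate}

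That route is correct, and is what the paper does, when the $x_i$ have type in $\Lambda'$, via Remark~\ref{rmk:inherit} and Lemma~\ref{lem:subdivision bowtie free}. It is also the correct source of bowtie-freeness of $\Delta_{\Lambda,b_{m-1}b_{m-2}b_m}$ in your labeled 4-cycle step. There, hypothesis~(1) is a statement about links and cannot supply the global input that Lemma~\ref{lem:subdivision bowtie free} requires.

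The missing idea is the convexity machinery of Section~\ref{sec:nf}, followed by a shortening argument.
\begin{enumerate}
\item The subcomplex $\bar X_i\subset\bar\Delta_{\Lambda,\Lambda'}$ of vertices adjacent to $x_i$ satisfies Corollary~\ref{cor:convex}.
\item After raising $y_1$ to an extremal vertex of $\bar X_1\cap\bar X_2$ if necessary, the normal form path from it to $y_2$ (Proposition~\ref{prop:existence and uniqueness}) lies in $\bar X_1\cap\bar X_2$. This gives an edge path from $y_1$ to $y_2$ in $\Delta_{\Lambda,\Lambda'\cup\Lambda_0}$ made of common neighbours of $x_1$ and $x_2$.
\item You then shorten this path by replacing interior vertices with joins taken in the bowtie-free complexes $\Delta_{\Lambda\setminus\{u\},\Gamma_u}$, where $\Gamma_u$ is the component of $\Lambda_0\setminus\{u\}$ containing $s_1$ or $b_1$. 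This is where hypothesis~(1) does its main work, after checking that it propagates to non-terminal $u$.
\item Once the path has length two, its middle vertex is the center.
\end{enumerate}

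The upward-flag step has two further holes.

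First, a vertex $z\in X_1\cap X_2\cap X_3$ given by Proposition~\ref{prop:intersection} may have type $\hat t_z$ with $t_z\in\Lambda'\setminus\Lambda_0$. Raising it to a maximal vertex of $\bar\Delta_{\Lambda,\Lambda'}$ need not land in $\bar\Delta_{\Lambda,\Lambda_0}$, nor keep it adjacent to the $x_i$. Instead, use Lemma~\ref{lem:4-cycle} to make the pairwise bounds $q_i$ adjacent to $z$. By Lemma~\ref{lem:link}, the $x_i$ then become pairwise upper bounded in $(\bar\Delta_{\Omega,\Lambda_0})^0$, where $\Omega$ is the component of $\Lambda\setminus\{t_z\}$ containing $\Lambda_0$. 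Now apply the upward-flag hypothesis in the fifth paragraph of Assumption~\ref{assum:1}, which your plan never uses.

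Second, for triples of type $\hat b_{m-2}$, a common lower bound in $\bar\Delta_{\Lambda,\Lambda'}$ is guaranteed only when all $q_i$ have type $\hat b_{m-1}$, which is minimal there. You first need Lemma~\ref{lem:weakly flag equivalent}, together with bowtie-freeness of $\Delta_{\Lambda,b_{m-1}b_{m-2}b_m}$, to reduce to the case where all $q_i$ have the same type. The all-$\hat b_m$ case then uses flagness of $\bar\Delta_{\Lambda,\Lambda'}$ (Lemma~\ref{lem:big lattice}) followed by the link argument above.

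By contrast, the obstacle you anticipate is not one. Since $b_{m-1}\in\Lambda_0$, a bound of type $\hat t'_1$ already satisfies the hypothesis of Proposition~\ref{prop:intersection}.
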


\begin{proof}
	For a vertex $s\in\Lambda_0$ with $s\neq s_1,b_1$, let $\Gamma_s$ be the connected component of $\Lambda_0\setminus\{s\}$ containing $s_1$ or $b_1$.
	%let $\Theta_s$ (resp.\ $\Lambda_s$) denote the connected component of $\Theta\setminus\{s\}$ (resp.\ $\Lambda\setminus\{s\}$) that contains $s_1$ when $\Lambda_0$ is $B_m$-like, and that contains $b_1$ when $\Lambda_0$ is $D_m$-like. Lemma~\ref{lem:link} implies that $\Delta_{\Lambda\setminus\{s\},\Theta\setminus\{s\}}\cong \Delta_{\Lambda_s,\Theta_s}$ when $s$ is a leaf vertex of $\Theta$.
	Then Assumption 1 implies that $\Delta_{\Lambda\setminus\{s\},\Gamma_s}$ is bowtie free. This is clear if $s$ is terminal in $\Lambda_0$. Now assume $s$ is not terminal in $\Lambda_0$.
	Then there is a vertex $t$ such that either $t\in \Lambda'\setminus\Lambda_0$ or $t$ is terminal in $\Lambda_0$ such that $s$ separates $\Gamma_s$ from $t$. 
	Let $\Lambda_s$ be the connected component of $\Lambda\setminus\{s\}$ containing $\Gamma_s$. Then $\Delta_{\Lambda\setminus\{s\},\Gamma_s}\cong \Delta_{\Lambda_s,\Gamma_s}$. Note that $\Lambda_s\subset \Lambda\setminus\{t\}$ and $\Gamma_s\subset \Lambda_0\setminus\{t\}$. By Lemma~\ref{lem:link}, $\Delta_{\Lambda_s,\Gamma_s}$ can be realized as a join factor of $\lk(x,\Delta_{\Lambda\setminus\{t\},\Lambda_0\setminus\{t\}})$ where $x$ is any vertex in $\Delta_{\Lambda\setminus\{t\},\Lambda_0\setminus\{t\}}$ of type $\hat s$. Thus $\Delta_{\Lambda_s,\Gamma_s}$ is bowtie free by Assumption 1.
	
	Let $\mathcal P=((\bar\Delta_{\Lambda,\Lambda_0})^0,<)$, and let $\tau_0$ be the rank function on $\mathcal P$ such that type $\hat b_1$ or $\hat s_1$ elements have rank 1. First we show $\mathcal P$ is bowtie free.
	We only treat the case of Figure~\ref{fig:core} (3), as the other cases are similar and simpler. By Lemma~\ref{lem:bowtie free criterion} and Assumption 1,
	it suffices to prove for any $p_1,p_2,p_3,p_4$ be elements in $\mathcal P$ such that $p_1,p_3$ have the same type, $p_2$ and $p_4$ are maximal, and $p_2$ and $p_4$ are common upper bounds for $\{p_1,p_3\}$, there is $p\in \mathcal P$ such that $\{p_1,p_3\}\le p\le \{p_2,p_4\}$.  
	By Assumption 2, Proposition~\ref{prop:propagation} (3) and Proposition~\ref{prop:propagation2} (3), $\Delta_{\Lambda,\Lambda'}$ satisfies the labeled 4-cycle condition. By Remark~\ref{rmk:inherit}, $\Delta_{\Lambda,\Lambda'\cap\Lambda_0}$ satisfies the labeled 4-cycle condition. By Lemma~\ref{lem:subdivision bowtie free}, we only need to consider the case when $p_1,p_3$ have type $\hat t$ with $t\notin \Lambda'$.

	%By Lemma~\ref{lem:4wheel}, it suffices to show $\Delta_{\Lambda,\Theta'}$ is bowtie free for each maximal linear subgraph $\Theta'\subset \Theta$. 

	%By Remark~\ref{rmk:inherit} and Lemma~\ref{lem:4wheel}, $\Delta_{\Lambda,\Theta'}$ is bowtie free when $\Theta'=\Theta_{r_1,r_2}$, i.e. the linear subdiagram from $r_1$ to $r_2$, or $\Theta'=\Theta_{b_{m-1},r_i}$ for $i=1,2$.
	
	%Next we consider $\Theta'=\Theta_{b_1,r_1}$. Let $\mathcal Q$ be the poset on $(\Delta_{\Lambda,\Theta'})^0$ such that vertices of type $\hat b_1$ are minimal. For each maximal element $p\in \mathcal Q$, $\mathcal Q_{<p}$ is the vertex set of $\lk(p,\Delta_{\Lambda,\Theta'})$, which is isomorphic to $\Delta_{\Lambda_{r_1},\Theta_{r_1}}$ by Lemma~\ref{lem:link}. 

	%As $\Delta_{\Lambda,\Lambda'}$ is bowtie free, 

	Let $\Theta=\Lambda'\cup\Lambda_0$.  
	We claim if there is an edge path $\omega=y_1\cdots y_\ell$ from $p_2$ to $p_4$ in $\Delta_{\Lambda,\Theta}$ such that $y_i$ is adjacent to both $p_1$ and $p_3$ for each $i$, then the desired $p$ exists. In the following discussion, we assume $\omega$ is the shortest such path. Suppose $y_i$ has type $\hat u_i$. Then $u_i$ does not separate $u_{i-1}$ from $u_{i+1}$, otherwise $u_{i-1}$ and $u_{i+1}$ are adjacent by Lemma~\ref{lem:transitive} and we can shorten $\omega$. Similarly, $y_i\neq y_{i+2}$ and they are not adjacent for any $1\le i\le \ell-2$, otherwise we can shorten $\omega$. Note that $u_1=b_m$ or $b_{m-1}$. 
	
	We only treat the case $u_1=b_m$ as the other case is similar. We first show that up to replacing $y_2$, we can assume $d(u_2,b_1)<d(u_1,b_1)$. Indeed, if $d(u_2,b_1)\ge d(u_1,b_1)$, then $u_2=b_{m-1}$ or $u_2\in \Lambda'\setminus\Lambda_0$. Let $\Lambda_2$ be the line segment from $b_1$ to $b_m$.
	As $\Delta_{\Lambda\setminus\{u_2\},\Lambda_2}$ is bowtie free (Assumption 1) and $y_1$ is a common upper bound for $\{p_1,p_3\}$ in $\Delta^0_{\Lambda\setminus\{u_2\},\Lambda_2}$ with the obvious partial order, we 
	%If $u_2=b_{m-1}$, then consider the 4-cycle $y_1p_1y_3p_3$ in $$\Delta_1=\Delta_{\Lambda\setminus\{b_{m-1}\},\Lambda_0\setminus\{b_{m-1}\}}.$$	As $\Delta_1$ is bowtie free and $y_1$ is a common upper bound for $\{p_1,p_3\}$ in  $(\Delta_1)^0$ with the obvious partial order, 
	let $y'_2\in \Delta^0_{\Lambda\setminus\{u_2\},\Lambda_2}$ be the join of $p_1$ and $p_3$, which exists by Lemma~\ref{lem:posets}. Then $y'_2\sim y_1$. If $y'_2=y_2$, then $d(u_2,b_1)\le d(u_1,b_1)$ and the equality holds when $y_2=y_1$ (as $y_2\le y_1$ in $\Delta^0_{\Lambda\setminus\{u_2\},\Lambda_2}$), which allows us to shorten $\omega$. Thus $d(u_2,b_1)< d(u_1,b_1)$. It remains to consider $y'_2\neq y_2$.
	By Lemma~\ref{lem:4-cycle} applying to the 4-cycle $y'_2p_1y_3p_3\subset \Delta_{\Lambda\setminus\{u_2\}}$, there is a vertex in $y''_2\in \Delta_{\Lambda\setminus\{u_2\}}$ of the same type as $y'_2$ such that $y'_2$ is adjacent to each of $\{p_1,y_3,p_3\}$. As $y''_2$ is a common upper bound for $\{p_1,p_3\}$ in $\Delta^0_{\Lambda\setminus\{u_2\},\Lambda_0\setminus\{u_2\}}$, we must have $y''_2=y'_2$. So $y'_2$ is adjacent to $y_3$.
	Suppose $y'_2$ is of type $\hat u'_2$. By construction $d(b_1,u'_2)\le d(b_1,u_1)$. If $u'_2=u_1$, then $y'_2=y_1$ and $y_1$ and $y_3$ are adjacent, contradicting that $\omega$ is shortest. So by replacing $y_2$ by $y'_2$, we can assume $d(u_2,b_1)<d(u_1,b_1)$.

	Now we show $\ell=3$ by arguing that the existence of $y_4$ leads to a contradiction.	
	If $d(u_3,b_1)<d(u_2,b_1)$, then Lemma~\ref{lem:transitive} implies that $y_1$ and $y_3$ are adjacent, contradicting that $\omega$ is shortest. So $d(u_3,b_1)>d(u_2,b_1)$. 
	%We consider the 4-cycle $y_2p_1y_4p_3$ in $\Delta_{\Lambda\setminus\{u_3\}}$. 
	Let $\Lambda_3$ be the intersection of $\Lambda_0$ with the line segment from $b_1$ to the vertex adjacent to $u_3$ in $\Lambda\setminus\{u_3\}$. Then $\Delta_{\Lambda\setminus\{u_3\},\Lambda_3}$ is bowtie free by the first paragraph of the proof. As $y_2$ is a common upper bound for $\{p_1,p_3\}$ in $(\Delta_{\Lambda\setminus\{u_3\},\Lambda_3})^0$, we define $y'_3$ to be the join of $p_1$ and $p_3$ in $(\Delta_{\Lambda\setminus\{u_3\},\Lambda_3})^0$. Similarly to the previous paragraph, $y'_3$ is adjacent or equal to each of $\{y_2,y_4,p_1,p_3\}$. Note that $y'_3\neq y_2$ and $y'_3\neq y_4$, otherwise $\omega$ is not the shortest. We replace $y_3$ by $y'_3$. By construction, $d(u_3,b_1)<d(u_2,b_1)$, and we deduce a contradiction as before. Thus $\ell=3$ and we found the desired $p$.

	%We first arrange $t_i\in \Theta'$ for each $1<i<\ell$. Indeed, if $t_i=b_{m-1}$, then we consider induced 4-cycle $y_{i-1}p_1y_{i+1}p_3$ in $\lk(y_i,\Delta_{\Lambda,\Theta})\cong \Delta_{\Lambda_{b_{m-1}},\Theta_{b_{m-1}}}$. Lemma~\ref{lem:tripod} and assumption $(1)'$ imply that the 4-cycle $y_{i-1}p_1y_{i+1}p_3$ has a center $y'_i$ of type $t'_i$ with $t'_i\in \Theta'$, so we replace $y_i$ and $y'_i$. We can do a similar replacement if $t_i=r_2$. Thus after finitely replacement, $t_i\in \Theta'$ for $1<i<\ell$. For $1<i<\ell$, if $d(t_{i-1},b_1)< d(t_i,b_1)$ and $d(t_{i+1},b_1)> (t_i,b_1)$, then consider the induced 4-cycle $y_{i-1}p_1y_{i+1}p_3$ in $\Delta_{\Lambda_{t_i},\Theta_{t_i}}$. By assumption $(1)'$, $y_{i-1}p_1y_{i+1}p_3$ has a center $y'_i$ of type $\hat t'_i$ such that $t'_i\in \Theta'$, $d(t'_i,b_1)<d(t_{i-1},b_1)$ and $d(t'_i,b_1)<d(t_{i+1},b_1)$. Thus we can assume the function $i\to d(t_i,b_1)$ first strictly decreases, then strictly increases (the decreasing part, or the constant part, or the increasing part could be trivial). As $d(t_1,b_1)=d(t_{\ell},b_1)$, $\ell=3$ and $d(t_2,b_1)<d(t_1,b_1)$, which proves the claim.
	
	Now we produce the path $\omega$ in the claim. Note that $p_2,p_4\in \bar\Delta_{\Lambda,\Lambda'}$. For $i=1,3$, let $Y_i$ be the full subcomplex of $\Delta_{\Lambda,\Lambda'}$ spanned by vertices that are adjacent to $p_i$ in $\Delta_\Lambda$, and let $\bar Y_i$ be the subcomplex of $\bar\Delta_{\Lambda,\Lambda'}$ corresponding to $Y_i$. Then $\bar Y_i$ satisfies the conclusion of Corollary~\ref{cor:convex}. Moreover, $p_2,p_4\in\bar Y_1\cap \bar Y_3$. 
	%As $p_2$ and $p_4$ are extremal in $\bar\Delta_{\Lambda,\Lambda'}$, the existence of $\omega$ follows from Proposition~\ref{prop:convex subcomplex} and Proposition~\ref{prop:existence and uniqueness}. This finishes the case of $\Theta'=\Theta_{b_1,r_1}$. The case of $\Theta'=\Theta_{b_1,r_2}$ is identical.
	Note that there is an edge path $\omega=y_1y_2\ldots y_\ell\subset \bar Y_1\cap \bar Y_3$ joining $p_2$ and $p_4$. Indeed, if one of $p_2$ and $p_4$ is extremal in $\bar\Delta_{\Lambda,\Lambda'}$, then this follows from Corollary~\ref{cor:convex} and Proposition~\ref{prop:existence and uniqueness}. If none of $p_2$ and $p_4$ are extremal, then let $p'_2$ be a vertex with $\tau(p'_2)=n$ and $p'_2\ge p_2$. We deduce from Lemma~\ref{lem:transitive} that $p'_2\sim p_1,p_3$. Thus $p'_2\in \bar Y_1\cap \bar Y_3$. Corollary~\ref{cor:convex} and Proposition~\ref{prop:existence and uniqueness} imply $p'_2$ is connected to $p_4$ by an edge path in $\bar Y_1\cap \bar Y_3$, which gives the desired path $\omega$. This finishes the proof that $\mathcal P$ is bowtie free. As we also know $\Delta_{\Lambda,b_{m-1}b_{m-2}b_m}\subset \Delta_{\Lambda,\Lambda'}$ satisfies labeled 4-cycle condition by discussion before, it follows that $\Delta_{\Lambda,\Lambda_0}$ satisfies labeled 4-cycle condition.

	Now we verify $\mathcal P$ is upward flag, and let $\tau_0$ be the rank function on $\mathcal P$ as before.  By Corollary~\ref{cor:upward flag criterion}, it suffices to show that for pairwise upper bounded elements $p_1,p_2,p_3\in \mathcal P$ with $1\le \tau_0(p_1)=\tau_0(p_2)=\tau_0(p_3)\le m-1$, $\{p_1,p_2,p_3\}$ have a common upper bound in $\mathcal P$. If $\{p_1,p_2,p_3\}$ are $\Lambda'$-extremal (which is equivalent to $\{p_1,p_2,p_3\}\subset (\Lambda_0\setminus\Lambda')\cup\{t_1\}$), then let $X_i$ be the full subcomplex of $\Delta_{\Lambda,\Lambda'}$ spanned by vertices that are adjacent or equal to $p_i$. As $\{p_1,p_2,p_3\}$ is pairwise upper bounded in $\mathcal P$, the assumption of Proposition~\ref{prop:intersection} is satisfied. So $X_1\cap X_2\cap X_3\neq\emptyset$. Take a vertex $z\in X_1\cap X_2\cap X_3$ and suppose $z$ has type $\hat t_z$. Then $z$ is adjacent to $\{p_1,p_2,p_3\}$ in $\Delta_\Lambda$. If $t_z\in \Lambda_0\cap\Lambda'$, then $z$ is a common upper bound of $\{p_1,p_2,p_3\}$ in $\mathcal P$. Suppose $t_z\notin \Lambda_0$. Let $\Omega$ be the connected component of $\Lambda\setminus\{t_z\}$ that contains $b_1$.
	Let $q_i$ be a common upper bound for $p_i$ and $p_{i+1}$ in $\mathcal P$ for $i\in \mathbb Z/3\mathbb Z$. 
	By applying Lemma~\ref{lem:4-cycle} to $p_iq_ip_{i+1}z$, we can assume $q_i\sim z$ up to replacing $q_i$ by another element of the same type. By Lemma~\ref{lem:link} applied to $\lk(z,\Delta_\Lambda)$, we can view $\{p_1,p_2,p_3\}$ as pairwise upper bounded elements in $(\bar\Delta_{\Omega,\Lambda_0})^0$, and use Assumption~\ref{assum:1} to produce a common upper bound for them.
	
	It remains to consider the case that $\{p_1,p_2,p_3\}$ are not $\Lambda'$-extremal. Then we are either in Figure~\ref{fig:core} (5) with $p_1,p_2,p_3$ being type $\hat s_{m-1}$, in which case the existence of their common upper bound follows from the flagness of $\bar\Delta_{\Lambda,\Lambda'}$ (Lemma~\ref{lem:big lattice}), or $\Lambda_0$ is $D_m$-like. If $\tau_0(p_i)=m-1$, then each $p_i$ is the middle point of an edge of $\Delta_\Lambda$ between a type $\hat b_m$ vertex and a type $\hat b_{m-1}$ vertex, and the existence of common upper bound is clear. The only case left is $\tau(p_i)=m-2$. Let $q_i$ be as before, and we can assume each $q_i$ is of type $\hat b_m$ or type $\hat b_{m-1}$. As $\Delta_{\Lambda,b_{m-1}b_{m-2}b_m}$ is bowtie free, by Lemma~\ref{lem:weakly flag equivalent}, it suffices to consider all the $q_i$'s are of type $\hat b_m$, or all of them have type $\hat b_{m-1}$. In the former case, as $(\bar\Delta_{\Lambda,\Lambda'})^0$ is flag (Lemma~\ref{lem:big lattice}), $\{p_1,p_2,p_3\}$ has a common upper bound $z$ in $((\bar\Delta_{\Lambda,\Lambda'})^0,<)$, and we conclude in the same way as the previous paragraph. If all $q_i$'s are of type $\hat b_{m-1}$, then Lemma~\ref{lem:big lattice} implies that $\{p_1,p_2,p_3\}$ has a common lower bound $z$ in $((\bar\Delta_{\Lambda,\Lambda'})^0,<)$. We can assume $\tau(z)=1$. If $z$ is of type $\hat b_{m-1}$, then the bowtie free property of $\Delta_{\Lambda,b_{m-1}b_{m-2}}$ implies that $z=q_1=q_2=q_3$, which gives a common upper bound for $\{p_1,p_2,p_3\}$ in $\mathcal P$. Now assume $z$ is of type $\hat t_1$. By using Lemma~\ref{lem:4-cycle} as in the previous paragraph, we assume $p_1q_1p_2q_2p_3q_3$ is contained in $\Delta_{\Omega',b_{m-2}b_{m-1}b_m}$, where $\Omega'$ is the connected component of $\Lambda\setminus\{t_1\}$ containing $b_{m-1}$. By Assumption 3, vertex set of the $(b_{m-1},b_m)$-subdivision of $\Delta_{\Omega',b_{m-2}b_{m-1}b_m}$ with induced partial order from $\mathcal P$ is upward flag, which gives the desired common upper bound for $\{p_1,p_2,p_3\}$. 
\end{proof}

%If exactly two of $\{q_1,q_2,q_3\}$, say $q_1$ and $q_3$, are of type $\hat b_{m-1}$, then $q_1<p_2<q_2$ and $q_3<p_3<q_2$ in $((\bar\Delta_{\Lambda,\Lambda'})^0,<)$. So $q_2$ and $p_1$ are two common upper bounds for $\{q_1,q_3\}$. Let $q$ be the join of $q_1$ and $q_3$ in $((\bar\Delta_{\Lambda,\Lambda'})^0,<)$, which exists by Lemma~\ref{lem:big lattice}.
%If $\tau(q)=2$, then $q_1=q_3$ and we are done. If $\tau(q)>2$, then $q=p_1$ and $p_1\le q_2$ in $((\bar\Delta_{\Lambda,\Lambda'})^0,<)$. Hence $q_2$ is a common upper bound for $\{p_1,p_2,p_3\}$ in $\mathcal P$.  If exactly two of $\{q_1,q_2,q_3\}$, say $q_1$ and $q_3$, are of type $\hat b_{m}$, then $q_1>p_2>q_2$ and $q_3>p_3>q_2$ in $((\bar\Delta_{\Lambda,\Lambda'})^0,<)$. Then $q_2$ and $p_1$ are two common lower bounds for $\{q_1,q_3\}$ in $((\bar\Delta_{\Lambda,\Lambda'})^0,<)$. As $\tau(p_1)=\tau(q_1)-1$, $p_1$ is the meet of $q_2$ and $p_1$. Thus $p_1\ge q_2$ in $((\bar\Delta_{\Lambda,\Lambda'})^0,<)$. It follows that $q_2$ is a common upper bound for $\{p_1,p_2,p_3\}$ in $\mathcal P$. This exhausts all the cases.

\subsection{Propagation of robustness}

Let $\Lambda$ be a tree Coxeter diagram. A $D_n$-like subdiagram $\Lambda_0$ of $\Lambda$ is \emph{weakly $\Lambda$-atomic}, if  all the requirements for being $\Lambda$-atomic holds for $\Lambda_0$, except that one of $b_1b_3$ and $b_2b_3$ is allowed to have label $>3$ (we label vertices of $\Lambda_0$ as in Figure~\ref{fig:BD} left). 

\begin{lem}
	\label{lem:weak atomic}
	Let $\Lambda$ be a tree Coxeter diagram such that all its induced subdiagrams are atomic BD-robust. Let $\Lambda_0$ be a weakly $\Lambda$-atomic $D_n$-like subdiagram with vertices labeled as Figure~\ref{fig:BD} left.
	Then the $(b_1,b_2)$-subdivision of $\Delta_{\Lambda,\Lambda_0}$, with the poset on its vertex set as in Definition~\ref{def:subdivision}, is bowtie free and downward flag. Moreover, $\Delta_{\Lambda,\Lambda_0}$ satisfies labeled 4-cycle condition.
\end{lem}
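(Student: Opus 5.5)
If one of $b_1b_3$, $b_2b_3$ has label $>3$, I will treat $\Lambda_0$ as a degenerate $\widetilde C$-core and reuse the propagation machinery of Section~\ref{sec:subdivision}. If all labels are $3$, then $\Lambda_0$ is $\Lambda$-atomic and the statement is exactly the D-robust clause of atomic BD-robustness of $\Lambda$ itself. Since $\Lambda$ is an induced subdiagram of itself, this holds by hypothesis. So assume, say, $m_{b_2b_3}\ge 4$.

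The plan is to verify the hypotheses of Proposition~\ref{prop:ori link0} (for $n\ge 3$), or of Lemma~\ref{lem:d3 upward flag} and Lemma~\ref{lem:weakly flag equivalent} (for $n=2$), using only proper subdiagrams. For $n=2$, $\Lambda_0=b_1b_3b_2$, and the chain $b_1b_3b_2$ with edge $b_3b_2$ of label $\ge4$ is a $B_3$-like subdiagram. Its vertices $b_1,b_3,b_2$ play the roles of $s_1,s_2,s_3$, so no interior valence condition beyond $s_2$ is imposed, and it is $\Lambda$-atomic provided $m_{b_1b_3}=3$. Then B-robustness of $\Lambda$ gives that $\Delta_{\Lambda,\Lambda_0}$ is bowtie free and upward flag. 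Lemma~\ref{lem:d3 upward flag} then yields that the $(b_1,b_2)$-subdivision is bowtie free and downward flag. The labeled 4-cycle condition follows from Lemma~\ref{lem:4wheel}, since the only maximal linear subdiagram is $\Lambda_0$ itself.

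For $n\ge 3$, I view $\Lambda_0$ as an analogue of a $\widetilde B_n$-like diagram, with the roles reversed: the heavy edge now sits at the fork end. I apply the argument of \cite[Prop 9.1]{huang2024}, recorded as Proposition~\ref{prop:propagation}, in the form of Proposition~\ref{prop:ori link0}. The input needed is the following.
\begin{enumerate}
\item For $i=1,2$, let $\Lambda_i$ be the component of $\Lambda\setminus\{b_i\}$ containing $b_3$. The complex $\Delta_{\Lambda_i,\Lambda'_i}$ on the linear chain $b_{3-i}b_3\cdots b_{n+1}$ must be bowtie free and upward flag. For $i=1$ the chain $b_2b_3\cdots b_{n+1}$ is $B_n$-like with heavy edge $b_2b_3$. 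Interior valence is preserved except at the allowed vertices, because $\Lambda_0$ is weakly atomic. So this is B-robustness of the proper subdiagram $\Lambda_1$. For $i=2$ the chain is all $3$'s, and I will get it from D-robustness of $\Lambda_2$ applied to a smaller $D$-like piece, or from Theorem~\ref{thm:bowtie free} if the relevant subdiagram is spherical of type $A$.
\item The $(b_1,b_2)$-subdivision of the link at $b_{n+1}$ must be bowtie free and downward flag. This is the statement for a smaller weakly atomic $D_n$-like subdiagram of $\Lambda_{n+1}$, so I induct on $n$, with the base case being the $n=2$ case above.
\end{enumerate}
Then Proposition~\ref{prop:propagation}(2),(3), applied with $\Lambda'$ replaced by $\Lambda_0$, give bowtie freeness and downward flagness of the subdivision. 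It also gives the labeled 4-cycle condition, using the inductive hypothesis for the link at $b_{n+1}$ as the extra input in item (3).

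The main obstacle is item (1) for $i=1$. The link chain $b_2b_3\cdots b_{n+1}$ has its heavy edge at the \emph{start} of the order $b_2<b_3<\cdots$, whereas Definition~\ref{def:good} orders a $B_n$-like chain so that the heavy edge is at the top. So B-robustness gives upward flagness for the reversed order, that is, downward flagness for the order needed here. I would try to bridge this with an analogue of Lemma~\ref{lem:weakly flag equivalent}, Lemma~\ref{lem:d3 upward flag} and \cite[Lem 12.2]{huang2024}. The idea is to show that, in the presence of bowtie freeness, downward flagness for this order implies the weak flagness that Proposition~\ref{prop:ori link0} actually consumes. Failing that, I would instead redo the convexity argument of Corollary~\ref{cor:robust0} directly with $\Lambda_0$ in place of $\Lambda'$.
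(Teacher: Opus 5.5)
\paragraph{Verdict.} Your base case $\Lambda_0=b_1b_3b_2$ (via $B$-robustness and Lemma~\ref{lem:d3 upward flag}) and your handling of the all-labels-$3$ case agree with the paper. The step for longer $\Lambda_0$, however, has a genuine gap, and the obstacle you flag in item~(1) cannot be bridged.

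\paragraph{Why the route through $\Lambda_0$ fails.} If Proposition~\ref{prop:ori link0} applied to $\Lambda_0$, its conclusion would say that the $(b_1,b_2)$-subdivision of $\Delta_{\Lambda,\Lambda_0}$ satisfies the hypotheses of Theorem~\ref{thm:contractibleII}. At a vertex $x$ of type $\hat b_1$ this requires $V_{\ge x}$ to be upward flag. Here $V_{\ge x}$ is essentially the poset on $\Delta_{\Lambda_1,b_2b_3\cdots b_{n+1}}$ ordered by $b_2<b_3<\cdots<b_{n+1}$, with the heavy edge $b_2b_3$ at the bottom. This is false in general.
\begin{enumerate}
\item Take $\Lambda=\Lambda_0$ a tripod. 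Then $V_{>x}$ is the full Artin complex of type $B_3$.
\item Its Coxeter complex is the boundary of the cube, with $\hat b_2$-, $\hat b_3$-, $\hat b_4$-vertices being cube vertices, edges and faces. The vertices $(1,1,1),(1,-1,-1),(-1,1,-1)$ pairwise lie on a common face but have no common upper bound.
\item This persists in the Artin complex, which contains the Coxeter complex via positive lifts, with the type-preserving projection as a retraction.
\end{enumerate}
So the $(b_1,b_2)$-subdivision is simply the wrong orthoscheme structure: for a tripod with one heavy leg, the $\widetilde C$-structure subdivides between the two \emph{light} legs. For $n+1\ge 5$, your $\Lambda_0$ is not a $\widetilde C$-core at all. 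Hence neither a weak-flag substitute nor rerunning Corollary~\ref{cor:robust0} with $\Lambda_0$ as the core can work. Separately, Proposition~\ref{prop:propagation}(2) concerns $\Lambda'\setminus\{b_{n+1}\}$, not $\Lambda'$, so it would not give a statement about $\Lambda_0$ anyway.

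\paragraph{The missing idea.} Use a smaller core and transfer from it. In your labelling, take the $\widetilde B_3$-like tripod $\Lambda'=b_1b_3\cup b_2b_3\cup b_4b_3$ with its $(b_1,b_4)$-subdivision.
\begin{enumerate}
\item $\Lambda'$ is robust by Proposition~\ref{prop:ori link0}, since each $\Lambda'\setminus\{t\}$ is a $\Lambda\setminus\{t\}$-atomic $D_3$- or $B_3$-like diagram.
\item The pair $(\Lambda',\Lambda_0)$ is then the configuration of Figure~\ref{fig:core}\,(4). Corollary~\ref{cor:robust0} gives bowtie freeness, the labeled $4$-cycle condition, and flagness in the reversed order (that is, the downward flagness you need) for $\Lambda_0$.
\item That corollary rests on the convexity of the subcomplexes $\bar X_i$ and the minimal-triangle argument of Proposition~\ref{prop:intersection}, carried out in the injective complex $\bar\Delta_{\Lambda,\Lambda'}$.
\end{enumerate}
Induction enters only in checking the corollary's hypotheses, and it is induction on the number of vertices of $\Lambda$, not on $n$. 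It is used for bowtie freeness of $\Delta_{\Lambda\setminus\{b_2\},\Lambda_0\setminus\{b_2\}}$ when that chain is not of type $A$. Lemma~\ref{lem:d3 upward flag} supplies the corollary's third hypothesis.
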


\begin{proof}
	We induct on the number of vertices in $\Lambda$. The base case when $\Lambda$ has one vertex is trivial. We assume without loss of generality that $b_1b_3$ has label $>3$ and $b_2b_3$ has label $3$. If $n=3$, then $\Lambda_0$ is $\Lambda$-atomic $B_3$-like, hence $\Delta_{\Lambda_0}$ is bowtie free and upward flag, with the partial order on its vertex set induced by $b_2<b_3<b_1$. By Lemma~\ref{lem:d3 upward flag}, the $(b_1,b_2)$-subdivision of $\Delta_{\Lambda,\Lambda_0}$ with the poset on its vertex set in Definition~\ref{def:subdivision} is bowtie free and downward flag.
	We assume $n\ge 4$ for the rest of the discussion.
	
	Let $\Lambda'=b_1b_3\cup b_2b_3\cup b_4b_3$. Let $\bar\Delta_{\Lambda,\Lambda'}$ be the $(b_2,b_4)$-subdivision of $\Delta_{\Lambda,\Lambda'}$. 
	By Lemma~\ref{lem:link} and the assumption of Lemma~\ref{lem:weak atomic}, $\bar\Delta_{\Lambda,\Lambda'}$ is $\wtC$-like. Then the pair $(\Lambda',\Lambda_0)$ belongs to the configuration in Figure~\ref{fig:core} (4), and the lemma follows from  Corollary~\ref{cor:robust0}, modulo justifying all the assumptions of this corollary. Indeed, Assumption~\ref{assum:1} does not apply as $\Lambda'$ is not $\widetilde D_4$-like and $\Lambda'\setminus\Lambda_0=\emptyset$. Assumption 1 of Corollary~\ref{cor:robust0} corresponds to checking bowtie free condition for $\Delta_{\Lambda\setminus\{b_2\},\Lambda_0\setminus\{b_2\}}$ and $\Delta_{\Lambda\setminus\{b_1\},\Lambda_0\setminus\{b_1\}}$. The former follows from  that $\Lambda_0\setminus\{b_2\}$ is $\Lambda\setminus\{b_2\}$-atomic $B_{m-1}$-like. The latter follows from Theorem~\ref{thm:bowtie free} if the connected component of $\Lambda\setminus\{b_1\}$ containing $\Lambda_0\setminus\{b_1\}$ is of type $A_n$. Otherwise $\Lambda_0\setminus\{b_1\}$ is contained in a connected subdiagram of $\Lambda\setminus\{b_1\}$ which is (weakly) $\Lambda\setminus\{b_1\}$-atomic $D_k$-like or $B_k$-like, hence $\Delta_{\Lambda\setminus\{b_1\},\Lambda_0\setminus\{b_1\}}$ is bowtie free by induction assumption.
	In Assumption 2 of Corollary~\ref{cor:robust0}, $\Lambda'\setminus\{s\}$ is $\Lambda\setminus\{s\}$-atomic $D_3$-like or $B_3$-like, hence this assumption holds true.
	%$\Theta_s=\Lambda_0\setminus\{b_2\}$ or $\Theta_s=\Lambda_0\setminus\{b_1\}$. In the former case $\Theta_s$ is $\Lambda_s$-atomic $B_m$-like, hence satisfies the labeled 4-cycle condition by our assumption and Lemma~\ref{lem:4wheel}; In the later case, either $\Theta_s$ is contained a $\Lambda_s$-atomic or weakly $\Lambda_s$-atomic $D_m$-like subdiagram of $\Lambda_s$, or $\Theta_s$ is contained in a $\Lambda_s$-atomic $B_m$-like subdiagram of $\Lambda_s$, or $\Lambda_s$ is of type $A_m$ for some $m$. In either cases $\Delta_{\Lambda_s,\Theta_s}$ satisfies the labeled 4-cycle condition by induction hypothesis,  Theorem~\ref{thm:bowtie free}, and Remark~\ref{rmk:inherit}. 
	Assumption 3 of Corollary~\ref{cor:robust0} corresponds to considering the $(b_1,b_2)$-subdivision of $\Delta_{\Omega',b_1b_3b_2}$ for proper subdiagram $\Omega'\subset \Lambda$. As $b_1b_3b_2$ is a $\Omega'$-atomic $B_3$-like, this assumption holds by Lemma~\ref{lem:d3 upward flag}.
\end{proof}

%\begin{lem}
%Let $\Lambda$ be a tree Coxeter diagram such that any proper induced subdiagram is atomic BD-robust and $\Lambda$ contains a $\widetilde C$-core. Then for each $\Lambda_0\subset \Lambda$ being atomic $B_m$-like or $D_m$-like, there is a robust $\widetilde C$-core $\Lambda'$ such that $\Lambda'\setminus \Lambda_0$ is connected and each vertex in $\Lambda_0\setminus\Lambda'$ is $\Lambda'$-extremal. Moreover, if $\Lambda_0$ is not contained in $\Lambda'$, then we can assume we are in one of Figure~\ref{fig:core} (1)-(10) such that 
%\begin{enumerate}
%	\item  the embedding $\Theta=\Lambda_0\cup\Lambda'\to \Lambda$ preserves the valence of all interior vertices of $\Theta$ except possibly at the square vertex in Figure~\ref{fig:core};
%	\item in Figure~\ref{fig:core} (1)-(2), (4)-(10), the edges of $\Theta$ without any labels are labeled by $3$, the edges with a $+$ sign can be labeled by any number $\ge 3$;
%	\item in Figure~\ref{fig:core} (3), when $\Lambda'$ is $\widetilde D_4$-like, the edge of $\Lambda'$ not contained in $\Lambda_0$ can have any label $\ge 3$, when $\Lambda'$ is $\widetilde D_k$-like for $k\ge 5$ all edges of $\Theta$ are labeled by $3$.
%\end{enumerate}
%\end{lem}

\begin{lem}
	\label{lem:robust C core}
	Let $\Lambda$ be a tree Coxeter diagram such that every proper induced subdiagram is atomic BD-robust, and suppose that $\Lambda$ contains a $\widetilde C$-core. Then for each subdiagram $\Lambda_0$ that is $\Lambda$-atomic and $B_m$-like or $D_m$-like, there exists a robust $\widetilde C$-core $\Lambda'$ such that $\Lambda'\setminus \Lambda_0$ is connected and every vertex in $\Lambda_0\setminus \Lambda'$ is $\Lambda'$-extremal. 
	
	If $\Lambda_0\subset \Lambda'$, then we may assume that the pair $(\Lambda_0,\Lambda')$ appears in one of the configurations shown in Figure~\ref{fig:core1}\,(11)--(19).
	If $\Lambda_0\nsubseteq \Lambda'$, then we may assume that $(\Lambda_0,\Lambda')$ appears in one of the configurations shown in Figure~\ref{fig:core1}\,(1)--(10). Moreover, the following conditions hold:
	\begin{enumerate}
		\item In Figure~\ref{fig:core1}\,(1)--(10), let $\Theta$ be the smallest subtree of $\Lambda$ containing $\Lambda_0$ and $\Lambda'$. Then the embedding $\Theta\hookrightarrow \Lambda$ preserves the valence of all interior vertices of $\Theta$, except possibly at the square vertex appearing in Figure~\ref{fig:core1}.
		\item In Figure~\ref{fig:core1}\,(1)--(2) and (4)--(10), all unlabeled edges of $\Theta$ have label $3$, while edges marked with a ``$+$'' sign may carry any label $\ge 3$.
		\item In Figure~\ref{fig:core1}\,(3), if $\Lambda'$ is $\widetilde D_4$-like, then the edge of $\Lambda'$ not contained in $\Lambda_0$ may have any label $\ge 3$; if $\Lambda'$ is $\widetilde D_k$-like with $k\ge 5$, then all edges of $\Theta$ have label $3$.
		\item In Figure~\ref{fig:core1}\,(11)--(16), the embedding $\Lambda'\hookrightarrow\Lambda$ preserves the valence of all interior vertices of $\Lambda'$, except possibly at the square vertex; and all unlabeled edges of $\Lambda'$ have label $3$. In Figure~\ref{fig:core1}\,(12) and (14), one of the edges marked with a ``+'' sign may carry any label $\ge 3$, the other one is labeled by $3$.
	\end{enumerate}
\end{lem}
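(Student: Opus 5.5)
The plan is to split the argument into a robustness criterion and a combinatorial search, treated in that order.

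\emph{Robustness criterion.} For any $\widetilde C$-core $\Lambda'\subset\Lambda$ of the tree $\Lambda$, I will check the link hypotheses of Proposition~\ref{prop:ori link0}, Proposition~\ref{prop:ori link2} and, in the $\widetilde C_n$-like case, Theorem~\ref{thm:contractibleII}. By Lemma~\ref{lem:link}, links of vertices of $\bar\Delta_{\Lambda,\Lambda'}$ are (subdivided) relative Artin complexes $\Delta_{\Lambda_s,\Lambda'_s}$ for proper subdiagrams $\Lambda_s=\Lambda\setminus\{s\}$. In these, $\Lambda'_s$ is $B$-like or $D$-like, and $\bar\Delta_{\Lambda,\Lambda'}$ is simply connected by Lemma~\ref{lem:sc}.

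The hypotheses only concern proper subdiagrams, but they are not restricted to atomic ones. So I first reduce to atomic ones by a minimality choice. I choose $\Lambda'$ to be a $\widetilde C$-core that is minimal: no proper induced subdiagram of $\Lambda$ obtained by shrinking $\Lambda'$ is a $\widetilde C$-core. The minimality should force three things about $\Lambda'$:
\begin{itemize}
\item all interior edge labels are $3$;
\item the interior vertices of $\Lambda'$ have no extra branches in $\Lambda$, except possibly next to the ends, since a branch would produce a smaller $\widetilde D$- or $\widetilde B$-like core;
\item the link subdiagrams $\Lambda'_s$ are $\Lambda_s$-atomic, or weakly atomic as in Lemma~\ref{lem:weak atomic}.
\end{itemize}
Atomic BD-robustness of proper subdiagrams then gives bowtie-freeness and flagness, using Lemma~\ref{lem:weak atomic} and Lemma~\ref{lem:d3 upward flag} for the $D_3$ cases. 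So every minimal core is robust.

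\emph{Combinatorial search.} Given $\Lambda_0$, I will choose a minimal core adapted to it. I take a core $\Lambda'$ and the smallest subtree $\Theta$ containing both $\Lambda_0$ and $\Lambda'$. Among all such cores I take one minimizing first the number of vertices of $\Theta$, and then $|\Lambda'\setminus\Lambda_0|$. If $\Lambda'\setminus\Lambda_0$ were disconnected, or some vertex of $\Lambda_0\setminus\Lambda'$ attached at an interior vertex of $\Lambda'$, I would cut $\Lambda'$ at that attachment point. Together with a portion of $\Lambda_0$, this yields a new core with smaller $\Theta$. Such a core exists because $\Lambda_0$ is atomic: its edges are labeled $3$ except a terminal one, and its valence conditions let branches be used as $\widetilde D$ or $\widetilde B$ ends. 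This gives connectedness and $\Lambda'$-extremality.

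I then enumerate how $\Lambda_0$ can meet $\Lambda'$:
\begin{itemize}
\item $\Lambda_0$ contained in $\Lambda'$;
\item they share an end edge;
\item they share a $D_4$-like fork;
\item they meet at a single vertex.
\end{itemize}
In each case, minimality together with the atomic valence conditions rules out extra labels and branches, which yields conditions (1)--(4) and the lists (1)--(10) and (11)--(19).

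The main obstacle is the case analysis for $\widetilde D_4$-like and $\widetilde B_3$-like cores, where the ``$+$'' labels and the square vertex appear. There, shrinking can jump between core types, so one must check carefully that the minimal choice is still robust. That is, its links may be only weakly atomic, and I will handle this by Lemma~\ref{lem:weak atomic}.
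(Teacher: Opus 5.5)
\textbf{Gap 1: minimality does not control branches.} Your robustness step depends on the claim that an inclusion-minimal $\widetilde C$-core has no extra branches at its interior vertices, and this claim is false. Minimality only constrains subdiagrams of $\Lambda'$. A branch of $\Lambda$ at an interior vertex produces a core that is not contained in $\Lambda'$, and that core is generally not smaller.

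Concretely, let $\Lambda$ be the path $s_1\cdots s_6$ with $m_{s_1s_2}=m_{s_5s_6}=4$ and all other labels $3$, plus a vertex $t$ joined to $s_3$ by an edge labeled $3$. Let $\Lambda_0=s_4s_5s_6$, which is $\Lambda$-atomic.
\begin{enumerate}
\item The path $\Lambda'_1=s_1\cdots s_6$ is an inclusion-minimal core.
\item It ties with the $\widetilde B_5$-like core $\{s_2,t,s_3,s_4,s_5,s_6\}$ for both $|\Theta|$ and $|\Lambda'\setminus\Lambda_0|$, so your selection rule does not exclude it.
\item $\Lambda'_1\setminus\{s_6\}=s_1\cdots s_5$ is not $(\Lambda\setminus\{s_6\})$-atomic: read from $s_5$, the vertex $s_3$ is neither the second nor the second-to-last vertex, yet it has valence $3$.
\item It is also not a $\widetilde C$-core.
\end{enumerate}
So atomic BD-robustness of $\Lambda\setminus\{s_6\}$ says nothing about the link at vertices of type $\hat s_6$. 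Your claim that ``every minimal core is robust'' is therefore not established.

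\textbf{Gap 2: the required cores are sometimes not minimal.} Some of the required configurations need cores that are not minimal, and your ``cut to decrease $\Theta$'' step cannot reach them.

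Take $\Lambda$ with vertices $s_1,s_2,s_3,t$ forming a star at $s_2$, with $m_{s_1s_2}=3$ and $m_{s_2s_3}=m_{s_2t}=4$. Let $\Lambda_0=s_1s_2s_3$, which is again $\Lambda$-atomic.
\begin{enumerate}
\item The only inclusion-minimal core is $ts_2s_3$. For it, $s_1$ attaches at the middle vertex, so $s_1$ is not $\Lambda'$-extremal.
\item The only other core is $\Lambda$ itself, with the same $\Theta$, so no core has smaller $\Theta$.
\item The only admissible choice is $\Lambda'=\Lambda_0\cup s_2t$, as in Figure~\ref{fig:core1}\,(12).
\end{enumerate}
The robustness of this $\Lambda'$ needs an ingredient your plan lacks. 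Here $\Lambda'\setminus\{s_1\}$ is itself a robust $\widetilde C$-core of $\Lambda\setminus\{s_1\}$, so Lemma~\ref{lem:big lattice} supplies the bowtie-free and flag properties of that link. The ``$+$'' labels allowed in configurations such as (12) and (14) likewise permit non-minimal cores.

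\textbf{How the paper avoids both problems.} It uses an explicit construction rather than a minimality principle.
\begin{itemize}
\item It first treats the cases where $b_3$, $b_{m-1}$, $s_2$ or $s_{m-1}$ has extra valence in $\Lambda$, by adding an edge at that vertex.
\item Otherwise it chooses $\phi$ closest to $b_3$ (resp.\ to $s_{m-1}s_m$) in the set $\Phi$ of edges with label $\ge 4$ and vertices of valence $\ge 3$.
\end{itemize}
This closeness is exactly what forces the intermediate vertices to have valence $2$ and the intermediate edges to have label $3$. Hence each $\Lambda'\setminus\{t\}$ is atomic, weakly atomic, or a robust core of $\Lambda\setminus\{t\}$.
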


\begin{figure}
	\centering
	\includegraphics[scale=1]{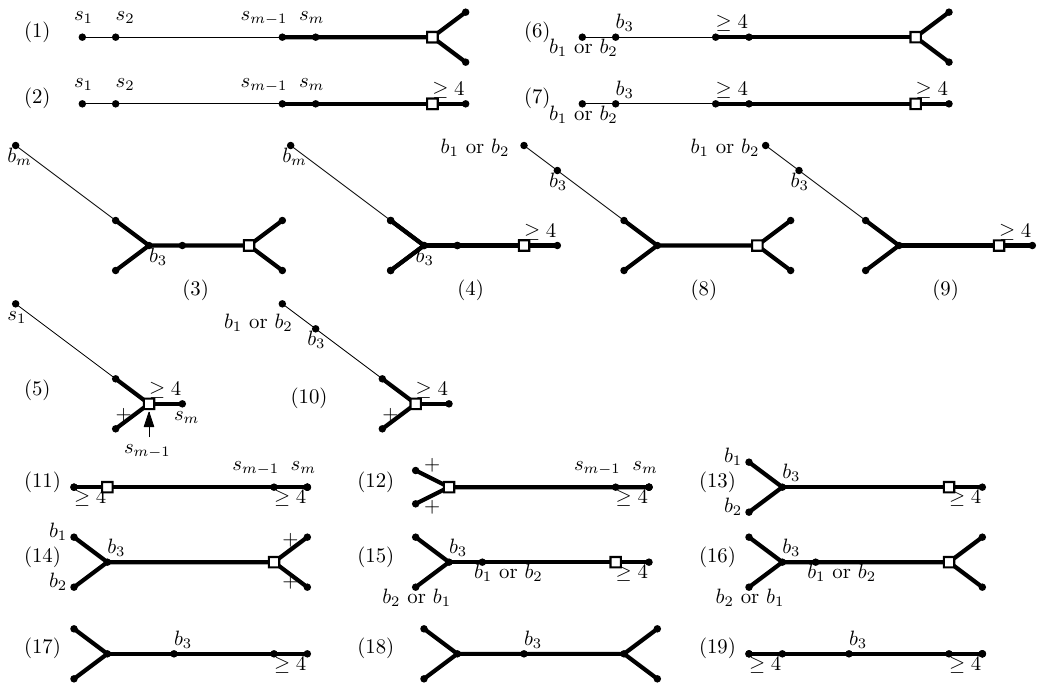}
	\caption{Vertices of $\Lambda_0$ are labeled as $s_1\ldots s_m$ with $m_{s_{m-1},s_m}\ge 4$ if $\Lambda_0$ is $B_m$-like, and vertices of $\Lambda_0$ are labeled as Figure~\ref{fig:BD} left if $\Lambda_0$ is $D_m$-like. The thickened diagram indicates the robust $\wtC$-core $\Lambda'$. In Figure~\ref{fig:core}\,(6)--(10) and (17)--(19), $\Lambda_0$ is $D_3$-like, and the center vertex $b_3$ of $\Lambda_0$ is labeled in each figure. Moreover, $b_3$ is allowed to be coincide with a leaf vertex of $\Lambda'$ in Figure~\ref{fig:core}\,(8)--(10).  In Figure~\ref{fig:core1}\,(11)--(12), $\Lambda_0$ is $B_m$-like and only vertices $s_{m-1},s_m$ of $\Lambda_0$ are indicated. In Figure~\ref{fig:core1}\,(13)--(16), $\Lambda_0$ is $D_m$-like for $m\ge 3$. Similar to Figure~\ref{fig:core}, we allow $\Lambda'$ to be $\widetilde D_4$-like in (3) and $\widetilde B_3$-like in (4).}
	\label{fig:core1}
\end{figure}
\begin{proof}
	First we assume $\Lambda_0$ is $\Lambda$-atomic $D_n$-like ($n\ge 4$). We label vertices of $\Lambda_0$ as in Figure~\ref{fig:BD} left. Let $\{e_i\}_{i=1}^3$ be the three edges of $\Lambda'$ containing $b_3$. 
	If $b_3$ has valence $>3$ in $\Lambda$, then we choose $\Lambda'$ to be $e_1\cup e_2\cup e_3\cup e$ where $e$ is another edge of $\Lambda$ containing $s$. Then for each leaf vertex $t$ of $\Lambda'$, $\Lambda'\setminus\{t\}$ is $\Lambda\setminus\{t\}$-atomic or weakly $\Lambda\setminus\{t\}$-atomic $D_4$-like.
	As $\Lambda\setminus\{t\}$ is atomic BD-robust, Proposition~\ref{prop:ori link2} and Lemma~\ref{lem:weak atomic} imply that $\Delta_{\Lambda,\Lambda'}$ is $\wtC$-like. Thus $\Lambda'$ is a robust $\wtC$-core. This corresponds to Figure~\ref{fig:core1} (3).

	Suppose $b_3$ has valence $3$ in $\Lambda$ (in particular $b_3\neq b_{m-1}$). If $b_{m-1}$ has valence $\ge 3$ in $\Lambda$, let $e_0=b_{m-1}t_0$ be an edge with $t_0\notin \Lambda_0$, and let $\Lambda'=\Lambda_0\cup e_0$. Then $\{t_0,b_m,b_1,b_2\}$ are terminal vertices of $\Lambda'$. As $\Lambda_0$ is $\Lambda$-atomic, for $i=1,2$, $\Lambda'\setminus\{t_i\}\subset \Lambda\setminus\{b_i\}$ is weakly $\Lambda\setminus\{b_i\}$-atomic. Note that $\Lambda'\setminus \{t_0\}$ is $\Lambda\setminus\{t_0\}$-atomic $D_m$-like; and $\Lambda'\setminus\{b_m\}$ is either $\Lambda\setminus\{b_m\}$-atomic $D_m$-like (when $e_0$ is labeled by 3), or $\Lambda'\setminus\{b_m\}$ is a robust $\wtC$-core in $\Lambda\setminus\{b_m\}$ (when $e_0$ has label $\ge 4$). The latter follows from Proposition~\ref{prop:ori link0} and the assumption of Lemma~\ref{lem:robust C core}, and we still know that the $(b_1,b_2)$-subdivision of $\Delta_{\Lambda\setminus\{b_m\},\Lambda'\setminus\{b_m\}}$ is bowtie free and downward flag by Lemma~\ref{lem:big lattice}. Then $\Lambda'$ is a robust $\wtC$-core in $\Lambda$ by Lemma~\ref{lem:weak atomic} and Proposition~\ref{prop:ori link2}.  This corresponds to Figure~\ref{fig:core1} (14).
	
	Now we assume $b_{m-1}$ has valence $2$ in $\Lambda$. Let $\Phi$ be the set of all edges in $\Lambda$ that have label $\ge 4$ together with all vertices of $\Lambda$ that are not $b_3$ and have valence $>2$. As $\Lambda$ has a $\wtC$-core, $\Phi$ is non-empty. We consider an element $\phi$ of $\Phi$ that has shortest distance to $b_3$. If $\phi$ an edge, then let $L$ be the smallest linear subgraph of $\Lambda$ containing $\phi$ and $b_3$. Then we define $\Lambda'=L\cup e_1\cup e_2\cup e_3$. Note that each interior vertex of $L$ has valence two in $\Lambda$ except the one $s_\phi$ that is contained in $\phi$. Thus for each leaf vertex $t$ of $\Lambda'$, $\Lambda'\setminus\{t\}$ is $\Lambda\setminus\{t\}$-atomic $D_m$-like or $B_m$-like. As $\Lambda\setminus\{t\}$ is atomic BD-robust, Proposition~\ref{prop:ori link0} implies that an appropriate subdivision of $\Delta_{\Lambda,\Lambda'}$ is $\wtC$-like. As $b_{m-1}$ has valence $2$ in $\Lambda$, each vertex in $\Lambda'\setminus \Lambda_0$ is $\Lambda'$-extremal. This corresponds to Figure~\ref{fig:core1} (4), (13) and (15).

	If $\phi$ is a vertex of $\Lambda$, let $L$ be the linear subgraph from $\phi$ to $b_3$, and let $e_4$ be the edge of $L$ containing $\phi$. We choose another two edges $e_5,e_6$ at $\phi$ such that $\{e_4,e_5,e_6\}$ contains all edges of $\Lambda'$ with one vertex being $\phi$ (if there are any). We can assume $e_4,e_5,e_6$ are labeled by $3$, otherwise we are reduced to the previous paragraph.
	Define $\Lambda'=L\cup(\cup_{i=1}^6e_i)$. Then for each leaf vertex $t'$ of $\Lambda'$, $\Lambda'\setminus\{t'\}$ is $\Lambda\setminus\{t'\}$-atomic $D_m$-like. Thus $\Lambda'$ is a robust $\wtC$-core in $\Lambda$ by Proposition~\ref{prop:ori link2}. By construction each vertex of $\Lambda_0\setminus\Lambda'$ is $\Lambda'$-extremal. This corresponds to Figure~\ref{fig:core1} (3), (14) and (16).

	Let $\Lambda_0=s_1s_2\ldots s_m$ be a $\Lambda$-atomic $B_m$-like subdiagram with $m_{s_{m-1},s_m}\ge 4$. 
	Suppose $s_2$ has valence $>2$ in $\Lambda$. Let $e$ be an edge at $s_2$ that is not in $\Lambda_0$. We define $\Lambda'=\Lambda_0\cup e$. Then $\Lambda'\setminus\{s_m\}$ is either $\Lambda\setminus\{s_m\}$-atomic or weakly $\Lambda\setminus\{s_m\}$-atomic $D_m$-like.
	Let $t$ be the vertex of $e$ that is not in $\Lambda_0$. Then $\Lambda'\setminus\{t\}$ is $\Lambda\setminus\{t\}$-atomic $B_m$-like. If $e$ has label $3$, then $\Lambda'\setminus\{s_1\}$ is $\Lambda\setminus\{s_1\}$-atomic $B_m$-like; if $e$ has label $>3$, then $\Lambda'\setminus\{s_1\}$ is a robust $\wtC$-core in $\Lambda\setminus\{s_1\}$ (as $\Lambda'\setminus\{s_1,t\}$ is $\Lambda\setminus\{s_1,t\}$-atomic $B_{m-1}$-like for any leaf vertex $t$ of $\Lambda'\setminus\{s_1\}$), hence $\Delta_{\Lambda\setminus\{s_1\},\Lambda'\setminus\{s_1\}}$ is bowtie free and flag by Lemma~\ref{lem:big lattice}. Thus Proposition~\ref{prop:ori link0} implies that $\Lambda'$ is a robust $\wtC$-core in $\Lambda$, as desired. This corresponds to Figure~\ref{fig:core1} (12).

	Suppose $s_{m-1}$ has valence $>2$ in $\Lambda$. Let $e$ be an edge at $s_{m-1}$ that is not in $\Lambda_0$ and we define $\Lambda'=s_{m-2}s_{m-1}\cup s_{m-1}s_m\cup e$. By a similar argument as before (and use Lemma~\ref{lem:d3 upward flag} if $e$ has label $\ge 4$), $\Lambda'$ is a robust $\wtC$-core in $\Lambda$. This corresponds to Figure~\ref{fig:core1} (5).

	Now we assume no interior vertex of $\Lambda_0$ has valence $>2$ in $\Lambda$. Let $\Phi$ be the set of all edges in $\Lambda$ with label $\ge 4$ that are not $s_{m-1}s_m$, together with all vertices of $\Lambda$ with valence $>2$. Then $\Phi\neq\emptyset$. Let $\phi$ be an element of $\Phi$ that is closest to the edge $s_{m-1}s_m$. If $\phi$ is an edge with label $\ge 4$, then $\Lambda'$ is the smallest linear subdiagram containing $s_{m-1}s_m$ and $\phi$. Then $\Lambda'\setminus\{t'\}$ is $\Lambda\setminus\{t'\}$-atomic $B_k$-like  for each leaf vertex $t'$ of $\Lambda'$. Thus $\Lambda'$ is a robust $\wtC$-core in $\Lambda$. As $s_i$ has valence $=2$ in $\Lambda$ for $2\le i\le m-1$, each vertex in $\Lambda_0\setminus\Lambda'$ is $\Lambda'$-extremal. This corresponds to Figure~\ref{fig:core1} (2) and (11).
	If $\phi$ is a vertex with valence $>2$, then we define $\Lambda'$ to be the smallest linear subdiagram $L$ containing $\phi$ and $s_{m-1}s_m$, together with two edges $e_1,e_2$ at $\phi$ that are not  in $L$. Assume $e_1$ and $e_2$ have label $3$, otherwise we are reduced to the previous case. Then $\Lambda'$ is the desired robust $\wtC$-core in $\Lambda$ by Proposition~\ref{prop:ori link0} and the valence assumption on $\{s_i\}_{i=2}^{m-1}$. This corresponds to Figure~\ref{fig:core1} (1) and (12).

	It remains to consider $\Lambda_0$ is $\Lambda$-atomic $D_3$-like with vertices labeled as in Figure~\ref{fig:BD} left. As the argument is largely similar to before, here we only give a sketch.
	If $b_3$ has valence $>2$ in $\Lambda$, then we find a robust $\wtC$-core containing $\Lambda_0$, as in Figure~\ref{fig:core1} (13)-(16). If $b_3$ has valence $2$ in $\Lambda$, then let $\Phi$ be the collection of vertices of valence $\ge 3$ together with edges with label $\ge 4$ in $\Lambda$. For $i=1,2$, let $\Phi_i$ be the collection of elements in $\Phi$ that are contained in the same component of $\Lambda\setminus\{b_3\}$ as $b_i$. Then $\Phi=\Phi_1\cup\Phi_2$. If both $\Phi_1$ and $\Phi_2$ are nonempty, then $\Lambda_0$ is contained in a robust $\wtC$-core of $\Lambda$, as in Figure~\ref{fig:core1} (17)-(19). Now we assume $\Phi_1\neq\emptyset$ and $\Phi_2=\emptyset$. Let $\Phi'_1$ be the collection of elements of $\Phi_1$ that are closest to $b_3$. If $\Phi'_1$ contains both a vertex $v$ of valence $\ge 3$ and an edge of label $\ge 4$, then $\Lambda'$ is taken to be a union of three edges containing $v$ as in Figure~\ref{fig:core1} (10) (note that even if $\Phi'_1$ have two edges of label $\ge 4$, we still want to add one more edge to form $\Lambda'$ for the purpose of arranging that each vertex in $\Lambda_0\setminus\Lambda'$ is $\Lambda'$-extremal). If $\Phi'_1$ only has an edge with label $\ge 4$, then $\Phi'_1\subsetneq \Phi_1$ and we take an element in $\Phi_1$ which is next closest to $b_3$, and these two elements of $\Phi_1$ give the desired $\Lambda'$, see Figure~\ref{fig:core1} (6) and (7). Now we assume $\Phi'_1$ is made of a single vertex. If this vertex has valence $\ge 4$, then $\Lambda'$ is taken to be a $\widetilde D_4$-like subdiagram containing this vertex. If this vertex has valence $3$, then we find the next closest element in $\Phi_1$ as before, see Figure~\ref{fig:core1} (8) and (9). 
\end{proof}

\begin{thm}
	\label{thm:BD robust prop}
	Let $\Lambda$ be a tree Coxeter diagram such that every proper induced subdiagram is atomic BD-robust
	and $\Lambda$ contains a $\widetilde C$-core. 	Then $\Lambda$ is atomic BD-robust. 
\end{thm}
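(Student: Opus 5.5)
To prove that $\Lambda$ is atomic BD-robust, we check the two conditions of Definition~\ref{def:good} for every $\Lambda$-atomic subdiagram $\Lambda_0$. Either $\Lambda_0$ is $B_m$-like and we need $B$-robustness, or it is $D_m$-like and we need $D$-robustness together with the labeled 4-cycle condition. Since $\Lambda$ contains a $\widetilde C$-core and every proper induced subdiagram is atomic BD-robust, Lemma~\ref{lem:robust C core} supplies a robust $\widetilde C$-core $\Lambda'$ adapted to $\Lambda_0$. Concretely, $\Lambda'\setminus\Lambda_0$ is connected, every vertex of $\Lambda_0\setminus\Lambda'$ is $\Lambda'$-extremal, and $(\Lambda_0,\Lambda')$ is in one of the configurations (1)--(19) of Figure~\ref{fig:core1}. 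Hence $\bar\Delta_{\Lambda,\Lambda'}$ is $\widetilde C$-like, and by Lemma~\ref{lem:ld for Ccore} it is locally determined. So the normal forms, Bestvina inequality and convexity results of Section~\ref{sec:nf} and Corollary~\ref{cor:convex} are available. The proof splits according to whether $\Lambda_0\subset\Lambda'$.

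\emph{Case $\Lambda_0\subset\Lambda'$, configurations (11)--(19).} Here $\Lambda_0$ is a proper subdiagram of a robust core, so the required properties should follow directly from Proposition~\ref{prop:propagation} and Proposition~\ref{prop:propagation2}. Assertions (1) and (2) of those propositions give bowtie freeness together with upward flagness, respectively downward flagness of the subdivision, for the relative Artin complexes $\Delta_{\Lambda,\Lambda'_i}$ obtained by deleting a leaf of $\Lambda'$. We then obtain $\Delta_{\Lambda,\Lambda_0}$ by noting that $\Lambda_0$ is either such a $\Lambda'_i$ or a subdiagram of one, and passing to subdiagrams using Remark~\ref{rmk:inherit} and Lemma~\ref{lem:4wheel}. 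Assertion (3) gives the labeled 4-cycle condition. The hypotheses of those propositions are exactly the link conditions at leaves of $\Lambda'$. In each configuration these links are $\Lambda\setminus\{t\}$-atomic, or weakly atomic by Lemma~\ref{lem:weak atomic}, and so they hold by the inductive hypothesis. For $D_3$-like $\Lambda_0$ we may additionally need Lemma~\ref{lem:d3 upward flag} and Lemma~\ref{lem:weakly flag equivalent}.

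\emph{Case $\Lambda_0\nsubseteq\Lambda'$, configurations (1)--(10).} Here the intersection $\Lambda_0\cap\Lambda'$ is an edge or a $D_4$-like piece, as in Figure~\ref{fig:core}, and we apply Corollary~\ref{cor:robust0} under Assumption~\ref{assum:1}. First we verify Assumption~\ref{assum:1}. For each $s\in\Lambda'\setminus\Lambda_0$, the complex $\bar\Delta_{\Omega,\Lambda_0}$ must be bowtie free and upward flag, where $\Omega$ is the component of $\Lambda\setminus\{s\}$ containing $\Lambda_0$. By the valence and label conditions (1)--(3) of Lemma~\ref{lem:robust C core}, $\Lambda_0$ is $\Omega$-atomic, or weakly atomic, in the proper subdiagram $\Omega$, so this holds by induction (using Lemma~\ref{lem:weak atomic} in the weakly atomic case). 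The extra $\widetilde D_4$ hypothesis is again a condition on a proper subdiagram, and we handle it the same way. Next we check hypotheses (1)--(3) of Corollary~\ref{cor:robust0}; these concern bowtie freeness and flagness in proper subdiagrams, which follow from induction, Theorem~\ref{thm:bowtie free} for type $A$ components, and Remark~\ref{rmk:inherit}. The corollary then gives bowtie freeness, upward flagness (after reversing the order, this is the downward flagness required of the subdivision) and the labeled 4-cycle condition.

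\emph{Expected obstacle.} The main difficulty is the case-by-case bookkeeping in (1)--(10). For each configuration we must confirm that every subdiagram appearing in Assumption~\ref{assum:1} and in Corollary~\ref{cor:robust0} is atomic or weakly atomic in a proper subdiagram, so that the inductive hypothesis applies. The most delicate instance is the $\widetilde D_4$ case (3), where the label on the extra edge is unrestricted. The geometric core of the argument, namely the intersection of three convex subcomplexes, is already contained in Proposition~\ref{prop:intersection}.
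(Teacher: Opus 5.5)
Your handling of configurations (1)--(5) and (11)--(14) of Figure~\ref{fig:core1} is essentially the paper's. The remaining configurations are not covered by the tools you invoke.

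\textbf{The case $\Lambda_0\subset\Lambda'$.} Remark~\ref{rmk:inherit} and Lemma~\ref{lem:4wheel} transfer only bowtie freeness and the labeled 4-cycle condition to subdiagrams. Upward flagness of $\Delta_{\Lambda,\Lambda'_i}$, or downward flagness of a subdivision, descends to $\Lambda_0$ only under two conditions: the order required on $\Lambda_0$ must be the restriction of the one on $\Lambda'_i$, and $\Lambda_0$ must contain the relevant extremal end.
\begin{itemize}
\item In (17)--(19), $\Lambda_0=b_1b_3b_2$ lies in a linear stretch of $\Lambda'$. The $(b_1,b_2)$-subdivision needs $\hat b_1,\hat b_2$ both minimal, but in $\bar\Delta_{\Lambda,\Lambda'}$ they lie on opposite sides of $\hat b_3$. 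The same kind of incompatibility occurs in (15)--(16).
\item Lemma~\ref{lem:d3 upward flag} does not help. Its hypothesis, upward flagness of $\Delta_{\Lambda,\Lambda_0}$ for $\hat b_1<\hat b_3<\hat b_2$, is exactly what is unknown. An atomic $D_3$-piece has all labels $3$, so $B$-robustness gives nothing here.
\item The paper treats (15)--(16) by applying Corollary~\ref{cor:robust0} to $\Lambda_1$, the union of the edges of $\Lambda'$ at $b_3$, viewed as a degenerate instance of Figure~\ref{fig:core}\,(3)--(4). So the convex-intersection machinery is needed there too.
\item For (17)--(19), the paper proves weak flagness of $\Delta_{\Lambda,\Lambda_0}$ and concludes with Lemma~\ref{lem:weakly flag equivalent}. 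Take a $6$-cycle of type $(\hat b_1\hat b_3)^3$. Flagness of $\bar\Delta_{\Lambda,\Lambda'}$ (Lemma~\ref{lem:big lattice}) gives a vertex $z$, of type $\hat t_z$, adjacent to its three $\hat b_1$-vertices. If $z$ is not of type $\hat b_2$ or $\hat b_3$, the cycle lies in $\lk(z,\Delta_\Lambda)$. By Lemma~\ref{lem:link} it then lies in $\Delta_{\Lambda_z,\Lambda_0}$, where $\Lambda_z$ is the component of $\Lambda\setminus\{t_z\}$ containing $\Lambda_0$, and the inductive hypothesis supplies weak flagness there.
\item Bowtie freeness of the subdivision then follows from Lemma~\ref{lem:subdivision bowtie free}.
\end{itemize}

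\textbf{The case $\Lambda_0\nsubseteq\Lambda'$.} You assert that $\Lambda_0\cap\Lambda'$ is an edge or a $D_4$-like piece. This is false in (6)--(10), where $\Lambda_0$ is $D_3$-like. Assumption~\ref{assum:1} only admits $B_m$-like ($m\ge 2$) or $D_m$-like ($m\ge 4$) subdiagrams, so Corollary~\ref{cor:robust0} cannot be applied to $\Lambda_0$. The paper proceeds as follows.
\begin{itemize}
\item Enlarge $\Lambda_0$ to a $B$- or $D$-like $\Lambda_1$, inside the smallest subtree containing $\Lambda_0\cup\Lambda'$, such that $(\Lambda_1,\Lambda')$ is in one of configurations (1)--(5).
\item Apply Corollary~\ref{cor:robust0} to $\Lambda_1$. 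This gives bowtie freeness of $\Delta_{\Lambda,\Lambda_0}$, hence of its subdivision.
\item Run the same link argument for weak flagness, using the one-sided flagness of $\bar\Delta_{\Lambda,\Lambda_1}$ in place of the flagness of $\bar\Delta_{\Lambda,\Lambda'}$.
\end{itemize}

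So the missing work is not bookkeeping in (1)--(10). It is manufacturing flagness, rather than inheriting it, in the $D_3$ cases and in (15)--(16).
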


\begin{proof}
	Let $\Lambda_0$ be a $\Lambda$-atomic $B_m$-like or $D_m$-like subdiagram. Let $\bar\Delta_{\Lambda,\Lambda_0}$ be as before. %We claim $\bar\Delta_{\Lambda,\Lambda_0}$ is bowtie free and satisfies the flagness requirements in Definition~\ref{def:good}, and $\Delta_{\Lambda,\Lambda_0}$ satisfies the labeled 4-cycle condition.
	Let $\Lambda'$ be the robust $\wtC$-core of $\Lambda$ given by Lemma~\ref{lem:robust C core}. 
	
	We first consider the case $\Lambda_0\subset \Lambda'$. If the pair $(\Lambda_0,\Lambda')$ is as in Figure~\ref{fig:core1} (11)--(14), then $\bar\Delta_{\Lambda,\Lambda_0}$ is bowtie free and satisfies the flagness requirements in Definition~\ref{def:good} by Lemma~\ref{lem:big lattice}, Proposition~\ref{prop:propagation} (1) and Proposition~\ref{prop:propagation2} (1) (2). By Lemma~\ref{lem:robust C core} (4), for each leaf vertex $s\in \Lambda'$, $\Lambda'\setminus\{s\}$ is either $\Lambda\setminus\{s\}$-atomic $B_m$-like, or (weakly) atomic $\Lambda\setminus\{s\}$-atomic $D_m$-like, or a robust $\wtC$-core of $\Lambda\setminus\{s\}$. In the last case, $\Lambda'\setminus\{s,s'\}$ is $\Lambda\setminus\{s,s'\}$-atomic $B_m$-like or $D_m$-like for any leaf vertex $s'$ of $\Lambda'\setminus\{s\}$. By Lemma~\ref{lem:weak atomic}, Proposition~\ref{prop:propagation} (3) and Proposition~\ref{prop:propagation2} (3), $\Delta_{\Lambda,\Lambda'}$ satisfies the labeled 4-cycle condition. Hence $\Delta_{\Lambda,\Lambda_0}$ satisfies the labeled 4-cycle condition. 
	
	If we are in Figure~\ref{fig:core1} (15) and (16), then $\Lambda_0\subset \Lambda_1$ where $\Lambda_1$ is the union of all edges of $\Lambda'$ containing $b_3$. Let $\bar\Delta_{\Lambda,\Lambda_1}$ be the $(b_1,b_2)$-subdivision of $\Delta_{\Lambda,\Lambda_1}$, endowed with the partial order on its vertex set which is opposite to the one in Definition~\ref{def:subdivision}. By Corollary~\ref{cor:robust0}, $(\bar\Delta_{\Lambda,\Lambda_1})^0$ is bowtie free and upward flag, and $\Delta_{\Lambda,\Lambda_1}$ satisfies the labeled 4-cycle condition, modulo checking the assumptions of this corollary. Indeed,  Figure~\ref{fig:core1} (15) and (16) correspond to the degenerate situation of Figure~\ref{fig:core} (3) and (4) where $b_1$ is a leaf vertex of $\Lambda'$, so Assumption~\ref{assum:1} holds true automatically. Assumptions 1 and 3 of Corollary~\ref{cor:robust0} are clear from the assumption of Theorem~\ref{thm:BD robust prop}. By Lemma~\ref{lem:robust C core} (4), $\Lambda'\setminus\{s\}$ in Assumption 2 of Corollary~\ref{cor:robust0} is $\Lambda\setminus\{s\}$-atomic $B_m$-like or $D_m$-like, hence this assumption holds true.
	
	If we are in Figure~\ref{fig:core1} (17)--(19), then Proposition~\ref{prop:propagation} (1), Proposition~\ref{prop:propagation2} (1) (2),  Lemma~\ref{lem:4wheel} and Remark~\ref{rmk:inherit} imply that $\Delta_{\Lambda,\Lambda_0}$ is bowtie free, hence its $(b_1,b_2)$-subdivision is bowtie free by Lemma~\ref{lem:subdivision bowtie free}. By Lemma~\ref{lem:weakly flag equivalent}, it suffices to show $\Delta_{\Lambda,\Lambda_0}$ is weakly flag. Given an embedded 6-cycle $\omega=x_1y_1x_2y_2x_3y_3$ of type $(\hat b_1\hat b_3)^3$ in $\Delta_{\Lambda,\Lambda_0}$, we aim to find a vertex of type $\hat b_3$ or $\hat b_2$ that is adjacent to each of $\{x_1,x_2,x_3\}$ in $\Delta_{\Lambda,\Lambda_0}$.
	We view vertices of $\omega$ as elements in $(\bar\Delta_{\Lambda,\Lambda'})^0$. As $(\bar\Delta_{\Lambda,\Lambda'})^0$ is flag (Lemma~\ref{lem:big lattice}), there is a vertex $z$ of type $\hat t_z$ such that $z$ is adjacent to each of $\{x_1,x_2,x_3\}$ and $t_z$ is in the connected component of $\Lambda'\setminus\{b_1\}$ that contains $b_3$. As $(\bar\Delta_{\Lambda,\Lambda'})^0$ is bowtie free and $\{x_1,x_2,x_3\}$ are pairwise distinct, $z$ is adjacent or equal to each of $\{y_1,y_2,y_3\}$ in $\bar\Delta_{\Lambda,\Lambda'}$. If $z$ is of type $\hat b_3$ or $\hat b_2$, then we are done. Suppose this is not true. Then $\omega\subset \lk(z,\Delta_{\Lambda})$. Let $\Lambda_z$ be the connected component of $\Lambda\setminus\{z\}$ that contains $\Lambda_0$. By Lemma~\ref{lem:link}, $\omega\subset \Delta_{\Lambda_z,\Lambda_0}$. By our assumption of the theorem, $\Delta_{\Lambda_z,\Lambda_0}$ satisfies the labeled 4-cycle condition and its $(b_1,b_2)$-subdivision with the partial order in Definition~\ref{def:subdivision} is downward flag. Lemma~\ref{lem:weakly flag equivalent} and Lemma~\ref{lem:4wheel} imply $\Delta_{\Lambda_z,\Lambda_0}$ is weakly flag, which gives the desired $z$. Other requirements of weakly flagness can be checked in a similar way.
	
	Now we consider the case $\Lambda_0\nsubseteq \Lambda'$. If we are in Figure~\ref{fig:core1} (1)--(5), then Corollary~\ref{cor:robust0} implies that $(\bar\Delta_{\Lambda,\Lambda_0})^0$ with the partial order in Assumption~\ref{assum:1} is bowtie free and upward flag, and $\Delta_{\Lambda,\Lambda_0}$ satisfies the labeled 4-cycle condition. Indeed, the requirement in the 5th paragraph of Assumption~\ref{assum:1} holds as each proper induced subdiagram of $\Lambda$ is atomic BD-robust. Let $\Gamma,\Gamma'$ be in the 6th paragraph of Assumption~\ref{assum:1}. Then Lemma~\ref{lem:robust C core} (1) and (3) imply that $\Gamma'$ is $\Gamma$-atomic or weakly $\Gamma$-atomic $D_k$-like. So the requirement in the 6th paragraph of Assumption~\ref{assum:1} follows from Lemma~\ref{lem:weak atomic}. Thus Assumption~\ref{assum:1} holds true. Corollary~\ref{cor:robust0} Assumption 1 is clear except the moreover part, and let $r\in \Lambda_0$ be as defined there. By Lemma~\ref{lem:robust C core} (1) (2) (3), either the connected component of $\Lambda\setminus\{r\}$ containing $\Lambda_0\setminus\{r\}$ is of type $A_n$, or $\Lambda_0\setminus\{r\}$ is contained in a (weakly) $\Lambda\setminus\{r\}$-atomic  $D_m$-like or $B_m$-like subdiagram, and the desired bowtie free condition in Corollary~\ref{cor:robust0} Assumption 1 holds true either by Theorem~\ref{thm:bowtie free} or the assumption of Theorem~\ref{thm:BD robust prop} and Lemma~\ref{lem:weak atomic}. Corollary~\ref{cor:robust0} Assumption 2 follows from Lemma~\ref{lem:robust C core} (1) (2) (3) and our assumption.
	Corollary~\ref{cor:robust0} Assumption 3 is satisfied as $b_{m-2}b_{m-1}b_m$ is atomic $D_3$-like in $\Omega'$.

	It remains to consider Figure~\ref{fig:core1} (6)--(10). Let $\Theta$ be the smallest subtree containing $\Lambda_0$ and $\Lambda'$. Then we can find $\Lambda_1\subset\Theta$ containing $\Lambda_0$ such that $\Lambda_1$ is $B_m$-like or $D_m$-like such that the pair $(\Lambda_1,\Lambda')$ belongs to Figure~\ref{fig:core1} (1)--(5). By Corollary~\ref{cor:robust0} (with $\Lambda_0$ replaced by $\Lambda_1$), $\bar\Delta_{\Lambda,\Lambda_1}$ is bowtie free and upward flag. Thus $\Delta_{\Lambda,\Lambda_0}$ is bowtie free, which implies $\bar\Delta_{\Lambda,\Lambda_0}$ is bowtie free by Lemma~\ref{lem:subdivision bowtie free}. We can prove  $\Delta_{\Lambda,\Lambda_0}$ is weakly flag in a similar way as treating Figure~\ref{fig:core1} (17)-(19), using the one sided flagness of $\bar\Delta_{\Lambda,\Lambda_1}$. 
\end{proof}

A tree Coxeter diagram is \emph{$\wtC$-elementary} if it does not contain any $\wtC$-core. If $\Lambda$ is $\wtC$-elementary, then either $\Lambda$ is linear diagram with at most one edge with label $\ge 4$, or $\Lambda$ is a union of three linear subdiagrams emanating from a common point, with all edges labeled by $3$.

\begin{thm}
	\label{thm:atomic BD robust}
	Let $\Lambda$ be a tree Coxeter diagram such that all its $\wtC$-elementary induced subdiagrams are BD-robust. Then any connected induced subdiagram of $\Lambda$ is atomic BD-robust. In particular $\Lambda$ is atomic BD-robust.
	
	Consequently, suppose all the $\wtC$-elementary Coxeter diagrams are BD-robust. Then all tree Coxeter diagrams are atomic BD-robust.
\end{thm}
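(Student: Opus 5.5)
We argue by induction on the number of vertices of a connected induced subdiagram $\Lambda_1\subset\Lambda$, proving that every connected induced subdiagram of $\Lambda$ is atomic BD-robust. The base case is a single vertex or a single edge, where there is essentially nothing to check; these are in any case $\wtC$-elementary. For the inductive step, fix a connected induced subdiagram $\Lambda_1$ with $k$ vertices and assume that all connected induced subdiagrams of $\Lambda$ with fewer than $k$ vertices are atomic BD-robust.

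The first step is to reduce to connected pieces. Every proper induced subdiagram $\Lambda_2$ of $\Lambda_1$ is a disjoint union of connected induced subdiagrams of $\Lambda$, each with fewer vertices. We claim $\Lambda_2$ is atomic BD-robust. The point is that any $\Lambda_2$-atomic $B_n$-like or $D_n$-like subdiagram $\Lambda''$ is connected, so it lies in a single component $\Lambda_2''$ of $\Lambda_2$. It is then $\Lambda_2''$-atomic, since valences of interior vertices are computed inside that component. By the observation preceding Corollary~\ref{cor:robust0}, $\Delta_{\Lambda_2,\Lambda''}\cong\Delta_{\Lambda_2'',\Lambda''}$, and the same holds for the subdivisions. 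Hence the required conditions (bowtie free, upward or downward flag, labeled 4-cycle condition) are inherited from $\Lambda_2''$, which satisfies them by induction.

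Next we split into two cases. If $\Lambda_1$ contains a $\wtC$-core, then all its proper induced subdiagrams are atomic BD-robust by the previous step, so Theorem~\ref{thm:BD robust prop} applies directly and shows that $\Lambda_1$ is atomic BD-robust. If $\Lambda_1$ contains no $\wtC$-core, then $\Lambda_1$ is $\wtC$-elementary, so it is BD-robust by hypothesis. Atomic BD-robustness is the same set of conditions restricted to atomic subdiagrams, so it follows immediately. This completes the induction, and taking $\Lambda_1=\Lambda$ (a tree, hence connected) gives the statement for $\Lambda$ itself.

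For the consequence, let $\Lambda$ be an arbitrary tree Coxeter diagram. Its $\wtC$-elementary induced subdiagrams are $\wtC$-elementary Coxeter diagrams, hence BD-robust by hypothesis, so the first part applies. The only point needing care is the inheritance argument for disconnected proper subdiagrams. One has to check that the atomic condition, which involves valences of interior vertices, and all the relevant complexes (including the subdivisions $\bar\Delta$) depend only on the component containing $\Lambda''$. This is the sole mildly nontrivial step; the rest is bookkeeping around Theorem~\ref{thm:BD robust prop}.
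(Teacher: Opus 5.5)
Your proof is correct and follows the paper's argument. The paper also uses strong induction on the number of vertices of connected induced subdiagrams, handling the $\wtC$-elementary case by the hypothesis and the case containing a $\wtC$-core by Theorem~\ref{thm:BD robust prop}. Your extra step reduces possibly disconnected proper induced subdiagrams to their components, via $\Delta_{\Lambda_2,\Lambda''}\cong\Delta_{\Lambda_2'',\Lambda''}$ and the fact that valences of interior vertices are computed inside one component. The paper leaves this point implicit: its statement $P(n)$ covers only connected subdiagrams, while Theorem~\ref{thm:BD robust prop} requires every proper induced subdiagram to be atomic BD-robust.
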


\begin{proof}
	Let $P(n)$ be the statement that any connected subdiagram of $\Lambda$ with $\le n$ vertices is atomic BD-robust. $P(1)$ is clear. Suppose $P(n)$ is true. Let $\Lambda_1\subset \Lambda$ be a connected subdiagram with $n+1$ vertices. If $\Lambda_1$ does not contains a $\wtC$-core, then $\Lambda_1$ is $\wtC$-elementary, hence it is BD-robust by our assumption. If $\Lambda_1$ contains a $\wtC$-core, then by Theorem~\ref{thm:BD robust prop} and $P(n)$, $\Lambda_1$ is atomic BD-robust. This finishes the proof.
\end{proof}

\begin{thm}
	\label{thm:tripod reduction}
	Suppose $\Lambda$ is a forest Coxeter diagram such that any induced   $\wtC$-elementary subdiagram $\Pi$ of $\Lambda$ is BD-robust and $A_\Pi$ satisfies the $K(\pi,1)$-conjecture. Then $A_\Lambda$ satisfies the $K(\pi,1)$-conjecture. 
	
	Consequently, suppose any $\wtC$-elementary Coxeter diagrams are BD-robust and the associated Artin groups satisfy the $K(\pi,1)$-conjecture. Then $A_\Lambda$ satisfies the $K(\pi,1)$-conjecture for any tree Coxeter diagram $\Lambda$.
\end{thm}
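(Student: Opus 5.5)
We argue by induction on the number of vertices of $\Lambda$, using Theorem~\ref{thm:kpi1}. If $\Lambda$ is disconnected, $A_\Lambda$ is a direct product of the Artin groups of its components, and the $K(\pi,1)$-conjecture is stable under products. So we may assume $\Lambda$ is a tree. If $\Lambda$ is $\wtC$-elementary, the conclusion is part of the hypothesis. If $A_\Lambda$ is spherical, the conclusion is Deligne's theorem \cite{deligne}. So assume $\Lambda$ is a non-spherical tree containing a $\wtC$-core.

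Every proper induced subdiagram of $\Lambda$ satisfies the hypothesis of the theorem, so by induction each $A_{\hat s}$ satisfies the $K(\pi,1)$-conjecture. By Theorem~\ref{thm:kpi1}, it then suffices to show that $\Delta_\Lambda$ is contractible.

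By Theorem~\ref{thm:atomic BD robust}, every connected induced subdiagram of $\Lambda$ is atomic BD-robust. We now choose a $\wtC$-core $\Lambda'\subset\Lambda$ and show that it is robust. Following the constructions in the proof of Lemma~\ref{lem:robust C core}, we take $\Lambda'$ so that each leaf-deleted $\Lambda'\setminus\{t\}$ is (weakly) $\Lambda\setminus\{t\}$-atomic $B$- or $D$-like, or is itself a robust core of a smaller diagram. Propositions~\ref{prop:ori link0} and \ref{prop:ori link2}, together with Lemma~\ref{lem:weak atomic}, then show that $\bar\Delta_{\Lambda,\Lambda'}$ is $\wtC$-like. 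In particular $\Delta_{\Lambda,\Lambda'}$ is contractible by Theorem~\ref{thm:contractibleII}.

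To pass from $\Delta_{\Lambda,\Lambda'}$ to $\Delta_\Lambda$ we use Lemma~\ref{lem:dr} with $T=\Lambda\setminus\Lambda'$. This requires $\Delta_{\Lambda\setminus R}$ to be contractible for every nonempty $R\subset T$. Each component of $\Lambda\setminus R$ is a proper subtree, and $\Delta_{\Lambda\setminus R}$ is the join of the Artin complexes of these components. For a join to be contractible it suffices that one factor is contractible. The component containing $\Lambda'$ still contains the core $\Lambda'$, so the same argument applied in that smaller diagram (using that its subdiagrams are atomic BD-robust) gives contractibility of $\Delta$ of that component. We organize this as an inner induction: we prove that $\Delta_\Omega$ is contractible for every connected $\Omega\subset\Lambda$ containing $\Lambda'$, by induction on $|\Omega\setminus\Lambda'|$. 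The base case $\Omega=\Lambda'$ is $\Delta_{\Lambda',\Lambda'}$, which is handled by Theorem~\ref{thm:contractibleII} as above. Lemma~\ref{lem:dr} then gives the deformation retraction of $\Delta_\Lambda$ onto $\Delta_{\Lambda,\Lambda'}$, so $\Delta_\Lambda$ is contractible.

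The main obstacle is the robustness of the core in a general tree. Lemma~\ref{lem:robust C core} produces robust cores only relative to a given atomic $\Lambda_0$. Here we must ensure that some $\wtC$-core of $\Lambda$ (or of each $\Omega$) is robust using only atomic BD-robustness of smaller diagrams. My first attempt is to take a minimal $\wtC$-core, i.e.\ one none of whose proper subdiagrams is a core. All its leaf-deletions are then $B$- or $D$-like, and the valence-preservation conditions can be arranged by shrinking the core toward the nearest branch vertex or label-$\ge 4$ edge, exactly as in the proof of Lemma~\ref{lem:robust C core}.

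The second statement of the theorem follows by applying the first to every tree $\Lambda$, since all its induced $\wtC$-elementary subdiagrams are $\wtC$-elementary Coxeter diagrams.
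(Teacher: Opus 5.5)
Your proposal is correct and is essentially the paper's proof. Both use induction together with Theorem~\ref{thm:kpi1}, atomic BD-robustness from Theorem~\ref{thm:atomic BD robust}, a robust $\wtC$-core, Theorem~\ref{thm:contractibleII}, and Lemma~\ref{lem:dr} via joins. The paper organizes the contractibility step as a single induction over all non-spherical induced subdiagrams, choosing a robust core afresh in each one. Your inner induction instead fixes one core $\Lambda'$, so it needs the extra (true) remark that $\Lambda'$ stays robust in every $\Omega\supseteq\Lambda'$: the atomicity conditions on the leaf deletions $\Lambda'\setminus\{t\}$ pass to subdiagrams, since valences can only drop.

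Your ``main obstacle'' is not one, so the minimal-core detour is unnecessary. Lemma~\ref{lem:robust C core} applies as soon as $\Lambda$ has some $\Lambda$-atomic $B$- or $D$-like subdiagram, and one always exists. An edge with label $\ge 4$ is $\Lambda$-atomic $B_2$-like, and if all labels are $3$, two edges at a vertex of valence $\ge 3$ form a $\Lambda$-atomic $D_3$-like subdiagram. This is how the paper obtains its robust core.
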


\begin{proof}
	Assume $\Lambda$ is not spherical, other the $K(\pi,1)$-conjecture for $A_\Lambda$ already follows from \cite{deligne}.	
	We claim for any induced non-spherical subdiagram $\Lambda'\subset \Lambda$, $\Delta_{\Lambda'}$ is contractible. We induct on the number of vertices in $\Lambda'$ and suppose the claim holds whenever $\Lambda'$ has $\le n$ vertices. Take an induced non-spherical subdiagram $\Lambda'$ of $\Lambda$ with $n+1$ vertices. We assume $\Lambda'$ is connected, otherwise $\Delta_{\Lambda'}$ is a join $\Delta_{\Lambda'_1}\circ\Delta_{\Lambda'_2}$ with $\Lambda'_1\subsetneq \Lambda'$ non-spherical, and the induction hypothesis implies the contractibility of $\Delta_{\Lambda'_1}$, which gives the contractibility of $\Delta_{\Lambda'}$. If $\Lambda'$ is $\wtC$-elementary, then $A_{\Lambda'}$ satisfies the $K(\pi,1)$-conjecture. Consequently, $A_{\Lambda''}$ satisfies the $K(\pi,1)$-conjecture for any induced subdiagram $\Lambda''\subset \Lambda'$ by \cite[Cor 2.4]{paris2012k}. As $\Lambda'$ is non-spherical, \cite[Thm 3.1]{paris2012k} implies that $\Delta_{\Lambda'}$ is contractible. Suppose $\Lambda'$ is not $\wtC$-elementary. By Theorem~\ref{thm:atomic BD robust}, any proper induced subdiagram of $\Lambda'$ is atomic BD-robust. As $\Lambda'$ contains a $\wtC$-core, Lemma~\ref{lem:robust C core} implies $\Lambda'$ contains a robust $\wtC$-core $\Lambda_0$. Then Theorem~\ref{thm:contractibleII} implies that $\Delta_{\Lambda',\Lambda_0}$ is contractible. As $\Lambda_0$ is not spherical, the induction hypothesis implies that for any induced subdiagram $\Lambda_1$ with $\Lambda_0\subset\Lambda_1\subsetneq \Lambda'$, $\Delta_{\Lambda_1}$ is contractible. Thus Lemma~\ref{lem:dr} implies that $\Delta_{\Lambda'}$ deformation retracts onto $\Delta_{\Lambda',\Lambda_0}$. Hence $\Delta_{\Lambda'}$ is contractible. This proves the claim. The theorem follows from this claim and Theorem~\ref{thm:kpi1} by induction on the number of vertices in $\Lambda'$.
\end{proof}

Recall the notion of special 4-cycles in $\Delta_\Lambda$ is defined in the introduction. We say a special 4-cycle is \emph{supported} on a subdiagram $\Lambda'$ of $\Lambda$ if the 4-cycle is of type $\hat s\hat t\hat s\hat t$ with $s,t\in \Lambda'$.
\begin{thm}
	\label{thmb:reduction single}
	Let $\Lambda$ be a tree Coxeter diagrams such that the following holds for any induced subdiagram $\Lambda'\subset \Lambda$ in families $F_{r,s}$, $H_{r,s}$, and $E_{r,s,t}$ in Definition~\ref{def:special}:
	\begin{enumerate}
		\item the Artin group $A_{\Lambda'}$ satisfies the $K(\pi,1)$-conjecture;
		\item any special 4-cycles in the Artin complex $\Delta_{\Lambda'}$ supported in a $B_k$-like or $D_k$-like subdiagram of $\Lambda'$ has a center, and any special 6-cycle in $\Delta_{\Lambda'}$ has a quasi-center.
	\end{enumerate}
	The $A_\Lambda$ satisfies the $K(\pi,1)$-conjecture.
\end{thm}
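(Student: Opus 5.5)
The plan is to deduce Theorem~\ref{thmb:reduction single} from Theorem~\ref{thm:tripod reduction}, applied to the tree $\Lambda$. Since the $\wtC$-elementary induced subdiagrams of a tree are exactly the induced subdiagrams lying in the families $F_{r,s}$, $H_{r,s}$, $E_{r,s,t}$ (together with their degenerate members of types $A_n$, $B_n$, $I_2(n)$, $D_n$, $E_n$, $F_4$, $H_3$, $H_4$), hypothesis (1) supplies the $K(\pi,1)$-conjecture for each such $\Pi$. Degenerate spherical members are already covered by \cite{deligne}. What remains is to show that every $\wtC$-elementary $\Pi\subset\Lambda$ is BD-robust, using only hypothesis (2).

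Fix such a $\Pi$ and a $B_n$-like or $D_{n+1}$-like subdiagram $\Lambda'\subset\Pi$. Since $\Pi$ is $\wtC$-elementary, $\Lambda'$ has all labels $3$ except possibly one terminal label $4$ or $5$.

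\textbf{Bowtie freeness and the labeled 4-cycle condition.} By Lemma~\ref{lem:4wheel}, the labeled 4-cycle condition for a $D$-like $\Lambda'$ reduces to bowtie freeness of $\Delta_{\Pi,\Lambda''}$ for its maximal linear subdiagrams $\Lambda''$, each of which is $B_k$-like or $A_k$ (hence contained in a $D$-like or $B$-like piece). I will then apply Corollary~\ref{cor:bowtie free criterion1} to the poset on $\Delta^0_{\Pi,\Lambda''}$, which is saturated. This reduces bowtie freeness to quasi-bowties $x_1y_1x_2y_2$ in which the $x_i$ share a type $\hat s$ and the $y_j$ share a type $\hat t$. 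Such a quasi-bowtie is an embedded special 4-cycle supported on $\Lambda''$. Its center from hypothesis (2) has some type $\hat u$ with $u\in\Pi$. I will move $u$ into the segment $[s,t]$, and hence make the center an element between the $x_i$ and the $y_j$, by using Lemma~\ref{lem:transitive} together with the bowtie-free links in the inductive argument of Lemma~\ref{lem:bowtie free criterion}. This is essentially the argument of \cite[Prop 6.17]{huang2023labeled}. Lemma~\ref{lem:subdivision bowtie free} then gives bowtie freeness of the $(b_1,b_2)$-subdivision.

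\textbf{Flagness.} For a $B_n$-like $\Lambda'$, Corollary~\ref{cor:upward flag criterion} reduces upward flagness to pairwise upper bounded rank-$k$ triples. I will first replace the pairwise upper bounds by bounds of maximal type $\hat s_n$. The triple together with these bounds is then a 6-cycle of type $\hat s_k\hat s_n\hat s_k\hat s_n\hat s_k\hat s_n$ inside the $B$-like subdiagram $s_k\cdots s_n$, whose base vertex is $s_k$. It is therefore a special 6-cycle, and the quasi-center from hypothesis (2) is adjacent to the three rank-$k$ vertices. If the cycle is not embedded, a common bound is immediate. If the quasi-center has type outside $\{s_{k+1},\dots,s_n\}$, I will push it into that range using Lemma~\ref{lem:transitive} and Lemma~\ref{lem:4-cycle}.

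For a $D$-like $\Lambda'$, Lemma~\ref{lem:weakly flag equivalent}, applied to the relevant $D_3$-like triples $b_1b_3b_2$, reduces downward flagness of the subdivision to weak flagness. Triples of base type $\hat b_k$ with bounds of types $\hat b_1,\hat b_2$ form exactly the special 6-cycles of the $D$-subdiagram, with the $t_i\in\{b_1,b_2\}$ allowed to repeat. Hypothesis (2) then supplies the quasi-center, and it can again be corrected to a genuine common bound as above.

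I expect the main obstacle to be this last step: controlling the type of the quasi-center, and checking that every flagness configuration is covered by a special 6-cycle, including the mixed $\hat b_1/\hat b_2$ cases. I will try to handle these with Lemma~\ref{lem:transitive} and the bowtie-free property already established.
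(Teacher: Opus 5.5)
Your treatment of the diagrams that actually lie in $F_{r,s}$, $H_{r,s}$, $E_{r,s,t}$ is essentially the paper's. You use Corollary~\ref{cor:bowtie free criterion1} for bowtie-freeness, and Corollary~\ref{cor:upward flag criterion} and Lemma~\ref{lem:subdivision bowtie free} for flagness, after relocating the center or quasi-center. The paper does the relocation by passing to $\lk(x,\Delta_{\Pi})$ via Lemma~\ref{lem:link} and reapplying hypothesis~(2) to the smaller diagram, which stays in the families.

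The gap is your opening claim that the $\wtC$-elementary trees are exactly the members of these families together with spherical degenerations. That is false:
\begin{itemize}
\item Take a linear diagram $s_1\cdots s_n$ with $n\ge 3$ and all labels $3$ except $m_{s_{n-1},s_n}=m\ge 6$ (including $m=\infty$). It contains no $\wtC$-core, so it is $\wtC$-elementary, but it lies in none of the three families. $\widetilde G_2$ is the simplest example.
\item Theorem~\ref{thm:tripod reduction} requires every such induced $\Pi\subset\Lambda$ to satisfy the $K(\pi,1)$-conjecture and to be BD-robust.
\item Hypotheses (1) and (2) say nothing about such $\Pi$. Deligne's theorem does not apply, since $\Pi$ is not spherical. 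Your assertion that every $B$- or $D$-like subdiagram of $\Pi$ has labels $3$ plus at most one terminal $4$ or $5$ also fails there.
\end{itemize}
As written, your argument does not prove the theorem even for $\Lambda=\widetilde G_2$.

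The paper treats this case separately, using input from outside the hypotheses:
\begin{itemize}
\item The $K(\pi,1)$-conjecture for $A_\Pi$ comes from \cite[Thm 1.1]{huang2023labeled}.
\item Bowtie-freeness of $\Delta_\Pi$, and hence the labeled 4-cycle condition, comes from \cite[Prop 9.8]{huangbestvina} and Theorem~\ref{thm:4 wheel}.
\item Upward flagness for a $B$-like $s_1\cdots s_n$ ending in the edge of label $\ge 6$ comes from a CAT$(0)$ argument. The complex $\Delta_{\Pi,s_{n-2}s_{n-1}s_n}$ is metrized by Euclidean triangles with angles $\pi/2,\pi/6,\pi/3$ at the vertices of types $\hat s_{n-1},\hat s_{n-2},\hat s_n$. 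It is CAT$(0)$ by Theorem~\ref{thm:4 wheel} and \cite[Prop 9.11]{huang2023labeled}.
\item For an embedded 6-cycle of alternating types $\hat s_1,\hat s_n$, the subcomplexes $X_i$ spanned by vertices adjacent to $x_i$ are convex and pairwise intersecting. An argument as in \cite[pp.~47--48]{huangbestvina} gives a common point, and Corollary~\ref{cor:upward flag criterion} finishes.
\end{itemize}
For $n=2$, i.e.\ $I_2(m)$ with $m\ge 6$, BD-robustness is trivial by girth $2m\ge 12$, but hypothesis~(2) does not cover that case either. You need to add this whole case; the special-cycle hypotheses cannot reach it.
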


\begin{proof}
	Let $\Lambda'$ be an induced $\wtC$-elementary subdiagram of $\Lambda$. By Theorem~\ref{thm:tripod reduction}, it suffices to show $\Lambda'$ is BD robust and $A_{\Lambda'}$ satisfies the $K(\pi,1)$-conjecture.
	
	First we assume $\Lambda'$ belongs to one of the families of Definition~\ref{def:special}. Then $A_{\Lambda'}$ satisfies the $K(\pi,1)$-conjecture by Assumption 1. Let $x_1x_2x_3x_4$ be a special 4-cycle in $\Delta_{\Lambda'}$ of type $\hat s\hat t\hat s\hat t$. By Assumption 2, there is a vertex $x\in \Delta_{\Lambda'}$ of type $\hat r$ which is a center for $\omega$. We claim $x$ can be chosen so $t$ is contained in the linear subdiagram from $s$ to $t$. Indeed, if this is not the case, then $\omega\subset \lk(x,\Delta_{\Lambda'})$. Let $\Lambda''$ be the connected component of $\Lambda'\setminus\{r\}$ that contains $s$ and $t$. Then $\Lambda''$ still belongs to the families of Definition~\ref{def:special}, and by Lemma~\ref{lem:link}, we can view $\omega$ as a special 4-cycle in $\Delta_{\Lambda''}$. By repeating this argument finitely many times, the claim follows. Likewise, let $x_1x_2x_3x_4x_5x_6$ be a special 6-cycle of type $\hat s\hat t_1\hat s\hat t_2\hat s\hat t_3$ as in Definition~\ref{def:special 6 cycles}, then we can assume its quasi-center is of type $\hat r$ with $r$ contained in the subdiagram $\Lambda'$ of Definition~\ref{def:special 6 cycles} (1) -- this follows from a similar argument as before (note that Lemma~\ref{lem:4-cycle} implies the quasi-center is a center up to replacing $x_2,x_4,x_6$ by vertices of the same type).
	
	By Corollary~\ref{cor:bowtie free criterion1}, $\Delta_{\Lambda',\Lambda'_0}$ is bowtie free for any linear subdiagram $\Lambda'_0$ of $\Lambda'$. Hence $\Delta_{\Lambda'}$ satisfies the labeled 4-cycle condition. Hence $\Lambda'$ is BD robust by the assumption on special 6-cycles, Corollary~\ref{cor:upward flag criterion}, and Lemma~\ref{lem:subdivision bowtie free}.
	
	It remains to consider $\Lambda'$ is $\wtC$-elementary but not belong to any families in Definition~\ref{def:special}. Then $\Lambda'$ is a linear diagram with exactly one edge with label $\ge 6$, and all other edges having label $3$. The $K(\pi,1)$-conjecture for $A_{\Lambda'}$ follows from \cite[Thm 1.1]{huang2023labeled}. By \cite[Prop 9.8]{huangbestvina} and Theorem~\ref{thm:4 wheel}, $\Delta_{\Lambda'}$ is bowtie free.
	Given a $B_n$-like subdiagram $\Lambda''=s_1s_2\cdots s_n\subset \Lambda'$ such that $s_{n-1}s_n$ has label $\ge 6$, we consider the complex $\Delta=\Delta_{\Lambda',s_{n-2}s_{n-1}s_n}$, and metrize it as a piecewise Euclidean complex such that its fundamental domain (which is a single 2-simplex) is a flat triangle with angle $\pi/2$ at vertex of type $\hat s_{n-1}$, angle $\pi/6$ at vertex of type $\hat s_{n-2}$, and angle $\pi/3$ at vertex of type $\hat s_n$. It follows from Theorem~\ref{thm:4 wheel} and \cite[Prop 9.11]{huang2023labeled} that $\Delta$ is CAT$(0)$. Given an embedded 6-cycle $x_1x_2x_3x_4x_5x_6\subset \Delta_{\Lambda,\Lambda''}$ of type $\hat s_1\hat s_n\hat s_1\hat s_n\hat s_1\hat s_n$, for $i=1,3,5$, let $X_i$ be the full subcomplex of $\Delta$ spanned by vertices that are adjacent to $x_i$ in $\Delta_\Lambda$. By \cite[Lem 9.4, Prop 9.11 (1)]{huang2023labeled} and Theorem~\ref{thm:bowtie free}, $X_i$ is a convex subcomplex of $\Delta$ in the sense of CAT$(0)$ geometry. By construction, $\{X_1,X_3,X_5\}$ pairwise intersect. By an argument similar to \cite[pp. 47-48]{huangbestvina}, $X_1\cap X_2\cap X_3\neq\emptyset$, which gives a vertex in $\Delta$ that is adjacent to each of $\{x_1,x_3,x_5\}$ in $\Delta_{\Lambda'}$. This gives the desired flagness property on $\Delta_{\Lambda,\Lambda'}$ by Corollary~\ref{cor:upward flag criterion}.
\end{proof}

\begin{thm}
	\label{thmb:npc}
	Under the assumption of Theorem~\ref{thm:reduction single}, if $\Lambda$ does not belong to the families in Definition~\ref{def:special}, then $A_\Lambda$ acts cocompactly on an injective metric space or a CAT$(0)$ space such that all the point stabilizers are isomorphic to a proper standard parabolic subgroups of $A_\Lambda$. 
\end{thm}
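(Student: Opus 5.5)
We split according to whether $\Lambda$ (a tree, as in Theorem~\ref{thm:reduction single}) contains a $\wtC$-core. The proof of Theorem~\ref{thmb:reduction single} shows that under the hypotheses of Theorem~\ref{thm:reduction single} every induced $\wtC$-elementary subdiagram of $\Lambda$ is BD-robust. Theorem~\ref{thm:atomic BD robust} then shows that every connected induced subdiagram of $\Lambda$ is atomic BD-robust; in particular every proper induced subdiagram is.

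\textbf{Case 1: $\Lambda$ contains a $\wtC$-core.} Lemma~\ref{lem:robust C core}, applied to any $\Lambda$-atomic $B_m$-like or $D_m$-like subdiagram, gives a robust $\wtC$-core $\Lambda'\subset\Lambda$. If $\Lambda$ has no such atomic subdiagram, we instead build $\Lambda'$ directly, as in that proof, from a minimal $\wtC$-core. Robust means that $\bar\Delta_{\Lambda,\Lambda'}$ is $\wtC$-like. By Theorem~\ref{thm:contractibleII} and Remark~\ref{rmk:injective}, the piecewise $\ell^\infty$ orthoscheme metric on $\bar\Delta_{\Lambda,\Lambda'}$ is then injective. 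The action of $A_\Lambda$ on $\Delta_\Lambda$ is type-preserving with a simplex as fundamental domain, so it preserves $\Delta_{\Lambda,\Lambda'}$ and its subdivision $\bar\Delta_{\Lambda,\Lambda'}$, and acts by isometries. The action is cocompact, since the fundamental domain is a single simplex subdivided into finitely many orthoschemes.

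Next we identify the point stabilizers. A point in the interior of a cell of $\bar\Delta_{\Lambda,\Lambda'}$ lies in the interior of a simplex of $\Delta_{\Lambda,\Lambda'}$ spanned by vertices of types $\hat s_1,\dots,\hat s_k$. Because the action is type-preserving, its stabilizer is the stabilizer of that simplex, namely $g(\bigcap_i A_{\hat s_i})g^{-1}=gA_{S\setminus\{s_1,\dots,s_k\}}g^{-1}$. This is a conjugate of a proper standard parabolic subgroup, hence isomorphic to one.

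\textbf{Case 2: $\Lambda$ is $\wtC$-elementary but not in the families of Definition~\ref{def:special}.} Then $\Lambda$ is linear with exactly one edge $s_{n-1}s_n$ of label $\ge 6$ and all other labels $3$. As in the last paragraph of the proof of Theorem~\ref{thmb:reduction single}, we metrize $\Delta_{\Lambda,s_{n-2}s_{n-1}s_n}$ piecewise Euclidean with triangle angles $\pi/2,\pi/6,\pi/3$. That complex is CAT$(0)$ by Theorem~\ref{thm:4 wheel} and \cite[Prop 9.11]{huang2023labeled}. The action is cocompact, and stabilizers are conjugates of proper standard parabolics by the same argument as in Case 1. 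If $n=2$ (a single edge), $\Lambda$ is dihedral. In that case $A_\Lambda$ acts on the Bass--Serre-type tree $\Delta_\Lambda$, which is CAT$(0)$ (or we simply note the statement applies with this tree).

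\textbf{Main obstacle.} The main step to verify is that a robust $\wtC$-core exists in Case 1 even without a prescribed $\Lambda_0$. We do this by choosing a minimal $\wtC$-core $\Lambda'$, so that each $\Lambda'\setminus\{t\}$ is a $\Lambda\setminus\{t\}$-atomic, or weakly atomic, $B$-like or $D$-like subdiagram. Robustness then follows from Propositions~\ref{prop:ori link0} and~\ref{prop:ori link2} together with Lemma~\ref{lem:weak atomic}, exactly as in the construction in Lemma~\ref{lem:robust C core}.
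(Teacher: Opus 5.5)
Your argument is the paper's argument. The paper makes the same split:
\begin{itemize}
\item if $\Lambda$ is not $\wtC$-elementary, it combines BD-robustness of the elementary subdiagrams (from the proof of Theorem~\ref{thmb:reduction single}), propagation, a robust $\wtC$-core from Lemma~\ref{lem:robust C core}, and the injective orthoscheme metric from Theorem~\ref{thm:contractibleII} and Remark~\ref{rmk:injective};
\item if $\Lambda$ is $\wtC$-elementary, it uses the CAT$(0)$ triangle complex $\Delta_{\Lambda,s_{n-2}s_{n-1}s_n}$.
\end{itemize}
Your checks of cocompactness and of the stabilizers $gA_{S\setminus\{s_1,\dots,s_k\}}g^{-1}$ just make explicit what the paper leaves implicit.

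Your ``main obstacle'' is not actually an obstacle. Any $\wtC$-core contains two adjacent edges. If one of them has label $\ge 4$, that edge alone is a $\Lambda$-atomic $B_2$-like subdiagram, since it has no interior vertices. If both are labeled $3$, they form a $\Lambda$-atomic $D_3$-like subdiagram, since its only interior vertex $b_3$ is exempt. So Lemma~\ref{lem:robust C core} always applies, exactly as the paper uses it. You can drop the fallback through a ``minimal'' core, which as stated is too vague anyway: minimality alone does not give the valence conditions required for atomicity.

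There is one genuine error: your sentence about $n=2$ is false. For finite $m\ge 6$, $A_{I_2(m)}$ acts on the graph $\Delta_{I_2(m)}$ with a single edge as quotient. The vertex stabilizers are $\langle a\rangle$ and $\langle b\rangle$, and the edge stabilizer is $\langle a\rangle\cap\langle b\rangle=1$. If $\Delta_{I_2(m)}$ were a tree, Bass--Serre theory would give $A_{I_2(m)}\cong\langle a\rangle*\langle b\rangle$, which fails because of the braid relation. So this graph contains cycles and is neither a tree nor CAT$(0)$; only $m=\infty$ gives a tree.

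The paper's own Case~2 reuses $\Delta_{\Lambda,s_{n-2}s_{n-1}s_n}$, so it tacitly needs $n\ge 3$ too. You were right to isolate this edge case, but it needs a different space. For example:
\begin{itemize}
\item for $m$ odd, take the Bass--Serre tree of $A_{I_2(m)}\cong\langle u,v\mid u^2=v^m\rangle$, with $u=\Pi(a,b;m)$ and $v=ab$;
\item for $m=2k$, take the Bass--Serre tree of the HNN splitting $A_{I_2(m)}\cong\langle x,a\mid [a,x^k]=1\rangle$, with $x=ab$.
\end{itemize}
In both cases the action is cocompact and every point stabilizer is infinite cyclic, hence isomorphic to the proper standard parabolic $A_{\{a\}}$.
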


\begin{proof}
	If $\Lambda$ is not $\wtC$-elementary, then any $\wtC$-elementary subdiagram of $\Lambda$ is BD robust by the proof of Theorem~\ref{thmb:reduction single}. Hence any induced subdiagram of $\Lambda$ is atomic BD-robust by Theorem~\ref{thm:BD robust prop}. Then $\Lambda$ has a robust $\wtC$-core $\Lambda'$ by Lemma~\ref{lem:robust C core}. Then Theorem~\ref{thm:contractibleII} and Remark~\ref{rmk:injective} give an injective metric on $\Delta_{\Lambda,\Lambda'}$ invariant under the action of $A_\Lambda$. If $\Lambda$ is not $\wtC$-elementary but not in the families of Definition~\ref{def:special}, then we can produce a CAT$(0)$ relative Artin complex where $A_\Lambda$ as in the proof of Theorem~\ref{thm:reduction single}.
\end{proof}

\section{Artin groups of type $ABI$}
\label{sec:ABI}
A Coxeter diagram is of type $ABI$, if every induced irreducible spherical subdiagrams is of type $A_n,B_n$ or $I_2(n)$. Our goal in this section is the following.

\begin{thm}
	\label{thm:ABI K(pi,1)}
	Let $\Lambda$ be a Coxeter diagram of type $ABI$. Then the $K(\pi,1)$-conjecture holds true for $A_\Lambda$.
\end{thm}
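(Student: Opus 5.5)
The plan follows the outline given in the introduction: combine Proposition~\ref{prop:stronger} (the tree case, with special 4-cycles having centers) with Theorem~\ref{thm:cycle 1'}, and verify the hypotheses on the level-0 tripod families directly.

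\textbf{Step 1: identify the level-0 subdiagrams.} Let $\Lambda$ be of type $ABI$. Any induced subdiagram $\Lambda'$ of a tree of type $ABI$ belonging to the families $F_{r,s}$, $H_{r,s}$, $E_{r,s,t}$ must itself be spherical of type $A$ or $B$. Indeed, $E_{r,s,t}$ with $r,s,t\ge1$ contains $D_4$, and $H_{r,s}$ with $r+s\ge1$ contains $H_3$. An $F_{r,s}$ with $r,s\ge1$ contains $F_4$. The degenerate members are exactly $A_n$ (tripods with an empty arm, and $F_{0,s}$, $H_{0,0}$ excluded as $I_2(5)$ is fine as rank two) and $B_n=F_{0,s}$. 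For these, the $K(\pi,1)$-conjecture is known \cite{fox1962braid,brieskorn2006groupes}.

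\textbf{Step 2: verify the cycle hypotheses.} For $A_n$, special 4-cycles have centers by the Crisp--McCammond result \cite[Thm 5.10]{haettel2021lattices}. Special 6-cycles (from $D_3=A_3$ or $B_2$-like subdiagrams, all inside an $A$-type diagram; for $A_n$ only the $D_3$ case arises with $s$ the middle vertex and $t_i$ the two leaves) are handled by \cite[Thm 5.6]{huang2024}. One must check that a quasi-center for a cycle of type $\hat s\hat t_1\hat s\hat t_2\hat s\hat t_3$ follows from the statement about $\hat s\hat t\hat s\hat t\hat s\hat t$ with $s,t$ noncommuting; when the $t_i$ differ, I would pass to the link/parabolic argument as in the proof of Theorem~\ref{thmb:reduction single}. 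For $B_n$, the 4-cycle and 6-cycle conditions reduce to $A_n$ via \cite[Prop 6.6]{haettel2021lattices} (Theorem~\ref{thm:triple}) and \cite[Lem 12.2]{huang2024}.

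\textbf{Step 3: pass from trees to arbitrary diagrams.} First, if some edge label is $\ge6$ or $\infty$, I reduce: by standard results, the $K(\pi,1)$-conjecture is inherited via amalgamation along $\infty$ edges. Theorem~\ref{thm:cycle 1'} needs only trees with labels $\le5$, and every such tree inside an $ABI$ diagram is again $ABI$. For each such tree, Proposition~\ref{prop:stronger} (Corollary~\ref{cor:ABI bowtie free}) gives the $K(\pi,1)$ property and centers for special 4-cycles, using Steps 1--2. However, Theorem~\ref{thm:cycle 1'} as stated quantifies over all trees, not just those in $\Lambda$. I expect this bookkeeping to be the main obstacle. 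The remedy is to use the relative form of \cite{huangbestvina}, i.e.\ Corollary~\ref{cor:combine}, which only requires trees arising from $\Lambda$ (induced subtrees, or trees built from $\Lambda$ in that reduction, which remain $ABI$). Corollary~\ref{cor:combine} then yields the $K(\pi,1)$-conjecture for $A_\Lambda$.
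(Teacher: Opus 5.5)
Your architecture matches the paper's. The proof in Section~\ref{sec:ABI} combines the relative tree reduction \cite[Thm 1.5]{huangbestvina}, which is the relative version of Theorem~\ref{thm:cycle 1'} you anticipate in Step 3, with Proposition~\ref{prop:ABI BD-robust} and Proposition~\ref{prop:ABI bowtie free}. The only cosmetic difference is that the paper checks BD-robustness of the level-$0$ pieces directly, instead of going through the cycle hypotheses of Theorem~\ref{thmb:reduction single}.

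However, the point you flag in Step 2 is not closed by the argument you propose. For a $D_3$-subdiagram $b_1b_3b_2$ (base $b_3$), a special $6$-cycle $y_1x_1y_2x_2y_3x_3$ can be mixed, say $x_1,x_2$ of type $\hat b_1$ and $x_3$ of type $\hat b_2$. Such a cycle is not of the form $\hat s\hat t\hat s\hat t\hat s\hat t$, so Theorem~\ref{thm:braid} says nothing about it. The link argument in the proof of Theorem~\ref{thmb:reduction single} only moves the type of an already existing quasi-center into $\Lambda'$. That is the right tool for the uniform cycles once Theorem~\ref{thm:braid} supplies a center, but it cannot produce a quasi-center.

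The missing input is bowtie-freeness of $\Delta_{\Lambda,b_1b_3b_2}$, which you have from Theorem~\ref{thm:bowtie free} (or from the special $4$-cycle hypothesis via Corollary~\ref{cor:bowtie free criterion1}). Order types by $\hat b_1<\hat b_3<\hat b_2$.
\begin{itemize}
\item Lemma~\ref{lem:transitive} turns $x_1<y_1<x_3$ and $x_2<y_3<x_3$ into $x_1,x_2<x_3$, so $x_1y_2x_2x_3$ is a quasi-bowtie.
\item A center $z$ satisfies $x_1,x_2\le z\le y_2$. Since $x_1\neq x_2$ have the same type, $z\neq x_1$, so $z$ has type $\hat b_3$ or $\hat b_2$.
\item Then $z\le y_2$ forces $z=y_2$. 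Hence $y_2<x_3$, and $x_3$ is a common neighbour of $y_1,y_2,y_3$.
\end{itemize}
Every mixed cycle reduces to this pattern by rotating the cycle and swapping $b_1,b_2$. The same argument works for $D_3$-subdiagrams inside $B_n$. The paper packages this step through Lemma~\ref{lem:weakly flag equivalent} (i.e.\ \cite[Lem 8.7]{huang2024}) in the proof of Proposition~\ref{prop:ABI BD-robust}.

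Two smaller corrections. First, the level-$0$ pieces of an $ABI$ tree with labels $\le 5$ are $A_n$, $B_n$ ($=F_{0,n-2}$) and $I_2(5)$ ($=H_{0,0}$). Your list in Step 1 is garbled, and $I_2(5)$ must be kept. It is harmless: $\Delta_{I_2(5)}$ is a bipartite graph of girth $10$, so it has no embedded $4$- or $6$-cycles. Second, in Step 3, amalgamation along $\infty$-edges does nothing for finite labels $\ge 6$. That sentence is unnecessary anyway, since the reduction of \cite{huangbestvina} already handles arbitrary $\Lambda$ by passing to trees with labels $\le 5$.
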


This theorem can be deduced from \cite[Thm 1.5]{huangbestvina}, Proposition~\ref{prop:ABI BD-robust} and Proposition~\ref{prop:ABI bowtie free}.

\begin{prop}
	\label{prop:ABI BD-robust}
	Let $\Lambda$ be a tree Coxeter diagram of type $ABI$ with edge labels $\le 5$. Then $\Lambda$ is atomic BD-robust, and $A_\Lambda$ satisfies the $K(\pi,1)$-conjecture.
\end{prop}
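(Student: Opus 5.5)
The plan is to deduce both conclusions from Theorem~\ref{thm:atomic BD robust} and Theorem~\ref{thm:tripod reduction}. Each induced subdiagram of $\Lambda$ is again a tree (or forest) of type $ABI$ with labels $\le 5$, so it suffices to show two things about every $\wtC$-elementary induced subdiagram $\Pi\subset\Lambda$: that $\Pi$ is BD-robust, and that $A_\Pi$ satisfies the $K(\pi,1)$-conjecture. The first step is to classify such $\Pi$. A $\wtC$-elementary tree is either linear with at most one edge labeled $\ge 4$, or a tripod with all labels $3$. A tripod contains a $D_4$ subdiagram, and a linear diagram with a $5$-edge and at least three vertices contains $H_3$; both are excluded by the $ABI$ hypothesis. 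So $\Pi$ is of type $A_n$, of type $B_n$, of type $I_2(5)$, or a linear $F_{r,s}$-type diagram with one $4$-edge. The last kind cannot contain $F_4$, so the $4$-edge is terminal and $\Pi=B_n$. Thus every $\Pi$ is spherical of type $A_n$, $B_n$ or $I_2(4),I_2(5)$, and its $K(\pi,1)$-conjecture holds by \cite{deligne}.

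The second step is BD-robustness of these spherical $\Pi$.

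\textbf{Labeled 4-cycle condition and bowtie freeness.} Theorem~\ref{thm:4 wheel} gives the labeled 4-cycle condition for $\Delta_\Pi$. Theorem~\ref{thm:bowtie free} gives bowtie freeness of $\Delta_{\Pi,\Pi'}$ for linear $\Pi'$. Lemma~\ref{lem:subdivision bowtie free} then makes every $(b_1,b_2)$-subdivision of a $D_{n+1}$-like subdiagram bowtie free. Remark~\ref{rmk:inherit} gives the labeled 4-cycle condition on relative complexes.

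\textbf{B-robustness.} The $B_k$-like subdiagrams of $\Pi$ are $B_k$ itself, inside a type $B_n$ diagram, or $I_2(m)$. For $B_k\subset B_n$ with the $4$-edge at the end, upward flagness of $\Delta_{\Pi,\Lambda'}$ should follow from Theorem~\ref{thm:triple}. I would reduce relative complexes to it via links (Lemma~\ref{lem:link}) and Corollary~\ref{cor:upward flag criterion}, checking common upper bounds of rank-$k$ triples inside links that are Artin complexes of type $B$. For $I_2(m)$ the complex is a graph of girth $2m$, and flagness is trivial or vacuous.

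\textbf{D-robustness.} The $D_{n+1}$-like subdiagrams in a linear $\Pi$ are only $D_3$-like, i.e.\ paths $b_1b_3b_2$. By Lemma~\ref{lem:d3 upward flag} (with Lemma~\ref{lem:weakly flag equivalent}), downward flagness of the subdivision reduces to weak flagness of $\Delta_{\Pi,b_1b_3b_2}$. That in turn follows from upward flagness in the order $b_1<b_3<b_2$ or its reverse, via \cite[Lem 12.2]{huang2024}.

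The main obstacle is the $A_n$ case of this flagness, and the $B_n$ case when the path $b_1b_3b_2$ avoids the $4$-edge. These need exactly the 6-cycle statement of Theorem~\ref{thm:braid}. For a pairwise upper bounded triple of type $\hat b_1$, with bounds of type $\hat b_3$, we get an embedded 6-cycle of type $(\hat b_1\hat b_3)^3$ with $b_1,b_3$ non-commuting. Theorem~\ref{thm:braid} provides a center, after passing to the component of the link that is again an $A$-type Artin complex (Lemma~\ref{lem:link}). For $B_n$, I would invoke \cite[Prop 6.6]{haettel2021lattices} and \cite[Lem 12.2]{huang2024} as the introduction indicates, reducing to the $A_n$ case.

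Having verified that all $\wtC$-elementary subdiagrams are BD-robust, Theorem~\ref{thm:atomic BD robust} gives that $\Lambda$ is atomic BD-robust. Theorem~\ref{thm:tripod reduction} then gives the $K(\pi,1)$-conjecture for $A_\Lambda$.
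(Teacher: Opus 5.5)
Your proposal is correct and follows the paper's proof. Like the paper, you reduce via Theorem~\ref{thm:atomic BD robust} and Theorem~\ref{thm:tripod reduction} to $\wtC$-elementary subdiagrams, note that these are spherical so \cite{deligne} gives the $K(\pi,1)$-conjecture, and get BD-robustness from the same inputs: Theorem~\ref{thm:bowtie free} and Lemma~\ref{lem:weakly flag equivalent}, together with the 6-cycle result \cite[Thm 5.6]{huang2024} in type $A$, and Theorem~\ref{thm:triple}, Lemma~\ref{lem:d3 upward flag} and \cite[Lem 12.2]{huang2024} in type $B$.

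Your classification is slightly more careful than the paper's claim that only $A_n$ and $B_n$ occur: you also catch the single edge $I_2(5)$, which is trivially BD-robust since its Artin complex is a bipartite graph of girth $10$.

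For $B_k$-like subdiagrams of $B_n$ you do not need links. The relative complex is an upward-closed subposet of $\Delta^0_{B_n}$, so bowtie freeness and upward flagness restrict to it directly.
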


\begin{proof}
	By Theorem~\ref{thm:atomic BD robust} and Theorem~\ref{thm:tripod reduction}, it suffices to show any $\wtC$-elementary induced subdiagram $\Lambda'$ of $\Lambda$ is BD-robust and satisfies the $K(\pi,1)$-conjecture. 
	As $\Lambda$ is type $ABI$ with edge labels $\le 5$, $\Lambda'$ is of type $A_n$ or $B_n$, hence $A_{\Lambda'}$ satisfies the $K(\pi,1)$-conjecture. In the $A_n$-case, we deduce BD-robustness from \cite[Thm 5.6]{huang2024}, Lemma~\ref{lem:weakly flag equivalent} and Theorem~\ref{thm:bowtie free}. In the $B_n$-case, we deduce BD-robustness from \cite[Lem 12.2]{huang2024}, Lemma~\ref{lem:weakly flag equivalent}, Lemma~\ref{lem:d3 upward flag}, Theorem~\ref{thm:triple} and Theorem~\ref{thm:bowtie free}.
\end{proof}

\begin{prop}
	\label{prop:ABI bowtie free}
	Let $\Lambda$ be a tree Coxeter diagram of type $ABI$ with edge labels $\le 5$. Then $\Delta_{\Lambda}$ satisfies the labeled 4-cycle condition and $\Lambda$ is $B$-robust.
\end{prop}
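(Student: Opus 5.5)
We prove both assertions together by induction on the number of vertices of $\Lambda$. The induction hypothesis gives them for every proper connected induced subdiagram, which is again a tree of type $ABI$ with labels $\le 5$. By Lemma~\ref{lem:4wheel}, the labeled 4-cycle condition for $\Delta_\Lambda$ is equivalent to $\Delta_{\Lambda,\Lambda''}$ being bowtie free for every maximal linear subdiagram $\Lambda''\subset\Lambda$. $B$-robustness requires $\Delta_{\Lambda,\Lambda''}$ to be bowtie free and upward flag for every $B_n$-like $\Lambda''$. So the task is to establish bowtie freeness of $\Delta_{\Lambda,\Lambda''}$ for linear $\Lambda''$, plus upward flagness in the $B_n$-like case.

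\textbf{Spherical or $\wtC$-elementary case.} If $\Lambda$ is spherical, it is of type $A_n$ or $B_n$, and everything follows from Theorem~\ref{thm:bowtie free}, Theorem~\ref{thm:4 wheel} and Theorem~\ref{thm:triple}, together with \cite[Lem 12.2]{huang2024} for $B_n$-like subdiagrams that are not the whole diagram. A $\wtC$-elementary tree of type $ABI$ with labels $\le 5$ is spherical, because $F_4$, $H_3$ and $D_4$ are excluded. Hence we may assume $\Lambda$ contains a $\wtC$-core.

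\textbf{Non-elementary case.} By Proposition~\ref{prop:ABI BD-robust}, every induced subdiagram is atomic BD-robust, so Lemma~\ref{lem:robust C core} applies. Take a linear or $B_n$-like $\Lambda''$.
\begin{enumerate}
\item First reduce to the atomic case. If $\Lambda''$ is not $\Lambda$-atomic, it either has a second label $\ge 4$ or passes through an interior branch vertex of valence $\ge 3$. In either situation $\Lambda''$ should sit inside a $\wtC$-core $\Lambda'$ built as in the proof of Lemma~\ref{lem:robust C core}: a $\widetilde C$-like piece between two labels $\ge 4$, or a $\widetilde B$-like piece formed by adding one branching edge.
\item For such a core, Propositions~\ref{prop:ori link0}, \ref{prop:ori link2}, \ref{prop:propagation} and \ref{prop:propagation2} show that $\Lambda'$ is robust, with the link hypotheses supplied by induction. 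Lemma~\ref{lem:big lattice} then makes $(\bar\Delta_{\Lambda,\Lambda'})^0$ bowtie free and flag.
\item From this, obtain the required properties of $\Delta_{\Lambda,\Lambda''}$ by restriction, using Remark~\ref{rmk:inherit} and Proposition~\ref{prop:propagation}(1),(3).
\item Linear subdiagrams with all labels $3$ are handled through the labeled 4-cycle condition for the cores containing them, which comes from Proposition~\ref{prop:propagation}(3) and Proposition~\ref{prop:propagation2}(3).
\item When $\Lambda''$ is atomic, atomic BD-robustness from Theorem~\ref{thm:atomic BD robust} gives exactly the required bowtie freeness and upward flagness.
\end{enumerate}

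\textbf{Main obstacle.} The hard step is a linear $\Lambda''$ that is neither contained in a single robust core nor atomic, for instance one passing through several branch vertices and several labels $4$ or $5$. I would handle this by the argument of Corollary~\ref{cor:robust0}. Join a quasi-bowtie $p_1p_2p_3p_4$ by a path inside $\bar Y_1\cap\bar Y_3$, where $\bar Y_i$ is the convex subcomplex of a suitable robust core; this path exists by Corollary~\ref{cor:convex} and Proposition~\ref{prop:existence and uniqueness}. Then shorten the path using bowtie freeness of the links, which holds by induction, until it has length $3$; its middle vertex is the required center. Splitting $\Lambda''$ at an interior branch vertex reduces to smaller linear pieces, which are covered by the induction hypothesis through Lemma~\ref{lem:link}.
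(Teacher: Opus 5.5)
You handle the spherical case correctly, and you are right that a $\wtC$-elementary $ABI$ tree with labels $\le 5$ is spherical. The non-elementary case, however, has a genuine gap.

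\textbf{Step~1 is false.} A non-atomic linear subdiagram need not lie in any $\wtC$-core, and the ones it misses are exactly the hard cases. Take $\Lambda$ to be the tripod with arms $r_3r_2r_1$, $r_3r_4r_5$, $r_3r'$, where $r_3r'$ has label $4$ and all other labels are $3$.
\begin{enumerate}
\item This is an $ABI$ tree, and its only $\wtC$-core is the $\widetilde B_3$-like $\Lambda'=r_2r_3r_4\cup r_3r'$.
\item The maximal linear subdiagram $\Omega=r_1\cdots r_5$ leaves $\Lambda'$ at both ends.
\item The labeled 4-cycle condition needs $\Delta_{\Lambda,\Omega}$ to be bowtie free, and atomic BD-robustness says nothing about this $\Omega$.
\item Splitting $\Omega$ at $r_3$ and invoking Lemma~\ref{lem:link} cannot help: an uncentered 4-cycle of type $\hat r_1\hat r_5\hat r_1\hat r_5$ lies in no vertex link.
\end{enumerate}
The same problem arises for upward flagness of a $B_n$-like $\Omega$ that has a second label $\ge 4$, or a branching edge of label $\ge 4$ at an interior vertex.

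\textbf{The Corollary~\ref{cor:robust0} fix does not apply.} That argument uses the fact that the two upper bounds $p_2,p_4$ of the quasi-bowtie are vertices of $\bar\Delta_{\Lambda,\Lambda'}$, since their types lie in $\Lambda_0\cap\Lambda'$. They therefore lie in $\bar Y_1\cap\bar Y_3$ and can be joined there. For a 4-cycle $z_1z_2z_3z_4$ of type $\hat r_5\hat r_1\hat r_5\hat r_1$, no vertex lies in the core complex. You only know that the consecutive intersections $\bar Z_i\cap\bar Z_{i+1}$ are nonempty.

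The missing step is to show that $\bar Z_1\cap\bar Z_3$ or $\bar Z_2\cap\bar Z_4$ is nonempty; after that, the argument of Claim~\ref{claim:reduce} produces the center. The paper proves this with a new extremal argument. It chooses extremal $u_i\in\bar Z_i\cap\bar Z_{i+1}$ minimizing $\sum_i d(u_i,u_{i+1})$. It then uses Proposition~\ref{prop:bestvina}, Lemma~\ref{lem:strip}, and bowtie freeness of links such as $\Delta_{\Lambda\setminus\{r'\},\Omega}$ to force two of the $u_i$ to coincide (Propositions~\ref{prop:labeled 4-cycle with C_2 subdiagram} and~\ref{prop:labeled 4-cycle}).

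\textbf{Upward flagness is not addressed.} Your outline has nothing for upward flagness of non-atomic $B_n$-like $\Omega=r_1\cdots r_m$ that are not contained in a core. The paper first shows that cores such as $(r_{k-1}\cdots r_m)\cup e'$ are robust, where $e'$ is a branching edge at an interior vertex $r_k$. It then applies the triple-intersection results, Proposition~\ref{prop:intersection} and its variant Proposition~\ref{prop:intersection2} (this is Proposition~\ref{prop:prop B-robustness}).
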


This proposition follows from Corollary~\ref{cor:ABI bowtie free}. In the rest of this section, we prove Corollary~\ref{cor:ABI bowtie free}.

\subsection{Tree diagrams with $\widetilde C_2$-like subdiagram}
\begin{prop}
	\label{prop:labeled 4-cycle with C_2 subdiagram}
	Let $\Lambda$ be a tree Coxeter diagram such that every proper induced subdiagram is $B$-robust and satisfies the labeled 4-cycle condition. We assume in addition that $\Lambda$ contains a $\wtC_2$-like subdiagram $\Lambda'=t_1t_2t_3$.
	Then $\Delta_{\Lambda}$ satisfies the labeled 4-cycle condition.
\end{prop}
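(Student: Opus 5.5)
The plan is to deduce the labeled 4-cycle condition from bowtie freeness via Lemma~\ref{lem:4wheel}. Applied with $\Lambda'=\Lambda$, that lemma reduces the statement to showing that $\Delta_{\Lambda,L}$ is bowtie free for every maximal linear subdiagram $L\subset\Lambda$. Fix such an $L=s_1\cdots s_n$ with the order $s_1<\cdots<s_n$ on the vertex set of $\Delta_{\Lambda,L}$.

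The poset on $\Delta_{\Lambda,L}$ is $r$-saturated, with rank function given by the index of the type. So by Corollary~\ref{cor:bowtie free criterion1} it suffices to find centers for quasi-bowties $x_1y_1x_2y_2$ with $r(x_1)=r(x_2)<r(y_1)=r(y_2)$. We will also use the framework of Lemma~\ref{lem:bowtie free criterion}. For a maximal element $p$, the lower set $P_{<p}$ is the vertex set of $\lk(p,\Delta_{\Lambda,L})\cong\Delta_{\Lambda\setminus\{s_n\},L\setminus\{s_n\}}$ (Lemma~\ref{lem:link}). This complex is bowtie free, because the proper subdiagram $\Lambda\setminus\{s_n\}$ satisfies the labeled 4-cycle condition, and we apply Lemma~\ref{lem:4wheel} together with Remark~\ref{rmk:inherit}. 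The same argument applied to the minimal end handles quasi-bowties whose $x_i$ have type $\hat s_1$. We are therefore reduced to the case where $x_1,x_2$ have type $\hat s_1$ and $y_1,y_2$ have type $\hat s_n$. Equivalently, we must show that a special 4-cycle $x_1y_1x_2y_2$ of type $\hat s_1\hat s_n\hat s_1\hat s_n$ has a center of type $\hat s$ with $s\in L$.

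For this key case we use the $\wtC_2$-like subdiagram $t_1t_2t_3$ and the robust $\wtC$-core machinery of Section~\ref{sec:propagation}. Because every proper subdiagram is $B$-robust, Proposition~\ref{prop:ori link0} (in the $\widetilde C$ version, with link conditions supplied by $B$-robustness of $\Lambda\setminus\{t_1\}$ and $\Lambda\setminus\{t_3\}$) shows that $\Delta_{\Lambda,t_1t_2t_3}$ is $\wtC$-like, hence locally determined by Lemma~\ref{lem:ld for Ccore}. For $i=1,2$ let $X_i$ be the full subcomplex of $\Delta_{\Lambda,t_1t_2t_3}$ spanned by neighbours of $x_i$, and similarly $Y_j$ for $y_j$. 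If $s_1$ and $s_n$ are $\{t_1t_2t_3\}$-extremal, then Corollary~\ref{cor:convex} shows that these subcomplexes are convex. By Lemma~\ref{lem:4-cycle} we may move the relevant vertices so that $X_1\cap X_2\cap Y_1$ and $X_1\cap X_2\cap Y_2$ are nonempty. Joining two such points by a normal form path inside $X_1\cap X_2$ (Proposition~\ref{prop:existence and uniqueness}) produces an edge path $\omega$ from $y_1$ to $y_2$ whose vertices are all adjacent to $x_1$ and $x_2$. We then shorten $\omega$ exactly as in the bowtie-free part of the proof of Corollary~\ref{cor:robust0}. Each interior vertex of type $\hat u_i$ is replaced, via joins in the bowtie free links $\Delta_{\Lambda\setminus\{u_i\},\cdot}$ and Lemma~\ref{lem:transitive}, by one with $u_i$ closer to $s_1$. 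This continues until $\ell=3$, which yields the desired center, and its type lies on $L$ after a further projection using Lemma~\ref{lem:transitive}.

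The main obstacle is the positioning of $t_1t_2t_3$ relative to $L$. When $s_1$ or $s_n$ is not extremal with respect to the core, for instance when $L$ passes through $t_2$, convexity from Corollary~\ref{cor:convex} does not apply directly. There we would first try to replace $t_1t_2t_3$ by a different $\wtC_2$-like or longer robust $\wtC$-core, chosen as in Lemma~\ref{lem:robust C core}, so that the endpoints of $L$ become extremal. If $L\cap\{t_1,t_2,t_3\}=\emptyset$, we would use the core only to produce a connecting path through the vertex of $L$ nearest the core. The case analysis of these configurations is where I expect most of the work.
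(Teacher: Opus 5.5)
\textbf{Genuine gap in the key case.} Your reduction is fine. Lemma~\ref{lem:4wheel}, bowtie-freeness of the links $\Delta_{\Lambda\setminus\{s_n\},L\setminus\{s_n\}}$ and $\Delta_{\Lambda\setminus\{s_1\},L\setminus\{s_1\}}$, and Lemma~\ref{lem:bowtie free criterion} used at both ends do reduce everything to finding a center for an embedded $4$-cycle $x_1y_1x_2y_2$ of type $\hat s_1\hat s_n\hat s_1\hat s_n$. This matches the paper's reduction to $4$-cycles of type $\hat r_m\hat r_1\hat r_m\hat r_1$.

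The failing step is ``by Lemma~\ref{lem:4-cycle} we may move the relevant vertices so that $X_1\cap X_2\cap Y_1$ and $X_1\cap X_2\cap Y_2$ are nonempty.'' Lemma~\ref{lem:4-cycle} only upgrades a vertex already adjacent to two opposite corners of a $4$-cycle into one adjacent to three corners. So to get a point of $X_1\cap X_2\cap Y_1$ you must already have a vertex of $\Delta_{\Lambda,t_1t_2t_3}$ adjacent to both $x_1$ and $x_2$. Applying the lemma to $x_1y_1x_2y_2$ itself, or to $5$-cycles through a point of $X_1\cap Y_1$, gives only trivial output, since adjacent vertices of the same type coincide.

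Worse, when $s_1,s_n$ are weakly separated by the core (e.g.\ both outside it), nonemptiness of $X_1\cap X_2$ or of $Y_1\cap Y_2$ is essentially equivalent, via Lemma~\ref{lem:transitive}, to the existence of the center you want. In general only one of these two diagonals can be expected to be nonempty, not specifically $X_1\cap X_2$. So the argument assumes what it must prove.

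The path-shortening from Corollary~\ref{cor:robust0} worked there only because the upper bounds $p_2,p_4$ were vertices of the core complex and so lay in $\bar Y_1\cap\bar Y_3$ automatically. Here neither diagonal pair lies in $\Delta_{\Lambda,t_1t_2t_3}$.

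\textbf{What the paper does instead.} Let $Z_1,\dots,Z_4$ be the subcomplexes of $\Delta_{\Lambda,\Lambda'}$ attached to the cycle vertices, i.e.\ your $X_1,Y_1,X_2,Y_2$ in cyclic order. One only knows that consecutive intersections $Z_i\cap Z_{i+1}$ contain extremal vertices. Choose extremal $u_i\in Z_i\cap Z_{i+1}$ minimizing the perimeter $\sum_i d(u_i,u_{i+1})$. Then show, using the Bestvina-type inequality (Proposition~\ref{prop:bestvina}), Lemma~\ref{lem:strip}, and bowtie-freeness or flagness of the complexes $\Delta_{\Lambda\setminus\{t_j\},\cdot}$ in links, that a minimal quadrilateral degenerates ($u_i=u_j$ for some $i\neq j$). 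This gives $Z_1\cap Z_3\neq\emptyset$ or $Z_2\cap Z_4\neq\emptyset$, which is the heart of the proof. It needs a case analysis by the projection of $L$ onto $t_1t_2t_3$ (the whole core, a leaf, $t_2$, or an edge).

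That case analysis also replaces your proposed swap of cores, which would not be available anyway. Lemma~\ref{lem:robust C core} needs all proper subdiagrams to be atomic BD-robust, which is not among your hypotheses. The paper keeps $t_1t_2t_3$ and checks directly that the subcomplex attached to a vertex projecting to $t_2$ still satisfies the conclusions of Corollary~\ref{cor:convex}.
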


\begin{proof}
	Let $\Omega=r_1\cdots r_m$ be a maximal linear subdiagram of $\Lambda$.  By Lemma~\ref{lem:4wheel}, it suffices to show $\Delta_{\Lambda,\Omega}$ is bowtie free. Then $\Lambda'$ is a robust $\wtC$-core of $\Lambda$. Let $\tau$ be the rank function on $(\Delta_{\Lambda,\Lambda'})^0$ such that vertices of type $\hat t_3$ are maximal.
	Consider a graph morphism $\pi:\Lambda\to\Lambda'$ sending each vertex of $\Lambda$ to the closet vertex in $\Lambda'$. Then $\pi(\Omega)$ is either $\Lambda'$ (Figure~\ref{fig:bowtie} (1)), or a leaf vertex of $\Lambda'$ (up to symmetry we assume it is $t_1$, see Figure~\ref{fig:bowtie} (2)), or $t_2$ (Figure~\ref{fig:bowtie} (3)), or an edge of $\Lambda'$ (up to symmetry we assume it is the edge $t_2t_3$, see Figure~\ref{fig:bowtie} (4)).
	
	\begin{figure}[h]
		\centering
		\includegraphics[scale=1]{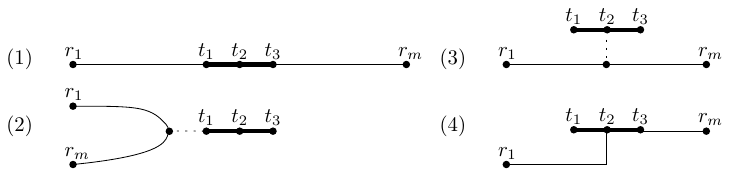}
		\caption{The dotted segments in (2) and (3) are allowed to be trivial. In (1), it is possible that $r_1=t_1$ or $t_3=r_m$. In (4), it is possible that $t_3=r_m$.}
		\label{fig:bowtie}
	\end{figure}
	
	For a vertex $z\in \Delta_\Lambda$ of type $\hat r$ with $r\notin \Lambda'$, we define $Z_z$ to be the full subcomplex of $\Delta_{\Lambda,\Lambda'}$ spanned by vertices that are adjacent to $z$. Note that $Z_z$ satisfies the conclusion of Corollary~\ref{cor:convex}. Indeed, this follows from Corollary~\ref{cor:convex} if $\pi(r)=t_1$ or $t_3$. The case of $\pi(r)=t_2$ can be proved in a similar way and we omit the details.
	
	Given vertices $s,t\in \Lambda$, let $\Lambda_{st}$ be the line segment from $s$ to $t$. The vertices $s,t\in\Lambda\setminus\Lambda'$ are \emph{weakly separated} by $\Lambda'$, if either a vertex of $\Lambda'$ separates $s$ from $t$, or $\Lambda'\cap\Lambda_{st}=\emptyset$ and the vertex of $\Lambda_{st}$ closest to $\Lambda'$ separates $s$ from $t$.
	\begin{claim}
		\label{claim:reduce}
		Suppose that for any embedded 4-cycle $z_1z_2z_3z_4$ in $\Delta_\Lambda$ of type $\hat s\hat t\hat s\hat t$ such that $s,t$ are weakly separated by $\Lambda'$, we have either $Z_{z_1}\cap Z_{z_3}\neq\emptyset$ or $Z_{z_2}\cap Z_{z_4}\neq\emptyset$. Then the proposition holds true.
	\end{claim}
	
	\begin{proof}
		If $\Lambda_0=s_1s_2\ldots s_k$ is a line segment from $s_1\in \Lambda\setminus\Lambda'$ to $s_k\in \Lambda'$, then $\Delta_{\Lambda,\Lambda_0}$ is bowtie free. Indeed, we enlarge $\Lambda_0$ if necessary, so that $s_{k-1}s_k$ is an edge of $\Lambda'$. Then this follows from Corollary~\ref{cor:robust0}, except the case $\Lambda'\subset \Lambda_0$, which can be treated using the same argument in the proof of Corollary~\ref{cor:robust0}.
		
		%By induction on $k$ and Lemma~\ref{lem:bowtie free criterion}, it suffices to show any embedded 4-cycle $y_1y_2y_3y_4$ of type $\hat s_1\hat s_k\hat s_1\hat s_k$ has a center $y$ of type $\hat t$ for some $t\in\Lambda_0$. Note that $Z_{y_1}$ and $Z_{y_3}$ are locally convex in $\Delta_{\Lambda,\Lambda'}$ and $\{y_2,y_4\}\subset Z_{y_1}\cap Z_{y_3}$. Similar to the proof of the bowtie free part of Corollary~\ref{cor:robust0}, we can find an edge path in $Z_{y_1}\cap Z_{y_3}$ from $y_2$ to $y_4$, which can be used to produce the vertex $y$. 
		
		Let $\Omega$ be as before and take $s,t\in \Omega$. By Corollary~\ref{cor:bowtie free criterion1}, we need to show any embedded 4-cycle $z_1z_2z_3z_4\subset \Delta_\Lambda$ of type $\hat s\hat t\hat s\hat t$ has a center $z$ of type $\hat r$ with $r\in \Lambda_{st}$. If $s,t$ are not weakly separated by $\Lambda'$, then this follows from the previous paragraph. Suppose $s,t$ are weakly separated by $\Lambda'$. If $Z_{z_1}\cap Z_{z_3}\neq\emptyset$, then there is a vertex $z'\in \Delta_\Lambda$ of type $\hat r'$ such that $r'\in \Lambda'$ and $w$ is adjacent $\{z_1,z_3\}$. Let $\Lambda_0$ be the line segment from $s$ to $r'$. We put the partial order on $(\Delta_{\Lambda,\Lambda_0})^0$ such that vertices of type $\hat s$ are minimal. Then $z'$ is a common upper bound for $\{z_1,z_3\}$ in $(\Delta_{\Lambda,\Lambda_0})^0$. By previous paragraph, $\{z_1,z_3\}$ have the join, denoted by $z$, of type $\hat r$. Now we consider the 4-cycle $z_1zz_3z_2$. By Lemma~\ref{lem:4-cycle}, there is a vertex $\bar z$ of the same type as $z$ such that $\bar z$ is adjacent to $\{z_1,z_3,z_2\}$. Then $\bar z$ is a common upper bound for $\{z_1,z_3\}$ in $(\Delta_{\Lambda,\Lambda_0})^0$, which implies that $z=\bar z$. Hence $z$ is adjacent to $z_2$. Similarly, $z$ is adjacent to $z_4$. If $r\in \Lambda_{st}$, then $z$ is the desired center. If $r\notin \Lambda_{st}$, then Lemma~\ref{lem:link} implies that the 4-cycle $z_1z_2z_3z_4$ is contained in $\Delta_{\Lambda\setminus\{r\},\Lambda_{st}}$, and we can conclude using the bowtie free property of  $\Delta_{\Lambda\setminus\{r\},\Lambda_{st}}$. The case of  $Z_{z_2}\cap Z_{z_4}\neq\emptyset$ can be handled in a similar way.
	\end{proof}
	
	In the rest of the proof, we show the assumption of Claim~\ref{claim:reduce} holds true. We can assume without loss of generality that $s=r_m$ and $t=r_1$, and $r_1,r_m\notin \Lambda'$. Let $Z_i=Z_{z_i}$. Let $\Xi$ be the collection of all quadruples $(u_1,u_2,u_3,u_4)$ such that each $u_i$ is an extremal vertex in $Z_i\cap Z_{i+1}$ for $i\in \mathbb Z/4\mathbb Z$. For each element in $\Xi$, we consider the quantity
	\begin{equation}
		\label{eq:ds2}
		d(u_1,u_2)+d(u_2,u_3)+d(u_3,u_4)+d(u_4,u_1).
	\end{equation}
	Let $\Xi_{\min} \subset \Xi$ be the collection of elements in $\Xi$ such that \eqref{eq:ds2} is minimized.

	\begin{claim}
		\label{claim:identical}
		Suppose we are in Figure~\ref{fig:bowtie} (2)-(4). Then there exists $(u_1,u_2,u_3,u_4)\in \Xi_{\min}$ such that $u_i=u_{j}$ for some $1\le i\neq j\le 4$. In particular, $Z_1\cap Z_3\neq\emptyset$ or $Z_2\cap Z_4\neq\emptyset$.
	\end{claim}

	We need a preparatory claim before proving Claim~\ref{claim:identical}.
	Given $(u_1,u_2,u_3,u_4)\in \Xi$.
	Let $\omega=y_1y_2\cdots y_k$ be a local normal form path from $u_2$ to $u_3$. Then $\omega\subset Z_3$ by Corollary~\ref{cor:convex}. For each $i$, let $y^{a_i}_{i}y^{a_i-1}_{i}\cdots y^1_{i}$ be the local normal form path form $y_i=y^{a_i}_{i}$ to $y^1_{i}=u_1$, which exists by Proposition~\ref{prop:existence and uniqueness}. These local normal form paths are also normal form paths by Proposition~\ref{prop:local vs global}.
	
	\begin{claim}
		\label{claim:decrease}
		Suppose we are in Figure~\ref{fig:bowtie} (2)-(4).	Suppose $\{u_1,u_2,u_3\}$ are pairwise distinct.
		If $d(y_2,u_1)\le d(u_2,u_1)$, then we can replace $u_2$ by another extremal vertex $u'_2\in Z_2\cap Z_3$ such that $$d(u'_2,u_3)+d(u'_2,u_1)<d(u_2,u_3)+d(u_2,u_1).$$ 
	\end{claim}
	
	\begin{proof}
		We only treat the case $d(y_2,u_1)=d(u_2,u_1)$ as the case $d(y_2,u_1)<d(u_2,u_1)$ is similar and simpler. If $\tau(u_2)=3$, then $y_2<y_1=u_2$ and Lemma~\ref{lem:strip} implies that $y^i_2\le y^i_1$ for $1\le i\le a_1$, see Figure~\ref{fig:coner3} (2). Similarly, if $\tau(u_1)=1$, then $y^i_2\ge y^i_1$ for $1\le i\le a_1$. See Figure~\ref{fig:coner3} (1). In the latter case by the second paragraph of the proof of Lemma~\ref{lem:corner1}, $y^{a_2}_2<y^{a_2-1}_2$ (the argument relies on the existence of $y^{a_1-2}_1$, however, if $y^{a_1-2}_1$ does not exist, then $y^{a_1-1}$ is maximal and $y^{a_1-1}_1=y^{a_2-1}_2$, hence  $y^{a_2}_2<y^{a_2-1}_2$ is clear).

		\begin{figure}[h]
			\centering
			\includegraphics[scale=1.2]{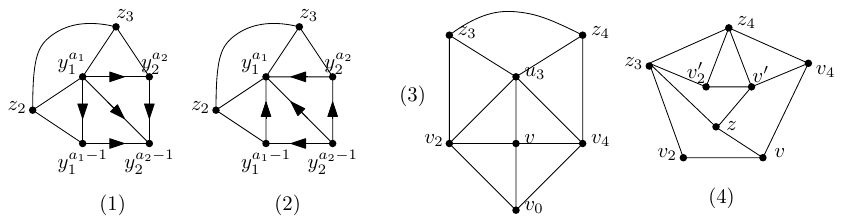}
			\caption{The arrows indicate poset relations.}
			\label{fig:coner3}
		\end{figure}

		If we are in Figure~\ref{fig:bowtie} (2) and $\tau(u_2)=1$, then by Lemma~\ref{lem:transitive} $z_2\sim y^{a_2-1}_2$ and $z_3\sim y^{a_2-1}_2$, as $y^{a_2-1}_2$ is of type $\hat t_3$, and $t_1$ separates $t_3$ from $\{r_1,r_m\}$. So $y^{a_2-1}_2$ is an extremal vertex in $Z_2\cap Z_3$. The claim follows by taking $u'_2=y^{a_2-1}_2$. If $\tau(u_2)=3$, then we deduce from $z_3\sim y^{a_2}_2$ and $y^{a_2}_2\sim y^{a_2-1}_2$ that $z_3\sim y^{a_2-1}_2$. Similarly, $z_2\sim y^{a_2-1}_2$. Thus we can take $u'_2=y^{a_2-1}_2$ as before. The cases of Figure~\ref{fig:bowtie} (3) can be treated again by taking $u'_2=y^{a_2-1}_2$.
		
		If we are in Figure~\ref{fig:bowtie} (4) and $\tau(u_2)=3$, then $z_3\sim y^{a_1}_1$ and $y^{a_1}_1\sim y^{a_2-1}_2$ imply $z_3\sim y^{a_2-1}_2$, and $z_2 \sim y^{a_1-1}_1$ and $y^{a_1-1}_1\sim y^{a_2-1}_2$ imply $z_2\sim y^{a_2-1}_2$. Thus we take $u'_2=y^{a_2-1}_2$. Suppose $\tau(u_2)=1$. Then $z_2\sim y^{a_2-1}_2$. Consider the 4-cycle $y^{a_2-1}_2y^{a_2}_2z_3z_2$ in $\lk(y^{a_1}_1,\Delta_\Lambda)$. By Lemma~\ref{lem:link}, we view $y^{a_2-1}_2y^{a_2}_2z_3z_2$ as a 4-cycle in $\Delta_{\Lambda\setminus\{t_1\},\Omega}$, which is bowtie free. By consider the join of $y^{a_2}_2$ (which is of type $\hat t_2$) and $z_2$ (which is of type $\hat r_1$) in $(\Delta_{\Lambda\setminus\{t_1\},\Omega})^0$ (assume vertices of $\hat r_m$ are maximal), we deduce that either $z_3\sim y^{a_2-1}_2$, in which case take $u'_2=y^{a_2-1}_2$, or $z_2\sim y^{a_2}_2$. As $y^{a_2}_2$ is not extremal, $y^{a_3}_3$ exists and it is of type $\hat t_2$. As $t_2$ separates $r_1$ from $t_1$, Lemma~\ref{lem:transitive} implies that $z_2\sim y^{a_3}_3$. Thus $y^{a_3}_3$ is an extremal vertex in $Z_2\cap Z_3$ and we take $u'_2=y^{a_3}_3$.
	\end{proof}
	
	\begin{proof}[Proof of Claim~\ref{claim:identical}]
		Take $(u_1,u_2,u_3,u_4)\in \Xi_{\min}$. Assume $\{u_1,u_2,u_3,u_4\}$ are pairwise distinct. We will either deduce a contradiction, which implies two of them must be the same, or we will replace one of $u_i$ to obtain another element in $\Xi_{\min}$ satisfying Claim~\ref{claim:identical}.
		For $i=2,4$, let $v_i$ be the vertex in the normal from path from $u_i$ to $u_3$ that is adjacent to $u_3$ ($v_2=y_{k-1}$). 
		Claim~\ref{claim:decrease} and Proposition~\ref{prop:bestvina} imply that $d(u_3,u_1)>d(v_2,u_1)$. Similarly, $d(u_3,u_1)>d(v_4,u_1)$. By Corollary~\ref{cor:convex}, $v_4\in Z_4$. Let $u_3vv_0$ be the first three vertices in the normal form path from $u_3$ to $u_1$ (which exists by Proposition~\ref{prop:existence and uniqueness}), see Figure~\ref{fig:coner3} (3).
		
		If we are in Figure~\ref{fig:bowtie} (2) with $\tau(u_3)=3$, then $v_2<u_3$ and $v_4<u_3$. As both $v_2$ and $v$ are in geodesics from $u_1$ to $u_3$, $v<u_3$. So $v>v_0$. As $d(u_1,v_2)<d(u_1,u_3)$, Lemma~\ref{lem:strip} implies that $v_2\le v$. Similarly, $v_4\le v$. Thus $v_i<v_0$ for $i=2,4$. As $z_3\sim v_2$ and $v_2\sim v_0$, Lemma~\ref{lem:transitive} implies that $z_3\sim v_0$. Similarly, $z_4\sim v_0$. Let $v'_0$ be an extremal vertex in $\Delta_{\Lambda,\Lambda'}$ with $v'_0\ge v_0$. Then $v'_0\sim z_i$ for $i=3,4$, and $d(v'_0,u_1)=d(u_3,u_1)-2$. Moreover, $d(u_2,v'_0)\le d(u_2,u_3)$ and $d(u_4,v'_0)\le d(u_4,u_3)$. So if we replace $u_3$ by $v'_0$, the quantity \eqref{eq:ds2} will not increase. If $v'_0=u_1$, then we are done. Now we assume $v'_0\neq u_1$. By the choice of $\{u_i\}_{i=1}^4$, $d(u_2,v'_0)=d(u_2,u_3)$ and $d(u_4,v'_0)= d(u_4,u_3)$. 
		Let $y'_1y'_2$ be the first two vertices in the local normal form path from $u_2$ to $v'_0$. As $d(v'_0,u_1)=d(u_3,u_1)-2$, by Proposition~\ref{prop:bestvina}, $d(u_1,y'_1)<d(u_2,y'_2)$ is impossible. Claim~\ref{claim:decrease} implies that we can replace $u_2$ to decrease \eqref{eq:ds2}, which is a contradiction. 
		Suppose $\tau(u_3)=1$. Then $u_3<v_4$. As $z_3\sim u_3$ and $u_3\sim v_4$, we have $z_3\sim v_4$. Take $v'_4$ to be an extremal vertex in $\Delta_{\Lambda,\Lambda'}$ with $v'_4\ge v_4$. Then $v'_4\sim z_3$, $v'_4\sim z_4$, $d(v'_4,u_4)=d(u_3,u_4)-1$ and $d(v'_4,u_2)\le d(u_3,u_2)+1$.  By the choice of $\{u_i\}_{i=1}^4$, $d(v'_4,u_2)=d(u_3,u_2)+1$. We replace $u_3$ by $v'_4$. If $v'_4=u_4$ or $u_1$, then we are done. Suppose $v'_4\neq u_4$ and $v'_4\neq u_1$. As $v_4$ is in a geodesic (which is an up-down path) from $u_1$ to $u_3$, so is $v'_4$. Hence $d(u_1,v'_4)=d(u_1,u_3)-1$. Let $y'_1y'_2$ be the first two vertices in the local normal form path from $u_2$ to $v'_0$. Again by Proposition~\ref{prop:bestvina} $d(u_1,y'_1)<d(u_2,y'_2)$ is impossible, and we reach a contradiction.
		
		Suppose we are in Figure~\ref{fig:bowtie} (4) and $\tau(u_3)=3$. Then $z_3\sim u_3$, $u_3\sim v_4$, $u_3$ being of type $\hat r_m$ and $u_3>v_4$ imply that $z_3\sim v_4$. Then we finish in the same way as the previous paragraph. Suppose $\tau(u_3)=1$. Then $v\le v_2$. We deduce from $z_3\sim v_2$ and $v_2\sim v$ that $z_3\sim v$. If $v=v_4$ then $z_3\sim v_4$ and we finish as before. Suppose $v\neq v_4$. Then $\tau(v)=2$ and $\tau(v_4)=3$.
		Consider the 4-cycle $vv_4z_4z_3$ in $\lk(u_3,\Delta_\Lambda)$. We can view cycle as in $\Delta_{\Lambda\setminus\{t_1\},\Omega}$ by Lemma~\ref{lem:link}. As $\Delta_{\Lambda\setminus\{t_1\},\Omega}$ is bowtie free, by considering the meet of $z_3$ and $v_4$ in $(\Delta_{\Lambda\setminus\{t_1\},\Omega})^0$ (suppose vertices of type $\hat r_1$ are minimal), either $z_3\sim v_4$ or $z_4\sim v$. In the former case, we conclude as before. In the latter case, $v\in Z_3\cap Z_4$. As $\tau(v)=2$ and $u_3<v$, we have $v_0<v$ and $\tau(v_0)=1$. Lemma~\ref{lem:transitive} implies $v_0\in Z_3\cap Z_4$ and we conclude as in the previous paragraph.
		
		Suppose we are in Figure~\ref{fig:bowtie} (3). Then cases of $\tau(u_3)=3$ and $\tau(u_3)=1$ are symmetric, so we only treat $\tau(u_3)=3$. Consider $z_3z_4v_4vv_2$ in $\lk(u_3,\Delta)$. See Figure~\ref{fig:coner3} (4) for the following discussion. By Lemma~\ref{lem:5cycle}, there exist $v'_2,v'$ of the same type as $v_2,v$ respectively such that $z_3\sim v'_2$, $v'_2\sim v'$, $v'\sim v_4$ and $z_4\sim\{v'_2,v'\}$. We consider $z_3v'_2v'v_4vv_2$, viewed as in $\Delta_{\Lambda\setminus\{t_3\},\Omega'}$, where $\Omega'$ is the line segment from $r_1$ to $t_1$. We assume vertices of type $\hat t_1$ are maximal in $(\Delta_{\Lambda\setminus\{t_3\},\Omega'})^0$. Then $(\Delta_{\Lambda\setminus\{t_3\},\Omega'})^0$ is bowtie free and upward flag. Note that $\{v',v,z_3\}$ are pairwise upper bounded in $(\Delta_{\Lambda\setminus\{t_3\},\Omega'})^0$, hence they have a common upper bound, denoted by $z$. We can assume $z$ is of type $\hat t_1$. Now consider $zv'v_4v$ in $(\Delta_{\Lambda\setminus\{t_3\},\Omega'})^0$. Note that $z$ and $v_4$ are common upper bounds for $\{v',v\}$. It follows from bowtie free property of $(\Delta_{\Lambda\setminus\{t_3\},\Omega'})^0$ that either $z=v_4$ or $v'=v$. In the former case, $z_3\sim v_4$ and we finish as before. In the latter case, $z_4\sim v$. Note that $z_4$ has type $\hat r_1$, $v$ has type $\hat t_1$ or $\hat t_2$, and $v\ge v_2$, we deduce that $z_4\sim v_2$, which is symmetric to the case of $z_3\sim v_4$. 
	\end{proof}
	
	In the remainder the proof, we are in the case of Figure~\ref{fig:bowtie} (1). If $d(u_2,u_3)=1$, then $Z_2\cap Z_4\neq\emptyset$ (indeed, if $u_2<u_3$, then $z_2\sim u_4$; if $u_2>u_3$, then $u_3\sim z_2$). If $d(u_2,u_3)=2$ and $\tau(u_2)=1$, then $y_1<y_2>y_3$ and consequently $y_2\in Z_2\cap Z_4$. By Claim~\ref{claim:reduce}, we will assume $d(u_2,u_3)\ge 2$ if $\tau(u_2)=3$ and $d(u_2,u_3)\ge 3$ if $\tau(u_2)=1$. By symmetry, we also assume $d(u_4,u_3)\ge 2$ if $\tau(u_4)=1$ and $d(u_4,u_3)\ge 3$ if $\tau(u_4)=3$. Take $(u_1,u_2,u_3,u_4)\in \Xi_{\min}$.
	
	\begin{claim}
		\label{claim:decrease2}
		Under such assumptions, if $\tau(u_2)=3$, then there is a geodesic path $\omega'=w_1w_2\cdots w_k\subset Z_3$ from $u_2$ to $u_3$ such that the function $f(i)=d(w_i,u_1)$ is strictly increasing in $[2,k]$. If $\tau(u_2)=1$, then $d(y_i,u_1)$ is strictly increasing in $[3,k]$.
	\end{claim}
	
	\begin{proof}
		Suppose $\tau(u_2)=3$. Then
		we deduce from $z_2\sim y^{a_2}_2$, $y^{a_2}_2\sim y^{a_2-1}_2$ and Lemma~\ref{lem:transitive} that $z_3\sim y^{a_2-1}_2$. Similarly $z_3\sim y^{a_1-1}_1$. Thus $y^{a_1-1}_1\in Z_2\cap Z_3$.
		If $y^{a_1-1}_1$ is extremal, then replacing $u_2$ by $y^{a_1-1}_1$ decreases \eqref{eq:ds2}. So $y^{a_1-1}_1$ is not extremal. Then $\tau(y^{a_1-1}_1)=2$. Moreover, $y^{a_1-2}_1$ exists and $\tau(y^{a_1-2}_1)=3$.

		First we consider the case $d(y_2,u_1)<d(u_2,u_1)$. We aim to show $d(y_3,u_1)>d(y_2,u_1)$, then the claim follows from Proposition~\ref{prop:bestvina}. We first treat the subcase $d(y_3,u_1)=d(y_2,u_1)$. Then $y^{a_i-1}_i>y^{a_i}_i$  for $i=2,3$. Note that $y^{a_2}_2<y^{a_3}_3$ (as $\omega$ is an up-down path), thus $\tau(y^{a_3}_3)=\tau(y^{a_1-1}_1)=2$, and $\tau(y^{a_2}_2)=1$.
		If $y^{a_3-1}_3=y^{a_1-2}_1$, then we consider the 4-cycle $z_3y^{a_3}_3y^{a_3-1}_3y^{a_1-1}_1$, viewed as in $\Delta_{\Lambda\setminus\{t_1\},\Omega}$, where $\Omega$ is the line segment from $t_2$ to $r_m$. Since $d(y^{a_1}_1,y^{a_3}_3)=2$, $y^{a_3}_3\neq y^{a_1-1}_1$. Then we deduce from the bowtie free property of $\Delta_{\Lambda\setminus\{t_1\},\Omega}$ that $z_3\sim y^{a_1-2}_1$. Thus $y^{a_1-2}_1$ is an extremal vertex in $Z_2\cap Z_3$. Then replacing $u_2$ by $y^{a_1-2}_1$ decreases \eqref{eq:ds2}. Now we assume  $y^{a_3-1}_3\neq y^{a_1-2}_1$. By considering the 6-cycle $z_3y^{a_3}_3y^{a_3-1}_3y^{a_2-1}_2y^{a_1-2}_1y^{a_1-1}_1$ in $\lk(y^{a_2}_2,\Delta_\Lambda)$, we know $\{z_3,y^{a_3-1}_3,y^{a_1-2}_1\}$ is pairwise upper bounded in $(\Delta_{\Lambda\setminus\{t_1\},\Omega})^0$ (assume vertices of type $\hat t_2$ are maximal). Thus there is a vertex $z$ of type $\hat t_2$ such that $z\sim \{z_3,y^{a_3-1}_3,y^{a_1-2}_1,y^{a_2}_2\}$. If $z\neq y^{a_1-1}_1$, then we consider the 4-cycle $y^{a_1-1}_1z_3zy^{a_1-2}_1$ in $\Delta_{\Lambda\setminus\{t_1\},\Omega}$ and deduce that $z_3\sim y^{a_1-2}_1$, and conclude as before. If  $z= y^{a_1-1}_1$, then $z\neq y^{a_3}_3$. Consider the 4-cycle $zz_3y^{a_3}_3y^{a_3-1}_3$ in $\Delta_{\Lambda\setminus\{t_1\},\Omega}$, we deduce $z_3\sim y^{a_3-1}_3$. As $z_2\sim z$ and $z\sim y^{a_3-1}_3$, we deduce from Lemma~\ref{lem:transitive} that $z_2\sim y^{a_3-1}_3$. Thus $y^{a_3-1}_3$ is an extremal vertex in $Z_2\cap Z_3$. Replacing $u_2$ by $y^{a_3-1}_3$ decreases \eqref{eq:ds2}. This finishes the subcase $d(y_3,u_1)=d(y_2,u_1)$. The subcase $d(y_3,u_1)<d(y_2,u_1)$ is similar and simpler, so we omit the details.
		
		Next we consider the case $d(y_2,u_1)=d(u_2,u_1)$. As $y^{a_1}_1>y^{a_2}_2>y^{a_2-1}_2$, $\tau(y^{a_2-1}_2)=1$. Note that $z_3\sim y^{a_2-1}_2$. We set $w_1=z_3$ and $w_2=y^{a_2-1}_2$. Let $w_2w_3\cdots w_k$ be the local normal form path from $w_2$ to $u_3$. As $w_2\in Z_3$, we know $w_2\cdots w_k\subset Z_3$ by Corollary~\ref{cor:convex}. By the same argument in the previous paragraph (with the roles of $y_2,y_3$ replaced by $w_2,w_3$, and $y^{a_3-1}_3$ replaced by the second vertex in the local normal form path from $w_3$ to $u_1$), we know $d(w_3,u_1)>d(w_2,u_1)$. Hence the claim follows from Proposition~\ref{prop:bestvina}.
		
		Now assume $\tau(u_2)=1$. If $d(y_2,u_1)>d(y_1,u_1)$, then the claim follows from Proposition~\ref{prop:bestvina}.
		If $d(y_2,u_1)<d(y_1,u_1)$, then $z_2\sim y_2$ and replacing $u_2$ by $y_2$ decreases \eqref{eq:ds2}. It remains to consider $d(y_2,u_1)=d(y_1,u_1)$. As $y^{a_2-1}_2>y_2>y_3$, $d(y_3,u_1)\le d(y_2,u_1)$. However, $d(y_3,u_1)<d(y_2,u_1)$ is ruled out by Proposition~\ref{prop:local vs global} and Definition~\ref{def:normal}. So $d(y_2,u_1)=d(y_3,u_1)$.
		Next we prove $d(y_4,u_1)>d(y_3,u_1)$. We first rule out $d(y_4,u_1)=d(y_3,u_1)$. Lemma~\ref{lem:transitive} implies that $z_2\sim y^{i}_2$ for $a_2-2\le i\le a_2$. As $y_3<y_4<y^{a_4-1}_4$, $\tau(y_4)=2$. Thus $y_5$ exists and $y_5<y_4$. Moreover, we argue as before to deduce $d(y_5,u_1)=d(y_4,u_1)$. As $y_1<y_2>y_3<y_4$, Lemma~\ref{lem:strip} implies that $y^{a_1-1}_1\le y^{a_2-1}_2\le y^{a_3-1}_3\le y^{a_4-1}_4$. 
		Note that $y^{a_2-1}_2\neq y^{a_3-1}_3$, indeed, if this is not the case, as $y_1<y_2<y^{a_2-1}_2$, we have $\tau(y^{a_2-1}_2)=3$; hence from $y^{a_2-1}_2= y^{a_3-1}_3$ and $y^{a_3-1}_3\le y^{a_4-1}_4$ we deduce $y^{a_2-1}_2=y^{a_4-1}_4$. 
		This implies $d(y_1,y_5)=4$ by considering $y_1y^{a_2-1}_2y_5$, contradicting that $y_1\cdots y_k$ is a normal form path (Proposition~\ref{prop:local vs global}). Now we consider $z_2y_2y_3y^{a_3-1}_3y^{a_3-2}_3y^{a_2-2}_2$ in $\lk(y^{a_2-1}_1,\Delta_\Lambda)$. Let $\Omega'$ be the line segment from $r_1$ to $t_2$. Then Lemma~\ref{lem:link} implies that we can view $z_2y_2y_3y^{a_3-1}_3y^{a_3-2}_3y^{a_2-2}_2$ as in $\Delta_{\Lambda\setminus\{t_3\},\Omega'}$. Thus there is a vertex $z$ of type $\hat t_2$ such that $z\sim \{y^{a_2-1}_2,z_2,y_3,y^{a_3-2}_3\}$. By considering $zy_3y^{a_3-1}_3y^{a_3-2}_3$, we deduce from the bowtie free property of  $\Delta_{\Lambda\setminus\{t_3\},\Omega'}$ that $z=y^{a_3-1}_3$. This gives a 4-cycle $z_2y_2y_3z$ in $\Delta_{\Lambda\setminus\{t_3\},\Omega'}$. By bowtie free again, $z_2\sim y_3$. Thus replacing $u_2$ by $y_3$ decreases \eqref{eq:ds2}. This rules out $d(y_4,u_1)=d(y_3,u_1)$. We can rule out $d(y_4,u_1)<d(y_3,u_1)$ by a similar argument. Thus $d(y_4,u_1)>d(y_3,u_1)$ and the claim follows from Proposition~\ref{prop:bestvina}.
	\end{proof}
	
	To conclude the proof in the case of Figure~\ref{fig:bowtie} (1), we argue in the same way as the proof of Claim~\ref{claim:identical}, using Claim~\ref{claim:decrease2}, to replace $u_3$ by another extremal vertex in $Z_3\cap Z_4$ such that the new quadruple is still in $\Xi_{\min}$, but $d(u_1,u_3)$ decreases. Repeating this procedure for finitely many times we end up in one of the situations discussed right before Claim~\ref{claim:decrease2}. 
\end{proof}

\subsection{Labeled 4-cycle property for type $ABI$ diagrams}

\begin{lem}
	\label{lem:widetilde b3 core}
	Let $\Lambda$ be a tree Coxeter diagram such that each proper induced subdiagram of $\Lambda$ is atomic BD-robust. Let $\Lambda'$ be a $\widetilde B_3$-like subdiagram with its vertex set labeled as Figure~\ref{fig:BD}. Then the $(b_1,b_2)$-subdivision of $\Delta_{\Lambda,\Lambda'}$ is $\wtC$-like. In particular, this holds when $\Lambda$ is a tree Coxeter diagram of type $ABI$ with edge labels $\le 5$.
\end{lem}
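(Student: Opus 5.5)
The plan is to apply Proposition~\ref{prop:ori link0} with $n=3$. Here $\Lambda'$ has vertices $b_1,b_2,b_3,b_4$, with $b_3$ the center and $b_3b_4$ labeled $\ge 4$. Once the two hypotheses of that proposition are verified, its conclusion says that the $(b_1,b_2)$-subdivision satisfies all the assumptions of Theorem~\ref{thm:contractibleII}. Simple connectivity comes from Lemma~\ref{lem:sc}, and with it the subdivision is $\wtC$-like. So the work reduces to checking the hypotheses on the links $\Lambda_i,\Lambda'_i$ for $i=1,2,4$.

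\textbf{Hypothesis (2), $i=1,2$.} The subdiagram $\Lambda'_i$ is the linear diagram $b_jb_3b_4$ with $j\neq i$, and $b_3b_4$ has label $\ge 4$. It is therefore a $B_3$-like subdiagram of the proper subdiagram $\Lambda_i\subset\Lambda\setminus\{b_i\}$, and B-robustness would give that $\Delta_{\Lambda_i,\Lambda'_i}$ is bowtie free and upward flag for the order $b_j<b_3<b_4$. The catch is that we only know atomic BD-robustness: the labels on $b_jb_3$ may exceed $3$, and the valence of $b_3$ in $\Lambda_i$ is unrestricted. For $B_3$ the atomicity condition only constrains interior vertices other than $s_2$ and $s_{n-1}$, and here $b_3=s_2=s_{n-1}$, so valence is no issue. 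The only remaining obstruction is the label of $b_jb_3$.
\begin{itemize}
\item If $b_jb_3$ has label $3$, the subdiagram is $\Lambda_i$-atomic and we are done.
\item If $b_jb_3$ has label $\ge 4$, then $b_jb_3b_4$ is $\widetilde C_2$-like and hence a $\wtC$-core of $\Lambda_i$. Removing either leaf leaves an edge of label $\ge 4$, which is atomic $B_2$-like, and the links are handled by atomic robustness of smaller diagrams. This makes $\Delta_{\Lambda_i,\Lambda'_i}$ $\wtC$-like, which is the argument used in the proof of Lemma~\ref{lem:robust C core}. Lemma~\ref{lem:big lattice} then gives bowtie freeness and flagness, in particular upward flagness.
\end{itemize}

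\textbf{Hypothesis (1), $i=4$.} The subdiagram $\Lambda'_4=b_1b_3b_2$ is $D_3$-like. We need its $(b_1,b_2)$-subdivision in $\Lambda_4$ to be bowtie free and downward flag. The cases depend on the labels of $b_1b_3$ and $b_2b_3$.
\begin{itemize}
\item If both labels are $3$, the subdiagram is $\Lambda_4$-atomic $D_3$-like, and D-robustness gives the claim.
\item If exactly one label exceeds $3$, the subdiagram is weakly atomic, and Lemma~\ref{lem:weak atomic} applies (its hypothesis concerns subdiagrams of $\Lambda_4$, which are proper in $\Lambda$).
\item If both labels exceed $3$, then $b_1b_3b_2$ is $\widetilde C_2$-like. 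As in hypothesis (2), $\Delta_{\Lambda_4,\Lambda'_4}$ is $\wtC$-like, and hence bowtie free and upward flag for the order $b_1<b_3<b_2$. Lemma~\ref{lem:d3 upward flag} converts this into bowtie freeness and downward flagness of the $(b_1,b_2)$-subdivision.
\end{itemize}
Proposition~\ref{prop:ori link0} then finishes the main statement.

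\textbf{The $ABI$ case.} For the final sentence, it suffices to show that every proper induced subdiagram of an $ABI$ tree diagram with labels $\le 5$ is atomic BD-robust. Such a subdiagram is again $ABI$ with labels $\le 5$, so this follows from Proposition~\ref{prop:ABI BD-robust}.

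The main obstacle is the non-atomic label situations: verifying that a $\widetilde C_2$-like $b_jb_3b_4$ or $b_1b_3b_2$ is really robust using only atomic robustness of smaller diagrams. I expect this to go through directly by invoking Proposition~\ref{prop:ori link0} with $n=2$, in the $\widetilde C$-like version of the argument used in Lemma~\ref{lem:robust C core}.
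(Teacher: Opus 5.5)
Your proposal is correct and follows essentially the paper's route. You apply Proposition~\ref{prop:ori link0} with $n=3$ and check its link hypotheses using atomic BD-robustness, Lemma~\ref{lem:d3 upward flag} (which is exactly what Lemma~\ref{lem:weak atomic} does in the $D_3$ case), and the fact that a $\widetilde C_2$-like $b_jb_3b_4$ is a robust $\wtC$-core, so that Lemma~\ref{lem:big lattice} applies. You also write out the label cases the paper leaves as ``similar''.

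One small correction to your closing remark: robustness of a $\widetilde C_2$-like core is not an instance of Proposition~\ref{prop:ori link0}, which needs $n\ge 3$ and a $\widetilde B_n$-like core. It follows directly from Theorem~\ref{thm:contractibleII} together with Lemma~\ref{lem:sc}, since by Lemma~\ref{lem:link} the nontrivial upper and lower links are relative Artin complexes of atomic $B_2$-like edges.
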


\begin{proof}
	We first consider the case that both $b_1b_3$ and $b_2b_3$ are labeled by $3$. 	
	Then $b_1b_3b_2$ is $\Lambda\setminus\{b_4\}$-atomic $D_3$-like, hence $\Delta_{\Lambda\setminus\{b_4\},b_1b_3b_2}$ satisfies Assumption 2 of Proposition~\ref{prop:propagation}. Note that $b_1b_3b_4$ is $\Lambda\setminus\{b_2\}$-atomic $B_3$-like, and $b_2b_3b_4$ is $\Lambda\setminus\{b_1\}$-atomic $B_3$-like. Thus Assumption 1 of Proposition~\ref{prop:propagation} holds true. Hence the lemma follows.
	
	If $b_1b_3$ has label 3 and $b_2b_3$ has label $\ge 4$, then $b_1b_3b_2$ is $\Lambda\setminus\{b_4\}$-atomic $B_3$-like. Hence Lemma~\ref{lem:d3 upward flag} implies that $\Delta_{\Lambda\setminus\{b_4\},b_1b_3b_2}$ satisfies Assumption 2 of Proposition~\ref{prop:propagation}. Note that $b_2b_3b_4$ is robust $\wtC$-core in $\Lambda\setminus\{b_1\}$. Hence $\Delta_{\Lambda\setminus\{b_1\},b_2b_3b_4}$ is upward flag and bowtie free by Lemma~\ref{lem:big lattice}. Now the lemma follows from Proposition~\ref{prop:propagation}.
	
	The remaining cases can be handled in a similar way. The last sentence of the lemma follows from Proposition~\ref{prop:ABI BD-robust}.
\end{proof}

\begin{prop}
	\label{prop:labeled 4-cycle}
	Let $\Lambda$ be a tree Coxeter diagram of type $ABI$ with edge labels $\le 5$. Suppose every proper induced subdiagram of $\Lambda$ is $B$-robust and satisfies the labeled 4-cycle condition. Then $\Delta_{\Lambda}$ satisfies the labeled 4-cycle condition.
\end{prop}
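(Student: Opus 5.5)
By Lemma~\ref{lem:4wheel}, it suffices to show that $\Delta_{\Lambda,\Omega}$ is bowtie free for every maximal linear subdiagram $\Omega=r_1\cdots r_m$ of $\Lambda$. By Corollary~\ref{cor:bowtie free criterion1} together with the hypothesis on proper subdiagrams, this reduces further. Indeed, any quasi-bowtie whose center could lie in the link of a vertex of type $\hat r$ with $r\neq r_1,r_m$ is handled by Lemma~\ref{lem:link} and induction. So we only need the following: every embedded $4$-cycle $z_1z_2z_3z_4$ of type $\hat r_1\hat r_m\hat r_1\hat r_m$ has a center of type $\hat r$ with $r\in\Omega$.

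We split according to the shape of $\Lambda$.

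\textbf{Case 1: $\Lambda$ is spherical.} Since $\Lambda$ is of type $ABI$, the connected diagram $\Lambda$ is of type $A_n$, $B_n$ or $I_2(n)$. The conclusion then follows from Theorem~\ref{thm:4 wheel}.

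\textbf{Case 2: $\Lambda$ contains a $\wtC_2$-like subdiagram $t_1t_2t_3$.} The conclusion is exactly Proposition~\ref{prop:labeled 4-cycle with C_2 subdiagram}.

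\textbf{Case 3: $\Lambda$ is non-spherical with no $\wtC_2$-like subdiagram.} Here $\Lambda$ has at most one edge with label $\ge 4$ in any linear piece between two such edges, in the sense that no two labels $\ge4$ are adjacent. The $ABI$ condition forbids spherical $D_4$, $E$, $F_4$ and $H$ subdiagrams. Non-sphericity then forces $\Lambda$ to contain a $\widetilde B_3$-like subdiagram, a $\widetilde D_m$-like subdiagram, or a $\wtC_n$-like subdiagram with $n\ge 3$. Note that a $\widetilde B_3$-like subdiagram can occur as the minimal obstruction when a branch vertex sits next to a label-$4$ edge.

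In each subcase we pick a robust $\wtC$-core $\Lambda'$. Robustness comes from Lemma~\ref{lem:widetilde b3 core} for the $\widetilde B_3$ case. For the other cases it comes from Proposition~\ref{prop:ori link0} and Proposition~\ref{prop:ori link2}, whose hypotheses follow from Proposition~\ref{prop:ABI BD-robust} applied to proper subdiagrams. We then mimic the proof of Proposition~\ref{prop:labeled 4-cycle with C_2 subdiagram}, with the following steps.

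\begin{enumerate}
\item For the vertices $z_i$ of types $\hat r_1,\hat r_m$, define $Z_i$ as the subcomplex of $\bar\Delta_{\Lambda,\Lambda'}$ of vertices adjacent to $z_i$. These subcomplexes are convex by Corollary~\ref{cor:convex}.
\item Establish the analogue of Claim~\ref{claim:reduce}: it suffices to show $Z_1\cap Z_3\neq\emptyset$ or $Z_2\cap Z_4\neq\emptyset$. The proof is the same: take the join in $\Delta_{\Lambda,\Lambda_0}$ for a segment $\Lambda_0$ ending in $\Lambda'$, using the bowtie freeness supplied by Corollary~\ref{cor:robust0}, and then apply Lemma~\ref{lem:4-cycle}.
\item Minimize the quantity \eqref{eq:ds2} over quadruples of extremal vertices $u_i\in Z_i\cap Z_{i+1}$.
\item Show the minimizer is degenerate. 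This uses the Bestvina-type inequality Proposition~\ref{prop:bestvina}, Lemma~\ref{lem:strip}, and local replacement arguments as in Claims~\ref{claim:decrease} and~\ref{claim:decrease2}.
\end{enumerate}

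In these replacement arguments, the local $5$- and $6$-cycles arising in vertex links are resolved using Lemma~\ref{lem:5cycle} and the bowtie-free and upward-flag properties of proper relative complexes. Those properties are supplied by the $B$-robust hypothesis on proper subdiagrams.

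The cases where $\Omega$ meets $\Lambda'$ in a leaf, a middle vertex, an edge, or contains it should be treated as in Figure~\ref{fig:bowtie} (1)--(4). The main obstacle is the case where $\Omega$ passes through $\Lambda'$ in a way that uses a fake (subdivision) vertex. This happens, for example, when $\Omega$ contains $b_3$ and one of $b_1,b_2$ of a $\widetilde B_3$ core. There the rank levels and the extremal vertices of $\bar\Delta_{\Lambda,\Lambda'}$ do not align with the types $\hat r_1,\hat r_m$. My first attempt would be to choose $\Lambda'$ so that $\Omega\cap\Lambda'$ avoids the subdivided edge; this is possible unless $\Lambda'$ lies entirely on $\Omega$'s side. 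Failing that, I would redo the corner analysis of Lemma~\ref{lem:corner1} with the special $\hat b_{m-1}$ exception tracked explicitly.
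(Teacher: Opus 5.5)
Your framework matches the paper's: the Lemma~\ref{lem:4wheel} reduction, the spherical and $\wtC_2$ cases, the convex $\bar Z_i$ from Corollary~\ref{cor:convex}, the Claim~\ref{claim:reduce}-type reduction via Corollary~\ref{cor:robust0}, and minimizing \eqref{eq:ds2}. The gap is that the case you call ``the main obstacle'' is the main content of the proposition, and you give no argument for it.

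First dispose of the easy situations. Either $\Omega$ has two edges of label $\ge 4$ and is itself a robust core. Or $r_{m-1}$ has valence $\ge 3$ and $r_{m-1}r_m$ has label $\ge 4$, which is configuration Figure~\ref{fig:core1}\,(5), handled by Corollary~\ref{cor:robust0}. After these, $ABI$ and the absence of $\wtC_2$-like subdiagrams leave an interior vertex $r_k$ of $\Omega$ of valence $3$ whose third edge $r_kr'$ has label $\ge 4$. The core is then $\Lambda'=r_{k-1}r_kr_{k+1}\cup r_kr'$, and $\Omega\cap\Lambda'$ contains both $b_1=r_{k-1}$ and $b_2=r_{k+1}$ of the subdivided pair.

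This cannot be avoided. If $\Lambda$ is a path $r_1\cdots r_m$ with all labels $3$ plus one pendant label-$4$ edge at an interior vertex, that $\widetilde B_3$ is the only $\wtC$-core of $\Lambda$ and $\Omega=r_1\cdots r_m$. So your first attempt fails, and ``redo Lemma~\ref{lem:corner1} with the exception tracked'' is not an argument. Moreover, Lemma~\ref{lem:corner1} treats subcomplexes attached through the single rank-one vertex $t_1$. Here the $\hat r_1$-vertices attach through $r_{k-1}$ and the $\hat r_m$-vertices through $r_{k+1}$, the two different rank-one types of $\bar\Delta_{\Lambda,\Lambda'}$.

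The paper handles this with a two-step descent in $\Xi_{\min}$ that has no counterpart in the cases of Figure~\ref{fig:bowtie}. It uses the paper's labeling, where $z_1,z_3$ have type $\hat r_m$.

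\begin{enumerate}
\item For a minimizer with pairwise distinct $u_i$, it shows $d(y_1,u_1)<d(y_2,u_1)$ along the normal form path from $u_2$ to $u_3$. For $\tau(u_2)=1$ this is a Lemma~\ref{lem:corner1}-type argument. For $\tau(u_2)=4$ one must treat $y^{a_2-1}_2$ of rank $2$, i.e.\ the midpoint of an edge $vv'$ of types $\hat r_{k+1},\hat r_{k-1}$. The $4$-cycles $z_3y_2vz_2$ and $z_2z_3v'y^{a_1-1}_1$ are then resolved by bowtie freeness of $\Delta_{\Lambda\setminus\{r'\},\Omega}$.
\item Using Proposition~\ref{prop:bestvina}, it replaces $u_3$ by another extremal vertex of $\bar Z_3\cap\bar Z_4$. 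This keeps \eqref{eq:ds2} minimal but strictly decreases $d(u_1,u_3)$, until two $u_i$ coincide.
\end{enumerate}

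The hard subcase of the second step is when the neighbours $v_2,v_4$ of $u_3$ on the normal form paths from $u_2,u_4$ have types $\hat r_{k-1},\hat r_{k+1}$. A $6$-cycle in $\lk(v_2,\Delta_\Lambda)$ is placed in the relative complex over the $B$-like segment from $r_m$ to $r'$. Its upward flagness, which is where $B$-robustness of proper subdiagrams is essential, gives a vertex $z$ of type $\hat r'$. Two further bowtie-freeness arguments, together with the fact that $r_k$ separates $r_1$ from $r'$, show that $z\in Z_3\cap Z_4$ and that $z$ is a legal replacement for $u_3$. Until you supply an argument of this kind for the $\widetilde B_3$ core crossed along $b_1b_3b_2$, the proposal does not prove the statement.
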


\begin{proof}
	By Proposition~\ref{prop:labeled 4-cycle with C_2 subdiagram}, it suffices to consider the case when $\Lambda$ does not contain any $\wtC_2$-like subdiagram. As before, let $\Omega=r_1\cdots r_m$ be a maximal linear subdiagram of $\Lambda$ and it suffices to show $\Delta_{\Lambda,\Omega}$ is bowtie free.
	As $\Lambda$ is of type $ABI$, $\Omega$ has at most two edges with label $\ge 4$, and such edges, if exist, appear at the two ends of $\Omega$. If $\Omega$ has two edges with label $\ge 4$, as $\Delta_{\Lambda\setminus\{t\}}$ is $B$-robust for each terminal vertex $t$ of $\Omega$, we deduce from Theorem~\ref{thm:contractibleII} that $\Omega$ is a robust $\wtC$-core of $\Lambda$. Lemma~\ref{lem:big lattice} implies that $\Delta_{\Lambda,\Omega}$ is bowtie free. Suppose $\Omega$ has at most one edges with label $\ge 4$. We assume such edge, if exists, is $r_{m-1}r_m$. At least one interior vertex of $\Omega$ has valence $\ge 3$ in $\Lambda$, otherwise $\Lambda=\Omega$ is spherical and we are done by Theorem~\ref{thm:bowtie free}. Suppose $r_{m-1}$ has valence $\ge 3$. Let $e'$ be an edge based at $r_{m-1}$ such that $e'\nsubseteq \Omega$. Let $\Lambda'=(r_{m-2}r_{m-1}r_m)\cup e'$. If $r_{m-1}r_m$ has label $\ge 4$, then the $(r_{m-2},r')$-subdivision of $\Delta_{\Lambda,\Lambda'}$ is a $\wtC$-like complex by Lemma~\ref{lem:widetilde b3 core}, where $r'$ is the vertex of $e'$ outside $\Omega$. Then $(\Omega,\Lambda')$ is the configuration in Figure~\ref{fig:core1} (5) (with $\Omega$ playing the role of $\Lambda_0$). By Corollary~\ref{cor:robust0}, $\Delta_{\Lambda,\Omega}$ is bowtie free.
	
	Now we assume either $r_{m-1}$ has valence $2$ in $\Lambda$, or $r_{m-1}$ has valence $3$ and $r_{m-1}r_m$ has label $3$. Let $r_k$ be an interior vertex of $\Omega$ with valence $\ge 3$ in $\Lambda$. As $\Lambda$ is of type $ABI$ without $\wtC_2$-like subdiagrams, $r_k$ has valence $3$ in $\Lambda$ and the edge $e'$ at $r_k$ outside $\Omega$ has label $\ge 4$. Let $\Lambda'=(r_{k-1}r_kr_{k+1})\cup e'$. Then Lemma~\ref{lem:widetilde b3 core} implies that the $(r_{k-1},r_{k+1})$-subdivision $\bar\Delta$ of $\Delta_{\Lambda,\Lambda'}$ is $\wtC$-like. Let $\tau$ be the rank function on the vertex set of $\bar \Delta$ as in Definition~\ref{def:subdivision}, taking value between $1$ and $4$. We define the vertex $r'\in e'$ as before.

	Let $z_1z_2z_3z_4$ be an embedded 4-cycle in $\Delta_\Lambda$ of type $\hat r_m\hat r_1\hat r_m\hat r_1$. Let $Z_i$ be the full subcomplex of $\Delta_{\Lambda,\Lambda'}$ spanned by vertices that are adjacent to $z_i$, and let $\bar Z_i$ be the subcomplex of $\bar\Delta$ corresponding to $Z_i$. Then $\bar Z_i\subset \bar\Delta$ satisfies the conclusion of Corollary~\ref{cor:convex}. By considering a maximal simplex of $\Delta_\Lambda$ containing the edge $z_iz_{i+1}$, we know $\bar Z_i\cap\bar Z_{i+1}$ contains an extremal vertex of $\bar\Delta$ for each $i$.
	\begin{claim*}
		Either $\bar Z_1\cap\bar Z_3\neq\emptyset$ or $\bar Z_2\cap\bar Z_4\neq\emptyset$. 
	\end{claim*}
	
	\begin{proof}
		Let $\Xi$ be the collection of all quadruples $(u_1,u_2,u_3,u_4)$ such that each $u_i$ is an extremal vertex of $\bar\Delta$ in $\bar Z_i\cap \bar Z_{i+1}$ for $i\in \mathbb Z/4\mathbb Z$. For each element in $\Xi$, we consider the quantity in \eqref{eq:ds2}, where $d$ is the distance function in the 1-skeleton of $\bar\Delta$ with unit edge length.
		Let $\Xi_{\min} \subset \Xi$ be the collection of elements in $\Xi$ such that \eqref{eq:ds2} is minimized. We define  $\omega=y_1\cdots y_k$ and $y^{a_i}_iy^{a_i-1}_i\cdots y^1_i$ in the same way as in the proof of Proposition~\ref{prop:labeled 4-cycle with C_2 subdiagram}. Take $(u_1,u_2,u_3,u_4)\in \Xi_{\min}$. We assume these $u_i$'s are pairwise distinct, otherwise the claim already follows.
		
		Now we prove $d(y_1,u_1)<d(y_2,u_1)$.
		
		Suppose $\tau(u_2)=1$. We first rule out $d(y_1,u_1)=d(y_2,u_1)$. Indeed, if this is the case, then we argue in the same way as in Proposition~\ref{prop:labeled 4-cycle with C_2 subdiagram} (or Lemma~\ref{lem:corner1}) to deduce that $y^i_1\le y^i_2$ for any $i$ and $y_2<y^{a_2-1}_2$. As $\tau(y_2)\ge 2$ and $\tau(y^{a_1-1}_1)\ge 2$, we deduce from $z_3\sim y_2$ and $y_2<y^{a_2-1}_2$ that $z_3\sim y^{a_2-1}_2$, and we deduce from $z_2\sim y^{a_1-1}_1$ and $y^{a_1-1}_1\le y^{a_2-1}_2$ that $z_2\sim y^{a_2-1}_2$. Thus $y^{a_2-1}_2\in Z_2\cap Z_3$. Let $u'_2$ be an extremal vertex of $\bar Z$ that is $\ge y^{a_2-1}_2$. Then $u'_2\in Z_2\cap Z_3$ and replacing $u_2$ by $u'_2$ decreases \eqref{eq:ds2}. This rules out $d(y_1,u_1)=d(y_2,u_1)$. Similarly we can rule out $d(y_1,u_1)>d(y_2,u_1)$. 
		
		Suppose $\tau(u_2)=4$. If $d(y_1,u_1)=d(y_2,u_1)$, then $u_2>y_2>y^{a_2-1}_2$ and $u_2>y^{a_1-1}_1\ge y^{a_2-1}_2$. Thus $\tau(y^{a_2-1}_2)=1$ or $2$. If $\tau(y^{a_2-1}_2)=2$, then $y^{a_2-1}_2$ is the midpoint of an edge $vv'$ of $\Delta_{\Lambda,\Lambda'}$ such that $v'$ is of type $\hat r_{k-1}$ and $v$ is of type $\hat r_{k+1}$. As $r_k$ separates $r_{k-1}$ from $r_m$, we deduce $z_3\sim v'$. Similarly, $z_2\sim v$. Consider the 4-cycle $z_3y_2vz_2$ in $\lk(y_1,\Delta_\Lambda)$, which can also be viewed as inside $\Delta_{\Lambda\setminus\{r'\},\Omega}$. Using bowtie freeness of $\Delta_{\Lambda\setminus\{r'\},\Omega}$ and Lemma~\ref{lem:posets}, we know either $z_2\sim y_2$ or $z_3\sim v$. In the former case, as $y_2<y_3$ and $\tau(y_2)\ge 2$, we know $z_2\sim y_3$ and replacing $u_2$ by any extremal vertex of $\bar\Delta$ that is $\ge y_3$ decreases \eqref{eq:ds2}. In the latter case, $z_3\sim y^{a_2-1}_2$. If $z_2\sim y^{a_2-1}_2$, then $\{z_2,z_3,v,v'\}$ span a simplex in $\Delta_{\Lambda}$. Then replacing $u_2$ by $v$ or $v'$ decreases \eqref{eq:ds2}. If $z_2\nsim y^{a_2-1}_2$, then by considering $z_2z_3v'y^{a_1-1}_1$ in $\Delta_{\Lambda\setminus\{r'\},\Omega}$, we obtain that $z_3\sim y^{a_1-1}_1$. As $\tau(y^{a_1-1}_1)\ge\tau(y^{a_2-1}_2)=2$, $z_3\sim y^{a_1-2}_1$, if $y^{a_1-2}_1$ exists. Then replacing $u_2$ by any extremal vertex of $\bar\Delta$ that is $\ge y^{a_1-2}_1$ (or replacing $u_2$ by $y^{a_1-1}_1$ if $y^{a_1-2}_1$ does not exist) decreases \eqref{eq:ds2}.

		Now suppose $\tau(y^{a_2-1}_2)=1$. Then $y^{a_2-1}_2$ is of type $\hat r_{k-1}$ or $\hat r_{k+1}$. We will only treat the case that $y^{a_2-1}_2$ is of type $\hat r_{k-1}$, as the other case is similar. Then $z_3\sim y^{a_2-1}_2$. If $z_2\sim y^{a_2-1}_2$, then we replace $u_2$ to decrease \eqref{eq:ds2} as before. Suppose $z_2\nsim y^{a_2-1}_2$. Then $\tau(y^{a_1-1}_1)=3$. Consider $z_2z_3y^{a_2-1}_2y^{a_1-1}_1$ in $\lk(u_2,\Delta_\Lambda)$, also viewed as in $\Delta_{\Lambda\setminus\{r'\},\Omega}$, we deduce from the bowtie free property that $z_3\sim y^{a_1-1}_1$ and replace $u_2$ to decrease \eqref{eq:ds2} in the same way as before. This rules out $d(y_1,u_1)=d(y_2,u_1)$. The case of $d(y_1,u_1)>d(y_2,u_1)$ can be ruled out in a similar way.
		
		Next we show that as long as $\{u_i\}_{i=1}^4$ are pairwise distinct, then it is possible to replace $u_3$ by another extremal vertex in $\bar Z_3\cap \bar Z_4$ such that the resulting quadruple is still contained in $\Xi_{\min}$, but $d(u_1,u_3)$ decreases.
		Let $v_2\in \bar Z_3$, $v_4\in \bar Z_4$ and $v,v_0$ be defined identical to the proof of Claim~\ref{claim:identical} in Proposition~\ref{prop:labeled 4-cycle with C_2 subdiagram}, see Figure~\ref{fig:coner3} (3). As $d(y_1,u_1)<d(y_2,u_1)$, by Proposition~\ref{prop:bestvina}, $d(u_3,u_1)>d(v_2,u_1)$. Similarly, $d(u_3,u_1)>d(v_4,u_1)$. 
		
		Suppose $\tau(u_3)=1$. Then $u_3$ is of type $\hat r_{k-1}$ or $\hat r_{k+1}$. If $u_3$ is of type $\hat r_{k-1}$, as $r_{k-1}$ separates $r_1$ from any vertices of $\Lambda'\setminus\{r_{k-1}\}$, Lemma~\ref{lem:transitive} implies that $z_4\sim v_2$. Hence $v_2\in Z_3\cap Z_4$. Let $\sigma$ be a maximal simplex of $\Delta_\Lambda$ containing $\{z_3,z_4,v_2\}$. Then we can replace $u_3$ by the maximal vertex $u'_3$ of $\bar\Delta$ in $\sigma$. Note that $d(u'_3,u_i)\le d(u_3,u_i)-1$ for $i=1,2$ and $d(u'_3,u_4)\le d(u_3,u_4)+1$. The case of $u_3$ being type $\hat r_{k+1}$ is symmetric.
		
		Suppose $\tau(u_3)=4$. First we treat the case $\tau(v_2)\ge 2$. As $v_2\le v$, $z_3\sim v$. If $\tau(v_4)\ge 2$, then $z_4\sim v$. As $v\le v_0$ and $\tau(v)\ge \tau(v_2)=2$, $v_0\in \bar Z_3\cap \bar Z_4$. Then we replace $u_3$ by any extremal vertex in $\bar\Delta$ that is $\ge v_0$. Now assume $\tau(v_4)=1$. If $v_4$ is of type $\hat r_{k-1}$, then $z_4\sim v$ and we conclude as before. Suppose $v_4$ is of type $\hat r_{k+1}$. If $\tau(v)=2$, then $v$ is the midpoint of an edge in $\Delta_{\Lambda,\Lambda'}$ with one endpoint being $v_4$. Thus $z_3\sim v$ implies $z_3\sim v_4$ and we replace $u_3$ by $v_4$. Suppose $\tau(v)=3$. Then $v$ is of type $\hat r_k$. Consider the 4-cycle $z_3z_4v_4v$ in $\lk(u_3,\Delta)$, which can also be viewed as a 4-cycle in $\Delta_{\Lambda\setminus\{r'\},\Omega}$. As $\Delta_{\Lambda\setminus\{r'\},\Omega}$ is bowtie free, either $z_3\sim v_4$ or $v\sim z_4$. So we replace $u_3$ by $v_4$ in the former case, and $u_3$ by an extremal vertex of $\bar\Delta$ that is $\ge v_0$ in the latter case. By symmetry, the case of $\tau(v_4)\ge 2$ can be treated similarly. From now on we assume $\tau(v_2)=\tau(v_4)=1$. If $v_2$ is of type $\hat r_{k+1}$, then $z_3\sim v_2$ and $v_2\sim v$ imply $z_3\sim v$. Hence we can repeat the argument as before. By symmetry, the same argument applies if $v_4$ is of type $\hat r_{k-1}$.

		It remains to consider $v_2$ is of type $\hat r_{k-1}$ and $v_4$ is of type $\hat r_{k+1}$. If $v_2=u_2$, then $z_2\sim v_2$. As $z_2$ is of type $\hat r_1$ and $v_2<u_3$, we obtain $z_2\sim u_3$, hence $u_3\in Z_2\cap Z_4$. Thus we will assume $v_2\neq u_2$ in the following discussion. In particular, $y_{k-2}$ exists.
		As $\tau(v)\le 3$, $v_2\sim v_4$. Let $v_2v'_2v''_2$ be the first three vertices in the normal form path from $v_2$ to $u_1$ (then $v'_2=y^{a_{k-1}-1}_{k-1}$). Consider $z_3u_3v_4v_0v'_2y_{k-2}$ in $\lk(v_2,\Delta_\Lambda)$ (if $\tau(v'_2)=2$, then we replace $v'_2$ by $v''_2$). 
		This 6-cycle can also be viewed as in $\Delta_{\Lambda\setminus\{r'\},\Omega'}$ where $\Omega'$ is the line segment from $r_m$ to $r'$. As $y^{k-2}>v_2<u_3$, by Lemma~\ref{lem:strip}, $y^{k-2}\ge v'_2\le v_0$. Thus $\{v_4,v'_2,z_3\}$ is pairwise upper bounded in $(\Delta_{\Lambda\setminus\{r'\},\Omega'})^0$. Hence there is a vertex $z$ of type $\hat r'$ such that $z\sim \{v_4,v'_2,z_3,v_2\}$. Then $d(z,u_1)\le d(v'_2,u_1)+1=d(u_3,u_1)-1$.
		Consider $z_3u_3v_4z$ in $\Delta_{\Lambda\setminus\{r'\},\Omega'}$. If $z_3\sim v_4$, then we replace $u_3$ by $v_4$ as before. So we assume $z_3\nsim v_4$. Note that $u_3\neq z$ as they are at different distances from $u_1$ in $\bar\Delta^1$. Thus $z_3u_3v_4z$ is an induced 4-cycle in  $\Delta_{\Lambda\setminus\{r'\},\Omega'}$. As $\Delta_{\Lambda\setminus\{r'\},\Omega'}$ is bowtie free, there is a vertex $z'$ of type $\hat r_k$ such that $z'$ is a center for $z_3u_3v_4z$. Consider $z_3z'v_4z_4$ in $\lk(u_3,\Delta_{\Lambda})$, also viewed as in $\Delta_{\Lambda\setminus\{r'\},\Omega}$. As $z_3\nsim v_4$, we deduce from bowtie free property of $\Delta_{\Lambda\setminus\{r'\},\Omega}$ that $z'\sim z_4$. As $z_4,z',z$ have types $\hat r_1,\hat r_k,\hat r'$ respectively and $r_k$ separates $r_1$ from $r'$, we obtain $z_4\sim z$. Hence $z\in Z_3\cap Z_4$. Note that $$d(z,u_4)\le d(z,v_4)+d(v_4,u_4)=1+d(u_3,u_4)-1=d(u_3,u_4)$$ and
		$$d(z,u_2)\le d(z,v'_2)+d(v'_2,y_{k-2})+d(y_{k-2},u_2)=d(u_2,u_3).$$ So replacing $u_3$ by $z$ satisfies all the requirements.
		
		As such replacement can be done as long as $\{u_i\}_{i=1}^4$ are pairwise distinct, after finitely many replacements, two of $\{u_1,u_2,u_3,u_4\}$ are identical, and the claim follows.
	\end{proof}
	
	By Corollary~\ref{cor:robust0}, $\Delta_{\Lambda,\Lambda(r_1,r',r_{k+1})}$ satisfies the labeled 4-cycle condition, where $\Lambda(r_1,r',r_{k+1})$ is the smallest subtree of $\Lambda$ containing $\{r_1,r',r_{k+1}\}$. In particular, $\Delta_{\Lambda,\Lambda(r_1,r_{k+1})}$ is bowtie free. Similarly, $\Delta_{\Lambda,\Lambda(r_{k-1},r_m)}$ is bowtie free. Thus we can reduce the proposition to the above claim in the same way as Claim~\ref{claim:reduce} of Proposition~\ref{prop:labeled 4-cycle with C_2 subdiagram}. This finishes the proof.
\end{proof}

\subsection{A variation of intersection lemma}
We prove a small variation of Proposition~\ref{prop:intersection}.

\begin{assumption}
	\label{assum:2}
	Let $\Lambda$ be a tree Coxeter diagram with a robust $\widetilde C$-core $\Lambda'$. Let $\Lambda_0$ be a $B_m$-like $(m\ge 2)$ or $D_m$-like $(m\ge 4)$ subdiagram of $\Lambda$.
	
	If $\Lambda_0$ is $B_m$-like, we label $\Lambda_0=s_1\ldots s_m$ with $m_{s_{m-1},s_m}\ge 4$ and assume that $\Lambda_0\cap \Lambda'$ is the edge $s_{m-1}s_m$, as in Figure~\ref{fig:core} (1), (2), and (5).
	If $\Lambda_0$ is $D_m$-like, we label the vertices of $\Lambda_0$ as in Figure~\ref{fig:core} (0) and assume that $\Lambda_0\cap \Lambda'$ is $D_4$-like, as in Figure~\ref{fig:core} (3) and (4). We define $\bar\Delta_{\Lambda,\Lambda_0}$ and the poset structure on its vertex set in the same way as Assumption~\ref{assum:1}. Let $t_1,t'_1$, $\{\theta_i\}_{i=1}^3,\{X_i\}_{i=1}^3$, $\{\bar X_i\}_{i=1}^3$ and $\tau$ be as in Assumption~\ref{assum:1}.
	
	In the case when $t'_1$ exists, let $\Gamma'=(\Lambda'\cup\Lambda_0)\setminus\{t'_1\}$. Let $\Gamma$ be the connected component of $\Lambda\setminus\{t'_1\}$ that contains $\Gamma'$. Let $\bar\Delta_{\Gamma,\Gamma'}$ be either $\Delta_{\Gamma,\Gamma'}$ or its appropriate subdivision (when $\Gamma'$ is $D_m$-like). We order vertices in $\bar\Delta_{\Gamma,\Gamma'}$ such that vertices of type $\hat b_1$ or $\hat s_1$ are minimal.
\end{assumption}

\begin{lem}
	\label{lem:corner2}
	Under Assumption~\ref{assum:2}, suppose $(\bar\Delta_{\Gamma,\Gamma'})^0$ is upward flag. 
	Let $(u_1,u_2,u_3)$ be three extremal vertices in $\bar\Delta_{\Lambda,\Lambda'}$ such that $u_1,u_2\in \bar X_2$ and $u_2,u_3\in \bar X_3$. Suppose $u_2$ is chosen such that the quantity
	\begin{equation}
		\label{eq:ds1}
		d(u_1,u_2)+d(u_2,u_3)
	\end{equation}
	is minimized among all extremal vertices $u_2
	\in \bar X_2\cap \bar X_3$.
	Let $\omega=y_1y_2\cdots y_k$ be a path from $u_2$ to $u_3$ which is either in local normal from $u_2$ to $u_3$ or in local normal form from $u_3$ to $u_2$. Suppose $d(u_1,u_2)\ge 2$.
	\begin{enumerate}
		\item If $\tau(u_2)=1$, then $d(u_1,y_2)>d(u_1,u_2)$;
		\item If $\tau(u_2)=n$, then there is at most one $i$ with $d(u_1,y_i)<d(u_1,u_2)$, and if such $i$ exists then $i=2$.
	\end{enumerate}
\end{lem}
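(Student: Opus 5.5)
The argument will follow the proof of Lemma~\ref{lem:corner1} essentially verbatim, since Lemma~\ref{lem:corner2} is that lemma with two changes. The hypothesis that $\bar X_2\cap\bar X_3$ contains a vertex of type $\hat s$ with $s\in\Lambda_0$ is dropped. In its place we assume that $(\bar\Delta_{\Gamma,\Gamma'})^0$ is upward flag. As before, $\omega\subset\bar X_3$ by Corollary~\ref{cor:convex}. For each $i$ we take the normal form path $y_i^{a_i}\cdots y_i^1$ from $y_i$ to $u_1$, using Proposition~\ref{prop:existence and uniqueness} and Proposition~\ref{prop:local vs global}.

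For assertion (1), where $\tau(u_2)=1$, the proof of part (1) of Lemma~\ref{lem:corner1} applies unchanged. Suppose $d(u_1,y_2)\le d(u_1,u_2)$. Lemma~\ref{lem:strip} gives $y_2^{i}\ge y_1^{i}$. Transitivity then forces $y_2^{a_2-1}>y_2^{a_2}$, and Lemma~\ref{lem:transitive} places $y_2^{a_2-1}$ in $\bar X_2\cap\bar X_3$. Any extremal $u_2'\ge y_2^{a_2-1}$ then strictly decreases \eqref{eq:ds1}, a contradiction. That argument never used the extra hypothesis, so it carries over directly.

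For assertion (2), where $\tau(u_2)=n$, let $i$ be the smallest index with $d(u_1,y_i)<d(u_1,u_2)$. By Proposition~\ref{prop:bestvina}, $f(j)=d(u_1,y_j)$ is constant before $i$. The case $i>2$ is handled exactly as in the last three paragraphs of the proof of Lemma~\ref{lem:corner1}. One propagates membership $y^{a_1-1}_j\in\bar X_3$ down the odd indices using Lemma~\ref{lem:strip}, Lemma~\ref{lem:convex} and the meet-closure in Corollary~\ref{cor:convex}(3). This either lets us move $u_2$ to decrease \eqref{eq:ds1}, or reduces to $y^{a_1-1}_2\in\bar X_3$, which is the $i=2$ situation. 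So $i>2$ is impossible. Those steps were purely local and did not use the extra hypothesis either. It remains to show that if $i=2$, then $d(u_1,y_j)\ge d(u_1,u_2)$ for all $j\ge3$. By Proposition~\ref{prop:bestvina} it suffices to rule out $d(u_1,y_3)\le d(u_1,y_2)$.

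This last step is the main obstacle. In Lemma~\ref{lem:corner1} this situation appeared only in the $\widetilde D_4$ subcase, and it was settled using upward flagness of $(\bar\Delta_{\Gamma,\Gamma'})^0$. That is precisely the hypothesis now assumed in general. So I will run the Figure~\ref{fig:corner}(III) argument. If $d(y_3,u_1)=d(y_2,u_1)$, then Lemma~\ref{lem:strip} gives $y_2^{a_2-1}<y_3^{a_3-1}$, and Lemma~\ref{lem:transitive} gives $x_3\sim y_3^{a_3-1}$. Hence $\{y_1^{a_1-1},y_2^{a_2-1},x_3\}$ lie in $\lk(y_2^{a_2},\Delta_\Lambda)$. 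By Lemma~\ref{lem:link} they can be viewed as pairwise upper bounded elements of $(\bar\Delta_{\Gamma,\Gamma'})^0$; here one replaces $y_2^{a_2-1}$ by $y_2^{a_2-2}$ if $\tau=2$, and uses that $y_2$ has type $\hat t_1'$ when $i=2$ forces the special configuration. A common upper bound $z$ exists by upward flagness. Comparability with the two elements above $y_1^{a_1-1}$ shows that $z\ne y_1^{a_1}$ for distance reasons, so $z=y_1^{a_1-2}$. This gives $x_3\sim y_1^{a_1-2}$, and any extremal vertex $\ge y_1^{a_1-2}$ decreases \eqref{eq:ds1}, a contradiction. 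The case $d(y_3,u_1)<d(y_2,u_1)$ is handled the same way.

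The point to check carefully is the identification of the relevant link with $\bar\Delta_{\Gamma,\Gamma'}$ when $\Lambda'$ is not $\widetilde D_4$-like. This needs $t_1'$ to exist, and it needs $y_2$ to be of type $\hat t_1'$. If $y_2$ is not of type $\hat t_1'$, the first paragraph of the $\tau(u_2)=n$ case of Lemma~\ref{lem:corner1} already gives a decrease of \eqref{eq:ds1} through Lemma~\ref{lem:transitive}, with no flagness needed.
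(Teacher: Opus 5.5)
There is a genuine gap in the one step that is new relative to Lemma~\ref{lem:corner1}. This is the case $\tau(u_2)=n$, $i=2$, $y_2$ of type $\hat t'_1$, where you apply upward flagness in $\lk(y_2,\Delta_\Lambda)$. You conclude $z=y_1^{a_1-2}$ on the grounds that $y_1^{a_1}$ and $y_1^{a_1-2}$ are the only elements above $y_1^{a_1-1}$. That fact is special to the $\widetilde D_4$-like subcase of Lemma~\ref{lem:corner1}. There $n=5$ and $\tau(y_1^{a_1-1})=4$, so $y_1^{a_1-1}$ is the midpoint of an edge of $\Delta_\Lambda$, and its only upper neighbours are that edge's two endpoints.

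Under Assumption~\ref{assum:2} this fails in general. For instance, take $\Lambda'$ $\widetilde B$-like with $t_1,t'_1$ its two leaves of $\tau$-value $1$. Then $y_1^{a_1-1}$ can be a real vertex with $3\le\tau(y_1^{a_1-1})\le n-1$, and it has many upper bounds of $\tau$-value $n$, not two. So the common upper bound $z$ need not be $y_1^{a_1-2}$, and $x_3\sim y_1^{a_1-2}$ does not follow.

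The missing idea is to use $z$ itself as the replacement for $u_2$, which is what the paper does.

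1. First dispose of two degenerate cases. If $y_1^{a_1-1}=y_2$, then $y_2$ is already an extremal vertex of $\bar X_2\cap\bar X_3$ closer to $u_1$. If $\tau(y_1^{a_1-1})=2$, then $y_1^{a_1-1}$ is the midpoint of an edge with endpoint $y_2$, so $x_2\sim y_2$. Excluding these also ensures $\tau(y_1^{a_1-1})>1$ and that $y_1^{a_1-1}$ is a legitimate element of $(\bar\Delta_{\Gamma,\Gamma'})^0$.
2. Take $z$ maximal in $(\bar\Delta_{\Gamma,\Gamma'})^0$. Then $z$ is an extremal vertex of $\bar\Delta_{\Lambda,\Lambda'}$ with $\tau(z)=n$.
3. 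We have $z\sim x_3$ because $z$ bounds $x_3$. We have $z\sim x_2$ by Lemma~\ref{lem:transitive} applied to $x_2\sim y_1^{a_1-1}\le z$. Hence $z\in\bar X_2\cap\bar X_3$.
4. Since $z\ge y_2^{a_2-1}$, we get $d(z,u_1)\le d(y_2,u_1)<d(u_2,u_1)$.
5. Since $y_2<y_3<y_3^{a_3-1}$, the vertex $y_3$ is not extremal, so $y_4$ exists. The chain $z>y_2^{a_2-1}<y_3^{a_3-1}>y_4$ gives $d(z,y_4)\le 3$, hence $d(z,u_3)\le d(u_2,u_3)$.

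Thus $z$ strictly decreases \eqref{eq:ds1}, a contradiction. The case $d(y_3,u_1)<d(y_2,u_1)$ is handled the same way. The rest of your proposal follows the paper.
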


\begin{proof}
	The proof is identical to Lemma~\ref{lem:corner1} except the case when $\theta(u_2)=n$, $d(y_2,u_1)<d(y_1,u_1)$ and $y_2$ is of type $\hat t'_1$. The proof of Lemma~\ref{lem:corner1} in this case relies on the existence of $z$ in Figure~\ref{fig:core}, which is not guaranteed by the assumption of this lemma, so we need to find an alternative argument (in the same line as the $\widetilde D_4$-like subcase of Lemma~\ref{lem:corner1}). Our goal is to rule out $d(y_3,u_1)\le d(y_2,u_1)$. See Figure~\ref{fig:corner2} (I) for the following discussion.

	If $d(y_3,u_1)=d(y_2,u_1)$, as $y^{a_1-1}_1<y^{a_1-2}_1$, we know $y^{a_2}_2<y^{a_2-1}_2$ and $y^{a_3}_3<y^{a_3-1}_3$.
	As $y_2<y_3$, by Lemma~\ref{lem:strip}, $y^{a_2-1}_2<y^{a_3-1}_3$. As $\tau(y^{a_3}_3)\ge 2$, $x_3\sim y^{a_3}_3$ and $y^{a_3}_3\sim y^{a_3-1}_3$, we know $x_3\sim y^{a_3-1}_3$. 
	Note that $y^{a_2}_2\sim\{y^{a_1-1}_1,y^{a_2-1}_2,x_3\}$, and $y^{a_2}_2\neq \{x_3,y^{a_2-1}_2\}$. We can assume $y^{a_2}_2\neq y^{a_1-1}_1$, otherwise $y^{a_2}_2\in X_2\cap X_3$ and replacing $u_2$ by $y^{a_2}_2$ decreases \eqref{eq:ds2} (note that $y^{a_2}_2$ is an extremal vertex in $X_2\cap X_3$). In particular $\tau(y^{a_1-1}_1)\ge 2$. Let $\Gamma,\Gamma'$ be as in Assumption~\ref{assum:2}. By Lemma~\ref{lem:link}, we view $\{y^{a_1-1}_1,y^{a_2-1}_2,x_3\}$ as in $\bar\Delta_{\Gamma,\Gamma'}$ that are pairwise upper bounded modulo the following consideration:
	\begin{itemize}
		\item we can assume $\tau(y^{a_1-1}_1)\neq 2$, otherwise $y^{a_1-1}_1$ is the middle point of an edge in $\Delta_\Lambda$, hence $x_2\sim y^{a_2}_2$ and replacing $u_2$ by $y^{a_2}_2$ decreases \eqref{eq:ds2};
		\item if $\tau(y^{a_2-1}_2)=2$, then $y^{a_2}_2\sim y^{a_2-2}_2$ and we replace $y^{a_2-1}_2$ by $y^{a_2-2}_2$.
	\end{itemize}
	Thus $\{y^{a_1-1}_1,y^{a_2-1}_2,x_3\}$ have a common upper bound $z$ in $((\bar\Delta_{\Gamma,\Gamma'})^0,<)$. We can assume $z$ is maximal in $((\bar\Delta_{\Gamma,\Gamma'})^0,<)$, hence $z\in \bar\Delta_{\Lambda,\Lambda'}$ and $\tau(z)=n$. Then $$d(z,u_1)\le d(z,y^{a_2-1}_2)+d(y^{a_2-1}_2,u_1)\le 1+a_2-1<a_1.$$
	%As $y^{a_2-2}_2<y^{a_2-1}_2$ and $y^{a_2-1}_2\le z$, we know $y^{a_2-2}_2<z$ hence $d(z,u_1)\le a_2-1\le a_1-2$. 
	As $x_2\sim y^{a_1-1}_1$ and $y^{a_1-1}_1\sim z$, we deduce from $\tau(y^{a_1-1}_1)>1$ and Lemma~\ref{lem:transitive} that $x_2\sim z$. Hence $z\in X_2\cap X_3$. 
	As $y^{a_3}_3$ is not extremal, $y^{a_4}_4$ must exist. Then $y^{a_4}_4<y^{a_3}_3$, hence $y^{a_4}_4<y^{a_3-1}_3$ and $d(z,y^{a_4}_4)\le 3$. It follows that $d(z,u_3)\le d(u_2,u_3)$. Thus replacing $u_2$ by $z$ decreases \eqref{eq:ds0}.
	The case of $d(y_3,u_1)<d(y_2,u_1)$ can be ruled out similarly. Thus $d(y_3,u_1)>d(y_2,u_1)$ and Proposition~\ref{prop:bestvina} implies $d(y_j,u_1)\ge d(u_1,u_2)$ for any $j\ge 3$.
\end{proof}

\begin{figure}[h]
	\centering
	\includegraphics[scale=1]{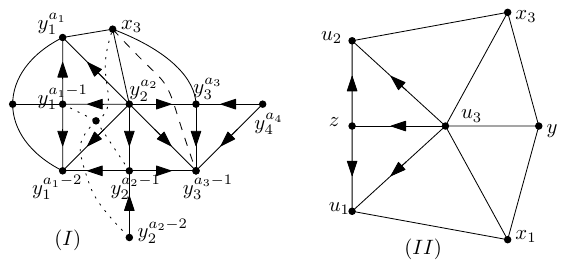}
	\caption{Diagrams in the proof of Lemma~\ref{lem:corner2} and Proposition~\ref{prop:intersection2}.}
	\label{fig:corner2}
\end{figure}

\begin{prop}
	\label{prop:intersection2}
	Under Assumption~\ref{assum:2},	suppose $(\bar\Delta_{\Gamma,\Gamma'})^0$ is upward flag. 
	If any pairwise intersection of $\{X_1,X_2,X_3\}$ contains a vertex whose type is not $\hat t'_1$, then $X_1\cap X_2\cap X_3\neq\emptyset$.
\end{prop}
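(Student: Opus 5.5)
The plan is to follow the proof of Proposition~\ref{prop:intersection} essentially verbatim, using Lemma~\ref{lem:corner2} wherever Lemma~\ref{lem:corner1} was used.

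\textbf{Step 1: a minimal triple of extremal vertices.} By hypothesis each pairwise intersection $X_i\cap X_j$ contains a vertex not of type $\hat t'_1$. Lemma~\ref{lem:transitive} then gives a vertex of $\bar X_i\cap\bar X_j$ with $\tau$-value $n$, so each such intersection contains an extremal vertex. Let $\Xi$ be the set of triples $(z_1,z_2,z_3)$ of extremal vertices with $z_i\in\bar X_i\cap\bar X_{i+1}$. Choose $(u_1,u_2,u_3)\in\Xi$ minimizing \eqref{eq:ds}. We suppose, for contradiction, that $u_1,u_2,u_3$ are pairwise distinct. For each pair we use the local normal form path between them; by Corollary~\ref{cor:convex} it lies in the common $\bar X_i$. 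We also use normal form paths from its vertices to the third vertex, which exist by Propositions~\ref{prop:existence and uniqueness} and~\ref{prop:local vs global}. Minimality of the triple implies that each $u_i$ minimizes \eqref{eq:ds1} for the other two, so Lemma~\ref{lem:corner2} applies to every corner.

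\textbf{Step 2: pin down the distances.} Extremal vertices are not pairwise adjacent, so some distance, say $d(u_1,u_2)$, is at least $2$. Lemma~\ref{lem:corner2} combined with Proposition~\ref{prop:bestvina} shows that along a normal form path from $u_2$ to $u_3$, the distance to $u_1$ dips below $d(u_1,u_2)$ at most once, and only at $y_2$. We first treat the case where some distance is $\ge 3$ and another is $\ge 2$. Comparing endpoints with roles swapped, as in Proposition~\ref{prop:intersection}, gives $d(u_1,u_2)=d(u_1,u_3)=d(u_2,u_3)$, that each $\tau(u_i)=n$, and that the normal form paths are equidistant from the opposite vertex. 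One point needs checking: we must rule out the single dip at $y_2$ when the path is long. If the dip occurs, Proposition~\ref{prop:bestvina} forces the distance to strictly increase afterwards, so the far endpoint has strictly larger distance, which contradicts the symmetric comparison. We then run the hexagon/square layer construction from Figure~\ref{fig:round} with the maps $f_r$ unchanged. It uses only Lemma~\ref{lem:big lattice}, Lemma~\ref{lem:convex} and Corollary~\ref{cor:convex}, and it produces a common upper bound $*\in\bar X_1\cap\bar X_2$ strictly closer to the middle of $\omega$. This contradicts minimality.

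\textbf{Step 3: the small cases.} It remains to handle the case where all pairwise distances are at most $2$. This is the main obstacle, because Lemma~\ref{lem:corner2}(2) allows a genuine dip at a vertex of type $\hat t'_1$ for every core type, not only the $\widetilde D_4$ case. Two subcases arise.
\begin{enumerate}
\item If two of the $u_i$ are adjacent, parity of up-down geodesics gives unequal distances to the third vertex. Lemma~\ref{lem:corner2} then forces the closer one to be reached through a $\hat t'_1$ vertex. Applying Lemma~\ref{lem:corner2} again with the roles of the two far vertices exchanged gives the reverse inequality, which is a contradiction. The exception is when a $\tau\le 3$ adjacency lets Lemma~\ref{lem:transitive} place a vertex in $X_1\cap X_2\cap X_3$ directly.
\item If all pairwise distances equal $2$ and some middle vertex $y_2$ of type $\hat t'_1$ is adjacent to the opposite $u_i$, look at the two neighbours $y^{a-1}$ of $y_2$ on the normal form paths to the far vertices, together with $x_3$. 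If either neighbour equals $y_2$, we are reduced to subcase 1. Otherwise, viewed in $\lk(y_2,\cdot)$ via Lemma~\ref{lem:link}, these three vertices are pairwise upper bounded in $(\bar\Delta_{\Gamma,\Gamma'})^0$. Its upward flagness, which is the standing hypothesis, gives a common upper bound. By Lemma~\ref{lem:transitive}, that bound lies in $X_1\cap X_2\cap X_3$.
\end{enumerate}
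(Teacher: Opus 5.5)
Your Steps~1--2 and subcase~2 of Step~3 follow the paper. Subcase~1 of Step~3, where two of the $u_i$ are adjacent, has a genuine gap.

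Say $d(u_1,u_2)=2$ and $u_3$ is adjacent to both. Lemma~\ref{lem:corner2}(1) forces $\tau(u_2)=n$, so $u_3<u_2$ and $\tau(u_3)=1$, hence $\tau(u_1)=n$. Now apply Lemma~\ref{lem:corner2} at the corner $u_1$, with the one-edge path $u_1u_3$ and reference vertex $u_2$. Part~(2) explicitly permits one dip at $i=2$, and $y_2=u_3$ is exactly that dip. So no reverse inequality and no contradiction come out.

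In the $\widetilde D_4$ case of Proposition~\ref{prop:intersection}, this step worked only because Lemma~\ref{lem:corner1} carries a type restriction: a dip needs the corner to be of type $\hat b_m$, and $u_1$ was not. Lemma~\ref{lem:corner2} has no such restriction. Your fallback through Lemma~\ref{lem:transitive} when the middle vertex $z$ of the normal form path from $u_2$ to $u_1$ has $\tau(z)\le 3$ is likewise special to $\widetilde D_4$. For a general core, with $u_3$ of type $\hat t'_1$ and $\tau(z)>3$, nothing in your argument produces a point of $X_1\cap X_2\cap X_3$. A symptom: you use the hypothesis about vertices not of type $\hat t'_1$ only in Step~1, where it is inessential.

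The missing step is where this hypothesis is really used.
\begin{itemize}
\item If $u_3$ is not of type $\hat t'_1$, then $u_3<u_1$ and Lemma~\ref{lem:transitive} give $u_1\in\bar X_3$, and you are done.
\item If $u_3$ has type $\hat t'_1$, take a vertex $y$ adjacent to both $x_3$ and $x_1$ that is not of type $\hat t'_1$. Apply Lemma~\ref{lem:4-cycle} to $u_3x_3yx_1$ to make $y$ adjacent to $u_3$.
\item By Definition~\ref{def:normal}, $u_3\le z$. If $z=u_3$, then $u_3\in\bar X_2$ by Corollary~\ref{cor:convex}. So assume $u_3<z$, i.e.\ $\tau(z)>1$.
\item The $6$-cycle $x_3\,y\,x_1\,u_1\,z\,u_2$ lies in $\lk(u_3,\Delta_\Lambda)$. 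By Lemma~\ref{lem:link}, the vertices $x_3,x_1,z$ are pairwise upper bounded, by $y,u_1,u_2$, in $(\bar\Delta_{\Gamma,\Gamma'})^0$.
\item Upward flagness gives a common upper bound $w$, which you may take maximal, so $w\in\bar\Delta_{\Lambda,\Lambda'}$ with $\tau(w)=n$.
\item Since $x_2\sim z\le w$ with $\tau(z)>1$, Lemma~\ref{lem:transitive} gives $x_2\sim w$. Hence $w\in\bar X_1\cap\bar X_2\cap\bar X_3$.
\end{itemize}

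Your reduction of the degenerate situation in subcase~2 to subcase~1 is harmless. There $y_2$ is an extremal vertex of $\bar X_2\cap\bar X_3$, which contradicts minimality directly.
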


\begin{proof}
	By Lemma~\ref{lem:transitive}, if $\bar X_i\cap \bar X_j$ contains a vertex which is not of type $\hat t'_1$, then $\bar X_i\cap \bar X_j$ contains a vertex with $\theta$-value $n$. Thus $\bar X_i\cap \bar X_j$ always contain an extremal vertex. Let $(u_1,u_2,u_3)$ be as in the proof of Proposition~\ref{prop:intersection} which minimizes \eqref{eq:ds1}. We will prove two of $\{u_1,u_2,u_3\}$ are the same, which implies the proposition. For now we assume $\{u_1,u_2,u_3\}$ are pairwise distinct, and aim at deducing contradictions. By the same argument in the proof of Proposition~\ref{prop:intersection}, we are reduced to consider $d(u_i,u_j)\le 2$ for any $1\le i\neq j\le 2$. We will use the same notation as the proof of Proposition~\ref{prop:intersection} in the following discussion.
	
	%If among $\{d(u_1,u_2),d(u_2,u_3),d(u_1,u_3)\}$, one of them (say $d(u_1,u_2)$) is $\ge 3$, and another one (say $d(u_2,u_3)$) is $\ge 2$, then Lemma~\ref{lem:corner2} implies that $d(u_1,u_3)\ge d(u_1,u_2)\ge 3$. If we reserve the role of $u_2$ and $u_3$, then $d(u_1,u_2)\ge d(u_1,u_3)\ge 3$. So $d(u_1,u_3)=d(u_1,u_2)\ge 3$. Similarly, $d(u_2,u_1)=d(u_2,u_3)\ge 3$. Then Lemma~\ref{lem:corner2} and Proposition~\ref{prop:bestvina} imply that vertices of the normal form path from $u_i$ to $u_j$ (or $u_j$ to $u_i$) have constant distance to $u_k$ with $\{i,j,k\}$ pairwise distinct. Thus we can repeat the argument in the proof of Proposition~\ref{prop:intersection} to deduce a contradiction.  

	%By triangle inequality, it remains to consider $d(u_i,u_j)\le 2$ for any $1\le i\neq j\le 2$. 
	
	Suppose two of $\{u_1,u_2,u_3\}$, say $u_2,u_3$, are adjacent in $\bar\Delta_{\Lambda,\Lambda'}$. If $d(u_1,u_2)=d(u_1,u_3)$, then $y^{a_1}_1<y^{a_1-1}_1$ if and only if $y^{a_2}_2<y^{a_2-1}_1$. Thus one of $u_2,u_3$ is not extremal, contradiction. Assume without loss of generality that $d(u_1,u_2)>d(u_1,u_3)$. Then Lemma~\ref{lem:corner2} (1) implies $
	\tau(u_2)=n$, hence $u_3<u_2$. Then $u_3\le y^{a_1-1}_1$ by Definition~\ref{def:normal}. As $d(u_1,u_2)\le 2$, $u_1$ and $u_3$ are adjacent. Let $z=y^{a_1-1}_1$, see Figure~\ref{fig:corner2} (II). If $u_3$ is not of type $\hat t'_1$, then by $u_3<u_1$ and Lemma~\ref{lem:transitive}, $u_1\in \bar X_3$. Hence $u_1\in \bar X_1\cap \bar X_2\cap \bar X_3$. Suppose $u_3$ is of type $\hat t'_1$. By our assumption, there is a vertex $y\in \bar X_2\cap \bar X_3$ such that $y$ is not of type $\hat t'_1$. By Lemma~\ref{lem:4-cycle} (applying to $u_3x_3yx_1$), we can assume $y$ is adjacent to $u_3$ in $\bar\Delta_{\Lambda,\Lambda'}$. By considering $x_3yx_1u_1zu_2$ around $u_3$ and using Lemma~\ref{lem:link}, we deduce that $\{x_3,x_1,z\}$ are pairwise upper bounded in $((\bar\Delta_{\Gamma,\Gamma'})^0,<)$, hence they have a common upper bound $w$ which can taken to be extremal in  $((\bar\Delta_{\Gamma,\Gamma'})^0,<)$. Then $w\in \bar\Delta_{\Lambda,\Lambda'}$ and $\tau(w)=n$. As $z>u_3$, $\tau(z)>1$. Then $x_2\sim z$, $z\le w$ and Lemma~\ref{lem:transitive} imply $x_2\sim w$. Hence $w\in \bar X_1\cap \bar X_2\cap \bar X_3$.
	
	It remains to consider $d(u_i,u_j)=2$ for $1\le i\neq j\le 3$. This is similar to the last paragraph of the proof of Proposition~\ref{prop:intersection} and we omit the details.
\end{proof}
%	We assume $\{u_1,u_2,u_3\}$ are pairwise non-adjacent in $\bar\Delta_{\Lambda,\Lambda'}$. Suppose $d(u_i,u_j)=2$ for any $i\neq j$. We assume $\theta(u_2)=n$ (otherwise we can decrease \eqref{eq:ds} as in the proof of Lemma~\ref{lem:corner1}). Then $\theta(u_1)=\theta(u_3)=n$. 
%	Hence $\{y_2,y^2_1,y^2_3\}$ are pairwise upper bounded in $((\bar\Delta_{\Lambda,\Lambda'})^0,<)$. Lemma~\ref{lem:big lattice} implies they have a common upper bound $z$. We can assume $\theta(z)=n$. Proposition~\ref{prop:convex subcomplex} implies $x_2\sim y^2_1,x_1\sim y^2_3$ and $x_3\sim y_2$. If none of $\{y_2,y^2_1,y^2_3\}$ has type $\hat t'_1$, then $z\in \bar X_1\cap\bar X_2\cap\bar X_3$ by Lemma~\ref{lem:transitive}. Suppose one of $\{y_2,y^2_1,y^2_3\}$ has type $\hat t'_1$. By Lemma~\ref{lem:strip}, $y^2_2\le y^2_1$ and $y^2_2\le y^2_3$. Moreover, $y^2_2\le y_2$. Thus $y^2_2$ has type $\hat t'_1$. Let $x'_3=y_2$ if $y^2_2\neq y_2$ and $x'_3=x_3$ if $y^2_2=y_2$. Similarly, let $x'_2$ be the element in $\{x_2,y^2_1\}$ that is adjacent to $y^2_2$, and $x'_1$ be the element in $\{x_1,x^2_3\}$ that is adjacent to $y^2_2$. Then $\{x'_1,x'_2,x'_3\}$ are pairwise upper bounded in $((\bar\Delta_{\Gamma,\Gamma'})^0,<)$, hence has a common upper bound $w$ that can be taken to be extremal. Then $\theta(w)=n$ and Lemma~\ref{lem:transitive} implies $w\in \bar X_1\cap\bar X_2\cap\bar X_3$. 
%\end{proof}

\subsection{Propagation of $B$-robustness}

%\begin{lem}
%Suppose $\Lambda$ is a tree Coxeter diagram of type $ABI$ without any $\wtC_2$-like subdiagram. Then $\Lambda$ contain a $A_n$-type subdiagram $\Lambda_1$ such that each edge of $\Lambda$ not contained in $\Lambda_1$ has label $>3$ and intersects $\Lambda_1$ in a vertex. Moreover each vertex of $\Lambda$ has valence $3$.
%\end{lem}
%
%\begin{proof}
%Let $\Lambda$ be a tree Coxeter diagram of type $ABI$ without $\wtC_2$-like subdiagram.	
%Let $\Lambda_1$ be the induced subdiagram of $\Lambda$ spanned by edges with label $3$. Note that each connected component of $\Lambda_1$ is of type $A_n$ as $\Lambda$ does not contain $D_4$-subdiagram. Note that $\Lambda_1$ only has one connected component. Indeed, if $\Lambda_1$ has more than one connected components, let $\Lambda_2$ be a shortest possible edge path connecting two different connected components of $\Lambda_1$ . By definition, each edge of $\Lambda_2$ has label $\ge 4$. As $\Lambda$ does not contain $\wtC_2$-subdiagram, $\Lambda_2$ is a single edge. Then $\Lambda_2$ together with edges from the two components of $\Lambda_1$ where $\Lambda_2$ is connecting gives a $F_4$-subdiagram of $\Lambda$, contradiction. Similarly, by using $\Lambda$ does not have $\wtC_2$-diagram, we know each vertex in $\Lambda\setminus\Lambda_1$ has distance 1 to $\Lambda_1$, and each vertex in $\Lambda$ has valence $3$. Now the lemma follows.
%\end{proof}

\begin{prop}
	\label{prop:prop B-robustness}
	Let $\Lambda$ be a tree Coxeter diagram of type $ABI$ with edge labels $\le 5$ such that every proper induced subdiagram is $B$-robust and satisfies the labeled 4-cycle condition. Suppose $\Lambda$ is not $\wtC$-elementary. Then $\Delta_{\Lambda}$ satisfies the labeled 4-cycle condition and $\Lambda$ is $B$-robust.
\end{prop}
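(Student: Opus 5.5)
The labeled 4-cycle part follows directly from Proposition~\ref{prop:labeled 4-cycle}: every proper induced subdiagram of $\Lambda$ is $B$-robust and satisfies the labeled 4-cycle condition, and $\Lambda$ is of type $ABI$ with edge labels $\le 5$. So $\Delta_\Lambda$ satisfies the labeled 4-cycle condition. By Lemma~\ref{lem:4wheel}, $\Delta_{\Lambda,\Lambda_0}$ is then bowtie free for every linear $\Lambda_0$. It remains to prove upward flagness of $\Delta_{\Lambda,\Lambda_0}$ for every $B_m$-like subdiagram $\Lambda_0=s_1\cdots s_m$ with $m_{s_{m-1},s_m}\ge 4$, ordered by $s_1<\cdots<s_m$.

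By Corollary~\ref{cor:upward flag criterion}, it suffices to show the following. Take pairwise upper bounded triples $p_1,p_2,p_3$ of a common type $\hat s_j$ with $j<m$. Then they have a common upper bound. Set $X_i$ to be the set of vertices of a suitable relative Artin complex adjacent to $p_i$; a common point of $X_1\cap X_2\cap X_3$ will produce the bound. The plan is to choose a robust $\widetilde C$-core $\Lambda'$ adapted to $\Lambda_0$, as in Lemma~\ref{lem:robust C core}. Since we only have $B$-robustness rather than atomic BD-robustness, I would build $\Lambda'$ directly. Since $\Lambda$ is not $\wtC$-elementary and is $ABI$, it contains either a $\wtC_2$-like subdiagram or a $\widetilde B_3$-like subdiagram. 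In the second case, Lemma~\ref{lem:widetilde b3 core} shows the $(b_1,b_2)$-subdivision is $\wtC$-like. I would choose $\Lambda'$ closest to the edge $s_{m-1}s_m$, so that the pair $(\Lambda_0,\Lambda')$ is in one of the configurations of Figure~\ref{fig:core} (1), (2) or (5), or $\Lambda_0\subset\Lambda'$.

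If $\Lambda_0\subset\Lambda'$, flagness comes from Lemma~\ref{lem:big lattice} and Proposition~\ref{prop:propagation}~(1). Otherwise, I would verify Assumption~\ref{assum:2} and apply Proposition~\ref{prop:intersection2} in place of Proposition~\ref{prop:intersection}. The upward flagness of $(\bar\Delta_{\Gamma,\Gamma'})^0$ required there holds because $\Gamma\subsetneq\Lambda$ and $B$-robustness of proper subdiagrams is assumed. When $\Gamma'$ is $D$-like rather than $B$-like, I would instead use Lemma~\ref{lem:d3 upward flag} together with the $ABI$ restriction, which rules out genuine $D_n$ subdiagrams with all labels $3$. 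The input that pairwise intersections contain vertices not of type $\hat t'_1$ comes from the pairwise upper bounds $q_i$, via Lemma~\ref{lem:transitive}.

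Once $X_1\cap X_2\cap X_3\ni z$ is found, I would finish as at the end of Corollary~\ref{cor:robust0}. If $z$ has type in $\Lambda_0$, it is already the common upper bound. If not, apply Lemma~\ref{lem:4-cycle} to the 4-cycles $p_iq_ip_{i+1}z$ so that everything lies in $\lk(z,\Delta_\Lambda)$. Then Lemma~\ref{lem:link} reduces the question to $\Delta_{\Omega,\Lambda_0}$ for a proper subdiagram $\Omega$, where $B$-robustness applies. The case where the $p_i$ are not $\Lambda'$-extremal, namely type $\hat s_{m-1}$ in Figure~\ref{fig:core} (5), follows from flagness of $\bar\Delta_{\Lambda,\Lambda'}$ (Lemma~\ref{lem:big lattice}).

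The main obstacle is the choice of $\Lambda'$ and the check of Assumption~\ref{assum:2} in each configuration. In particular, one must ensure that every vertex of $\Lambda_0\setminus\Lambda'$ is $\Lambda'$-extremal, using only $B$-robustness of proper subdiagrams; the $ABI$ and no-$D_4$ constraints should keep the case list short.
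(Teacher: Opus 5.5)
Your outline follows the paper's overall structure. Proposition~\ref{prop:labeled 4-cycle} gives the 4-cycle part, Corollary~\ref{cor:upward flag criterion} reduces to same-type triples, an intersection result handles the $X_i$, and Lemma~\ref{lem:4-cycle} with Lemma~\ref{lem:link} passes to a proper subdiagram. However, the step you yourself call the main obstacle, choosing the robust core $\Lambda'$, is left open, and the target you set for it is not always attainable.

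A smaller error first. A non-$\wtC$-elementary $ABI$ tree need not contain a $\wtC_2$-like or $\widetilde B_3$-like subdiagram: the linear diagram with labels $4,3,4$ contains neither. You need $\wtC_n$-like cores for all $n$, including ones lying inside $\Omega$ when $\Omega$ has a second edge of label $\ge 4$. These are robust directly from $B$-robustness of proper subdiagrams, so this is easy to repair.

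The genuine gap is the following case. $\Omega=r_1\cdots r_m$ has $r_{m-1}r_m$ as its only edge of label $\ge 4$, and an edge $e'=r_kr'$ of label $\ge 4$ leaves $\Omega$ at an interior vertex $r_k$ with $3\le k\le m-2$. Take $\Lambda=\Omega\cup e'$ with all other labels $3$; it is $ABI$. Its only $\wtC$-cores are:
\begin{itemize}
\item the $\wtC$-like $r'r_k\cdots r_m$, which leaves $r_1,\dots,r_{k-1}$ attached at an interior vertex, so they are not $\Lambda'$-extremal and Lemma~\ref{lem:convex} fails;
\item the $\widetilde B_3$-like $\{r_{k-1},r_k,r_{k+1},r'\}$, which meets $\Omega$ in its middle, so $\Omega\setminus\Lambda'$ lies on both sides and Assumption~\ref{assum:2} is not in force;
\item the $\widetilde B$-like $(r_{k-1}r_k\cdots r_m)\cup e'$.
\end{itemize}
None of these meets $\Omega$ exactly in $r_{m-1}r_m$ or contains $\Omega$, so configurations (1), (2), (5) of Figure~\ref{fig:core} are out of reach.

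The paper uses the third core, in configuration (4). The missing ingredient is a proof that this core is robust, and you have no tool for it: you only invoke Lemma~\ref{lem:widetilde b3 core}. In the paper this goes through Proposition~\ref{prop:ori link0}:
\begin{itemize}
\item The conditions at the short leaves $r_{k-1},r'$ come from $B$-robustness of proper subdiagrams (or Lemma~\ref{lem:big lattice}).
\item The condition at $r_m$ is that the $(r_{k-1},r')$-subdivision of $\Delta_{\Lambda\setminus\{r_m\},\Lambda'\setminus\{r_m\}}$ is bowtie free and downward flag. This is a $D$-type statement not supplied by your hypotheses.
\item It is obtained by running Corollary~\ref{cor:robust0} inside $\Lambda\setminus\{r_m\}$ with the $\widetilde B_3$-like core $(r_{k-1}r_kr_{k+1})\cup e'$, as in the proof of Lemma~\ref{lem:weak atomic}.
\end{itemize}
Only after that can Proposition~\ref{prop:intersection2} be applied, with $\Gamma'=\Omega$ and upward flagness of $\Delta_{\Lambda\setminus\{r'\},\Omega}$ as input.
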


\begin{proof}
	By Proposition~\ref{prop:labeled 4-cycle}, it suffices to show $\Lambda$ is $B$-robust. Let $\Omega=r_1\cdots r_m$ be a $B_m$-like subdiagram of $\Lambda$ with $m_{r_{m-1},r_m}\ge 4$. If $\Omega$ has another edge $e'$ with label $\ge 4$, then let $\Lambda'$ the subdiagram spanned by $r_{m-1}r_m$ and $e'$. As any proper subdiagram of $\Lambda$ is $B$-robust, $\Delta_{\Lambda,\Lambda'}$ is $\wtC$-like. 
	As $\Delta_\Lambda$ satisfies the labeled 4-cycle condition (Proposition~\ref{prop:labeled 4-cycle}), $\Delta_{\Lambda,\Omega}$ is bowtie free.
	To show $\Delta_{\Lambda,\Omega}$ is upward flag, by Corollary~\ref{cor:upward flag criterion}, it suffices to show if $\{p_1,p_2,p_3\}$ are pairwise upper bounded in $(\Delta_{\Lambda,\Omega})^0$ such that $p_1,p_2,p_3$ have the same type $\hat t$, then there is a common upper bound $p$ for them in $(\Delta_{\Lambda,\Omega})^0$. This is true if $t\in \Lambda'$, as $(\Delta_{\Lambda,\Lambda'})^0$ is flag by Lemma~\ref{lem:big lattice}. If $t\in \Omega\setminus \Lambda'$, then we let $X_i$ be the full subcomplex of $\Delta_{\Lambda,\Lambda'}$ spanned by vertices that are adjacent to $p_i$ in $\Delta_\Lambda$. Note that $X_i\cap X_j$ contains a vertex of type $\hat r_m$ for any $1\le i\neq j\le 3$. Thus $X_1\cap X_2\cap X_3\neq\emptyset$ by Proposition~\ref{prop:intersection2}. Any vertex in such intersection is a common upper bound for $\{p_1,p_2,p_3\}$.
	
	Now we assume $\Omega$ only has one edge $e=r_{m-1}r_m$ with label $\ge 4$. As $\Lambda$ is of type $ABI$, but not $\wtC$-elementary, at least one of the following holds true:
	\begin{enumerate}
		\item there are at least two edges labeled by $3$ at the vertex $r_{m-1}$ or at the vertex $r_m$;
		\item there is another edge $e'$ of $\Lambda$ with label $\ge 4$ such that the subdiagram $\Lambda'$ of $\Lambda$ spanned by $e$ and $e'$ satisfies that either $\Omega\subset \Lambda'$, or $\Lambda'\setminus e'\subset \Omega$. or $\Omega\cap \Lambda'=r_{m-1}r_m$. 
	\end{enumerate}
	We first consider case (1). If the two edges in case (1), denoted by $e_1,e_2$, are based at $r_m$, then $m=2$ (otherwise we obtain a $F_4$-subdiagram of $\Lambda$). Let $\Lambda'=r_{m-1}r_m\cup e_1\cup e_2$. 
	%Then $e_1\cup e_2$ is $\Lambda\setminus\{r_1\}$-atomic $D_3$-like, hence Theorem~\ref{thm:atomic BD robust} implies that $\Delta_{\Lambda\setminus\{r_{m-1}\},e_1\cup e_2}$ satisfies assumption (2) of Proposition~\ref{prop:propagation}. 
	By Lemma~\ref{lem:widetilde b3 core}, $\Lambda'$ is a robust $\wtC$-core of $\Lambda$. Hence $\Delta_{\Lambda,\Omega}$ is a graph with girth $\ge 8$ by Lemma~\ref{lem:big lattice}. Hence $\Delta_{\Lambda,\Omega}$ is flag. It remains to consider $e_1$ and $e_2$ are based at $r_{m-1}$. We can assume $e_1=r_{m-2}r_{m-1}$ if $m\ge 3$. Let $\Lambda'$ be as before, which is a robust $\wtC$-core of $\Lambda$ by Lemma~\ref{lem:widetilde b3 core}. Then the pair $(\Omega,\Lambda')$ corresponds to Figure~\ref{fig:core} (5). Note that the requirement in paragraph 5 of Assumption~\ref{assum:1} holds true. Let $\{p_1,p_2,p_3\}$ be pairwise upper bounded elements in $(\Delta_{\Lambda,\Omega})^0$ of the same type, and we define $\{X_i\}_{i=1}^3$ as before. Then Proposition~\ref{prop:intersection} implies that $X_1\cap X_2\cap X_3\neq\emptyset$. Let $p$ be an element in the common intersection and suppose $p$ has type $\hat t$. If $t\in \Omega$, then $p$ is a common upper bound for $\{p_1,p_2,p_3\}$. If $t\notin \Omega$, then we can assume $\{p_1,p_2,p_3\}$ is pairwise upper bounded in $\Delta_{\Lambda\setminus\{t\},\Omega}$ (indeed, if the upper bound $q_i$ of $p_i$ and $p_{i+1}$ is not adjacent to $p$ in $\Delta_\Lambda$, then we use Lemma~\ref{lem:4-cycle} to produce $q'_i$ of the same type as $q_i$ such that $q'_i$ is adjacent to $\{p_i,p_{i+1},p\}$ in $\Delta_\Lambda$). Thus $\{p_1,p_2,p_3\}$ have a common upper bound in  $\Delta_{\Lambda\setminus\{t\},\Omega}$, hence in $\Delta_{\Lambda,\Omega}$.
	
	It remains to consider case (2). As before, $\Delta_{\Lambda,\Lambda'}$ is $\wtC$-like. We assume $\Omega$ is not contained in $\Lambda'$, otherwise $\Delta_{\Lambda,\Omega}$ is upward flag by Lemma~\ref{lem:big lattice}. If $\Omega\cap \Lambda'=r_{m-1}r_m$, then $(\Omega,\Lambda')$ belongs to the configuration in Figure~\ref{fig:core} (2). Consider $\{p_i\}_{i=1}^3$ and $\{X_i\}_{i=1}^3$ as before. By Proposition~\ref{prop:intersection}, there is a vertex $p\in X_1\cap X_2\cap X_3$, and we conclude in the same way as the previous paragraph. It remains to consider $\Lambda'\setminus e'\subset \Omega$. 
	
	Suppose $r_k=e'\cap \Omega$. Let $r'$ be the other vertex of $e'$, and let $\Lambda'=(r_{k-1}r_k\cdots r_m)\cup e'$, which is $\widetilde B_n$-like for some $n$. We claim $\Lambda'$ is a robust $\wtC$-core of $\Lambda$. It suffices to show the two assumptions of Proposition~\ref{prop:propagation} hold true. Note that $\Lambda'\setminus \{r_{k-1}\}$ is a robust $\wtC$-core in $\Lambda\setminus\{r_{k-1}\}$. Thus Assumption 2 of Proposition~\ref{prop:propagation} follows from Lemma~\ref{lem:big lattice}. Now we consider $\Lambda'\setminus \{r_m\}$. If $k=m-1$, then $\Lambda'\setminus\{r_{m}\}=r_{m-2}r_{m-1}r'$, and $\Delta_{\Lambda\setminus\{r_m\},r_{m-2}r_{m-1}r'}$ is bowtie free and upward flag (with vertices of type $\hat r'$ being maximal). By Lemma~\ref{lem:d3 upward flag}, Assumption 1 of Proposition~\ref{prop:propagation} holds true. Suppose $k<m-1$. Let $\Lambda''=(r_{k-1}r_kr_{k+1})\cup e'$, which is $\widetilde B_3$-like. By Lemma~\ref{lem:widetilde b3 core}, $\Lambda''$ is a robust $\wtC$-core in $\Lambda\setminus\{r_m\}$. Then the pair $(\Lambda'\setminus\{r_m\},\Lambda'')$ corresponds to Figure~\ref{fig:core} (4), where the thickened part corresponds to $\Lambda''$ and $b_{m-2}b_m=e'$. %Note that $\Delta_{\Lambda\setminus\{r_m\}}$ is atomic BD-robust by Proposition~\ref{prop:ABI BD-robust}.
	Now we argue in the same way as the second paragraph of the proof of Lemma~\ref{lem:weak atomic}, using Corollary~\ref{cor:robust0}, to deduce that Assumption 1 of Proposition~\ref{prop:propagation} holds true for $\Delta_{\Lambda\setminus\{r_m\},\Lambda'\setminus\{r_m\}}$. Thus the claim is proved. 
	
	The subdiagram $\Theta=\Omega\cup \Lambda'$ corresponds to Figure~\ref{fig:core} (4), with the thickened subdiagram being $\Lambda'$. Let $\{p_1,p_2,p_3\}\subset(\Delta_{\Lambda,\Omega})^0$ be as before such that they are of type $\hat t$. If $t\in \Lambda'$, then the existence of common upper bound for $\{p_i\}_{i=1}^3$ follows from the previous claim and Lemma~\ref{lem:big lattice}. If $t\notin \Lambda'$, then we define $\{X_i\}_{i=1}^3$ as before. As $\Delta_{\Lambda\setminus\{r'\},\Omega}$ is upward flag, Proposition~\ref{prop:intersection2} implies that $\cap_{i=1}^3 X_i\neq\emptyset$, and we conclude as before.
\end{proof}

The following is deduced from Proposition~\ref{prop:prop B-robustness}, Theorem~\ref{thm:bowtie free} and Theorem~\ref{thm:triple} by induction on the number of vertices in $\Lambda$.
\begin{cor}
	\label{cor:ABI bowtie free}
	Let $\Lambda$ be a tree Coxeter diagram of type $ABI$ with edge labels $\le 5$. Then $\Delta_{\Lambda}$ satisfies the labeled 4-cycle condition and $\Lambda$ is $B$-robust.
\end{cor}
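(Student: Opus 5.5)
We argue by induction on the number of vertices of $\Lambda$. Every induced subdiagram of a tree diagram of type $ABI$ with edge labels $\le 5$ is again a forest of this kind. A disconnected diagram is harmless: its Artin complex is a join of the complexes of its components, so both the labeled 4-cycle condition and $B$-robustness reduce componentwise. We may therefore assume $\Lambda$ is connected and that every proper induced subdiagram of $\Lambda$ satisfies the labeled 4-cycle condition and is $B$-robust. The base case of a single vertex is trivial.

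Suppose first that $\Lambda$ is not $\wtC$-elementary. Then $\Lambda$ meets all hypotheses of Proposition~\ref{prop:prop B-robustness}, which gives both conclusions directly.

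Suppose next that $\Lambda$ is $\wtC$-elementary. Then $\Lambda$ is either a linear diagram with at most one edge labeled $\ge 4$, or a tripod with all labels $3$. Type $ABI$ forbids $D_n$ and the exceptional types, and the labels are $\le 5$. Hence a tripod is excluded, since it contains $D_4$. A linear diagram with one edge labeled $5$ is also excluded: it is of type $H_3$, $H_4$, or contains one of these, unless it is $I_2(5)$. Finally, a linear diagram with a $4$ in an interior position contains $F_4$, which is excluded. So $\Lambda$ is of type $A_n$, $B_n$, or $I_2(5)$, and in particular it is spherical. The labeled 4-cycle condition then follows from Theorem~\ref{thm:4 wheel}. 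For $B$-robustness, take any $B_k$-like subdiagram $\Lambda'=s_1\cdots s_k$ with $m_{s_{k-1},s_k}\ge 4$. Bowtie freeness of $\Delta_{\Lambda,\Lambda'}$ is Theorem~\ref{thm:bowtie free}. For upward flagness, note that $\Lambda'$ must contain the unique edge labeled $\ge 4$, so $\Lambda$ is of type $B_n$ or $I_2(5)$. The $I_2(5)$ case, with $k=2$, is trivial: the poset has only two ranks, so upward flagness amounts to girth considerations in a generalized polygon. In the $B_n$ case, $\Lambda'$ is a terminal segment ending at the label-$4$ edge. If $\Lambda'=\Lambda$, upward flagness is Theorem~\ref{thm:triple}. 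Otherwise we pass to the link: write $\Delta_{\Lambda,\Lambda'}$ via Lemma~\ref{lem:link} as a relative complex inside the $B_n$ Artin complex, and apply Corollary~\ref{cor:upward flag criterion}. This reduces us to rank-$k$ triples, which we handle by restricting Theorem~\ref{thm:triple} together with the bowtie freeness.

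The main obstacle is this last point, namely upward flagness for a proper sub-segment $\Lambda'\subsetneq\Lambda$ in type $B_n$. My plan is to show that a pairwise upper bounded triple in $\Delta_{\Lambda,\Lambda'}$ is also pairwise upper bounded in $\Delta^0_\Lambda$ with the full order $s_1<\cdots<s_n$, after adding extra vertex types. Theorem~\ref{thm:triple} then produces a common upper bound. We push it into $\Delta_{\Lambda,\Lambda'}$ by taking an appropriate join, using Lemma~\ref{lem:posets} and Lemma~\ref{lem:transitive}.
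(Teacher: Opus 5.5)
Your proposal is correct and is essentially the paper's argument, which is exactly this induction on the number of vertices: Proposition~\ref{prop:prop B-robustness} handles the non-$\wtC$-elementary case, and Theorems~\ref{thm:bowtie free} (or \ref{thm:4 wheel}) and \ref{thm:triple} handle the remaining spherical diagrams. The step you flag as the main obstacle is in fact immediate: $\Lambda'$ is a terminal segment of $B_n$ containing the top type, so $(\Delta_{\Lambda,\Lambda'})^0$ is an upward-closed subset of $\Delta^0_\Lambda$ carrying the restricted order. Hence pairwise upper bounds transfer verbatim, and the common upper bound given by Theorem~\ref{thm:triple} already lies in $\Delta_{\Lambda,\Lambda'}$, with no need for Lemma~\ref{lem:link}, joins, or Corollary~\ref{cor:upward flag criterion}.
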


\bibliographystyle{alpha}
\bibliography{mybib}

\newcommand{\etalchar}[1]{$^{#1}$}
\begin{thebibliography}{CCG{\etalchar{+}}25}

\bibitem[BDS87]{Bandelt_Dahlmann_Schutte:Absolute_Retract_Bipartite:1987}
H.-J. Bandelt, A.~D\"ahlmann, and H.~Sch\"utte.
\newblock Absolute retracts of bipartite graphs.
\newblock {\em Discrete Appl. Math.}, 16(3):191--215, 1987.

\bibitem[Bes99]{Bestvina1999}
Mladen Bestvina.
\newblock Non-positively curved aspects of {A}rtin groups of finite type.
\newblock {\em Geom. Topol.}, 3:269--302 (electronic), 1999.

\bibitem[BFH93]{Bandelt_Farber_Hell:AbsoluteReflexiveBipartiteRetract:1993}
Hans-J\"urgen Bandelt, Martin Farber, and Pavol Hell.
\newblock Absolute reflexive retracts and absolute bipartite retracts.
\newblock {\em Discrete Appl. Math.}, 44(1-3):9--20, 1993.

\bibitem[BM10]{brady2010braids}
Thomas Brady and Jonathan~P. McCammond.
\newblock Braids, posets and orthoschemes.
\newblock {\em Algebr. Geom. Topol.}, 10(4):2277--2314, 2010.

\bibitem[Bri06]{brieskorn2006groupes}
Egbert Brieskorn.
\newblock Sur les groupes de tresses [d'apr{\`e}s vi arnol'd].
\newblock In {\em S{\'e}minaire Bourbaki vol. 1971/72 Expos{\'e}s 400--417},
  pages 21--44. Springer, 2006.

\bibitem[CCG{\etalchar{+}}25]{CCGHO:Helly:2025}
J\'er\'emie Chalopin, Victor Chepoi, Anthony Genevois, Hiroshi Hirai, and
  Damian Osajda.
\newblock Helly groups.
\newblock {\em Geom. Topol.}, 29(1):1--70, 2025.

\bibitem[CCHO14]{weaklymodular}
J{\'e}r{\'e}mie Chalopin, Victor Chepoi, Hiroshi Hirai, and Damian Osajda.
\newblock Weakly modular graphs and nonpositive curvature.
\newblock {\em arXiv preprint arXiv:1409.3892. To appear in Mem. Amer. Math.
  Soc.}, 2014.

\bibitem[CD95]{CharneyDavis}
Ruth Charney and Michael~W. Davis.
\newblock The ${K}(\pi,1)$-problem for hyperplane complements associated to
  infinite reflection groups.
\newblock {\em J. Amer. Math. Soc.}, 8(3):597--627, 1995.

\bibitem[Cha04]{charney2004deligne}
Ruth Charney.
\newblock The {D}eligne complex for the four-strand braid group.
\newblock {\em Transactions of the American Mathematical Society},
  356(10):3881--3897, 2004.

\bibitem[CMS10]{callegaro2010k}
Filippo Callegaro, Davide Moroni, and Mario Salvetti.
\newblock The ${K} (\pi, 1)$ problem for the affine {A}rtin group of type
  {$\widetilde B_n$} and its cohomology.
\newblock {\em Journal of the European Mathematical Society (EMS Publishing)},
  12(1), 2010.

\bibitem[CMV23]{cumplido2020parabolic}
Mar{\'\i}a Cumplido, Alexandre Martin, and Nicolas Vaskou.
\newblock Parabolic subgroups of large-type {A}rtin groups.
\newblock {\em Mathematical Proceedings of the Cambridge Philosophical
  Society}, 174(2):393--414, 2023.

\bibitem[Del72]{deligne}
Pierre Deligne.
\newblock Les immeubles des groupes de tresses g\'en\'eralis\'es.
\newblock {\em Invent. Math.}, 17:273--302, 1972.

\bibitem[EM02]{elder2002curvature}
Murray Elder and Jon McCammond.
\newblock Curvature testing in 3-dimensional metric polyhedral complexes.
\newblock {\em Experimental Mathematics}, 11(1):143--158, 2002.

\bibitem[FN62]{fox1962braid}
Ralph Fox and Lee Neuwirth.
\newblock The braid groups.
\newblock {\em Mathematica Scandinavica}, 10:119--126, 1962.

\bibitem[GH25]{goldman2025deligne}
Katherine Goldman and Amy Herron.
\newblock The {D}eligne complex for the ${B}_3$ {A}rtin group.
\newblock {\em arXiv:2503.15820}, 2025.

\bibitem[GHP26]{tripod}
Katherine Goldman, Jingyin Huang, and Piotr Przytycki.
\newblock Tripod reduction for {$K(\pi,1)$-conjecture}, 2026.

\bibitem[Gol24]{goldman2022k}
Katherine Goldman.
\newblock The ${K} (\pi, 1)$ conjecture and acylindrical hyperbolicity for
  relatively extra-large {A}rtin groups.
\newblock {\em Algebraic \& Geometric Topology}, 24(3):1487--1504, 2024.

\bibitem[GP12]{godelle2012k}
Eddy Godelle and Luis Paris.
\newblock ${K}(\pi,1)$ and word problems for infinite type {A}rtin--{T}its
  groups, and applications to virtual braid groups.
\newblock {\em Mathematische Zeitschrift}, 272(3):1339--1364, 2012.

\bibitem[Hae24]{haettel2021lattices}
Thomas Haettel.
\newblock Lattices, injective metrics and the ${K}(\pi,1)$ conjecture.
\newblock {\em Algebraic \& Geometric Topology}, 24(7):4007--4060, 2024.

\bibitem[Hae25]{haettel2022link}
Thomas Haettel.
\newblock A link condition for simplicial complexes, and {CUB} spaces.
\newblock {\em Mathematische Annalen}, 393(2):1939--1987, 2025.

\bibitem[HH25]{haettel2023new}
Thomas Haettel and Jingyin Huang.
\newblock New {G}arside structures and applications to {A}rtin groups.
\newblock {\em Duke Mathematical Journal}, 174(9):1665--1722, 2025.

\bibitem[Hir20]{hirai2020uniform}
Hiroshi Hirai.
\newblock Uniform modular lattices and affine buildings.
\newblock {\em Advances in Geometry}, 20(3):375--390, 2020.

\bibitem[HKS16]{haettel20166}
Thomas Haettel, Dawid Kielak, and Petra Schwer.
\newblock The 6-strand braid group is {${\rm CAT}(0)$}.
\newblock {\em Geom. Dedicata}, 182:263--286, 2016.

\bibitem[HM]{HodaMunro:BiHelly_Directed_Geodesics:2026}
Nima Hoda and Zachary Munro.
\newblock Combing bi-{H}elly graphs.
\newblock In preparation.

\bibitem[HO21]{huang2021helly}
Jingyin Huang and Damian Osajda.
\newblock Helly meets {G}arside and {A}rtin.
\newblock {\em Inventiones mathematicae}, 225(2):395--426, 2021.

\bibitem[HP25]{huang2025353}
Jingyin Huang and Piotr Przytycki.
\newblock 353-combinatorial curvature and the $3 $-dimensional ${K}(\pi,1)$
  conjecture.
\newblock {\em arXiv preprint arXiv:2509.06914}, 2025.

\bibitem[Hua24a]{huang2024}
Jingyin Huang.
\newblock Cycles in spherical {D}eligne complexes and application to
  ${K}(\pi,1)$-conjecture for {A}rtin groups.
\newblock {\em arXiv preprint arXiv:2405.12068}, 2024.

\bibitem[Hua24b]{huang2023labeled}
Jingyin Huang.
\newblock Labeled four cycles and the ${K}(\pi,1)$-conjecture for {A}rtin
  groups.
\newblock {\em Inventiones mathematicae}, 238(3):905--994, 2024.

\bibitem[Hua26]{huangbestvina}
Jingyin Huang.
\newblock Bestvina metric and tree reduction for ${K}(\pi,1)$ conjecture.
\newblock {\em arXiv preprint}, 2026.

\bibitem[J{\'S}06]{JanuszkiewiczSwiatkowski2006}
Tadeusz Januszkiewicz and Jacek {\'S}wi{\c{a}}tkowski.
\newblock Simplicial nonpositive curvature.
\newblock {\em Publ. Math. Inst. Hautes \'Etudes Sci.}, 104:1--85, 2006.

\bibitem[Juh23]{juhasz2023class}
Arye Juh{\'a}sz.
\newblock A class of {A}rtin groups in which the ${K}(\pi, 1)$ conjecture
  holds.
\newblock {\em Journal of Pure and Applied Algebra}, page 107401, 2023.

\bibitem[Par14]{paris2012k}
Luis Paris.
\newblock ${K}(\pi,1)$ conjecture for {A}rtin groups.
\newblock {\em Ann. Fac. Sci. Toulouse Math. (6)}, 23(2):361--415, 2014.

\bibitem[PS21]{paolini2021proof}
Giovanni Paolini and Mario Salvetti.
\newblock Proof of the ${K}(\pi,1)$ conjecture for affine {A}rtin groups.
\newblock {\em Invent. Math.}, 224(2):487--572, 2021.

\bibitem[vdL83]{lek}
Harm van~der Lek.
\newblock {\em The homotopy type of complex hyperplane complements}.
\newblock 1983.

\bibitem[Vin93]{vinberg1993volumes}
Ernest~Borisovich Vinberg.
\newblock Volumes of non-euclidean polyhedra.
\newblock {\em Russian Mathematical Surveys}, 48(2):15--45, 1993.

\end{thebibliography}
\end{document}